\newcommand{\acts}{\curvearrowright}
\newcounter{theoremintro}
\newtheorem{thmintro}[theoremintro]{Theorem}
\newtheorem{corintro}[theoremintro]{Corollary}
\newtheorem{defintro}[theoremintro]{Definition}
\newtheorem{theorem}{Theorem}[section]
\newtheorem{corollary}[theorem]{Corollary}
\newtheorem{lemma}[theorem]{Lemma}
\newtheorem{rmk}[theorem]{Remark}
\newtheorem{example}[theorem]{Example}
\newtheorem{definition}[theorem]{Definition}
\newtheorem{question}[theorem]{Question}
\newcommand{\Set}[1]{\{ #1 \}}
\newcommand{\G}{\mathcal{G}}
\newcommand{\B}{\mathcal{B}}
\title{Studying Stein's Groups as Topological Full Groups}
\author{Owen Tanner}
\date{\today}
\newcommand{\Gn}[1]{\mathcal{G}^{(#1)}}
\begin{document}
\newpage

\maketitle
\section*{Abstract}
We include a class of generalisations of Thompson's group $V$ introduced by Melanie Stein into the growing framework of topological full groups. Like $V$, Stein's groups can be described as certain piecewise linear maps with prescribed slopes and nondifferentiable points. Stein's groups include the class of groups where the group of slopes is generated by multiple integers (which we call Stein's integral groups) and when the group of slopes is generated by a single irrational number, such as Cleary's group, (which we call irrational slope Thompson's groups). We give a unifying dynamical proof that whenever the group of slopes is finitely generated, the simple derived subgroup of the associated Stein's groups is finitely generated. We then study the homology of Stein's groups, building on the work of others. This line of inquiry allows us to compute the abelianisations and rational homology concretely for many examples.  

\section{Introduction}

Thompson's group $V$ was the first known example of an infinite finitely presented simple group \cite{thompson}. Since the introduction and dissemination of Thompson's work into the mathematical community, $V$ has impacted mathematics in countless often disconnected and surprising ways, leading Brin to call $V$ the Chameleon group for its breadth of impact \cite{brin1996chameleon}. Combinatorial group theorists have been interested in Thompson's group $V$ because of the connection to interesting open questions such as the Boone-Higman conjecture \cite{belk2023progress}. Computer scientists have been interested in Thompson's group from the perspective of cryptography, as a platform group \cite{shpilrain2005thompson}. More recently, it has attracted the attention of those working in C$^*$-algebras and ample groupoids because of the deep connection to the Cuntz algebra \cite{nekrashevych2004cuntz}, via topological full groups \cite{matui2012homology}, \cite{matui2014topological}. 

A family of groups were introduced by Melanie Stein in 1992  \cite{stein1992groups}. This class vastly generalises Thompson's group $V$ but still satisfies many of the properties that $V$ satisfies. For example, they provide a source of simple derived subgroups which often are finitely generated. Let us briefly recall her construction. 

\begin{defintro}[Stein's groups]
    Let $\Lambda$ be a subgroup of $(\mathbb{R}_+, \cdot)$. Let $\Gamma$ be a subgroup of the group ring $(\mathbb{Z}\cdot\Lambda ,+)$. Let $\ell \in \Gamma$. Then, Stein's group 
    $V(\Gamma,\Lambda,\ell)$ associated to the triple $(\Gamma,\Lambda,\ell)$, is the group of piecewise linear bijections of $[0,\ell]$, with finitely many slopes, all in $\Lambda$ and finitely many nondifferentiable points, all in $\Gamma$. 
\end{defintro}
This family also encompasses many interesting examples that have been studied in great detail such as:
\begin{itemize}
\item Thompson's group $V$, which corresponds to choosing $\Lambda=\langle 2 \rangle , \Gamma=\mathbb{Z}[1/2]$ and $ \ell=1$. 
 \item The Higman-Thompson groups $V_{k,r}$ \cite{higman1974finitely} where $k,r \in \mathbb{N}$, which correspond to choosing $\Lambda=\langle k \rangle, \, \Gamma=\mathbb{Z}[1/k], $ and $ \ell=r$.
 \item Stein's integral groups, which corresponds to choosing a finite collection of integers $\{n_1,...,n_k\}$ and a length $r \in \mathbb{N}$, and taking $\Lambda=\langle n_1,...,n_k \rangle, \, \Gamma=\mathbb{Z} \cdot \Lambda,$ and $ \ell=r$. 
    \item $V_\tau$, Cleary's group \cite{cleary2000regular}, \cite{Clearymore}, \cite{burillo2021irrationalslope}, also known as the irrational slope Thompson group \cite{irrationalslope22}, which corresponds to choosing $\Lambda=\langle \tau \rangle, \Gamma=\mathbb{Z} \cdot \Lambda,$ and $ \ell=1$ for $\tau=\frac{\sqrt{5}-1}{2}$, and other related irrational slope Thompson groups (corresponding to different choices of $\tau$).
   
\end{itemize} 

Stein's groups are the primary object of study in this paper but we take a different perspective from other authors who have studied Stein's groups previously \cite{stein1992groups}, \cite{irrationalslope22}, \cite{martinez2016cohomological}. Namely, we take the dynamical perspective; that Stein's groups are topological full groups, in the sense of Matui \cite{matui2012homology}. The key idea of Matui's framework is that simple groups with finiteness properties are often built from kinds of partial symmetries that are best captured by bisections in ample groupoids. This is paired with a philosophy that the ample groupoids themselves are often more accessible to study than simple groups, be that from the perspective of homology \cite{li2022}, finite generation \cite{nekrashevych2019simple} or amenability \cite{juschenkomonod}, \cite{juschenko2016extensions}. This relative accessibility is partly because minimal ample groupoids have been studied extensively by mathematicians working in C$^*$-algebras for many decades, see the survey of Sims \cite{sims2017etale} or books of Paterson \cite{paterson2012groupoids}, Renault \cite{renault2006groupoid} for an overview on this research. This philosophy has since been strengthened by the remarkable results of Nekrashevcych \cite{nekrashevych2019simple}, and Li \cite{li2022}, which transfer algebraic and homological information from the ample groupoids to the topological full groups. 

This example class is not an exception to the philosophy that ample groupoids are often more accessible to study. These ample groupoids can be described as full corners in the universal groupoid of the inverse hull of $\Gamma_+ \ltimes \Lambda$, which, as shown in work of Li \cite{li2012nuclearity}, we could equally describe as the partial transformation groupoid of $\Gamma \ltimes \Lambda$ on the Cantor space $X$, which is the description which we give throughout Section 4. See Lemma \ref{v as a topological full group} for the precise construction. This perspective is convenient because the partial action $\beta: \Gamma \ltimes \Lambda \acts X$ is minimal and topologically free. This gives us the simplicity of the derived subgroups of these groups by using standard results in the literature of topological full groups \cite{matui2014topological}.

The partial action $\beta$ is also expansive, in the sense of classical dynamics. We show that this notion of expansivity for partial actions is closely related to Nekrashevych's notion of expansivity for groupoids \cite{nekrashevych2019simple}, even equivalent in the case of compactly generated partial transformation groupoids. We also study the property of compact generation for groupoids, showing that it is preserved under taking full compact open corners. This line of inquiry allows us to show that the derived subgroups are finitely generated under very general circumstances, which is the main result of our paper:
\begin{thmintro}[Theorem \ref{fg when fg by alg}]

   Let $\Lambda$ be a subgroup of $(\mathbb{R}_+, \cdot)$ and $\Gamma$ be a $\mathbb{Z} \cdot \Lambda$-submodule and $\ell \in \Gamma$. Then, $D(V(\Gamma,\Lambda,\ell))$ is simple. Moreover, the following are equivalent:
   \begin{enumerate}
   
       \item $\Gamma \ltimes \Lambda$ is finitely generated.
       \item $\Gamma \ltimes \Lambda \ltimes [0_+,\ell_-]$ is compactly generated for all $\ell \in \Gamma$. 
       \item $D(V(\Gamma,\Lambda,\ell))$ is  finitely generated for all $\ell$. 
        \item $D(V(\Gamma,\Lambda,\ell))$ is  2-generated for all $\ell$. 
  \end{enumerate}
  \label{thmintro1}
\end{thmintro}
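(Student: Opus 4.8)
The plan is to pass entirely to the groupoid model and then invoke the transfer results between ample groupoids and their topological full groups. By Lemma \ref{v as a topological full group}, for each $\ell \in \Gamma$ we may identify $V(\Gamma,\Lambda,\ell)$ with the topological full group of the partial transformation groupoid $\mathcal{G}_\ell := \Gamma \ltimes \Lambda \ltimes [0_+,\ell_-]$, where $[0_+,\ell_-]$ is the compact open subset of the Cantor space $X$ modelling the interval $[0,\ell]$. Since the partial action $\beta$ is minimal and topologically free, so is its restriction to $[0_+,\ell_-]$, and $\mathcal{G}_\ell$ is therefore a minimal, essentially principal ample groupoid. The simplicity of $D(V(\Gamma,\Lambda,\ell)) = D([[\mathcal{G}_\ell]])$ is then immediate from Matui's theorem on commutator subgroups of topological full groups of such groupoids \cite{matui2014topological}; this requires no finiteness hypothesis and so holds unconditionally.

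For the equivalences I would prove the cycle $(1) \Rightarrow (2) \Rightarrow (4) \Rightarrow (3) \Rightarrow (1)$, noting that $(4) \Rightarrow (3)$ is trivial. For $(1) \Rightarrow (2)$, the key observation is that $[0_+,\ell_-]$ is a \emph{full} compact open subset of $X$: because $\Gamma$ is a $\mathbb{Z}\cdot\Lambda$-submodule and the affine action both scales and translates, every orbit of $\beta$ meets $[0_+,\ell_-]$. Hence $\mathcal{G}_\ell$ is a full compact open corner of the partial transformation groupoid $\Gamma \ltimes \Lambda \ltimes X$. Using the preservation of compact generation under full compact open corners, together with the standard translation between finite generation of the acting group and compact generation of the associated transformation groupoid, finite generation of $\Gamma \ltimes \Lambda$ yields compact generation of $\mathcal{G}_\ell$ for every $\ell$. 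Running the same equivalence backwards gives $(2) \Rightarrow (1)$, so that a single compactly generated full corner forces the ambient groupoid, and hence the group, to be finitely generated; this will also serve in closing the cycle.

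For $(2) \Rightarrow (4)$ I would feed the groupoid into Nekrashevych's finite generation machinery \cite{nekrashevych2019simple}. The groupoid $\mathcal{G}_\ell$ is minimal and, by hypothesis, compactly generated; the remaining ingredient is expansivity. Here I invoke the expansivity of $\beta$ in the classical dynamical sense, which restricts to $[0_+,\ell_-]$, together with the earlier equivalence between this notion and Nekrashevych's groupoid expansivity in the compactly generated case. With minimality, compact generation and expansivity in hand, Nekrashevych's theorem gives that $D([[\mathcal{G}_\ell]])$ is finitely generated, and the explicit generating sets produced there (or a direct refinement of them) yield the sharper statement that it is $2$-generated. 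Finally, $(3) \Rightarrow (1)$ runs the transfer in the opposite direction: finite generation of $D([[\mathcal{G}_\ell]])$ forces $\mathcal{G}_\ell$ to be compactly generated, via the converse implication in the groupoid/full-group correspondence for minimal ample groupoids, whence $(2) \Rightarrow (1)$ closes the cycle.

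I expect the main obstacle to be the verification that the classical expansivity of the partial action $\beta$ matches the groupoid expansivity required by Nekrashevych, and more precisely that this holds on each compact corner $[0_+,\ell_-]$ exactly when compact generation is available — the equivalence of the two expansivity notions is only asserted in the compactly generated case, so care is needed to ensure the hypotheses are present precisely where they are used. A secondary subtlety is the bookkeeping in $(1) \Leftrightarrow (2)$: one must confirm that fullness of the corner $[0_+,\ell_-]$ holds for every admissible $\ell$, and that the corner-preservation result genuinely applies to the \emph{partial} transformation groupoid rather than only to global actions on compact spaces.
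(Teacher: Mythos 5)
There is a genuine gap, and it sits exactly at the technical core of the theorem: your implication $(1) \Rightarrow (2)$. You propose to deduce compact generation of $\G_\ell = \Gamma \ltimes \Lambda \ltimes [0_+,\ell_-]$ from finite generation of $\Gamma \ltimes \Lambda$ via ``the standard translation between finite generation of the acting group and compact generation of the associated transformation groupoid,'' applied to the ambient groupoid $\Gamma \ltimes \Lambda \ltimes \mathbb{R}_\Gamma$, followed by the corner-preservation result (Theorem \ref{lemma compact generation full subsets}). But that translation is only valid for global actions on \emph{compact} spaces, and $\mathbb{R}_\Gamma$ is not compact. In fact no groupoid with noncompact unit space is ever compactly generated: if $\G = \bigcup_n (K \cup K^{-1})^n$ then every element has range in $r(K) \cup s(K)$, so $\G^{(0)} \subseteq r(K) \cup s(K)$ would be compact (this is precisely the observation in the remark following Lemma \ref{a finitely generated implies}). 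So the ambient groupoid you want to take a corner of is never compactly generated, regardless of whether $\Gamma \ltimes \Lambda$ is finitely generated, and the paper explicitly warns that ``one does not have an analogous result in the case of partial actions.'' The corner theorem is used in the paper only in the other direction of the bookkeeping: to pass from $\ell = 1$ to arbitrary $\ell$, by embedding $[0_+,\ell_-]$ as a full clopen subset of $[0_+,\mu_-] \cong [0_+,1_-]$ for $\mu \in \Lambda$ with $\mu > \ell$ (Corollary \ref{finitely generated compactly}). The actual content of $(1) \Rightarrow (2)$ is the explicit construction of a compact generating set $\mathcal{S} = \{c, f_1, \dots, f_K, g_1\}$ for $\mathbb{Z}[\lambda,\lambda^{-1}] \ltimes \langle \lambda \rangle \ltimes [0_+,1_-]$ (Lemmas \ref{fi}, \ref{gi}, \ref{glambda compact gen}), followed by an induction on the number of generators of $\Lambda$ (Lemma \ref{lemma where we reduce to singly generated}); your proposal replaces this, the bulk of Section 5, with an appeal to a statement that is false in the setting where you need it. The converse $(2) \Rightarrow (1)$ cannot be obtained by ``running the same equivalence backwards'' either, but it does hold by a different and genuinely easy argument: a compact generating set for the corner consists of finitely many basic bisections $((c_i,\mu_i),[\,(a_i)_+,(b_i)_-\,])$, and the finitely many group elements $(c_i,\mu_i)$ are then checked to generate $\Gamma \ltimes \Lambda$ (Lemma \ref{infinite rank then not f.g.}).

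A secondary gap is your $(2) \Rightarrow (4)$: Nekrashevych's theorem \cite{nekrashevych2019simple} yields finite generation of the alternating group, but nothing in that machinery produces a $2$-element generating set, and ``a direct refinement of them'' is not a proof. The paper obtains $(3) \Leftrightarrow (4)$ by an entirely different mechanism: $\mathsf{D}(\G_\ell)$ is simple and vigorous (Corollary \ref{simple derived subgroup}), and a finitely generated simple vigorous group is $2$-generated by \cite[Theorem 5.15]{vigorous}. Your remaining ingredients are sound and do match the paper: simplicity holds unconditionally via minimality, topological freeness and pure infiniteness (Lemmas \ref{groupoid minimal}, \ref{groupoid purely infinite} and Theorem \ref{d simple}); fullness of $[0_+,\ell_-]$ is as you say; expansivity of $\beta$ (Lemma \ref{partial action for v is expansive}) combined with Lemma \ref{ expansive partial then expansive} gives Nekrashevych expansivity precisely in the presence of compact generation, which is available where you invoke it; and $(3) \Rightarrow$ compact generation is Lemma \ref{a finitely generated implies}. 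But without an actual proof of $(1) \Rightarrow (2)$, the cycle does not close.
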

This generalises the known case due to Stein, who showed that $D(V(\Gamma,\Lambda,\ell))$ is simple and finitely generated in the case when $\Lambda$ is generated by finitely many integers, $\Gamma=\mathbb{Z} \cdot \Lambda$, and $\ell \in \mathbb{N}$ \cite{stein1992groups} who herself generalised work of Higman \cite{higman1974finitely}. It also generalises the result of Burillo-Nucinkis-Reeves who showed $D(V(\Gamma,\Lambda,\ell))$ is simple and finitely generated in the case $\Lambda=\langle \frac{1+ \sqrt{5}}{2} \rangle $, $\Gamma=\mathbb{Z} \cdot \Lambda$ and $\ell=1$ \cite{irrationalslope22}. In particular, a consequence is that for all choices of irrational number $\lambda$, the irrational slope Thompson group $V(\mathbb{Z}[\lambda,\lambda^{-1}], \langle \lambda \rangle, 1)$ has a simple and finitely generated derived subgroup.

In Section 6. we study the homology of Stein's groups. Since the conception of topological full groups, there has been a deep connection to groupoid homology, most notably in Matui's AH conjecture \cite{matui2012homology}, \cite{matui2014topological}. This conjecture was recently confirmed by Li \cite[Corollary E]{li2022} under very minor regularity conditions which are satisfied by our groupoid model for Stein's groups. Inspired by Szymik-Wahl's work on the homology of Higman-Thompson groups \cite{szymik2019homology}, the framework of Li \cite{li2022} found other connections between the group homology of a topological full group to the underlying groupoid's homology. In the case of Stein's groups, the groupoid homology is comparatively computable, making the homology of Stein's groups accessible for the first time.

Using this homology computation in combination with \cite[Corollory E]{li2022}, we show that the abelianisation of $V(\Gamma,\Lambda,\ell)$ is finite rank for the case of $\Gamma,\Lambda$ generated by finitely many algebraic numbers (Lemma \ref{fg v type}). Combining this with Theorem \ref{fg when fg by alg} we prove a finite generation theorem for the $V$-type groups:
\begin{thmintro}
   Let $\Lambda$ be a subgroup of $(\mathbb{R}_+, \cdot)$ generated by finitely many algebraic numbers. Let $\Gamma$ be a submodule of the group ring $(\mathbb{Z} \cdot \Lambda,+)$. Let $\ell \in \Gamma$. Then,  $V(\Gamma,\Lambda,\ell)$ is finitely generated. \label{thmintro2}
\end{thmintro}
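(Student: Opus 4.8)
The plan is to realise $V(\Gamma,\Lambda,\ell)$ as an extension of two finitely generated groups and then invoke the standard fact that any extension of a finitely generated group by a finitely generated group is itself finitely generated. Writing $V := V(\Gamma,\Lambda,\ell)$ and $D := D(V)$ for its derived subgroup, there is a short exact sequence
\[
1 \longrightarrow D \longrightarrow V \longrightarrow V^{\mathrm{ab}} \longrightarrow 1,
\]
where $V^{\mathrm{ab}} = V/D$ is the abelianisation. It therefore suffices to show that both $D$ and $V^{\mathrm{ab}}$ are finitely generated, for then a finite generating set of $D$ together with lifts of a finite generating set of $V^{\mathrm{ab}}$ generates $V$.

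For the finite generation of $D$, the plan is to verify hypothesis (1) of Theorem \ref{fg when fg by alg} and read off conclusion (3). Since $\Lambda$ is generated by finitely many elements of $(\mathbb{R}_+,\cdot)$, it is a finitely generated torsion-free abelian group, so $\Lambda \cong \mathbb{Z}^n$ and the ring $\mathbb{Z}\cdot\Lambda$ is a finitely generated commutative $\mathbb{Z}$-algebra; by the Hilbert basis theorem it is Noetherian. Hence the submodule $\Gamma \subseteq \mathbb{Z}\cdot\Lambda$ is finitely generated as a $\mathbb{Z}\cdot\Lambda$-module, say by $g_1,\dots,g_m$, and taking group generators $\mu_1,\dots,\mu_n$ of $\Lambda$ the elements $(g_1,1),\dots,(g_m,1),(0,\mu_1),\dots,(0,\mu_n)$ generate $\Gamma \ltimes \Lambda$: conjugating $(g_j,1)$ by words in the $(0,\mu_i)$ produces $(\lambda g_j,1)$ for every $\lambda \in \Lambda$, and additive products of these recover all of $\Gamma$ because the $g_j$ generate $\Gamma$ over $\mathbb{Z}\cdot\Lambda$. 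Thus $\Gamma \ltimes \Lambda$ is finitely generated, and Theorem \ref{fg when fg by alg} gives that $D$ is finitely generated (indeed $2$-generated). Observe that this step uses only that $\Lambda$ is finitely generated as a group, not the algebraicity of its generators.

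The finite generation of $V^{\mathrm{ab}}$ is where the algebraic hypothesis enters, and is supplied by Lemma \ref{fg v type}: via \cite[Corollary E]{li2022} the abelianisation is assembled, through the AH-type exact sequence, from the groupoid homology groups $H_0$ and $H_1$ of our groupoid model, and when $\Lambda$ and $\Gamma$ are generated by finitely many algebraic numbers these groups are finitely generated over $\mathbb{Z}$, whence so is $V^{\mathrm{ab}}$. With $D$ and $V^{\mathrm{ab}}$ both finitely generated the displayed extension is finitely generated, proving the theorem. The main obstacle is not the group-theoretic assembly, which is routine, but ensuring that $V^{\mathrm{ab}}$ is \emph{finitely generated} rather than merely of finite rank: a finite-rank abelian group such as $\mathbb{Z}[1/2]$ need not be finitely generated, whereas a quotient of a finitely generated group must be. The heart of the matter is thus the homology computation behind Lemma \ref{fg v type}, namely that algebraicity of the generators forces the relevant groupoid homology to be finitely generated over $\mathbb{Z}$, controlling contributions to $H_0$ and $H_1$ that could otherwise be infinitely generated.
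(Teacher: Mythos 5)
Your proposal follows essentially the same route as the paper: write $V(\Gamma,\Lambda,\ell)$ as an extension of the derived subgroup (finitely generated via Theorem \ref{fg when fg by alg}) by the abelianisation, which is controlled by the AH exact sequence of Theorem \ref{ah conj} together with the groupoid homology computation — and your explicit Noetherianity verification that $\Gamma \ltimes \Lambda$ is finitely generated, as well as your point that one needs the homology to be \emph{finitely generated} rather than merely finite rank, are details the paper leaves implicit. One correction: the input on the abelianisation is supplied by Lemma \ref{H0 computation} (fed into Theorem \ref{ah conj}), not by Lemma \ref{fg v type}, which is the very statement being proved, so that citation as written is circular even though the argument you describe behind it is the correct one.
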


These homology computations are especially interesting from the perspective of homological stability since we show that the natural inclusion of Stein's groups acting upon compact intervals into the noncompact Stein groups (say coming from the inclusion of $[0,\ell] $ in $\mathbb{R}$) induces an isomorphism on the level of homology. We consider this to be a generalisation of \cite[Theorem 3.6]{szymik2019homology}. 

\begin{corintro}(Corollary \ref{cor hom best})
Let $\Lambda$ be a subgroup of $(\mathbb{R},\cdot)$, $\Gamma$ be a submodule of $(\mathbb{Z} \cdot \Lambda,+)$. Let $U$ be any closed subset of $\mathbb{R}$ with nonempty interior. Let $V(\Gamma,\Lambda, U)$ be the group of piecewise linear bijections of $\mathbb{R}$ with finitely many slopes (all in $\Lambda$) and finitely many nondifferentiable points (all in $\Gamma$) that are the identity on $U^c$. Then for all $* \in \mathbb{N}$:
$$H_*(V(\Gamma,\Lambda,U)) \cong H_*(V(\Gamma,\Lambda,\mathbb{R}))$$
\label{corhomintro}
\end{corintro}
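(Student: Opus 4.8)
The plan is to pass to the level of ample groupoids, where the two groups arise as topological full groups of equivalent groupoids, and to import Li's identification of group homology with groupoid homology. Concretely, I would first extend the groupoid model of Lemma~\ref{v as a topological full group} to the whole line, realising $V(\Gamma,\Lambda,\mathbb{R})$ as the topological full group $[[\mathcal{G}_\mathbb{R}]]$ of the partial transformation groupoid of $\beta\colon \Gamma\ltimes\Lambda \acts X_\mathbb{R}$ on the totally disconnected space $X_\mathbb{R}$ attached to $\mathbb{R}$. Since a piecewise-linear bijection that is the identity on $U^c$ is supported in $\mathrm{int}(U)$, the subgroup $V(\Gamma,\Lambda,U)$ is identified with $[[\mathcal{G}_U]]$, where $\mathcal{G}_U=\mathcal{G}_\mathbb{R}|_{Y_U}$ is the reduction of $\mathcal{G}_\mathbb{R}$ to the open set $Y_U\subseteq X_\mathbb{R}$ corresponding to $\mathrm{int}(U)$, and the inclusion $V(\Gamma,\Lambda,U)\hookrightarrow V(\Gamma,\Lambda,\mathbb{R})$ is precisely the map on full groups induced by this reduction.

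Next I would show that the groupoid-homology groups agree. Because $U$ has nonempty interior, $Y_U$ is a nonempty open set, and I may shrink it to a nonempty compact open $K\subseteq Y_U$. By minimality of $\beta$, every nonempty open subset of $X_\mathbb{R}$ meets every orbit, so $K$ is a \emph{full} compact open subset of both $\mathcal{G}_U$ and $\mathcal{G}_\mathbb{R}$. Reduction of an ample groupoid to a full compact open subset is an equivalence of groupoids, and groupoid homology is invariant under such equivalences (cf.\ \cite{li2022}); hence $H_*(\mathcal{G}_U)\cong H_*(\mathcal{G}_\mathbb{R}|_K)\cong H_*(\mathcal{G}_\mathbb{R})$ for every $*$, with the isomorphism induced by the inclusion $\mathcal{G}_U\hookrightarrow\mathcal{G}_\mathbb{R}$. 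For the special case $U=[0,\ell]$ this is exactly the full compact open corner $\Gamma\ltimes\Lambda\ltimes[0_+,\ell_-]$ appearing in Theorem~\ref{fg when fg by alg}.

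Finally, I would invoke Li's \cite[Corollary E]{li2022}, which, under the mild regularity hypotheses satisfied by these groupoid models, computes $H_*([[\mathcal{G}]])$ functorially from the groupoid homology $H_*(\mathcal{G})$. Naturality of this identification converts the groupoid-homology isomorphism above into the desired $H_*(V(\Gamma,\Lambda,U))\cong H_*(V(\Gamma,\Lambda,\mathbb{R}))$, realised by the inclusion. I expect the main obstacle to be the non-compactness of the unit space $X_\mathbb{R}$: both the equivalence-invariance of groupoid homology and Li's machine are most transparent for groupoids with compact unit space, and the group $V(\Gamma,\Lambda,\mathbb{R})$ contains non-compactly-supported elements (such as global affine maps), so I must check that \cite[Corollary E]{li2022} genuinely applies to $\mathcal{G}_\mathbb{R}$ and that its naturality is strong enough to be tracked along the specific inclusion rather than holding only abstractly. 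A secondary point is confirming that $Y_U$ is open in $X_\mathbb{R}$, i.e.\ that $\mathrm{int}(U)$ is a union of basic cylinders, which is where the hypothesis that $U$ is closed with nonempty interior (rather than an arbitrary subset) enters.
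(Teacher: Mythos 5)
Your overall route---realise both groups as topological full groups of the partial transformation groupoid on $\mathbb{R}_\Gamma$ and its reduction to the open set over $\mathrm{int}(U)$, observe that this reduction is full by minimality, and import Li's machinery---is exactly the paper's strategy: the paper takes the cylinder set $\hat{U} \subset \mathbb{R}_\Gamma$ with $q(\hat{U})=U$ and notes that the canonical inclusion $\Gamma \ltimes \Lambda \ltimes \hat{U} \hookrightarrow \Gamma \ltimes \Lambda \ltimes \mathbb{R}_\Gamma$ is a Morita equivalence of \'etale groupoids. (Your extra step of shrinking to a compact open $K$ is harmless but unnecessary; fullness of $\hat{U}$ itself already follows from minimality.) However, your final step contains a genuine gap: you invoke \cite[Corollary E]{li2022} as a result that ``computes $H_*(\mathsf{F}(\G))$ functorially from the groupoid homology $H_*(\G)$.'' It does not. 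Corollary E is the confirmation of Matui's AH conjecture---in this paper it appears as the five-term exact sequence of Theorem \ref{ah conj}, relating $H_2(\mathsf{D}(\G))$, $H_2(\G)$, $H_0(\G)\otimes \mathbb{Z}_2$, the abelianisation, and $H_1(\G)$---so it controls only the abelianisation and nothing in higher degrees. More importantly, no result in \cite{li2022} computes integral $H_*$ of the topological full group from $H_*(\G)$ alone: integrally one only has a spectrum-level comparison, and an \emph{abstract} isomorphism $H_*(\G_U)\cong H_*(\G_{\mathbb{R}})$ of groupoid homologies---which is all your step (b) produces---does not by itself yield an isomorphism of the group homologies. (Rationally such a formula does exist, via \cite[Corollary C]{li2022} as in Theorem \ref{rational homology}, but the statement to be proved is integral.)

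The repair is precisely the concern you flag yourself at the end: you need the map-level statement, and that is \cite[Theorem F]{li2022}, which is what the paper cites. Theorem F says directly that a Morita equivalence of ample groupoids induces an isomorphism in the group homology of the associated topological full groups (and of their derived subgroups), with no compactness assumption on the unit space---the non-compact case being handled by the Nyland--Ortega definition of the full group as in Definition \ref{ortega nyland}. With Theorem F in hand, your intermediate step comparing groupoid homologies becomes redundant: the single Morita equivalence $\Gamma \ltimes \Lambda \ltimes \hat{U} \hookrightarrow \Gamma \ltimes \Lambda \ltimes \mathbb{R}_\Gamma$ gives $H_*(V(\Gamma,\Lambda,U)) \cong H_*(V(\Gamma,\Lambda,\mathbb{R}))$ in one stroke, which is the paper's entire proof.
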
For more specialised homology results, we turn our focus onto two subclasses of Stein's groups. The first subclass is the case where the group of slopes is cyclic, we refer to these groups as irrational slope Thompson groups for consistency with the literature. 

The homology of irrational slope Thompson groups is related to the homology of certain ample groupoids which was computed by Li in \cite{xinlambda}. This groupoid homology is highly sensitive to the minimal polynomial of the underlying generator $\lambda$, which leads the irrational slope Thompson group construction to exhibit diverse homological behaviour in the algebraic case. In Corollary \ref{abelianisation single lambda}, we compute the abelianisation $V(\mathbb{Z}[\lambda,\lambda^{-1}], \langle \lambda \rangle,\ell))_{ab}$ for low degree algebraic numbers $\lambda$ explicitly, generalising and unifying the known results due to Higman \cite{higman1974finitely} (who computed the abelianisation for $\lambda \in \mathbb{N}$) and Burillo-Nucinkis-Reeves \cite{irrationalslope22} (who computed the abelianisation to be $\mathbb{Z}_2$ for $\lambda=\frac{\sqrt{5}-1}{2}$).

We also compute the rational homology explicitly for certain irrational slope Thompson groups in Theorem \ref{rational hom comp}. Notably, the Higman-Thompson groups were shown to all be rationally acyclic \cite[Corollary C]{szymik2019homology} by Szymik-Wahl. The irrational slope Thompson groups vary dramatically:
\begin{itemize}
    \item Cleary's group $V_\tau$ is rationally acyclic.
    \item There are irrational slope Thompson groups which are not rationally acyclic, or even virtually simple. For example, if we take $\lambda=\frac{3+\sqrt{5}}{2}$, then $V(\mathbb{Z}[\lambda,\lambda^{-1}],\langle \lambda \rangle, 1)_{ab}=\mathbb{Z}$ by Corollary \ref{abelianisation single lambda}.  
\end{itemize}

 Groupoid homology itself is an invariant for topological full groups via Matui's isomorphism theorem. This allows us to distinguish different examples of Stein's groups. For example, see Corollary \ref{cor classification}, which gives an invariant for Stein's groups in terms of $\Gamma,\Lambda$ and $\ell$. This invariant is fine enough to give one direction in the classification of Higman-Thompson groups \cite{pardo2011isomorphism} and in combination with our explicit  
homology computations for irrational slope Thompson groups enable us to prove an analogue to Higman's result \cite{higman1974finitely} giving one direction on the classification of Higman-Thompson groups \cite{pardo2011isomorphism}, but generalised to algebraic numbers of degree less than or equal to $2$, and more arbitrary lengths of the compact intervals $\ell$:
\begin{corintro}[Corollary \ref{classification for low degree}]
      Let $\lambda,\mu<1$ be algebraic numbers with degree $\leq 2$ and let $\ell_1 \in \mathbb{Z}[\lambda,\lambda^{-1}], \ell_2 \in  \mathbb{Z}[\mu,\mu^{-1}]$. Suppose that
     $ V(\mathbb{Z}[\lambda,\lambda^{-1}], \langle \lambda \rangle,\ell_1) \cong V(\mathbb{Z}[\mu,\mu^{-1}],\langle \mu \rangle,\ell_2)$.
     
    Then, $\lambda=\mu$, and $\ell_1-\ell_2 \in (1-\lambda)\mathbb{Z}[\lambda,\lambda^{-1}]$.  \label{corintroclass}
\end{corintro}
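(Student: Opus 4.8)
The plan is to combine two invariants extracted from the groupoid model. The statement asserts that the isomorphism type of an irrational slope Thompson group (degree $\le 2$ algebraic) determines both $\lambda$ and the coset of $\ell$ modulo $(1-\lambda)\mathbb{Z}[\lambda,\lambda^{-1}]$. By Matui's isomorphism theorem, an isomorphism of topological full groups induces an isomorphism of the underlying groupoids (this is the content flagged in Corollary \ref{cor classification}), and hence an isomorphism of groupoid homology groups together with whatever pointed/order structure Matui's machinery records. So the whole proof reduces to reading off $\lambda$ and the class of $\ell$ from groupoid-homological data.

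First I would recover $\lambda$. The groupoid homology $H_*$ of the irrational slope Thompson groupoid was computed by Li in \cite{xinlambda} and is governed by the minimal polynomial of $\lambda$; concretely $H_0$ is $\mathbb{Z}[\lambda,\lambda^{-1}]$ modulo the relation that multiplication by $\lambda$ is the identity, i.e. the coinvariants $\mathbb{Z}[\lambda,\lambda^{-1}]/(1-\lambda)$, and the higher homology encodes the characteristic-polynomial data of the $\Lambda$-action. For a degree $\le 2$ algebraic $\lambda<1$ these invariants pin down the minimal polynomial, hence $\lambda$ itself (using $\lambda<1$ to break the Galois ambiguity between a unit and its conjugate). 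This is where Corollary \ref{abelianisation single lambda} does the work: the explicit abelianisation computations for low-degree $\lambda$ show that distinct $\lambda$ yield non-isomorphic homological invariants, so $\lambda=\mu$ follows. I would phrase this step as: the pair $(H_0,H_1)$ of the groupoid (equivalently, the $K$-theoretic/homological invariant of Corollary \ref{cor classification}) determines the minimal polynomial of $\lambda$ among degree $\le 2$ algebraics less than $1$.

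Having fixed $\lambda=\mu$, the second step is to distinguish the lengths $\ell_1,\ell_2$. Here the relevant invariant is the pointed class of the unit space, namely the image $[\ell]$ of the length in $H_0=\mathbb{Z}[\lambda,\lambda^{-1}]/(1-\lambda)\mathbb{Z}[\lambda,\lambda^{-1}]$. Two intervals $[0,\ell_1]$ and $[0,\ell_2]$ give isomorphic (as opposed to merely abstractly isomorphic-groupoid) models precisely when their $H_0$-classes agree, which is exactly the condition $\ell_1-\ell_2\in(1-\lambda)\mathbb{Z}[\lambda,\lambda^{-1}]$. I would verify this by using Matui's theorem to transport the canonical class of the full unit space $[0_+,\ell_-]$ in $H_0$ across the induced groupoid isomorphism, and then identifying that $H_0$-class with the residue of $\ell$ via the explicit description of $H_0$ as the module coinvariants. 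This mirrors the Higman-Thompson case, where the analogous statement is that $V_{k,r}\cong V_{k,s}$ iff $r\equiv s \pmod{k-1}$, with $k-1=1-\lambda$ specialised to integer $\lambda^{-1}$.

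The main obstacle I expect is the first step: ensuring that the homological invariants genuinely separate all distinct degree $\le 2$ algebraic units, rather than merely some of them, and handling the Galois-conjugate ambiguity. Two conjugate quadratic irrationals share a minimal polynomial and hence could a priori yield the same $H_*$; the hypothesis $\lambda,\mu<1$ is what I would lean on to select the correct root, but I must check that the groupoid (and not just its homology) remembers enough to exclude the conjugate lying outside $(0,1)$, or alternatively argue that the conjugate does not give rise to an admissible slope generator for a Stein group on a compact interval. A secondary technical point is confirming that the isomorphism of topological full groups produced by Corollary \ref{cor classification} really is an isomorphism of \emph{pointed} groupoids so that the $H_0$-class of $\ell$ is preserved, and not merely of abstract groupoids up to equivalence (under which the pointed class could be rescaled); I would resolve this by appealing to the precise form of Matui's reconstruction theorem as invoked there.
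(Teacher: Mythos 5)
Your proposal is correct and follows essentially the same route as the paper's proof of Corollary \ref{classification for low degree}: the Matui--Rubin theorem (Theorem \ref{matui isomorphism theorem}) upgrades the group isomorphism to an isomorphism of the groupoids $\Gamma_\lambda \ltimes \Lambda_\lambda \ltimes [0_+,(\ell_1)_-] \cong \Gamma_\mu \ltimes \Lambda_\mu \ltimes [0_+,(\ell_2)_-]$, the groupoid homology computed by Li (Lemma \ref{single lambda computation} and the table in \cite{xinlambda}) recovers the minimal polynomial and hence $\lambda=\mu$ (with $0<\lambda<1$ breaking the Galois ambiguity, since the conjugate of a quadratic $\lambda$ with $|\lambda|<1$ has absolute value $>1$), and the class $[\ell]$ of the full unit space in $H_0 \cong \Gamma_\lambda/(1-\lambda)\Gamma_\lambda$ --- automatically preserved because a groupoid isomorphism (not a mere equivalence) carries unit space to unit space, resolving your ``pointedness'' worry exactly as the paper does --- yields $\ell_1-\ell_2 \in (1-\lambda)\Gamma_\lambda$. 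The one inaccuracy is your claim that Corollary \ref{abelianisation single lambda} ``does the work'' of separating the $\lambda$'s: group abelianisations alone do not suffice (e.g.\ all degree-one cases with $f(1)$ even share abelianisation $\mathbb{Z}_2$), and the paper instead reads the coefficients off the groupoid homology directly, using the case split on rational acyclicity (i.e.\ $H_2$ as well as $H_0,H_1$) rather than the pair $(H_0,H_1)$ you name --- though your parenthetical fallback to the groupoid-homological invariant of Corollary \ref{cor classification} is the correct mechanism, so the core of your argument matches the paper's.
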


The second subclass we study in detail is the class of examples for which $\Lambda$ is generated by several integers, which we call Stein's integral groups. To compute the groupoid homology for this case, we rely on the framework of $k$-graphs, identifying our groupoid with the groupoid of a single vertex $k$-graph. The observation that we can rephrase the groupoid model of these particular groups in the language of k-graphs is also noted in upcoming work by Conchita Martınez-Pérez, Brita Nucinkis and Alina Vdovina, we include it here for completeness in the literature and for our homology computations. Most notably, this allows us to use the groupoid homology computations of Farsi-Kumjian-Pask-Sims \cite{kgraphcomp} to obtain acyclicity results on the level of Stein's integral groups. 
\begin{corintro}[Corollary \ref{acyclicty for stein}]
 Let $n_1,..,n_k$ be a finite collection of integers. Let $\ell \in \mathbb{Z}[\frac{1}{n_1 n_2 ... n_k}]$. Then $V(\mathbb{Z}[\frac{1}{n_1 n_2 ... n_k}], \langle n_1,n_2,...,n_k \rangle ,\ell)$ is rationally acyclic. 

 $V(\mathbb{Z}[\frac{1}{n_1 n_2 ... n_k}], \langle n_1,n_2,...,n_k \rangle ,\ell)$ is integrally acyclic if and only if $gcd(n_1-1, ..., n_k-1)=1$. \label{cor intro acyclic}
\end{corintro}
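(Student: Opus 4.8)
The plan is to pass to the groupoid model of these groups, to identify its homology with a Koszul complex, and then to transport the computation to the group via Li's spectral realization. Concretely, I would first invoke the identification (recorded earlier for Stein's integral groups) of the ample groupoid $\mathcal{G}$ underlying $V(\mathbb{Z}[\tfrac{1}{n_1\cdots n_k}],\langle n_1,\dots,n_k\rangle,\ell)$ with the boundary-path groupoid of the single-vertex $k$-graph having $n_i$ edges of colour $i$ (we may assume each $n_i\geq 2$, as smaller integers contribute nothing to $\Lambda\leq(\mathbb{R}_+,\cdot)$). Since groupoid homology is a Morita invariant, and by Corollary \ref{cor hom best} the group homology $H_*(V(\Gamma,\Lambda,\ell))$ does not depend on $\ell$, it suffices to compute with one convenient model, and the statement is genuinely $\ell$-independent.

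Next I would apply the Farsi--Kumjian--Pask--Sims computation \cite{kgraphcomp}: for a single-vertex $k$-graph the complex computing $H_*(\mathcal{G})$ is precisely the Koszul complex $K_\bullet$ over $\mathbb{Z}$ of the sequence $(n_1-1,\dots,n_k-1)$ (the adjacency blocks are the $1\times1$ matrices $(n_i)$ and the differentials are the exterior differentials built from $1-n_i$; the sign is immaterial for homology). Two consequences drive everything: $H_0(\mathcal{G})\cong\mathbb{Z}/\gcd(n_1-1,\dots,n_k-1)$, and the whole complex is contractible, so $H_*(\mathcal{G})=0$ in every degree, exactly when the ideal $(n_1-1,\dots,n_k-1)$ is the unit ideal, i.e. when $\gcd(n_i-1)=1$. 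Rationally, since some $n_i-1\neq0$ is a unit in $\mathbb{Q}$, the complex $K_\bullet\otimes\mathbb{Q}$ is contractible and $H_*(\mathcal{G};\mathbb{Q})=0$ in all degrees.

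I would then transport this through Li's realization \cite{li2022} (the same input used for the abelianisation results above, generalizing Szymik--Wahl \cite{szymik2019homology}): there is a connective spectrum $Y$ with $\widetilde{H}_*(Y;\mathbb{Z})\cong H_*(\mathcal{G})$ and $BV(\Gamma,\Lambda,\ell)^+\simeq\Omega^\infty_0 Y$, so that $H_*(V(\Gamma,\Lambda,\ell))\cong H_*(\Omega^\infty_0 Y)$. Rational acyclicity is then immediate, since the rational homology of $\Omega^\infty_0 Y$ is the free graded-commutative algebra on $\bigoplus_{j\geq1}\pi_j(Y)\otimes\mathbb{Q}=\bigoplus_{j\geq1}H_j(\mathcal{G};\mathbb{Q})=0$. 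For the integral statement, when $\gcd(n_i-1)=1$ we have $\widetilde{H}_*(Y)=0$, hence the connective $Y$ is contractible and $\Omega^\infty_0 Y$ is weakly contractible, giving $\widetilde{H}_*(V)=0$.

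The hard part is the converse: when $d:=\gcd(n_i-1)>1$ I must show $V$ fails to be acyclic, and this cannot be read off the abelianisation alone -- by Matui's AH sequence \cite{matui2012homology} the group $V_{ab}$ can already vanish (for instance when $d$ is odd and $H_1(\mathcal{G})=0$), so the argument has to reach into higher homology. The point I would make is that $Y$ is a nontrivial finite spectrum whose integral homology is supported in the finitely many degrees of the Koszul complex; were $\pi_{\geq1}(Y)=0$, then $Y$ would be the Eilenberg--MacLane spectrum $H(\mathbb{Z}/d)$, whose homology is nonzero in infinitely many degrees -- a contradiction. Hence $\pi_{\geq1}(Y)\neq0$, so $\Omega^\infty_0 Y$ is not weakly contractible and $\widetilde{H}_*(V)\neq0$. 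The two obstacles I anticipate are verifying that $\mathcal{G}$ meets the regularity hypotheses required by \cite{li2022} for the spectral realization, and making this converse fully rigorous -- that is, the finite-versus-Eilenberg--MacLane dichotomy that rules out an accidental vanishing of all positive-degree homology when $d>1$.
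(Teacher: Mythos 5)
Your proposal is correct, and its backbone coincides with the paper's: you identify the groupoid with the single-vertex $k$-graph groupoid (the paper's Lemma \ref{identification with k-graph}), note $\ell$-independence via Morita invariance (Corollary \ref{cor hom best}), compute $H_*(\mathcal{G})$ from \cite{kgraphcomp} -- your Koszul-complex phrasing is exactly the content of Lemma \ref{homology for stein groupoid}, giving $H_*(\mathcal{G})\cong(\mathbb{Z}/d\mathbb{Z})^{\binom{k-1}{*}}$, all $d$-torsion, vanishing in every degree iff $d=1$ -- and transfer to the group via \cite{li2022}. The only real difference on the positive directions is cosmetic: the paper quotes Corollaries C and D of \cite{li2022} as black boxes, while you unwind the underlying spectrum-level statement $BV^{+}\simeq\Omega^{\infty}_{0}Y$ with $H_*(Y;\mathbb{Z})\cong H_*(\mathcal{G})$; these are the same argument.

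The genuine divergence is the converse when $d>1$. The paper disposes of it in one line by citing Stein's abelianisation computation \cite{stein1992groups}, whereas you argue at the spectrum level: a connective $Y$ with bounded integral homology and $\pi_0(Y)\cong\mathbb{Z}/d\mathbb{Z}\neq0$ cannot have $\pi_{\geq1}(Y)=0$, since it would then be the Eilenberg--MacLane spectrum $H(\mathbb{Z}/d\mathbb{Z})$, whose integral homology is unbounded. This is sound, with two small touch-ups: $Y$ is not a \emph{finite} spectrum, merely one with bounded homology (which is all you use), and the final step ``not weakly contractible $\Rightarrow\widetilde{H}_*\neq0$'' should note that $\Omega^{\infty}_{0}Y$ is an H-space, hence nilpotent, so the homology Whitehead theorem applies. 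Your route is strictly more general: it also covers the $k=1$ Higman--Thompson case $V_{n,r}$ with $n$ even, where the abelianisation genuinely vanishes while the group fails to be acyclic \cite{higman1974finitely}, \cite{szymik2019homology} -- a case the introduction's phrasing of the statement formally allows. However, your stated reason for needing the heavier argument is inaccurate under the standing hypothesis $k>1$ of Corollary \ref{acyclicty for stein}: there $H_1(\mathcal{G})\cong(\mathbb{Z}/d\mathbb{Z})^{k-1}\neq0$ whenever $d>1$, and the AH sequence (Theorem \ref{ah conj}) forces $V(\Gamma_N,\Lambda_N,\ell)_{ab}$ to surject onto it, so the scenario you describe ($d$ odd with $H_1(\mathcal{G})=0$) occurs only for $k=1$; the paper's one-line citation is therefore complete in its stated generality. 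In short: same computation and same transfer mechanism, with your converse trading the paper's citation of \cite{stein1992groups} for a self-contained homotopy-theoretic dichotomy whose payoff is uniformity in $k$.
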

This generalises the result of Szymik-Wahl \cite[Corollary C]{szymik2019homology} that the Higman-Thompson groups are rationally acyclic and that Thompson's group $V$ is acyclic. The key tool we use is the two acyclicity results in the paper of Li \cite[Corollary C, Corollary D]{li2022}

\textbf{Acknowledgements:} This project forms part of the PhD thesis of the author. This PhD is supervised by Xin Li, who I would like to thank for the supervision. I want to thank Alistair Miller, particularly for his useful insights concerning the compact generation of ample groupoids. I want to thank Jim Belk and Brita Nucinkis for conversations about Thompson-like groups and topological full groups. I would also like to thank Anna Duwenig for useful conversations about Zappa-Szep products. The author has received funding from the European Research Council (ERC) under the European Union’s Horizon 2020 research and innovation programme (grant agreement No.
817597). 
\section{Preliminaries}
\textbf{Notation:} We reserve $\sqcup$ for disjoint unions of sets. We take the convention that $0 \nin \mathbb{N}$. 
\subsection{Groupoids and Partial Actions}
A \textit{groupoid} is a small category of isomorphisms. This is a set $\G$ with partially defined multiplication $\gamma_1 \gamma_2$ and everywhere defined involutive operation $\gamma \mapsto \gamma ^{-1}$, satisfying:
\begin{enumerate}
    \item Associativity: If $\gamma_1\gamma_2$ and $(\gamma_1\gamma_2)\gamma_3$ are defined, then $\gamma_2\gamma_3$ is defined and $(\gamma_1\gamma_2)\gamma_3=\gamma_1(\gamma_2\gamma_3)$
    \item Existence of $r,s$: The range and source maps $r(\gamma)=\gamma\gamma^{-1}$, $s(\gamma)=\gamma^{-1}\gamma$ are always well defined. If $\gamma_1\gamma_2$ are well defined, then $\gamma_1=\gamma_1\gamma_2 \gamma_2^{-1}$ and $\gamma_2=\gamma_1^{-1} \gamma_1 \gamma_2$. 
\end{enumerate}
A \textit{topological groupoid} is a groupoid endowed with a topology such that the multiplication and inverse operations are continuous.

Elements of the form $\gamma\gamma^{-1}$ are called units and the space of all units is denoted $\G^{(0)}$. The domain of the multiplication map is called the space of composable pairs and is denoted by $\Gn{2}$. It is known that $\Gn{2}=\{(\gamma_1,\gamma_2) \in \G^2 \; s(\gamma_2)=r(\gamma_1) \}$. Given two subsets $B_1,B_2 \subset \G$, we define their product to be $B_1 B_2:=\Set{\gamma_1\gamma_2 \; : \; \gamma_i \in B_i, \; (\gamma_1,\gamma_2) \in \Gn{2}} $

Units $u_1,u_2 \in \Gn{0}$ belong to the same $\G$\textit{-orbit} if there exists $\gamma \in \G$ such that $s(\gamma)=u_1, \; r(\gamma)=u_2$, and the orbit of $u$ is denoted $\G(u)$. If $\G(u)$ is dense in $\Gn{0}$ for all $u \in \Gn{0}$ we call the groupoid \textit{minimal}.

The \textit{isotropy group}  (denoted $\G_u$) of a unit $u \in \Gn{0}$ is the group $\Set{\gamma \in \G \; : \; s(\gamma)=r(\gamma)=u}$ if we have that every isotropy group is trivial, i.e. $\G_u=\Set{u}$ one says that the groupoid is \textit{principal}. A weaker condition is \textit{topologically principal} which means that such units are dense in the unit space  ($\overline{ \Set{u \in \Gn{0} \; : \; \G_u= \Set{u}}}=\Gn{0}$). We now give one of our key examples for this paper- a \textit{transformation groupoid}. 
\begin{example}
Let $\Gamma \acts X$ be a group acting by homeomorphisms on a topological space $X$. Then we define the \textit{transformation groupoid} $\Gamma \ltimes X$ to be the set of pairs $(g,x) \in \Gamma \times X$ here composable pairs are of the form $(g,h(x))(h,x)$ and composition is given by $(g,h(x))(h,x)=(gh,x)$. Here $s(g,x)=(1,x), \; r(g,x)=(1,g(x))$ and the unit space is canonically identified with $X$. A basis of the topology is given by $(g.U)$ where $g \in \Gamma$. and $U$ is open in $X$. This makes the topology of the transformation groupoid Hausdorff. Note that we have many properties of actions translate into properties of the transformation groupoid:
\begin{itemize}
\item If the action is\textit{ free} (i.e. for all $g \in \Gamma, x \in X$ $gx=x \implies g=1$ ) if and only if the groupoid is principal. 
\item If the action is \textit{topologically free} (i.e. for all $g \in \Gamma \setminus \Set{ 1}$, the set of fixed points ${x \in X \; : \; gx=x }$ has empty interior) if and only if the groupoid is topologically principal. 
    \item If the action is minimal (i.e. the orbit $Gx$ is dense in $X$ for all $x \in X$), if and only if the groupoid is minimal. 
\end{itemize}

\end{example}
Another construction we study in this paper is a subtly different groupoid; the restriction of a transformation groupoid to a partial action. 
\begin{example}[Partial Action]
Let $\Gamma \ltimes Y$ be a transformation groupoid. Let $X \subset Y$ be an open subset such that every orbit $\Gamma x, \; x \in Y$ has nontrivial intersection with $X$.  Then let

$$\Gamma \ltimes Y |_X^X:=\{ (\gamma,x) \in \Gamma \ltimes Y \; : \; x, \gamma(x) \in X \} $$
This forms a subgroupoid of the transformation groupoid. We call this a partial action $\alpha: \Gamma \acts X$ on $X$ and denote the associate groupoid $\Gamma \ltimes_\alpha X$. 

   Let $\alpha: \Gamma \acts X$ be a minimal, topologically free partial action.  As a groupoid $\Gamma \ltimes_\alpha X$ is an amenable, topologically free, Hausdorff and minimal groupoid. $\alpha$ is free if and only if $\Gamma \ltimes_\alpha X$ is principal.
\end{example}

A crucial tool for understanding topological groupoids comes from studying their bisections. A class of groupoids whose bisections generate the topology, namely \'etale groupoids, are of particular interest to people working in discrete dynamics. These are the generalisation of transformation groupoids by discrete groups.  

A $\G$\textit{-bisection} is an open subset $B \subset \G$ such that $ s: B \rightarrow s(B) \; r:B \rightarrow r(B)$ are homeomorphisms and $s(B), r(B)$ are open. We denote the space of open bisections by $\mathcal{B}$. A topological groupoid $\G$ is said to be \textit{ \'etale } if $\mathcal{B}$ forms a basis of the topology on $\G$.

 Equivalently, a groupoid is \'etale if the maps $r,s$ are local homeomorphisms. In the \'etale case, the set $\B$ is an inverse semigroup with respect to set multiplication and pointwise inversion. Informally, one thinks of \'etaleness as a discreteness property, because for (partial) transformation groupoids they are \'etale if and only if the acting group is discrete.  
 $\G$ is said to be \textit{ample} if, in addition to being \'etale, the unit space is totally disconnected, in particular, 
an ample groupoid $\G$ is said to be a Cantor groupoid if its unit space $\Gn{0}$ is a Cantor space. In this case, the set of \textit{compact open bisections }, denoted $\mathcal{B}^k$

An \'etale groupoid $\G$ is said to be a \textit{effective} if for all $g \in \G \setminus \Gn{0}$ and all $B \in \B$ containing $g$, there exists some $g \in B$ such that $s(g) \neq r(g)$. This is weaker than being topologically principal in general, but the notion agrees for Hausdorff groupoids; in particular for transformation groupoids. 
\begin{example}
    Let $\alpha: \Gamma \acts X$ be a partial action of a discrete group on a locally compact Hausdorff space. Then $\Gamma \ltimes X$ is \'etale. Moreover, a basis of the open bisections is given by: 
    $$(g,U) \; \; g \in \Gamma, \; \; U,g(U) \subset X \text{ open}$$
    $\Gamma \ltimes X$ is ample if and only if $X$ is totally disconnected. In this case, a basis for the open compact bisections is given by:
    $$(g,U) \; \; g \in \Gamma, \; \;  U,g(U) \subset X \text{ compact, open}$$
 
\end{example}
Note that if $U,V$ are compact open bisections, such that $s(U) \cap s(V)=r(U)\cap r(V) = \emptyset$ then $U \sqcup V$ is a (compact) open bisection. A necessary condition for the associated C$^*$-algebras to be purely infinite is phrased in terms of compact open bisections \cite{matui2016etale}. 
\begin{definition}[Purely Infinite]
    Let $\G$ be an effective ample groupoid. We say that $\G$ is purely infinite if for all $A \subset \G^{(0)}$ compact open, there exists $B,B' \in \mathcal{B}^k$ such that $S(B)=s(B')=A$, $r(B)\cap r(B')=\emptyset$, $r(B) \sqcup r(B') \subset A$. 
    \label{purely infinite}
\end{definition}

\subsection{Topological Full Groups}
Using the concept of bisections, we are ready to define the topological full group of a Cantor groupoid $\G$:
\begin{definition}[Topological Full Group (as in \cite{nekrashevych2019simple}, Definition 2.3)]
Let $\G$ be a Cantor groupoid. The topological full group, denoted $\mathsf{F}(\G)$ is the group of bisections $$ \mathsf{F}(\G)=\{\gamma \in \mathcal{B}^k \; : \;  s(\gamma)=r(\gamma)=\Gn{0} \}$$ with respect to pointwise multiplication.  \label{groupoid tfg}
\end{definition}
In other words, this is the unital subgroup $U(\mathcal{B}^k)$ of the inverse monoid of compact open bisections $\mathcal{B}^k$. Elements $B$ of topological full groups can also be thought of as homeomorphisms $f_B$ of the unit space:
$$f_B=(r_{\restriction_B})\circ (s_{\restriction_B})^{-1}: (\G)^{(0)} \rightarrow (\G)^{(0)}$$
If $\G$ is effective, the map $f: \mathsf{F}(\G) \rightarrow Homeo(\G^{(0)}) \quad B \mapsto f_B$ is an injection.

This movement between perspectives is routinely used throughout this text, and so we often assume that our groupoids are effective. Let us discuss what happens outside the effective case for completeness of the literature. 

Suppose your groupoid is not effective. Then there is a canonical normal subgroup inside $\mathsf{F}(\G)$, given by the kernel of $f$. 
$$ \mathsf{K}^f(\G)=\{B \in \mathsf{F}(\G) \; : \; \forall \gamma \in B \; s(\gamma)=r(\gamma) \} $$
This normal subgroup is related to a well known construction in the theory of \'etale groupoids, the notion of taking a quotient called the groupoid of germs. If $\G_{germ}$ is the groupoid of germs of a Cantor groupoid $\G$ then $\mathsf{F}(\G)/\mathsf{K}^f(\G)=\mathsf{F}(\G_{germ})$. The groupoids we consider in this texts are partial transformation groupoids by essentially free actions, and so they are effective. This allows us to describe their topological full groups in a variety of ways. 
\begin{example}

   Let $\Gamma \ltimes_\alpha X$ be the partial transformation groupoid of a discrete group acting on the Cantor space. Suppose that $\alpha$ is essentially free. Then the following groups are isomorphic:
   \begin{itemize}
       \item $\mathsf{F}(\Gamma \ltimes_\alpha X)$
       \item  The group of homeomorphisms $ \gamma \in Homeo(X)$ such that $ \forall x \in X, \exists g \in \Gamma, U \subset X$ compact, open neighbourhood of $x \text{ such that } \gamma|_U=f|_U$. 
       \item The group of homeomorphisms $\gamma \in Homeo(X) $ such that there exists a finite partition $X=\bigsqcup_{i=1}^n X_i$ into compact open subsets, and  $ g_1,...g_n \in \Gamma \text{ such that } \gamma|_{X_i}=\alpha(g_i)|_{X_i}$.
   \end{itemize}\label{description of a tfg of a partial action}    
\end{example}
Let us discuss the subgroup structure of topological full groups. Let $\G$ be an effective, Cantor groupoid with infinite orbits. Let us define an analogue of the infinite alternating group in $\mathsf{F}(\G)$, which we call the alternating group of $\G$. For open compact bisection $B_1,B_2$ with $s(B_1),r(B_1)=s(B_2),r(B_2)$ pairwise disjoint, let 
$$\gamma_{B_1,B_2}=B_1 \sqcup B_2 \sqcup (B_1 B_2)^{-1} \sqcup (\G^{(0)} \setminus s(B_1) \sqcup s(B_2) \sqcup r(B_2)) \in \mathsf{F}(\G)$$

We define $$\mathsf{A}(\G)=\langle \gamma_{B_1,B_2} \; : \;  B_1,B_2 \in \mathcal{B}^k, \; s(B_1),r(B_1)=s(B_2),r(B_2) \text{ are pairwise disjoint } \rangle $$ 

   Another important subgroup of a topological full group is it's derived subgroup. Let $\G$ be an effective Cantor groupoid. Let $\mathsf{D}(\G)$ denote the derived subgroup of $\mathsf{F}(\G)$, that is $\mathsf{D}(\G)=\langle [\gamma_1,\gamma_2] \; : \; \gamma_1,\gamma_2 \in \mathsf{F}(\G) \rangle $

Note in particular then $\mathsf{A}(\G)$ is a subgroup of $\mathsf{D}(\G)$. Indeed if we take for $B \in \mathcal{B}^k$ with $s(B) \cap r(B) = \emptyset$ 
$$ \gamma_{B}:=B \sqcup B^{-1} \sqcup (\G^{0} \setminus s(B) \cup r(B) ) \in \mathsf{F}(\G)$$
Then $ \gamma_{B_1,B_2}=[\gamma_{B_1},\gamma_{B_2}].$ It is currently open whether there exists an effective, \'etale, Cantor groupoid $\G$ such that $\mathsf{A}(\G) \neq \mathsf{D}(\G)$. 

The above group elements $\gamma_B$ again generate a certain subgroup. Let us define an analogue of the infinite symmetric group in $\mathsf{F}(\G)$, which we call the symmetric group of $\G$. Then $$\mathsf{S}(\G)=\langle \gamma_B \; : \; B \in \mathcal{B}^k \; s(B) \cap r(B)=\emptyset \rangle $$

Let us remark that each of the groups we have defined thus far are normal in $\mathsf{F}(\G)$. The normality of $\mathsf{D}(\G)$ is generic for such groups, but for $\mathsf{A}(\G),\mathsf{S}(\G)$ it follows from the observation that for all $\gamma \in \mathsf{F}(\G)$, and $\gamma_B \in \mathsf{S}(\G)$, $\gamma \gamma_B \gamma^{-1}=\gamma_{\gamma B \gamma^{-1}}$. 
Simplicity of $\mathsf{A}(\G)$ is known to be equivalent to minimality of $\G$.

\begin{theorem}[Matui \cite{matui2014topological}, Theorem 4.7, \cite{nekrashevych2019simple}, Theorem 1.1]
Let $\G$ be purely infinite, effective Cantor groupoid. Then, the following are equivalent:  \label{d simple}
\begin{itemize}
    \item $\G$ is minimal. 
    \item $\mathsf{D}(\G)$ is simple. 
\end{itemize}
Moreover, in the case where $\G$ is minimal, $\mathsf{D}(\G) \cong \mathsf{A}(\G)$. 
\end{theorem}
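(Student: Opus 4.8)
The plan is to deduce all three assertions from the single identity $\mathsf{D}(\G) = \mathsf{A}(\G)$, which I would prove for every effective, purely infinite Cantor groupoid, \emph{without} assuming minimality. Granting this, everything follows formally from the equivalence recalled just above the statement, that $\mathsf{A}(\G)$ is simple exactly when $\G$ is minimal. Indeed, if $\G$ is minimal then $\mathsf{A}(\G)$ is simple, so $\mathsf{D}(\G) = \mathsf{A}(\G)$ is simple and the isomorphism in the ``moreover'' clause is immediate; conversely, if $\mathsf{D}(\G)$ is simple then $\mathsf{A}(\G) = \mathsf{D}(\G)$ is simple, which forces $\G$ to be minimal. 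Thus both implications and the final clause come out at once, and no separate construction of an invariant subset is needed for the converse.

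One inclusion is already recorded before the statement: since $\gamma_{B_1,B_2} = [\gamma_{B_1},\gamma_{B_2}]$, each generator of $\mathsf{A}(\G)$ is a commutator, whence $\mathsf{A}(\G) \subseteq \mathsf{D}(\G)$. The substance is the reverse inclusion $\mathsf{D}(\G) \subseteq \mathsf{A}(\G)$, that is, that $\mathsf{F}(\G)/\mathsf{A}(\G)$ is abelian (recall $\mathsf{A}(\G)$ is normal in $\mathsf{F}(\G)$). I would prove this in two moves. First, a fragmentation step: since $\G^{(0)}$ is a Cantor space, I refine the defining clopen partition of a given $g \in \mathsf{F}(\G)$ so as to write $g = g_1 \cdots g_n$ with each $g_i \in \mathsf{F}(\G)$ equal to the identity off a proper compact open subset $U_i$ of $\G^{(0)}$. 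Second, the commutator calculus expands $[g_1 \cdots g_n, h_1 \cdots h_m]$ as a product of $\mathsf{F}(\G)$-conjugates of the commutators $[g_i,h_j]$; as $\mathsf{A}(\G)$ is normal, it is enough to show that each $[g_i,h_j]$ lies in $\mathsf{A}(\G)$.

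This reduces the theorem to the key claim that the commutator of two compactly supported elements is a product of the $3$-cycles $\gamma_{B_1,B_2}$. Here Definition \ref{purely infinite} is indispensable: applied to $K = U_i \cup U_j$, it supplies two disjoint compact open copies of $K$ inside $K$, a paradoxical ``scratch space''. Using these copies I would conjugate $[g_i,h_j]$ onto one copy, freeing a disjoint copy, and then decompose the resulting supported element into $3$-cycles by tracking a sign invariant that vanishes on commutators. This parity bookkeeping is the main obstacle, and it must be handled carefully: purely infiniteness does \emph{not} make every transposition $\gamma_B$ even, so $\mathsf{S}(\G)$ may strictly contain $\mathsf{A}(\G)$ (for instance $V_3$ retains a $\mathbb{Z}/2$ obstruction), and one cannot shortcut through $\mathsf{S}(\G)$. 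The argument must genuinely use both that a commutator carries trivial sign and that the copies furnished by Definition \ref{purely infinite} remove the only obstruction to realizing a sign-trivial supported element by $3$-cycles.

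Two technical points would need care in the execution: justifying the fragmentation, namely that every element of $\mathsf{F}(\G)$ is a product of properly supported ones, which is where the ampleness of $\G$ and the abundance of bisections with disjoint source and range enter; and, throughout the decomposition into $3$-cycles, ensuring that the sources and ranges of the bisections involved remain pairwise disjoint, so that each factor is a legitimate $\gamma_{B_1,B_2}$. Neither is expected to be serious, but both are places where a naive argument could silently fail, and on the parity step I would follow the bookkeeping of Matui and Nekrashevych most closely.
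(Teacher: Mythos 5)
The paper never proves this statement: it is quoted verbatim from Matui \cite{matui2014topological} and Nekrashevych \cite{nekrashevych2019simple}, so your proposal must be measured against their arguments. Your global logic --- prove $\mathsf{D}(\G)=\mathsf{A}(\G)$ for \emph{every} purely infinite effective Cantor groupoid, then invoke the quoted equivalence ``$\mathsf{A}(\G)$ simple $\iff$ $\G$ minimal'' --- would indeed deliver all three assertions at once, but it inverts the logical order of the cited proofs, and the inversion places the entire weight on a claim that is not in the literature and that your sketch does not establish. In Matui and Nekrashevych, minimality together with pure infiniteness is used to prove simplicity of $\mathsf{D}(\G)$ \emph{directly} (every nontrivial normal subgroup is shown to be everything, with minimality supplying group elements that displace a given compact open set anywhere in $\G^{(0)}$), and $\mathsf{D}(\G)=\mathsf{A}(\G)$ is then a one-line corollary, since $\mathsf{A}(\G)$ is a nontrivial normal subgroup of the simple group $\mathsf{D}(\G)$; the converse implication is obtained by producing, from a non-dense orbit, an invariant open set and hence a proper nontrivial normal subgroup of supported elements (nontrivial thanks to pure infiniteness). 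An unconditional $\mathsf{D}=\mathsf{A}$ without minimality is nowhere proved, and the paper itself flags how delicate such equalities are: whether $\mathsf{A}(\G)=\mathsf{D}(\G)$ can fail is recorded as an open problem in the general effective case.

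Two concrete steps of your plan would fail as written. First, the ``conjugate $[g_i,h_j]$ onto one copy'' move: Definition \ref{purely infinite} applied to $K=U_i\cup U_j$ yields a compact open bisection $B$ with $s(B)=K$ and $r(B)\subsetneq K$, but since $r(B)\subseteq s(B)$ this $B$ is not usable in any $\gamma_{B_1,B_2}$, and it does not extend to an element of $\mathsf{F}(\G)$: an extension would have to carry $\G^{(0)}\setminus K$ bijectively onto $(\G^{(0)}\setminus K)\sqcup (K\setminus r(B))$, an absorption that pure infiniteness alone does not provide --- the natural Hilbert-hotel fix uses infinitely many bisection pieces, which is forbidden in the topological full group. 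This is exactly the point where the cited proofs invoke minimality to realize such displacements by honest group elements. Second, the ``sign invariant that vanishes on commutators'' is not defined on $\mathsf{F}(\G)$: the parity bookkeeping of Nekrashevych attaches to permutations of the pieces of a multisection, i.e.\ to elements of $\mathsf{S}(\G)$-type, whereas a general element of $\mathsf{F}(\G)$ supported in a clopen set is not a permutation of any fixed finite partition (your own example $V_3$, where the $\mathbb{Z}/2$ sign exists, works only because of the canonical tree structure there). Constructing such an invariant and showing it is the sole obstruction to membership in $\mathsf{A}(\G)$ amounts to computing $\mathsf{F}(\G)/\mathsf{A}(\G)$, which is the content of the index-map/AH machinery rather than a routine reduction. (Your fragmentation step, by contrast, is fine: given $g\neq\mathrm{id}$ choose a small clopen $V$ with $V\cap gV=\emptyset$ and $V\cup gV\neq\G^{(0)}$, and multiply by the swap $\gamma_{g|_V}$ to split $g$ into two properly supported factors.) To repair the proof you should follow the cited route: prove simplicity of $\mathsf{D}(\G)$ from minimality and pure infiniteness, deduce $\mathsf{D}(\G)=\mathsf{A}(\G)$ from normality of $\mathsf{A}(\G)$, and handle the converse by restricting to an invariant open set.
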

A crucial deep result in the theory of topological full groups is that we do not lose any information when passing from $\G$ to $\mathsf{F}(\G)$ in the following sense. 
\begin{theorem}[Matui-Rubin Isomorphism Theorem \cite{matui2014topological}, Theorem 3.10]
\label{matui isomorphism theorem}
Let $\G_1,\G_2$ be essentially principal, \'etale, minimal Cantor groupoids. Then the following are equivalent:
\begin{itemize}
    \item $\G_1 \cong \G_2$ as \'etale groupoids. 
    \item $\mathsf{F}(\G_1) \cong \mathsf{F}(\G_2)$ as discrete groups.
     \item $\mathsf{D}(\G_1) \cong \mathsf{D}(\G_2)$ as discrete groups.
\end{itemize}
\end{theorem}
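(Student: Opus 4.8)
The plan is to prove the two implications emanating from a groupoid isomorphism directly and functorially, and to obtain the two converse implications by a spatial reconstruction argument resting on Rubin's theorem; concretely it suffices to establish $(\G_1 \cong \G_2) \Rightarrow (\mathsf{F}(\G_1) \cong \mathsf{F}(\G_2))$, $(\G_1 \cong \G_2) \Rightarrow (\mathsf{D}(\G_1) \cong \mathsf{D}(\G_2))$, and the two reverse implications, which together close the cycle of equivalences.

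\textbf{The easy directions.} An isomorphism $\Phi\colon \G_1 \to \G_2$ of \'etale groupoids is in particular a homeomorphism carrying compact open bisections to compact open bisections and respecting the inverse-semigroup structure of $\mathcal{B}^k$. Hence it restricts to a homeomorphism $\Gn{0}_1 \to \Gn{0}_2$ of unit spaces and sends each full bisection (element of $\mathsf{F}(\G_1)$) to a full bisection, intertwining pointwise multiplication; this gives a group isomorphism $\mathsf{F}(\G_1) \cong \mathsf{F}(\G_2)$. Since $\mathsf{D}(-)$ is defined functorially from the ambient group, this isomorphism restricts to $\mathsf{D}(\G_1) \cong \mathsf{D}(\G_2)$. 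It thus remains to recover the groupoid from each of the two group-theoretic hypotheses.

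\textbf{Setting up Rubin.} For the converse directions I would realise the groups as concrete homeomorphism groups. Because each $\G_i$ is essentially principal and Hausdorff it is effective, so by injectivity of $B \mapsto f_B$ the full group $\mathsf{F}(\G_i)$ acts faithfully on the Cantor space $\Gn{0}_i$, as does its subgroup $\mathsf{D}(\G_i)$. I would then invoke Rubin's reconstruction theorem: if two groups act faithfully and \emph{locally densely} by homeomorphisms on perfect locally compact Hausdorff spaces, every abstract isomorphism between them is spatially implemented by a unique homeomorphism conjugating one action to the other. Thus an isomorphism $\phi\colon \mathsf{F}(\G_1) \to \mathsf{F}(\G_2)$ (respectively on the derived subgroups) would yield a homeomorphism $\tau\colon \Gn{0}_1 \to \Gn{0}_2$ with $f_{\phi(g)} = \tau \circ f_g \circ \tau^{-1}$ for all $g$. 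The step I expect to be the main obstacle is verifying the local density hypothesis: for every $x$ and every nonempty compact open $U \ni x$, the orbit of $x$ under the rigid stabiliser $\{g : \mathrm{supp}(f_g) \subseteq U\}$ must have closure with nonempty interior. I would build the required elements from the alternating generators $\gamma_{B_1,B_2}$; minimality of $\G_i$ supplies compact open bisections inside $U$ with pairwise disjoint sources and ranges, so the associated $\gamma_{B_1,B_2} \in \mathsf{A}(\G_i) \subseteq \mathsf{D}(\G_i)$ are supported in $U$ and, since orbits are dense, move points around densely within $U$. This simultaneously shows the actions are locally moving and locally dense, with the infinite-orbit structure guaranteeing enough room inside $U$; the delicate point is carrying this out inside $\mathsf{D}(\G_i) = \mathsf{A}(\G_i)$ rather than all of $\mathsf{F}(\G_i)$.

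\textbf{Reconstructing the groupoid.} Finally I would upgrade $\tau$ to a groupoid isomorphism using the fact that an effective ample minimal groupoid is recovered as the groupoid of germs of the action of its full group on the unit space: every $\gamma \in \G_i$ lies in a compact open bisection which, using minimality to match up complements, extends to a full bisection, so the germs of $\mathsf{F}(\G_i) \acts \Gn{0}_i$ are exactly the elements of $\G_i$, with effectiveness ensuring distinct germs correspond to distinct groupoid elements. Since $\tau$ conjugates the action of $\mathsf{F}(\G_1)$ to that of $\mathsf{F}(\G_2)$ pointwise, it sends germs to germs compatibly with multiplication and inversion and hence descends to an isomorphism $\G_1 \cong \G_2$. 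The identical germ-reconstruction argument applies with $\mathsf{D}(\G_i)$ in place of $\mathsf{F}(\G_i)$ once one knows the derived subgroup already realises every germ of $\G_i$, which completes both converse implications and the equivalence.
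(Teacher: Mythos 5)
The paper does not prove this statement at all---it is recalled verbatim from Matui \cite{matui2014topological}---and your proposal correctly reconstructs the argument of that cited proof: functorial forward implications, then Rubin's spatial realization theorem applied to the faithful, locally dense actions of $\mathsf{F}(\G_i)$ and $\mathsf{D}(\G_i)=\mathsf{A}(\G_i)$ on the Cantor unit spaces (local density supplied by minimality through the elements $\gamma_{B_1,B_2}$), followed by recovery of $\G_i$ as the groupoid of germs of the action, where effectiveness gives injectivity of the germ map and the fact that every $\gamma \in \G_i$ lies in a full bisection from $\mathsf{A}(\G_i)$ is exactly the construction this paper itself rehearses in the proof of Lemma \ref{a finitely generated implies}. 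Your approach is thus essentially the same as that of the cited source, and it is sound.
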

Two partial actions $\alpha: G \acts X$ $\beta: H \acts Y$ give rise to the same partial transformation groupoids if and only if they are continuous orbit equivalent, as noted by Li in \cite{li2018continuous}.

On some occasions, we generalize the definition of a topological full group to include groupoids whose unit space is not a Cantor space. Such work has been pioneered by Nyland-Ortega \cite{Nyland_2019}, who focused on general graph groupoids \cite{Nyland_2019}, \cite{nyland2021matui} and Katsura-Exel groupoids \cite{nyland2021katsura}.
Amongst their general results, they generalised Theorem \ref{d simple} and Theorem \ref{matui isomorphism theorem} to the noncompact setting. The definition of a topological full group in this setting due to Nyland and Ortega and is as follows:
\begin{definition}
    \label{ortega nyland}

    Let $\G$ be an effective ample groupoid. For each compact open bisection $B \in B^k$ in $\G$ such that $s(B)=r(B)$, let us associated a homeomorphism of $\G^{(0)}$:
    $$\pi_B: \G^{(0)} \rightarrow \G^{(0)} \quad x \mapsto \begin{cases}
        Bx & x \in s(B) \ \\
        x & x \nin s(B)
    \end{cases}$$
    We define $F(\G)$ to be the subgroup of $Homeo(\G^{(0)})$ consisting of all such homeomorphisms. 
    $$F(\G)=\{ \pi_B \; : B \in B_\G^k, s(B)=r(B) \} $$
    With respect to composition. 
\end{definition}

The groupoids we study are purely infinite and minimal. We end by recalling the main result of \cite{gardella2023generalisations}, which characterizes pure infiniteness and minimality of a groupoid through properties of its topological full group. Let us first define these properties. 
\begin{definition}[Vigor]
(\cite[Definition 1.1]{vigorous}). Let $\Gamma \leq \mathrm{Homeo}(X)$ be a subgroup of homeomorphisms of the Cantor space $X$. We say that $\Gamma$ is \emph{vigorous} if whenever 
$U, Y_1,Y_2 \subseteq X$ are compact and open with $U\neq X$ and $Y_2\neq \emptyset$, and satisfy $Y_1,Y_2 \subseteq U$, then there exists $g \in \Gamma$ such that $g$ is the identity on $X \setminus U$, and $g(Y_1) \subseteq Y_2$. 
\end{definition}
\begin{definition}[Compressible Action (as in  \cite{dudko2014finite})]
An action of a discrete group $\Gamma$ on a locally compact Hausdorff space $X$ is said to be 
\emph{compressible}, if there exists a subbase $\mathcal{U}$ for the topology on $X$ such that: \begin{enumerate}
    \item for all $ g \in \Gamma$, there exists $U \in \mathcal{U}$ such that $\mathrm{supp}(g) \subseteq U$;
    \item for all $ U_1, U_2 \in \mathcal{U}$, there exists $g \in \Gamma$ such that $g(U_1) \subseteq U_2$;
    \item for all $ U_1,U_2,U_3 \in \mathcal{U}$ with $\overline{U}_1 \cap \overline{U}_2 = \emptyset$, there exists $g \in \Gamma$ such that $g(U_1) \cap U_3 = \emptyset$ and $\mathrm{supp}(g) \cap U_2= \emptyset$;
    \item for all $ U_1, U_2 \in \mathcal{U }$, there exists $U_3 \in \mathcal{U}$ such that $U_1 \cup U_2 \subseteq U_3 $.\end{enumerate}
\end{definition}
\begin{theorem}[\cite{gardella2023generalisations} Theorem A]
    Let $\G$ be an essentially principal, ample groupoid. Then the following are equivalent:
    \begin{itemize}
        \item $\G$ is purely infinite and minimal (as in Definition \ref{purely infinite})
        \item $\mathsf{D}(\G)$ is vigorous.
        \item The action of the subgroup of $\mathsf{D}(\G)$ that stabilises a point $x_0$ on $\G^{(0)}\setminus \{x_0\}$ is compressible.
    \end{itemize}
    \label{tgardellathm}
\end{theorem}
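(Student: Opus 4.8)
The plan is to prove the three conditions equivalent by closing the cycle (purely infinite and minimal) $\Rightarrow$ ($\mathsf{D}(\G)$ vigorous) $\Rightarrow$ (the stabiliser action compressible) $\Rightarrow$ (purely infinite and minimal). Essential principality will be used repeatedly to identify elements of $\mathsf{F}(\G)$ with the compact open bisections underlying them, and in the first implication, where minimality is in force, I will freely use $\mathsf{D}(\G) = \mathsf{A}(\G)$ from Theorem \ref{d simple}, so that it suffices to build the required group elements out of the three-cycle generators $\gamma_{B_1,B_2}$.

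The engine of the first implication is a comparison lemma: if $\G$ is minimal and purely infinite, then for every pair of nonempty compact open $A, C \subseteq \G^{(0)}$ there is a compact open bisection $B$ with $s(B) = A$ and $r(B) \subseteq C$. To prove it I would use minimality to cover $A$ by finitely many range-pieces of bisections whose sources land in $C$, disjointify the sources, and invoke pure infiniteness (Definition \ref{purely infinite}) inside $C$ to spread the overlapping ranges into disjoint copies; a clopen Schr\"oder--Bernstein argument then upgrades the two-sided comparison $[A] \le [C]$, $[C] \le [A]$ to an honest bisection equivalence, so that any two nonempty compact open sets are equidecomposable. Given this, vigor is nearly immediate at the level of the full group: for $Y_1, Y_2 \subseteq U \neq X$ I choose a proper copy $Z \subseteq Y_2$ with $[Z] = [Y_1]$ and a bisection $Y_1 \to Z$, fill in a bisection $U \setminus Y_1 \to U \setminus Z$, and extend by the identity off $U$ to obtain $g \in \mathsf{F}(\G)$ supported in $U$ with $g(Y_1) = Z \subseteq Y_2$.

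The genuinely delicate point, and what I expect to be the main obstacle, is that vigor is asserted for $\mathsf{D}(\G) = \mathsf{A}(\G)$ rather than for all of $\mathsf{F}(\G)$: I must realise the compressing homeomorphism as a product of the generators $\gamma_{B_1,B_2}$, which act as three-cycles on triples of disjoint equivalent clopen sets. The strategy is to decompose the required rearrangement of $U$ into clopen transpositions and pair them into three-cycles, routing the extra strand through spare clopen pieces; such pieces exist because $U \neq X$ provides at least one clopen set outside the support and pure infiniteness lets me subdivide and produce arbitrarily many disjoint equivalent copies to route through. Keeping this bookkeeping compatible with the parity and commutator structure of $\mathsf{A}(\G)$ is the technical heart of the argument.

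For the remaining implications I would argue more formally. The step ($\mathsf{D}(\G)$ vigorous) $\Rightarrow$ (compressible) is essentially a translation between the two compression formalisms: I would produce a subbase $\mathcal{U}$ for $\G^{(0)} \setminus \{x_0\}$ adapted to a neighbourhood basis of $x_0$, deriving axiom (2) and axiom (3) from vigor applied to sets, respectively to sets with disjoint closures, contained in a common proper clopen set, axiom (4) from the Boolean algebra of compact open sets being directed under unions, and axiom (1) from controlling the support of stabiliser elements relative to $x_0$; matching this last axiom to the behaviour of elements near $x_0$ is the subtle part of the translation and is where essential principality enters. Finally, for (compressible) $\Rightarrow$ (purely infinite and minimal) I would reverse the extraction: the compression axioms furnish stabiliser elements, hence elements of $\mathsf{F}(\G)$, and essential principality turns them back into compact open bisections. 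Axiom (2) then yields orbit-connectedness of any two nonempty compact open sets, giving minimality, while applying the compressions to a set and to a proper subset of itself produces, after gluing over a clopen partition, the two disjoint copies required by Definition \ref{purely infinite}. The one point needing care is recovering pure infiniteness for $A = \G^{(0)}$ itself, which I would obtain by splitting $\G^{(0)}$ into two nonempty compact open halves, treating each separately, and reassembling via the equivalence of all nonempty compact open sets established along the way.
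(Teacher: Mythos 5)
First, a point of comparison that matters for this exercise: the paper does not prove this statement at all --- it is quoted as an external result from \cite{gardella2023generalisations} --- so there is no internal proof to measure yours against, and your proposal has to stand on its own. It does not, for two concrete reasons.

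The serious error is in your first implication, where you claim that two-sided comparison plus ``a clopen Schr\"oder--Bernstein argument'' makes any two nonempty compact open sets equidecomposable. This is false in purely infinite minimal ample groupoids: equidecomposability by a compact open bisection forces equality of classes in $H_0(\G)$, and mutual subequivalence does not. Concretely, take $\G = \mathcal{E}_4$, the full shift on $4$ letters (purely infinite, minimal, essentially principal), where $H_0(\mathcal{E}_4) \cong \mathbb{Z}/3\mathbb{Z}$ is generated by the class $u$ of a one-letter cylinder $A$; the set $B = A_1 \sqcup A_2$ given by two one-letter cylinders has class $2u \neq u$, yet $A$ and $B$ embed into each other by comparison. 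The usual Schr\"oder--Bernstein back-and-forth produces infinitely many pieces, i.e.\ a non-compact bisection, which is exactly why it fails here. This breaks your construction of the vigor witness at the step ``fill in a bisection $U \setminus Y_1 \to U \setminus Z$'': for an arbitrary $Z \subseteq Y_2$ with $Z$ equivalent to $Y_1$ there is no reason the complements should be equivalent. The repair is to track $H_0$: since $Z$ is obtained from $Y_1$ by an honest bisection, $[U \setminus Y_1] = [U \setminus Z]$ in the group $H_0(\G)$ (where one \emph{can} cancel), and one then needs the genuinely nontrivial realization theorem that in a purely infinite minimal ample groupoid two nonempty compact open sets with equal $H_0$-class are equidecomposable. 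That theorem, not Schr\"oder--Bernstein, is the actual engine of the implication, and it is the substantial content you sketch past. (By contrast, your worry about parity in $\mathsf{A}(\G)$ is a non-issue: a product of two disjointly supported clopen transpositions factors into $3$-cycles exactly as in the symmetric group.)

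The second gap is in (vigorous $\Rightarrow$ compressible), which you flag as ``subtle'' but for which you supply no idea, and as sketched it would fail. Elements of the literal point stabiliser of $x_0$ in $\mathsf{D}(\G)$ need not have support bounded away from $x_0$: already in Thompson's group $V = \mathsf{D}(V)$ there is $g$ fixing $x_0 = 0_+$ and $1_-$ and moving every other point, so its support computed in $\G^{(0)} \setminus \{x_0\}$ is the whole space. Then axiom (1) forces $\G^{(0)} \setminus \{x_0\}$ itself into the subbase $\mathcal{U}$, after which axiom (2) is impossible (a homeomorphism of $\G^{(0)} \setminus \{x_0\}$ cannot compress the whole space into a proper subbase element, and a one-element family is not a subbase). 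So no choice of $\mathcal{U}$ of the kind you describe can satisfy axiom (1) for the full point stabiliser; the translation only works for the subgroup of elements acting trivially on a \emph{neighbourhood} of $x_0$, whose supports are clopen and avoid $x_0$, with $\mathcal{U}$ the clopen sets missing $x_0$ --- for that subgroup your derivations of axioms (2)--(4) from vigor do go through. Identifying this restriction (or otherwise confronting supports accumulating at $x_0$) is the missing idea; ``essential principality enters here'' does not address it, since the obstruction has nothing to do with isotropy.
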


\section{Stein's Groups}
We now turn to the piecewise linear homeomorphisms perspective of Thompson-like groups, following Stein \cite{stein1992groups}. Note similar generalisations of Thompson's group $F$ have been studied in detail by Bieri-Streibel \cite{BieriStrebel}. 

\begin{definition}[Stein's Groups]
Let $\Lambda \subset \mathbb{R}$ be a multiplicative subgroup of $(0,+ \infty)$, $\Gamma$ be a $\mathbb{Z} \cdot \Lambda$ submodule such that $\Lambda \cdot \Gamma=\Gamma$ let $1 \leq \ell \in \Gamma$. Then, $V(\Gamma,\Lambda, \ell)$ denotes the right continious, piecewise linear bijections of $[0,\ell]$ with slopes in $\Lambda$ and finitely many discontinuities in $\Gamma$. In addition, let us define two nested subgroups in analogy of Thompson's groups $F \subset T \subset V$:
\begin{itemize}
    \item $T(\Gamma,\Lambda,\ell) \subset V(\Gamma,\Lambda,\ell)$ is the space of  continious piecewise linear bijections of $[0,\ell]/\sim$ where $0 \sim \ell$,  with nondifferentiable points in $\Gamma$ and slopes in $\Lambda$.
    \item $F(\Gamma,\Lambda, \ell) \subset T(\Gamma,\Lambda,\ell) \subset V(\Gamma,\Lambda,\ell)$ is the space of piecewise linear homeomorphisms of $[0,\ell]$ with nondifferentiable points in $\Gamma$ and slopes in $\Lambda$. Note then necessarily, $\forall f \in F(\Gamma,\Lambda, \ell), f(0)=0 \equiv f(1)=1 (\text{mod} \, \mathbb{Z})$. In particular then, $F(\Gamma,\Lambda, \ell) \subset T(\Gamma,\Lambda, \ell)$ 
\end{itemize}\label{steins group}
\end{definition}
Note that the restriction of $\ell \geq 1$ does not limit the diversity of these groups. This definition due to Stein encompasses many generalisations of Higman-Thompson groups of interest to geometric Group Theory.

Occasionally, it is convenient to require our Stein's groups to act on noncompact intervals, taking for example piecewise linear bijections on $\mathbb{R}$, or on $[0,+\infty)$ as in \cite{BieriStrebel}, \cite{stein1992groups}.
\begin{definition}[Noncompact $V(\Gamma,\Lambda,U)$]
  Let $\Lambda$ be a multiplicative subgroup of $\mathbb{R} \cap (0,\infty)$, and $\Gamma$ be a $\mathbb{Z} \cdot \Lambda$ submodule. Let $U$ be some closed, not necessarily compact, interval in $\Gamma$ (for example $[0,\ell]$, $(-\infty,0]$, $[0,\infty)$, $\mathbb{R}$). Then, let $V(\Gamma,\Lambda,U)$ denote the group of piecewise linear bijections of $U$ with finitely many slopes (all in $\Lambda$) and finitely many nondifferentiable points, (all in $\Gamma$).
  \label{noncompact}
\end{definition}

We list some specific examples below:
\begin{example}[Thompson's group $V$]
Let $\Gamma=\mathbb{Z}[1/2], \Lambda=\langle 2 \rangle$ and $\ell=1$. In this notation, $V( \mathbb{Z}[1/2], \langle 2 \rangle,1 )$ recovers Thompson's group $V$ \cite{thompson}, \cite{stein1992groups}. \label{thompsons group}
\end{example}
This group was the first example of a simple, finitely presented group \cite{thompson}. Since then, it has been shown that $V$ is type $F_\infty$ \cite{brown1987finiteness} (see \cite{thumann2014topological} for a very general approach) and is acyclic \cite{szymik2019homology}. These groups were generalised by Higman.  
\begin{example}[The Higman-Thompson groups] Let $n,r \in \mathbb{N}$. Then,
$V(\mathbb{Z}[1/n], \langle n \rangle , r) \cong V_{n,r}$, recovers the Higman-Thompson groups \cite{stein1992groups}, \cite{higman1974finitely}. These are the groups of piecewise linear bijections of $[0,r]$ with slopes in $\langle n \rangle$, and finitely many nondifferentiable points, all of which in $\mathbb{Z}[1/n]$. 
\label{higmans group}
\end{example}
Once again, these groups are type $F_\infty$. A full classification of Higman-Thompson groups has been obtained. Here, we can see the dependence on the length $\ell$ of the underlying interval. In this notation, the main result of \cite{pardo2011isomorphism} states that:
$$V(\mathbb{Z}[1/n],\langle n \rangle , r) \cong V(\mathbb{Z}[1/n'],\langle n' \rangle , r') \iff n=n', \text{gcd}(n-1,r)=\text{gcd}(n-1,r') $$ 
As well as including integers, one can consider arbitrary irrational numbers. The most studied group of this form is Cleary's group \cite{cleary2000regular} \cite{Clearymore} \cite{irrationalslope22} \cite{burillo2021irrationalslope}:
\begin{example}[Cleary's irrational slope Thompson Group]
Let $\tau=\frac{1+ \sqrt{5}}{2}$ be the golden ratio. Let  $V(\mathbb{Z}[\tau,\tau^{-1}], \langle \tau \rangle ,1)$, recovers Clearys group $V_\tau$,  otherwise known as the irrational slope Thompsons group.
\label{clearys group}
\end{example}
Much is known about Cleary's group $V_\tau$, and there have been exciting developments in recent years.  For example, explicit finite generating sets have been found \cite{irrationalslope22}. The abelianisation of this group was shown to be $\mathbb{Z}_2$ \cite{irrationalslope22}. However, it can be shown that it has very different properties to that Thompson's group $V$ as well.

\begin{rmk}
  
From a dynamical perspective, $V_\tau$ contains minimal homeomorphisms of the Cantor space, a concrete example being the map $$f_\tau: [0,1] \rightarrow [0,1] \quad t \mapsto t+ \tau \; (\text{ mod } \mathbb{Z})$$ Thompson's group $V$ does not contain minimal homeomorphisms, since if we examine the orbit structure of an element of Thompson's group $V$, we will find that there are finite orbits and the underlying homeomorphism of the Cantor space is therefore not minimal \cite{salazar2010thompson}. 
  \label{minimal discussion}
\end{rmk}
Finally, it is important to consider when $\Gamma,\Lambda$ are generated by more than one number, i.e. when $\Lambda \not \cong \mathbb{Z}$. The case where $\Lambda$ is generated by multiple integers were the main object of study by Stein, and so are accordingly named Stein Integral groups:
\begin{example}[Stein's Integral Groups]
Let $N=\{n_1,n_2,...n_k\}$ be a finite set of algebraically independent integers. Let $\ell \in \mathbb{N}$. Then, we have that Stein's integral group are realised in this matter as $V(\mathbb{Z}[\prod_{i=1}^k 1/n_i ],\langle n_i \rangle_{i=1}^k, \ell)$.
\end{example}
Stein's integral groups are of type $F_\infty$, and their abelianisation was computed in Stein's original paper \cite{stein1992groups}. But the class of Stein's group studied in this paper is much more general than any of the above example classes. Let us demonstrate that whichever group $\Lambda$ and module $\Gamma$ we choose, we obtain nontrivial groups.
\begin{lemma}
Let $\Lambda$ be a multiplicative subgroup of $\mathbb{R} \cap (0,+\infty)$. Let $\Gamma$ be a $\mathbb{Z} \cdot \Lambda$ submodule and $\ell \in \Gamma$. Then, all three of the groups $F(\Gamma,\Lambda,\ell) \subset T(\Gamma,\Lambda, \ell) \subset V(\Gamma,\Lambda, \ell)$ are nontrivial, properly nested groups. 
\end{lemma}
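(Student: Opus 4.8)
The plan is to prove all three assertions by exhibiting explicit piecewise linear maps, treating the nontriviality of $F(\Gamma,\Lambda,\ell)$ as the crux and deducing everything else from it. I work under the standing assumption that $\Lambda \neq \{1\}$, which is needed already for $F$ to be nontrivial: if $\Lambda=\{1\}$ then every element of $F$ is a continuous, slope-$1$, endpoint-fixing bijection of $[0,\ell]$, hence the identity. So fix once and for all some $\lambda \in \Lambda$ with $0<\lambda<1$ (replacing a nontrivial element of $\Lambda$ by its inverse if necessary).

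First I would construct a nontrivial element of $F(\Gamma,\Lambda,\ell)$. Since $\lambda^k \to 0$, I can choose $k$ with $\lambda^k(1+\lambda)\le 1$ and set $m=\lambda^k\ell$ and $L=m+\lambda m$. The module condition $\Lambda\cdot\Gamma=\Gamma$ together with additive closure of $\Gamma$ gives $m,\lambda m, L \in \Gamma$, while $L=\lambda^k(1+\lambda)\ell\le \ell$. I then take
$$f_0(x)=\begin{cases} \lambda x, & 0\le x\le m,\\ \lambda^{-1}(x-m)+\lambda m, & m\le x\le L,\\ x, & L\le x\le \ell.\end{cases}$$
A direct check shows $f_0$ is continuous (matching at $m$ and $L$), fixes $0$ and $\ell$, is strictly increasing, and has breakpoints $m,L\in\Gamma$ and slopes $\lambda,\lambda^{-1},1\in\Lambda$; it is not the identity because its slope on $[0,m]$ is $\lambda\neq 1$. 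Hence $F(\Gamma,\Lambda,\ell)$ is nontrivial, and since $F\subset T\subset V$, the groups $T$ and $V$ are nontrivial as well. I expect this to be the main obstacle, because it is the one step where the arithmetic of $\Gamma$ and $\Lambda$ must be made to cooperate: the geometric-sequence trick $m=\lambda^k\ell$ is exactly what guarantees both an interior breakpoint of $\Gamma$ lying inside $[0,\ell]$ and a pair of matching slopes in $\Lambda$.

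Second, for the proper inclusion $F(\Gamma,\Lambda,\ell)\subsetneq T(\Gamma,\Lambda,\ell)$, note that $p:=\lambda\ell$ lies in $\Gamma\cap(0,\ell)$. The rotation $x\mapsto x+p \pmod{\ell}$ of the circle $[0,\ell]/\!\sim$ is a continuous PL bijection with the single breakpoint $(1-\lambda)\ell\in\Gamma$ and constant slope $1\in\Lambda$, so it belongs to $T(\Gamma,\Lambda,\ell)$. It is not in $F(\Gamma,\Lambda,\ell)$ since it sends $0\mapsto p\neq 0$, whereas every element of $F$ fixes the endpoint $0$.

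Third, for $T(\Gamma,\Lambda,\ell)\subsetneq V(\Gamma,\Lambda,\ell)$, choose $w=\lambda^k\ell\in\Gamma$ with $3w\le\ell$, so that $0,w,2w,3w$ all lie in $\Gamma$, and take the map that translates $[0,w)$ onto $[2w,3w)$ and $[2w,3w)$ onto $[0,w)$ while fixing the remainder of $[0,\ell]$. This is a right-continuous PL bijection with slopes $1\in\Lambda$ and all breakpoints in $\Gamma$, hence an element of $V(\Gamma,\Lambda,\ell)$; but it is discontinuous at the interior point $w$ (indeed $\lim_{x\to w^-}f(x)=3w\neq w=f(w)$), so it does not descend to a continuous map of the circle and therefore lies in $V\setminus T$. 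Together these three steps establish that each group is nontrivial and that both inclusions are strict.
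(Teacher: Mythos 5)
Your proof is correct and takes essentially the same route as the paper's, which is likewise purely constructive: an element of $F$ built from a matched slope-$\lambda$/slope-$\lambda^{-1}$ pair supported near $0$, a rotation of the circle $[0,\ell]/\!\sim$ witnessing $T\setminus F$, and a two-interval swap witnessing $V\setminus T$. If anything your write-up is slightly more careful than the paper's (which treats $\ell=1$ and declares the other cases analogous): you handle general $\ell$ directly by using $\lambda^k\ell$ to force the breakpoints into $\Gamma\cap[0,\ell]$, and you make explicit the hypothesis $\Lambda\neq\{1\}$, without which $F(\Gamma,\Lambda,\ell)$ is trivial and which the paper's choice of $\lambda<1/2$ assumes tacitly.
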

\begin{proof}

This proof is entirely constructive. We show that for $\ell=1$, since the other cases are analagous. For any choice of $\Lambda,\Gamma$, consider $\lambda \in \Lambda, \lambda < 1/2$ then consider the following piecewise linear functions:

\begin{center}

\tikzset{every picture/.style={line width=0.75pt}}%set default line width to 0.75pt        

\begin{tikzpicture}[x=0.75pt,y=0.75pt,yscale=-0.85,xscale=0.85]
%uncomment if require: \path (0,342); %set diagram left start at 0, and has height of 342

%Straight Lines [id:da23792128113499822] 
\draw    (14,106.4) -- (15.86,262.43) ;
%Straight Lines [id:da8579466500698678] 
\draw    (15.86,262.43) -- (180.75,261.55) ;
%Straight Lines [id:da9365047414508711] 
\draw    (261.4,108.49) -- (263.25,265.43) ;
%Straight Lines [id:da24909226912585858] 
\draw    (263.25,265.43) -- (427.22,264.54) ;
%Straight Lines [id:da39100099778943176] 
\draw    (478.17,101.24) -- (480.03,260.9) ;
%Straight Lines [id:da2868113826782772] 
\draw    (480.03,260.9) -- (644,259.99) ;
%Straight Lines [id:da5299486881347149] 
\draw    (261.4,165.56) -- (325.32,112.95) ;
%Straight Lines [id:da08672123359780715] 
\draw    (326.24,264.54) -- (426.3,166.45) ;
%Straight Lines [id:da8172003546650504] 
\draw    (98.31,179.88) -- (172.41,107.31) ;
%Straight Lines [id:da968112128099778] 
\draw    (15.86,262.43) -- (43.64,206.19) ;
%Straight Lines [id:da4026269904933004] 
\draw    (98.31,179.88) -- (43.64,206.19) ;
%Straight Lines [id:da6023036508890438] 
\draw  [dash pattern={on 0.84pt off 2.51pt}]  (42.72,106.4) -- (42.72,262.43) ;
%Straight Lines [id:da6186575193294861] 
\draw  [dash pattern={on 0.84pt off 2.51pt}]  (98.31,105.96) -- (98.31,261.99) ;
%Straight Lines [id:da8798531387307178] 
\draw  [dash pattern={on 0.84pt off 2.51pt}]  (325.32,112.95) -- (325.32,268.99) ;
%Straight Lines [id:da9687491508737376] 
\draw  [dash pattern={on 0.84pt off 2.51pt}]  (14.93,208) -- (180.75,205.28) ;
%Straight Lines [id:da7530050442753493] 
\draw  [dash pattern={on 0.84pt off 2.51pt}]  (15.4,181.24) -- (181.23,178.52) ;
%Straight Lines [id:da13387953546469356] 
\draw  [dash pattern={on 0.84pt off 2.51pt}]  (261.4,165.56) -- (426.3,166.45) ;
%Shape: Rectangle [id:dp8777551806997304] 
\draw  [dash pattern={on 0.84pt off 2.51pt}] (513.35,183.18) -- (559.67,183.18) -- (559.67,228.54) -- (513.35,228.54) -- cycle ;
%Shape: Rectangle [id:dp05083448347066377] 
\draw  [dash pattern={on 0.84pt off 2.51pt}] (559.67,183.18) -- (605.99,183.18) -- (605.99,228.54) -- (559.67,228.54) -- cycle ;
%Shape: Rectangle [id:dp6095560550854895] 
\draw  [dash pattern={on 0.84pt off 2.51pt}] (513.35,137.82) -- (559.67,137.82) -- (559.67,183.18) -- (513.35,183.18) -- cycle ;
%Shape: Rectangle [id:dp24253446193989348] 
\draw  [dash pattern={on 0.84pt off 2.51pt}] (559.67,137.82) -- (605.99,137.82) -- (605.99,183.18) -- (559.67,183.18) -- cycle ;
%Straight Lines [id:da7897380212953022] 
\draw    (480.03,260.9) -- (512,230) ;
%Straight Lines [id:da4407800260951973] 
\draw    (513.35,183.18) -- (559.67,137.82) ;
%Straight Lines [id:da10668472980481947] 
\draw    (559.67,228.54) -- (605.99,183.18) ;
%Straight Lines [id:da9321404405198075] 
\draw    (605.99,137.82) -- (634,109) ;
%Straight Lines [id:da7367249361204977] 
\draw  [dash pattern={on 0.84pt off 2.51pt}]  (605.99,137.82) -- (606,105) ;

% Text Node
\draw (79.04,279.33) node [anchor=north west][inner sep=0.75pt]    {$f\in \ F$};
% Text Node
\draw (318.92,280.51) node [anchor=north west][inner sep=0.75pt]    {$b\ \in \ T$};
% Text Node
\draw (546.7,275.07) node [anchor=north west][inner sep=0.75pt]    {$g\ \in \ V$};
% Text Node
\draw (189.42,196.38) node [anchor=north west][inner sep=0.75pt]    {$\lambda ^{2}$};
% Text Node
\draw (71.19,82.38) node [anchor=north west][inner sep=0.75pt]    {$\lambda ^{2} +\lambda ^{3}$};
% Text Node
\draw (188.04,166.77) node [anchor=north west][inner sep=0.75pt]    {$\lambda ^{2} +\lambda ^{3}$};
% Text Node
\draw (37.04,80.77) node [anchor=north west][inner sep=0.75pt]    {$\lambda ^{3}$};
% Text Node
\draw (320.04,92.77) node [anchor=north west][inner sep=0.75pt]    {$\lambda $};
% Text Node
\draw (429.04,155.77) node [anchor=north west][inner sep=0.75pt]    {$1-\lambda $};
% Text Node
\draw (506,116.4) node [anchor=north west][inner sep=0.75pt]    {$\lambda $};
% Text Node
\draw (533.04,115.77) node [anchor=north west][inner sep=0.75pt]    {$\lambda ^{2} +\lambda $};
% Text Node
\draw (577.04,83.77) node [anchor=north west][inner sep=0.75pt]    {$2\lambda ^{2} +\lambda $};

\end{tikzpicture}

\end{center}
$$f(t):=\begin{cases} \lambda^{-1}t & t \in [0,\lambda^2] \\
\lambda t + \lambda^2-\lambda^3 & t \in (\lambda^2,\lambda^2 +\lambda^3] \\
t & \text{otherwise}
\end{cases}  $$

$$b(t):=\begin{cases} t+ 1-\lambda & t \in [0,\lambda] \\
 t -\lambda & t \in (\lambda,1] 
\end{cases} \quad g(t):=\begin{cases} t+\lambda^2 & t \in (\lambda,\lambda+\lambda^2] \\
t -\lambda^2 & t \in (\lambda+\lambda^2,\lambda+2\lambda^2] \\
t & \text{otherwise}
\end{cases} $$

These clearly define nontrivial elements of $V(\Gamma,\Lambda,1)$. Furthermore, we have that $f \in F(\Gamma,\Lambda,1),$ $b \in T(\Gamma,\Lambda,1) \setminus F(\Gamma,\Lambda,1), \; \text{ and } g \in V(\Gamma,\Lambda,1) \setminus T(\Gamma,\Lambda,1)$.

\end{proof}
Remark that the existence of nontrivial elements of $F$ also follows from [\cite{BieriStrebel}, Theorem 1]. Let us end by explaining through Cleary's group why there is good reason to think these generalisations are different in an interesting way from Thompson's group $V$. 
In \cite{myintervalexchanges}, the author explored groups of so-called interval exchanges. 
\begin{definition}
Let $\Gamma$ be a countable subadditive group of $\mathbb{R}$, and $\ell \in \Gamma$. Then, $IE(\Gamma,\ell)$ is the group of piecewise linear bijections $f$ of $[0,\ell]$ with finitely many angles $\{ft-t \; : \; t \in [0,\ell] \}$ all in $\Gamma$. 
\end{definition}
Let us explain the relationship between $V(\Gamma,\Lambda,\ell)$ and $IE(\Gamma,\ell)$. One perspective on these groups is that you could consider these groups to be the measure-preserving subgroups of $V(\Gamma,\Lambda,\ell)$ for their actions on $[0,\ell]$. It is notable also, that on a groupoid level, there is an action of $\Lambda$ on the groupoid model for $IE(\Gamma,\ell)$ which gives us a groupoid model for $V(\Gamma,\Lambda,\ell)$. The algebraic relationship between these $V$-like groups and their measure-preserving subgroups is formalised through a Zappa-Szep product decomposition.
\begin{lemma}
    Let $\Lambda$ be a multiplicative subgroup of $\mathbb{R} \cap (0,+\infty)$. Let $\Gamma$ be a $\mathbb{Z} \cdot \Lambda$ submodule and $\ell \in \Gamma$. Then:
    $$V(\Gamma,\Lambda,\ell)=F(\Gamma,\Lambda,\ell) \bowtie IE(\Gamma,\ell) $$
\end{lemma}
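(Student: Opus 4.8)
The plan is to verify the three defining conditions of an internal Zappa--Szép product directly: that $F(\Gamma,\Lambda,\ell)$ and $IE(\Gamma,\ell)$ are subgroups of $V(\Gamma,\Lambda,\ell)$, that they intersect trivially, and that every element of $V(\Gamma,\Lambda,\ell)$ factors as a product of one element from each (uniqueness of the factorisation then being automatic from triviality of the intersection). Throughout I regard $IE(\Gamma,\ell)$ concretely as the slope-one, equivalently the Lebesgue-measure-preserving, subgroup of $V(\Gamma,\Lambda,\ell)$: its elements are exactly the bijections in $V(\Gamma,\Lambda,\ell)$ all of whose slopes equal $1 \in \Lambda$, hence piecewise translations by elements of $\Gamma$ with finitely many breakpoints in $\Gamma$. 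With this reading the containment $IE(\Gamma,\ell)\le V(\Gamma,\Lambda,\ell)$ and closure under composition and inversion are immediate, and $F(\Gamma,\Lambda,\ell)$ is a subgroup by its defining description as the orientation-preserving elements.

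For the trivial intersection, suppose $h \in F(\Gamma,\Lambda,\ell)\cap IE(\Gamma,\ell)$. As an element of $F$, $h$ is an orientation-preserving homeomorphism of $[0,\ell]$ with $h(0)=0$; as an element of $IE$, every slope of $h$ equals $1$. Continuity together with unit slope on the connected interval forces $h(t)=t$, so $h=\mathrm{id}$ and $F(\Gamma,\Lambda,\ell)\cap IE(\Gamma,\ell)=\{\mathrm{id}\}$.

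The substance is the factorisation. Given $g \in V(\Gamma,\Lambda,\ell)$, list its domain breakpoints $0=a_0<a_1<\cdots<a_n=\ell$ (all in $\Gamma$), so that $g$ is affine with slope $\lambda_i\in\Lambda$ on each piece $(a_{i-1},a_i)$. Put $b_i=\sum_{j\le i}\lambda_j(a_j-a_{j-1})$; since $\Gamma$ is a $\mathbb{Z}\cdot\Lambda$-module with $\Lambda\cdot\Gamma=\Gamma$ we get $b_i\in\Gamma$, and because $g$ is a bijection of $[0,\ell]$ the image lengths $\lambda_i(a_i-a_{i-1})=\lvert g((a_{i-1},a_i))\rvert$ sum to $\ell$, whence $b_n=\ell$. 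Let $f$ be affine of slope $\lambda_i$ on $(a_{i-1},a_i)$, sending it onto $(b_{i-1},b_i)$; the pieces match since $b_i-b_{i-1}=\lambda_i(a_i-a_{i-1})$, so $f$ is an increasing homeomorphism of $[0,\ell]$ fixing both endpoints, with breakpoints in $\{a_i\}\subseteq\Gamma$ and slopes in $\Lambda$, i.e. $f\in F(\Gamma,\Lambda,\ell)$. Now set $\sigma:=g\circ f^{-1}$. On each $(b_{i-1},b_i)$ it has slope $\lambda_i\cdot\lambda_i^{-1}=1$ and image $g((a_{i-1},a_i))$, so $\sigma$ is a bijection of $[0,\ell]$ of constant slope $1$ with breakpoints in $\{b_i\}\subseteq\Gamma$, giving $\sigma\in IE(\Gamma,\ell)$ and $g=\sigma\circ f$. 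Thus $V(\Gamma,\Lambda,\ell)=IE(\Gamma,\ell)\cdot F(\Gamma,\Lambda,\ell)$, and passing to inverses yields $V(\Gamma,\Lambda,\ell)=F(\Gamma,\Lambda,\ell)\cdot IE(\Gamma,\ell)$; combined with the trivial intersection this is exactly the internal Zappa--Szép decomposition $V(\Gamma,\Lambda,\ell)=F(\Gamma,\Lambda,\ell)\bowtie IE(\Gamma,\ell)$.

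The main obstacle is purely one of bookkeeping at the finitely many breakpoints, and I expect two places to require care. First, the right-continuity convention means the pieces should be handled on half-open intervals, and one must check that the finite (measure-zero) set of breakpoint images causes no overlap; this is routine because $g$ and $f$ are genuine bijections. Second, and more importantly, one must ensure the complementary factor $\sigma$ genuinely lands in $V(\Gamma,\Lambda,\ell)$, that is, that \emph{its} breakpoints lie in $\Gamma$. This is precisely why it is cleanest to produce $\sigma$ as $g\circ f^{-1}$, which forces its breakpoints to come from those of $g$ and $f$ and hence into $\Gamma$, rather than to argue from the abstract definition of $IE(\Gamma,\ell)$ via angles, whose breakpoints need not a priori lie in $\Gamma$.
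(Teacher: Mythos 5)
Your proposal is correct and matches the paper's proof in essence: both construct the $F$-factor as the unique orientation-preserving element with the same slope pattern as $g$ (your $f$ is the paper's $\hat f$, with your $b_i$ the paper's cumulative sums $\sum_{k}\mu_k(x_{k+1}-x_k)$), and then recover the interval-exchange factor as $g\circ f^{-1}$. The only cosmetic difference is that you derive uniqueness of the factorisation from the trivial intersection $F\cap IE=\{\mathrm{id}\}$, whereas the paper argues it directly from the fact that $\hat f$ is the unique element of $F(\Gamma,\Lambda,\ell)$ with the prescribed derivative on each piece; your closing remark that the $IE$-factor should be produced as $g\circ f^{-1}$ to guarantee its breakpoints land in $\Gamma$ is a sound reading of the intended definition of $IE(\Gamma,\ell)$.
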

\begin{proof}
    We show that $\cdot :IE(\Gamma,\ell) \times F(\Gamma,\Lambda,\ell) \rightarrow V(\Gamma,\Lambda,\ell) \quad  (g,h) \mapsto g \cdot h $ is bijective. Let $f \in V(\Gamma,\Lambda,\ell)$. Then there exists a finite partition of $[0,\ell]$, $[x_0,x_1],(x_1,x_2],$ $...(x_{n-1},x_n] $ such that on each $I_i= (x_i,x_{i+1}]$, $f(t)=\mu_i t + c_i$ where $\mu_i \in \Lambda, c_i \in \Gamma$. Let us construct an element of $\hat{f} \in   F(\Gamma,\Lambda,\ell)$ as follows. On $I_0$  let $\hat{f}(t)=\mu_1 t$ then on $I_i, i>0$, let $\hat{f}(t)=\mu_i t -\mu_i x_i + \sum_{k=0}^{i-1} \mu_k(x_{k+1}-x_{k})$, then let the element of $IE(\Gamma,\ell)$ be $\tilde{f}(t)=t+\mu_i x_i-\sum_{k=0}^{i-1} \mu_k(x_{k+1}-x_{k})+c_i$ for $ t \in I_i$. It is clear that $f=\tilde{f}\hat{f}$ and that this decomposition is unique- one would need an element of $F(\Gamma,\Lambda,\ell)$ with the same derivative on each $I_i$ as $f$, and $\hat{f}$ is the unique group element with this property. 
\end{proof}
 Using this observation, we can see that there are embeddings of interesting groups into Cleary's group $V_\tau$ which would not be possible in $V$:
\begin{rmk}[Juschenko-Monod groups in Clearys group]
One can see that in Cleary's group $V_\tau$, we have a canonical embedding the interval exchange group $IE(\mathbb{Z} \oplus \tau \mathbb{Z}) \hookrightarrow V_\tau$ as described in \cite{myintervalexchanges}. The derived subgroup of $IE(\mathbb{Z} \oplus \tau \mathbb{Z})$ is known, due to results by Juschenko-Monod \cite{juschenkomonod} to be a (rare) example of an infinite, finitely generated, amenable simple group. This shows that the word problem is solvable for $D(IE(\mathbb{Z} \oplus \tau \mathbb{Z}))$, since $D(V_\tau)$ is simple and finitely presented \label{juschenkomonod groups}.
\end{rmk}

Thompson's group $V$ does however embed into Clearys group $V_\tau$, as shown in \cite{irrationalslope22}, \cite{gardella2023generalisations}. Therefore, $V_\tau$ contains a richer class of subgroups than Thompson's group $V$, making us interested in this group from the perspective of embedding questions like the Boone-Higman conjecture. It would be interesting to understand better the  groups that embed into $V_\tau$ but not $V$. Throughout this text, we study $V(\Gamma,\Lambda,\ell)$, with the perspective that this group is the topological full group of a certain ample groupoid. 

\section{Construction of Stein's Groups as Topological Full Groups}
 Let us first adapt $\mathbb{R}$ so that we can allow discontinuities at some subset $\Gamma$, by including two points $\tau_+,\tau_-$, separated in the topology, at each point $\tau \in \Gamma$. This notation is following that in Section 3 \cite{chornyi2020topological}. 
\begin{definition}
Let $\Gamma \subset \mathbb{R}$. Let 
$\mathbb{R}_\Gamma:=\{t, a_+,a_- \; : \; t \in \mathbb{R} \setminus \Gamma, a \in \Gamma \} $
with the canonical quotient map $q$ onto $\mathbb{R}$:
$q: \mathbb{R}_\Gamma \rightarrow \mathbb{R} \quad t \mapsto t, a_{\pm} \mapsto a.$
Let us also define a total order on $\mathbb{R}_\Gamma$ by 
$q(x)<q(y) \implies x < y \; \forall x,y \in \mathbb{R}_\Gamma, a_-<a_+ \forall a \in \Gamma$
And let us topologise $\mathbb{R}_\Gamma$ by the order topology, i.e. the topology generated by open intervals:
$$ (x,y)=\{z \in \mathbb{R}_\Gamma \; : \; x<z<y \}, \; x,y \in \mathbb{R}_\Gamma$$
\end{definition}
In the case of Stein's groups, $\Gamma$ is dense, countable in $\mathbb{R}$. This makes the topology on $\mathbb{R}_\Gamma$ identifiable with the disjoint union of countably many Cantor spaces. 
\begin{lemma}
    Let $\Gamma \subset \mathbb{R}$ be dense and countable. Then a (countable) basis for the topology on $\mathbb{R}_\Gamma$ is given by:
    $$(a_-,b_+)=[a_+,b_-] \; a< b, \; a,b \in \Gamma $$
    Moreover, each set of the form $[a_+,b_-]$ with $a<b$ is a Cantor set. 
\end{lemma}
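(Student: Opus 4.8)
The plan is to base everything on the order-theoretic identity $(a_-,b_+)=[a_+,b_-]$, valid for $a<b$ in $\Gamma$. Since no element of $\mathbb{R}_\Gamma$ lies strictly between $a_-$ and $a_+$, the inequality $a_-<z$ is equivalent to $a_+\le z$, and dually $z<b_+$ is equivalent to $z\le b_-$; this gives the identity and shows simultaneously that each $[a_+,b_-]$ is both an open and a closed interval, hence clopen. There are only countably many such sets because $\Gamma$ is countable. To see that they form a basis I would use that the open intervals $(x,y)$ are closed under finite intersection and so already form a basis for the order topology, whence it suffices to refine an arbitrary $(x,y)\ni z$. I would split on the form of $z$: when $z=c_+$ I take $a=c$ so that $a_+=z$, and otherwise $q(x)<q(z)$ and density of $\Gamma$ in $\mathbb{R}$ yields $a\in\Gamma$ with $x<a_+<z$; the right endpoint $b$ is chosen symmetrically. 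Since $a_+\le z\le b_-$ forces $a<b$, the resulting set satisfies $z\in[a_+,b_-]\subseteq(x,y)$, as needed.

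For the Cantor set assertion I would invoke Brouwer's characterisation: a nonempty, second countable, compact, Hausdorff, perfect and totally disconnected space is homeomorphic to the Cantor set. The interval $[a_+,b_-]$ is nonempty (it contains $a_+$, and $a_+<b_-$), and since it is order-convex its subspace topology agrees with the order topology of the restricted order. Hausdorffness is automatic for order topologies; second countability, and hence metrizability, follows from the countable basis above; and total disconnectedness follows because that basis consists of clopen sets. The absence of isolated points comes from density of $\Gamma$: no point has an immediate neighbour on the relevant side (for instance $c_+$ has no immediate successor and $c_-$ no immediate predecessor), so every basic neighbourhood of a point contains other points of the interval, making the space perfect.

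The main obstacle is compactness. I would deduce it from the standard fact that a linearly ordered space is compact exactly when its order is a complete lattice. The interval has least element $a_+$ and greatest element $b_-$, so it remains to exhibit $\sup S$ for every nonempty $S\subseteq[a_+,b_-]$. I would put $m=\sup_{\mathbb{R}}q(S)\in[a,b]$ and then select the correct preimage of $m$: the supremum is $m_+$ when $m\in\Gamma$ and $m_+\in S$, and otherwise it is $m_-$ (if $m\in\Gamma$) or $m$ (if $m\notin\Gamma$). Checking that this element is an upper bound, that it is least, and that it lies in $[a_+,b_-]$ is a short argument from the definition of $m$ and the order relations; infima are symmetric. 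This presents $[a_+,b_-]$ as a complete lattice, hence compact, and completes the hypotheses of Brouwer's theorem. I expect this completeness verification, together with the bookkeeping of which of $m_-,m_+,m$ is the supremum, to be the only genuinely delicate point; the remaining steps are formal.
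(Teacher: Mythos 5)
Your proof is correct, and on the two routine steps (the order identity $(a_-,b_+)=[a_+,b_-]$ giving clopenness, and density of $\Gamma$ giving a countable basis before invoking Brouwer's characterisation) it matches the paper. Where you genuinely diverge is compactness: the paper argues that $q$ is continuous and that $q([a_+,b_-])=[a,b]$, and asserts that ``compactness follows'' --- but as stated this is a non sequitur, since continuity of $q$ only pushes compactness \emph{forward}, and $[a_+,b_-]$ is not even the full preimage $q^{-1}([a,b])=[a_-,b_+]$; preimages of compact sets under continuous maps need not be compact. Your replacement --- the standard fact that a linearly ordered space is compact in the order topology if and only if it is a complete lattice, together with the explicit identification of $\sup S$ as $m_+$, $m_-$, or $m$ according to whether $m=\sup q(S)$ lies in $\Gamma$ and whether $m_+\in S$ --- is a correct and self-contained patch, and your case analysis checks out (in the boundary case $m=a$ with $m_+\notin S$ one is forced into $S=\{a_+\}$, so the supremum never escapes the interval). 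You also verify perfectness, which Brouwer's theorem requires and which the paper passes over in silence; your observation that density of $\Gamma$ rules out immediate neighbours does this cleanly. In short: same skeleton as the paper, but your version supplies a valid compactness argument where the paper's is at best a sketch, at the cost of the (admittedly fiddly) lattice-completeness bookkeeping; an alternative with the same effect would be to identify $[a_+,b_-]$ with a closed subspace of a compact character space as in the paper's Remark on $O_{\Gamma^+\subseteq\Gamma}$, but nothing in your write-up needs that.
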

\begin{proof}
First let us remark that for all $a<b$ that $(a_-,b_+)=[a_+,b_-]$, so that each of these sets are clopen. By density of $\Gamma$, these form a basis for the topology on $\mathbb{R}_\Gamma$. Thus we establish that $\mathbb{R}_\Gamma$ is second countable, with a basis of clopen sets. Note moreover that the basis elements clearly separate points in $\mathbb{R}_\Gamma$. Note that $q$ is continuous, indeed the preimage of $(a,b) \subset \mathbb{R}$ is of the form $(x,y)$ for some $x,y \in \mathbb{R}_\Gamma$. But since $q(a_-,b_+)=q([a_+,b_-])=[a,b]$, compactness follows. In total then we have that for all $a,b \in \Gamma$ $(a_-,b_+)$ is compact, and has a countable basis of compact open subsets; by Brouwer's theorem, we are done. 
\end{proof}
Another perspective of these groupoids, given by Li in \cite{xinlambda}, is that they are the groupoids associated to the inverse semigroups $\Gamma \cap [0,+\infty) \ltimes \Lambda$, i.e. the positive cone of $\Gamma \ltimes \Lambda$. Let us explain the relationship between the space $\mathbb{R}_\Gamma$ and the space of characters here. 
\begin{rmk}[Relationship between $\mathbb{R}_\Gamma, O_{\Gamma^+ \subseteq \Gamma}$]
In \cite{xinlambda}, Section 2.3 Li constructs an analagous space as follows. Let $\Gamma$ be an additive subgroup of $\mathbb{R}$. Let $D(\Gamma^+)$ be the (abelian) semigroup C*-algebra of $\Gamma \cap [0,\infty)$. This group has the basis of idempotents $\{ 1_{a + \Gamma^+ } \; a \in \Gamma \cap [0,+\infty)\} $ He then considers the Gelfand dual space $\Omega(D(\Gamma^+))$, and removes the trivial character $\chi_\infty$ such that for all $a \in \Gamma$ $\chi_\infty(1_{a+\Gamma^+})=1$. This space is denoted $O_{\Gamma^+ \subseteq \Gamma}$ Concretely, this is the space of nonzero, nontrivial, characters
$ \chi: D(\Gamma^+) \rightarrow \{0,1\}$ that are strictly decreasing with on the basis (with respect to the partial order $1_{a + \Gamma^+} \leq 1_{b+ \Gamma^+ } \iff a \leq b$).  This space is topologised in the weak operator topology, which is the topology generated by the basic compact open sets $U_{a,b}:=\{\chi \in O_{\Gamma^+ \subseteq \Gamma} \; : \; \chi(1_{a+\Gamma^+})=1, \chi(1_{b+\Gamma^+})=0 \}, a,b \in \Gamma$. There is a canonical homeomorphism:
$$ f: \mathbb{R}_\Gamma \rightarrow O_{\Gamma^+ \subseteq \Gamma} \quad a_+ \mapsto \chi_a^+, \; a_- \mapsto \chi_a^-, \; t \mapsto \chi_t \quad a \in \Gamma, t \in \mathbb{R}_\Gamma \setminus \Gamma_\pm  $$
Where for all $a,b \in \Gamma$, $t \in \mathbb{R}_\Gamma \setminus \Gamma_\pm  $,
$ \chi_a^+(1_{b+\Gamma^+})=1 \iff b\leq a, \;  \chi_a^-(1_{b+\Gamma^+})=1 \iff b<a, \; \chi_t(1_{b+\Gamma^+})=1 \iff b<t $. 
For all $a,b \in \Gamma, a<b$ $f[a_+,b_-])=U_{a,b}$. \label{identification of the spaces}
\end{rmk}

Let $\ell \in \Gamma \cap (0,+\infty)$. Then, let $q^*$ define a canonical inclusions of $[0,\ell]$ into $[0_+,\ell_-]$ that sends $(a,b] \mapsto (a_+,b_-]$ for all $a,b \in \Gamma$ with $a<b$:
$$q^*:[0,\ell] \hookrightarrow [0_+,\ell_-] \quad t \mapsto \begin{cases} t & t \nin \Gamma \ \\
0_+ & t=0 \ \\
t_- & t \in \Gamma \setminus \{0\}
\end{cases}$$
Let $\Lambda$ be a multiplicative subgroup of $(0,+\infty)$ and $\Gamma$ a $\mathbb{Z} \cdot \Lambda$-submodule, let us form the semidirect product in the following way. Let $\Lambda \acts \Gamma$ by multiplication, and for us to form the semidirect product:
$$ (a,\mu)(b,\lambda)=(\lambda^{-1}a+b,\mu \lambda) \quad \forall (a,\mu),(b,\mu) \in \Gamma \ltimes \Lambda$$
Note $(a,\mu)^{-1}=(-\mu a,\mu^{-1}), \, \forall (a,\mu) \in \Gamma \ltimes \Lambda$. Then, using the canonical action of $\Gamma \ltimes \Lambda$ on $\mathbb{R}_\Gamma$, we can define a partial action that realises $V(\Gamma,\Lambda,\ell)$ as a topological full group.
\begin{lemma}[$V(\Gamma,\Lambda,\ell)$ as the topological full group of a partial action] \label{v as a topological full group}
Let $\Lambda $ be a countable submultiplicative group of $(0,+\infty)$, and $\Gamma$ be a $\mathbb{Z} \cdot \Lambda$-submodule. Let $1<\ell \in \Gamma$. Consider the following canonical action of $\Gamma \ltimes \Lambda$ on $\mathbb{R}_\Gamma$:
$$\beta:\Gamma \ltimes \Lambda  \acts \mathbb{R}_\Gamma $$
Where for all $a_\pm \in q^{-1}(\Gamma) \cap \mathbb{R}_\Gamma, t \in q^{-1}(\Gamma^c) \cap\mathbb{R}_\Gamma$ and all $(c,\mu) \in \Gamma \ltimes \Lambda$
$$ (c,\mu)(a_\pm)=(\mu (a+c))_\pm \quad (c,\mu)(t)=\mu (t+c) $$
Consider the restriction of $\beta$ to a partial action on $[0_+,\ell_-]$. We have that:
$$ \mathsf{F}(\Gamma \ltimes \Lambda \ltimes [0_+,\ell_-]) \cong V(\Gamma,\Lambda,\ell)$$
\end{lemma}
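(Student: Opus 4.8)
The plan is to make precise the dictionary between the two descriptions of the group. By Example \ref{description of a tfg of a partial action}, the elements of $\mathsf{F}(\Gamma \ltimes \Lambda \ltimes [0_+,\ell_-])$ are exactly those homeomorphisms $\varphi$ of the Cantor space $X := [0_+,\ell_-]$ for which there is a finite partition $X = \bigsqcup_{i=1}^n X_i$ into compact open sets together with elements $g_1,\dots,g_n \in \Gamma \ltimes \Lambda$ satisfying $\varphi|_{X_i} = \beta(g_i)|_{X_i}$; and the elements of $V(\Gamma,\Lambda,\ell)$ are the right-continuous piecewise-linear bijections of $[0,\ell]$ with slopes in $\Lambda$ and finitely many breakpoints, all in $\Gamma$. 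I would define a map $\Phi$ from the former to the latter by pushing each homeomorphism $\varphi$ of $X$ through the quotient $q$ to a self-map $\Phi(\varphi)$ of $[0,\ell]$, and then show that $\Phi$ is a well-defined bijective group homomorphism.

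For the forward direction the crucial computation is that each $g = (c,\mu) \in \Gamma \ltimes \Lambda$ acts by $\beta(c,\mu)(t) = \mu(t+c) = \mu t + \mu c$ on the real points of $\mathbb{R}_\Gamma$ and by $a_\pm \mapsto (\mu(a+c))_\pm$ on the split points. Thus $q$ intertwines $\beta(g)$ with the affine map $t \mapsto \mu t + \mu c$ of slope $\mu \in \Lambda$ and intercept $\mu c \in \Gamma$, where I use $\Lambda \cdot \Gamma = \Gamma$ to guarantee $\mu c \in \Gamma$. Since $\mu > 0$, the map is orientation-preserving, so $\beta(g)$ sends $+$-decorated points to $+$-decorated points, which is precisely what forces $\Phi(\varphi)$ to be right-continuous. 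Using the clopen basis $[a_+,b_-]$ of $X$, each $X_i$ is a finite union of such basic sets, and these correspond under $q$ to half-open subintervals of $[0,\ell]$ with endpoints in $\Gamma$; hence $\Phi(\varphi)$ is piecewise linear with finitely many breakpoints, all in $\Gamma$, and slopes in $\Lambda$. That $\Phi(\varphi)$ is a bijection of $[0,\ell]$ follows at once from $\varphi$ being a homeomorphism of $X$.

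For surjectivity I would run this construction in reverse. Given $f \in V(\Gamma,\Lambda,\ell)$ with a partition $[0,\ell] = \bigsqcup_i (x_i,x_{i+1}]$ on which $f(t) = \mu_i t + c_i$, I first observe that $c_i \in \Gamma$: the image $f((x_i,x_{i+1}])$ is a half-open interval with endpoints in $\Gamma$, so $c_i = f(x_{i+1}) - \mu_i x_{i+1} \in \Gamma$ because $\mu_i x_{i+1} \in \Gamma$. Setting $g_i := (\mu_i^{-1} c_i, \mu_i)$, the identity $\Lambda \cdot \Gamma = \Gamma$ gives $\mu_i^{-1} c_i \in \Gamma$, so $g_i \in \Gamma \ltimes \Lambda$, and $\beta(g_i)$ restricts on the clopen set $[(x_i)_+,(x_{i+1})_-]$ to the lift of the $i$-th affine piece of $f$. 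Assembling these pieces yields the required $\varphi$ with $\Phi(\varphi) = f$. Injectivity of $\Phi$ is immediate, since the real points are dense in $X$ and a homeomorphism is determined by its restriction to them; and the homomorphism property follows because $q$ intertwines composition of the maps $\beta(g)$ with composition of the corresponding affine pieces.

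The step I expect to be most delicate is the bookkeeping of the $+/-$ decorations: matching each compact open set $[a_+,b_-]$ of $X$ with the half-open interval $(a,b]$ of $[0,\ell]$ (with the leftmost piece $[0,b]$ corresponding to $[0_+,b_-]$), and verifying that a right-continuous PL bijection of $[0,\ell]$ lifts to a genuine homeomorphism of $X$ rather than merely to a bijection of a dense subset. This is exactly where the construction of $\mathbb{R}_\Gamma$ — splitting each point of $\Gamma$ into $a_+,a_-$ — does its work, converting the jump discontinuities of elements of $V(\Gamma,\Lambda,\ell)$ into honest continuity on $X$. I would discharge it by checking the correspondence on the clopen basis and extending by the total order.
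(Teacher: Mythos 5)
Your proposal is correct and takes essentially the same route as the paper: both use Example \ref{description of a tfg of a partial action} to identify elements of $\mathsf{F}(\Gamma \ltimes \Lambda \ltimes [0_+,\ell_-])$ with homeomorphisms that are piecewise given by $\beta(c_i,\mu_i)$ on a clopen partition $[(x_i)_+,(x_{i+1})_-]$, and then transport through the quotient $q$ --- your $\Phi$ is exactly the paper's explicit isomorphism $g \mapsto qgq^*$. Your write-up is in fact more detailed than the paper's (which asserts the isomorphism without spelling out surjectivity or the membership $c_i \in \Gamma$), and the only loose end, the left-versus-right continuity bookkeeping at breakpoints under the identification $[a_+,b_-] \leftrightarrow (a,b]$, mirrors an inconsistency already present in the paper itself between Definition \ref{steins group} and the conventions used in its proofs.
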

\begin{proof} 
First let us note that $\beta$ does define an action on $\mathbb{R}_\Gamma$. For all $t \in \mathbb{R}_\Gamma \cap q^{-1}(\Gamma^c)$ 
$$ (c,\mu)((b,\nu)(t))=(c,\mu )(\nu t + b)=\mu((\nu t + b)+c)=\mu\nu (t+b + \nu^{-1}c )=((c,\mu)(b,\nu))(t)$$
And similarly for $a_\pm \in \mathbb{R}_\Gamma \cap q^{-1}(\Gamma) $.
Now let us follow Example \ref{description of a tfg of a partial action} for understanding the ample groupoid.
 A basis of the compact open bisections of $\Gamma \ltimes \Lambda \ltimes [0_+,\ell_-]$ is given by 
$$ ((c,\mu),[a_+,b_-]) \quad c \in \Gamma \quad a,b \in \Gamma \quad \max\{-c,0\} \leq a < b \leq \min\{\mu^{-1}-c,1\}  $$
Hence elements of $\mathsf{F}(\Gamma \ltimes \Lambda \ltimes [0_+,\ell_-])$ are homeomorphisms $f$ of $[0_+,\ell_-]$ for which there exists a finite subset $\{x_i\}_{i=1}^n \subset \Gamma$ with $0=x_1<...<x_n=1$ and elements $\{(c_i,\mu_i) \}_{i=1}^n$ such that $$f|_{[(x_i)_+,(x_{i+1})_-]}=\beta(c_i,\mu_i)$$
From here, the proof follows since
$$\varphi: \mathsf{F}(\Gamma \ltimes \Lambda \ltimes_{\alpha} [0_+,1_-])  \rightarrow V(\Gamma,\Lambda,\ell) \quad g \mapsto qgq^* $$
provides an explicit isomorphism. 
\end{proof}

Let us also remark that $[0_+,\ell_-]$ is always $\Gamma \ltimes \mathbb{R}_\Gamma$ full (in fact, it meets every $\Gamma$ orbit in $\mathbb{R}_\Gamma$, by density, since for all $x \in \mathbb{R}_\Gamma$, we can choose $a \in \Gamma$ such that $0<q(x)-a<\ell$). 
It is time to remark on some basic facts for these partial action groupoids:

\begin{lemma}
    $\beta:\Gamma \ltimes \Lambda \acts \mathbb{R}_\Gamma$ is a topologically free, amenable, minimal action. In other words, as a groupoid, $\Gamma \ltimes \Lambda \ltimes_\beta [0_+,\ell_-]$ is topologically principal, amenable and minimal.  \label{groupoid minimal} 
\end{lemma}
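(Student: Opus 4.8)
The plan is to verify the three properties—minimality, topological freeness, and amenability—separately, reducing each to an elementary statement about the group $\Gamma \ltimes \Lambda$ and its action, and then invoking the standard dictionary between actions and transformation groupoids recorded in the transformation-groupoid Example. Throughout I use that the basic clopen sets of $[0_+,\ell_-]$ have the form $[a_+,b_-] = (a_-,b_+)$ with $a<b$ in $\Gamma$.

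For minimality I would show that the translation subgroup $\Gamma \times \{1\}$ already has dense orbits. Given any $x \in [0_+,\ell_-]$ and any basic clopen $[a_+,b_-] \subseteq [0_+,\ell_-]$, density of $\Gamma$ in $\mathbb{R}$ lets me pick $c \in \Gamma$ with $a < q(x)+c < b$; then $\beta(c,1)(x) = x+c$ has $q$-value strictly between $a$ and $b$, so it lies in the open interval $(a_-,b_+) = [a_+,b_-]$ regardless of any $\pm$ decoration. Since the image also lies in $[0_+,\ell_-]$, this $(c,1)$ is a legitimate arrow of the partial action groupoid, so every orbit meets every basic clopen set, giving minimality of $\beta$, equivalently minimality of the groupoid.

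For topological freeness I would compute the fixed-point set of an arbitrary nonidentity $(c,\mu)$. On the dense open set $q^{-1}(\Gamma^c)$, a point $t$ is fixed iff $\mu(t+c) = t$: if $\mu = 1$ this forces $c \neq 0$ to have no solution, while if $\mu \neq 1$ it has the unique solution $t = -\mu c/(\mu-1)$. The corresponding equation $\mu(a+c)=a$ on the decorated points $a_\pm$ likewise admits at most one value of $a$. Hence the fixed-point set of any nontrivial element is finite, so has empty interior, which is precisely topological freeness; equivalently the groupoid is topologically principal, and being Hausdorff it is effective.

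For amenability the key observation is that $\Gamma \ltimes \Lambda$ is an \emph{amenable} discrete group: as an additive subgroup of $\mathbb{R}$ the group $\Gamma$ is abelian, and $\Lambda$ is an abelian subgroup of $(\mathbb{R}_+,\cdot)$, so the semidirect product is an extension of one amenable group by another and is therefore amenable. The transformation groupoid of an amenable discrete group acting on a locally compact Hausdorff space is an amenable \'etale groupoid, so $\Gamma \ltimes \Lambda \ltimes \mathbb{R}_\Gamma$ is amenable; since amenability of \'etale groupoids is inherited under restriction to a clopen subset of the unit space, the partial action groupoid $\Gamma \ltimes \Lambda \ltimes_\beta [0_+,\ell_-] = (\Gamma \ltimes \Lambda \ltimes \mathbb{R}_\Gamma)|_{[0_+,\ell_-]}$ is amenable as well. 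I expect none of the three steps to present genuine difficulty; the only points requiring care are the bookkeeping of the $\pm$ decorations when checking that translated points land inside basic clopen sets, and citing the correct permanence properties for amenability (amenable group implies amenable transformation groupoid, and stability of amenability under restriction to a clopen piece of the unit space).
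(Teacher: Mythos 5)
Your proposal is correct and follows essentially the same route as the paper: amenability via solvability of $\Gamma \ltimes \Lambda$, minimality via density of the translation orbits $q(x)+\Gamma$ in $\mathbb{R}$, and topological freeness via the same fixed-point computation $q(x)(1-\mu)=c\mu$, which shows each nontrivial $(c,\mu)$ fixes at most the two decorated points over $c\mu/(1-\mu)$. The only cosmetic difference is that you verify minimality by hitting basic clopen sets $[a_+,b_-]$ directly, where the paper phrases the same density argument through convergence rules for sequences in $\mathbb{R}_\Gamma$; both are fine, and your empty-interior conclusion for finite fixed-point sets is justified since each $[a_+,b_-]$ is a Cantor set and hence $\mathbb{R}_\Gamma$ has no isolated points.
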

\begin{proof}
The action $\beta$ is amenable since $\Gamma \ltimes \Lambda$ is an amenable group. 

Now let us establish minimality. First let us remark the following convergence rules in $\mathbb{R}_\Gamma$:
$$\lim_{n \rightarrow \infty} x_n=x_+ \iff \lim_{n \rightarrow \infty} q(x_n)=q(x) \text{ from above }$$
$$\lim_{n \rightarrow \infty } x_n=x_- \iff lim_{n \rightarrow \infty} q(x_n)=q(x) \text{ from below }$$
$$ \lim_{n \rightarrow \infty } x_n=x \text{ s.t. } x \nin \Gamma \iff \lim_{n \rightarrow \infty } q(x_n)=x$$
For all $x \in \mathbb{R}_\Gamma$, the image of the orbit $\Gamma x$ under $q$, $q(\Gamma x)=q(x)+ \Gamma \subset \mathbb{R}$ is both left-dense and right-dense in $\mathbb{R}$, so we can find sequences in $q(x)+\Gamma$ tending to any $x' \in \Gamma$ from above or below, and sequences approximating any $x' \nin \Gamma$ in $q(\Gamma x)$. Minimality follows. 

It remains to show that $\beta$ is topologically free.
Let $(c,\mu) \in \Gamma \ltimes \Lambda$, with $(c,\mu) \neq (0,1)$. Suppose that $(c,\mu)x=x$ for $x \in \mathbb{R}_\Gamma$. This occurs $\iff q((c,\mu)x)=q(x) \iff q(x)(1-\mu)=c\mu$. Note then $\mu \neq 1$, otherwise this forces $c=0$. Otherwise, we have that $(c,\mu)$ fixes only $q^{-1}(c\mu/(1-\mu))=\{c\mu/(1-\mu)_\pm \}$. In particular, $(c,\mu)$ does not fix $x$ such that $q(x) \nin \Gamma$, so $\mathbb{R}_\Gamma \setminus \{x_\pm \; : \; x \in \Gamma \}$ is a dense set which $\Gamma \ltimes \Lambda$ acts freely on. 
\end{proof} 

Note in particular as a Corollary of Matui's isomorphism Theorem (Theorem \ref{matui isomorphism theorem}), and the regularity established for $\beta$ are conjugate to some known groupoids. Let $\mathcal{E}_k$ denote the full shift on $k$ generators, and $\mathcal{R}_r$ be the full equivalence relation on $r$ points. We have the following:
\begin{corollary}
    Up to groupoid conjugacy, $\Gamma \ltimes \Lambda \ltimes_\beta [0_+,\ell_-]$ is the unique ample groupoid of germs $\G$ such that $\mathsf{F}(\G) \cong V(\Gamma,\Lambda,\ell)$. In particular, for all $2<k \in \mathbb{N}$, $r \in \mathbb{N}$, 
$\mathbb{Z}[1/k] \ltimes \langle k \rangle \ltimes_\beta [0_+,r_-] \cong \mathcal{R}_r \times \mathcal{E}_k$. \label{unique groupoid conjugacy} 
\end{corollary}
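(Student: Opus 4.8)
The plan is to deduce the whole statement from the Matui--Rubin isomorphism theorem (Theorem \ref{matui isomorphism theorem}), feeding in the structure already recorded in Lemmas \ref{v as a topological full group} and \ref{groupoid minimal}. Write $\G_0 := \Gamma \ltimes \Lambda \ltimes_\beta [0_+,\ell_-]$. First I would confirm that $\G_0$ lies in the class to which Theorem \ref{matui isomorphism theorem} applies: it is ample with Cantor unit space $[0_+,\ell_-]$ (by the earlier lemma identifying that interval as a Cantor set), and by Lemma \ref{groupoid minimal} it is minimal and topologically principal, hence --- being Hausdorff --- effective, i.e. essentially principal. By Lemma \ref{v as a topological full group}, $\mathsf{F}(\G_0) \cong V(\Gamma,\Lambda,\ell)$. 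For the uniqueness clause I would take any essentially principal, minimal, ample Cantor groupoid $\G$ with $\mathsf{F}(\G) \cong V(\Gamma,\Lambda,\ell)$; here the phrase ``groupoid of germs'' is exactly what secures effectiveness, since the groupoid of germs construction is effective by design and effectiveness coincides with essential principality in the Hausdorff setting. Then $\mathsf{F}(\G) \cong \mathsf{F}(\G_0)$ as discrete groups, and a single application of Theorem \ref{matui isomorphism theorem} upgrades this to a groupoid isomorphism $\G \cong \G_0$, which is the claimed uniqueness up to groupoid conjugacy.

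For the ``in particular'' clause I would specialize to $\Gamma = \mathbb{Z}[1/k]$, $\Lambda = \langle k \rangle$, $\ell = r$, so that Lemma \ref{v as a topological full group} and Example \ref{higmans group} give $\mathsf{F}(\mathbb{Z}[1/k] \ltimes \langle k \rangle \ltimes_\beta [0_+,r_-]) \cong V(\mathbb{Z}[1/k], \langle k \rangle, r) \cong V_{k,r}$. On the other side I would invoke the standard identification (due to Matui) of $V_{k,r}$ as the topological full group of $\mathcal{R}_r \times \mathcal{E}_k$, and then check that this product groupoid also satisfies the Matui--Rubin hypotheses. Here $\mathcal{E}_k$ is a minimal, effective, ample Cantor groupoid, while $\mathcal{R}_r$ is principal with a single $r$-point orbit; the isotropy and the orbit of a point $(i,x)$ in the product factor through the two coordinates, so essential principality passes to the product, and minimality holds because the dense $\mathcal{E}_k$-orbit of $x$, replicated across all $r$ sheets via $\mathcal{R}_r$, is dense in the product unit space. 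With both topological full groups identified with $V_{k,r}$, a second application of Theorem \ref{matui isomorphism theorem} produces the isomorphism $\mathbb{Z}[1/k] \ltimes \langle k \rangle \ltimes_\beta [0_+,r_-] \cong \mathcal{R}_r \times \mathcal{E}_k$.

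The work is genuinely in the hypothesis-checking rather than in any single deduction, so the main obstacle is making the two verifications airtight rather than producing the isomorphisms. The most delicate point is the minimality of $\mathcal{R}_r \times \mathcal{E}_k$: minimality of a product groupoid does not follow formally from minimality of the factors, and one must use that $\mathcal{R}_r$ has only finitely many units, all in one orbit, to conclude that the orbit of $(i,x)$ is dense. The second point worth care is matching ``ample groupoid of germs'' with the precise essential-principality hypothesis of Theorem \ref{matui isomorphism theorem}, so that the uniqueness is stated over exactly the class in which the theorem operates. The external fact $\mathsf{F}(\mathcal{R}_r \times \mathcal{E}_k) \cong V_{k,r}$ I would cite rather than reprove.
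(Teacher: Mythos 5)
Your proposal is correct and takes essentially the same route as the paper: the paper states this corollary without a written proof, treating it as an immediate consequence of the Matui--Rubin isomorphism theorem (Theorem \ref{matui isomorphism theorem}) together with the regularity established in Lemmas \ref{v as a topological full group} and \ref{groupoid minimal}, plus Matui's identification $\mathsf{F}(\mathcal{R}_r \times \mathcal{E}_k) \cong V_{k,r}$, which is exactly the skeleton you flesh out. One small correction to your commentary: minimality of the product is in fact formal, since the $(\G_1 \times \G_2)$-orbit of a unit $(u_1,u_2)$ is precisely $\G_1(u_1) \times \G_2(u_2)$ and a product of dense sets is dense in the product topology, so your verification is right but the delicacy you flag is not actually present.
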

\begin{rmk}
    Following on from Remark \ref{identification of the spaces}, let us make the point that there are equivalent but nonetheless alternative ways to construct this groupoid (up to groupoid conjugacy). One such description arises from cancellative semigroups. Let $\Gamma^+$ denote the positive cone of $\Gamma$. Consider the subsemigroup $\Gamma^+ \ltimes \Lambda$ of $\Gamma \ltimes \Lambda$,  and the universal groupoid (in the sense of Li) $\G(\Gamma^+ \ltimes \Lambda)$ of this semigroup. This groupoid is the semigroup transformation groupoid of the canonical action of the semigroup on the dual of the semilattice of idempotents, which in this case is just the dual of $\Gamma^+$. If we take the full corner of this groupoid restricted to  the subset $U$ of characters $\chi$ such that $\chi(\ell)=1$, this describes a conjugate groupoid. See [\cite{xinlambda}, \cite{cuntz2017k}, Chapter 5] for more information on the universal groupoids of semigroups that embed into groups.  
\end{rmk}

We next verify that the groupoid $(\Gamma \ltimes \Lambda) \ltimes_{\beta} [0_+,\ell_-]$ is purely infinite, in the sense of Definition \ref{purely infinite}.

\begin{lemma}
    Let $\Lambda$ a nontrivial multiplicative subgroup of $\mathbb{R}$, $\Gamma$ a $\mathbb{Z} \cdot \Lambda$-submodule and $\ell \in \Gamma$. Then $(\Gamma \ltimes \Lambda) \ltimes_{\beta} [0_+,\ell_-]$ is a purely infinite groupoid. \label{groupoid purely infinite}
\end{lemma}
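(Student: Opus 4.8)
The plan is to verify Definition \ref{purely infinite} directly: given any compact open subset $A \subseteq \G^{(0)}=[0_+,\ell_-]$, we must produce two compact open bisections $B, B' \in \mathcal{B}^k$ with $s(B)=s(B')=A$ and disjoint ranges $r(B), r(B')$ both contained in $A$. The key mechanism is that the partial action $\beta$ contains genuine contractions: for any $\lambda \in \Lambda$ with $\lambda < 1$, the element $(0,\lambda)$ acts as multiplication by $\lambda$, and since $\Lambda$ is a nontrivial multiplicative subgroup of $(0,+\infty)$, such a $\lambda$ exists (if every nonidentity element exceeded $1$, inverting would produce one below $1$). This lets me shrink any basic clopen interval $[a_+,b_-]$ down onto an arbitrarily small subinterval, and by composing with a translation $(c,1)$ I can place its image wherever I like inside $A$.

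First I would reduce to the case $A=[a_+,b_-]$ a single basic clopen interval, since a general compact open $A$ is a finite disjoint union of such intervals; I would handle each piece and reassemble, using that a disjoint union of compact open bisections with disjoint sources and disjoint ranges is again a compact open bisection (as noted in the excerpt before Definition \ref{purely infinite}). So fix $A=[a_+,b_-]$ with $a<b$ in $\Gamma$. Pick $\lambda \in \Lambda$ with $\lambda<1$; by iterating I may assume $\lambda$ is small enough that $2\lambda(b-a) < (b-a)$, i.e. $\lambda < 1/2$, so that two scaled copies of $A$ fit inside $A$ with room to spare. I then define $B$ to be the bisection implementing the affine map $t \mapsto \lambda t + d$ on $A$, where $d \in \Gamma$ is chosen so that $\lambda a + d$ and $\lambda b + d$ both lie strictly inside $(a,b)$; concretely $B=((d',\lambda),[a_+,b_-])$ for the appropriate group element $(d',\lambda) \in \Gamma \ltimes \Lambda$ realising this affine map under $\beta$. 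I then define $B'$ analogously with a different translation constant $d''$ so that the image interval $[(\lambda a + d'')_+,(\lambda b + d'')_-]$ lands in the complementary part of $A$, disjoint from $r(B)$. Since $\Gamma$ is dense and $b-a>0$, there is ample room in $(a,b)$ to position two disjoint scaled copies; checking $r(B) \cap r(B')=\emptyset$ and $r(B) \sqcup r(B') \subseteq A$ is then an elementary interval-arithmetic verification.

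The one point requiring care, and which I expect to be the main obstacle, is confirming that these affine maps genuinely arise as compact open bisections of the \emph{partial} groupoid $\Gamma \ltimes \Lambda \ltimes_\beta [0_+,\ell_-]$, rather than merely of the global transformation groupoid on $\mathbb{R}_\Gamma$. This is a matter of checking the constraints recorded in Lemma \ref{v as a topological full group}: the group element $(c,\mu)$ paired with the interval $[a_+,b_-]$ yields a legitimate bisection only when both $[a_+,b_-]$ and its image under $\beta(c,\mu)$ lie inside $[0_+,\ell_-]$. Since by construction the image intervals are deliberately chosen to sit strictly inside $A \subseteq [0_+,\ell_-]$, this inclusion holds automatically, so the obstacle is really just bookkeeping to exhibit the correct $(c,\mu)$ and confirm the source/range constraints. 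I would also note that effectiveness, needed for Definition \ref{purely infinite} to apply, is already furnished by the topological freeness established in Lemma \ref{groupoid minimal}. With the two bisections in hand and their source and range conditions verified, pure infiniteness follows immediately.
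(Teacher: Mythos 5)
Your proposal is correct and takes essentially the same approach as the paper's proof: write $A$ as a finite disjoint union of basic clopen intervals $[(a_i)_+,(b_i)_-]$ and use contraction elements $(c,\lambda)$ with $\lambda \leq 1/2$ to produce two bisections with source $A$ and disjoint ranges inside $A$ (the paper anchors the two contracted copies at the endpoints $a_i$ and $b_i$, where you leave the translation constants generic, and it assembles the bisections over all pieces at once rather than piecewise). The bookkeeping you flag --- realising $t \mapsto \lambda t + d$ as the element $(\lambda^{-1}d,\lambda) \in \Gamma \ltimes \Lambda$ and checking that the image intervals remain in $[0_+,\ell_-]$ --- is precisely the verification the paper carries out.
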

\begin{proof}
   
By compactness, any clopen subset can be written as a finite disjoint union of sets of the form $[a_+,b_-]$. Therefore, let $A=\cup_{i=1}^n [(a_i)_+,(b_i)_-]$ where each $a_i,b_i \in \Gamma$. Then let $\lambda \in \Lambda$ with $0 \leq \lambda \leq 1/2$, set
$$U= \bigsqcup_{i=1}^n  ((a_i(1-\lambda^{-1}),\lambda),[(a_i)_+,(b_i)_-]),  \; V=\bigsqcup_{i=1}^n(b_i(1-\lambda^{-1}),\lambda), [(a_i)_+,(b_i)_-]).$$ 
It is straightforward to verify that $U,V$ are indeed bisections.

We compute
$$U^{-1}=\bigsqcup_{i=1}^n ((-\lambda a_i (1-\lambda^{-1}), \lambda) , [(a_i)_+,(a_i+\lambda(b_i-a_i))_-]) $$ $$
V^{-1}=\bigsqcup_{i=1}^n(-\lambda b_i (1-\lambda^{-1}), \lambda),[(b_i-\lambda(a_i-b_i))_+,(b_i)_-]) $$
This gives us exactly what we need. $s(U)=s(V)=\bigsqcup_{i=1}^n[(a_i)_+,(b_i)_-]=A$, $r(U)= \bigsqcup_{i=1}^n [(a_i)_+, (a_i+\lambda (a_i-b_i)_-]$, $r(V)= \bigsqcup_{i=1}^n [(b_i-\lambda(a_i-b_i))_+,(b_i)_-]$ and so $r(V) \cap r(U) = \emptyset$ whilst simultaneously $r(U) \cup r(V) \subset A$. 
\end{proof}
\begin{rmk}
An alternative, (group theoretic) proof that $(\Gamma \ltimes \Lambda) \ltimes_{\beta} [0_+,\ell_-]$ is a purely infinite, minimal groupoid is to observe that $V(\Gamma,\Lambda,\ell)$ is vigorous for all choices of $\Gamma,\Lambda,\ell$ and apply Theorem A \cite{gardella2023generalisations} (Theorem \ref{tgardellathm}). 
\end{rmk}
Hence, by Theorem \ref{d simple} we obtain that $D(V(\Gamma,\Lambda,\ell))$ is always simple. We also have via [\cite{gardella2023generalisations}, Cor D] that  $D(V(\Gamma,\Lambda,\ell))$ has no proper characters.
\begin{corollary}[Simple, Vigorous Derived Subgroup]
Let $\Gamma,\Lambda,\ell$ be arbitrary. Then,   $D(V(\Gamma,\Lambda,\ell))$ is simple and vigorous. Moreover, it is identified with the alternating subgroup $A(\Gamma \ltimes \Lambda \ltimes [0_+,1_-])$. Finally, $D(V(\Gamma,\Lambda,\ell))$ has no proper characters. \label{simple derived subgroup}
\end{corollary}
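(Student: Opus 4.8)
The plan is to assemble Corollary \ref{simple derived subgroup} entirely from results already established earlier in the paper, so that the ``proof'' is a bookkeeping argument that invokes the right theorem for each claim. First I would observe that the groupoid $\G := (\Gamma \ltimes \Lambda) \ltimes_\beta [0_+,\ell_-]$ is an effective (indeed topologically principal, hence essentially principal), ample Cantor groupoid: effectiveness follows from Lemma \ref{groupoid minimal}, which shows $\beta$ is topologically free, together with the remark that for Hausdorff groupoids topological principality and effectiveness coincide; ampleness and the Cantor unit space follow from the construction in Lemma \ref{v as a topological full group} and the fact that $[0_+,\ell_-]$ is a Cantor space. This verifies the standing hypotheses needed to apply the structural theorems.

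Next I would dispatch the three substantive claims in turn. For \emph{simplicity} of $\mathsf{D}(\G) \cong \mathsf{D}(V(\Gamma,\Lambda,\ell))$, I combine Lemma \ref{groupoid purely infinite} (pure infiniteness) with Lemma \ref{groupoid minimal} (minimality) and feed these into Theorem \ref{d simple}: since $\G$ is purely infinite, effective, minimal Cantor, that theorem yields both that $\mathsf{D}(\G)$ is simple and that $\mathsf{D}(\G) \cong \mathsf{A}(\G)$, which is exactly the identification with the alternating subgroup asserted in the statement. For \emph{vigorousness}, I apply Theorem \ref{tgardellathm}: $\G$ is essentially principal and ample, and it is purely infinite and minimal by the two lemmas just cited, so the first bullet of that theorem holds, and the equivalence with the second bullet immediately gives that $\mathsf{D}(\G)$ is vigorous. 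For the \emph{no proper characters} claim, I cite \cite[Corollary D]{gardella2023generalisations} directly, as flagged in the sentence preceding the corollary; this is a consequence of the same pure-infiniteness-and-minimality input.

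The only genuinely non-routine point is checking that the hypotheses of each external theorem are literally met by $\G$ rather than merely by its transformation-groupoid ambient object, so I would spend a sentence reconciling vocabulary: Theorem \ref{d simple} is stated for ``purely infinite, effective Cantor'' groupoids and Theorem \ref{tgardellathm} for ``essentially principal, ample'' ones, and I must confirm that our partial transformation groupoid satisfies both descriptions simultaneously. This is where the earlier remark that effectiveness, topological principality, and essential principality all agree for Hausdorff groupoids does the necessary translation work; the partial action groupoid is Hausdorff by the example following the definition of partial actions. I do not expect any real obstacle here, since every ingredient has already been proved in the preceding lemmas; the corollary is precisely the payoff of that accumulated regularity. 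Once the hypotheses are confirmed, the three conclusions follow by direct citation, and the proof closes.
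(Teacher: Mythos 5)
Your proposal is correct and follows essentially the same route as the paper, which presents this corollary as an immediate consequence of Lemma \ref{groupoid minimal} (minimality and topological freeness), Lemma \ref{groupoid purely infinite} (pure infiniteness), Theorem \ref{d simple} (simplicity and $\mathsf{D}(\G)\cong\mathsf{A}(\G)$), Theorem \ref{tgardellathm} (vigor), and the cited Corollary D of \cite{gardella2023generalisations} (no proper characters). Your extra care in reconciling the effective/topologically principal/essentially principal vocabulary for the Hausdorff partial transformation groupoid is a sound addition that the paper leaves implicit.
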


Using Definition, we can also realize the groups $V(\Gamma,\Lambda,U)$ which act upon on noncompact intervals. 
\begin{lemma}
Let $\Lambda$ be a multiplicative subgroup of $(0,+\infty)$. Let $\Gamma$ be a $\mathbb{Z} \cdot \Lambda$ submodule. Let $U$ be a closed interval of $\mathbb{R}$ with nonempty interior. Then,
$V(\Gamma,\Lambda,U) \cong F( \Gamma \ltimes \Lambda \ltimes U) $. The derived subgroup, $D(V(\Gamma,\Lambda,U))$ is simple and vigorous. 
\end{lemma}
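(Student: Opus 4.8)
The plan is to run the argument of Lemma \ref{v as a topological full group} in the noncompact regime, replacing the compact unit space $[0_+,\ell_-]$ by $U_\Gamma := q^{-1}(U) \subseteq \mathbb{R}_\Gamma$ and replacing $\mathsf{F}(\G)$ by the Nyland--Ortega full group $F(\G)$ of Definition \ref{ortega nyland}, which is the appropriate object once $\G^{(0)}$ is no longer compact. First I would record that $U_\Gamma$ is a clopen (hence locally compact, totally disconnected, Hausdorff) subset of $\mathbb{R}_\Gamma$ and, because $U$ has nonempty interior, that $U_\Gamma$ contains a basic set $[a_+,b_-]$ and therefore meets every $\Gamma \ltimes \Lambda$-orbit, exactly as in the remark following Lemma \ref{v as a topological full group}. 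Thus the restriction $\Gamma \ltimes \Lambda \ltimes_\beta U_\Gamma$ is a well-defined ample groupoid, namely the groupoid $\Gamma \ltimes \Lambda \ltimes U$ of the statement, which I abbreviate $\G_U$.

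For the isomorphism $V(\Gamma,\Lambda,U) \cong F(\G_U)$ I would note that a basis of compact open bisections of $\G_U$ is again given by the sets $((c,\mu),[a_+,b_-])$ with $[a_+,b_-] \subseteq U_\Gamma$ and $(c,\mu)[a_+,b_-] \subseteq U_\Gamma$. By Definition \ref{ortega nyland} an element of $F(\G_U)$ is a homeomorphism $\pi_B$ attached to a compact open bisection $B$ with $s(B) = r(B)$; decomposing this compact $B$ into finitely many basic pieces $((c_i,\mu_i),[(a_i)_+,(b_i)_-])$ shows that $\pi_B$ agrees with $\beta(c_i,\mu_i)$ on each piece and is the identity outside the compact set $s(B)$. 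Transporting through the quotient map as before, $g \mapsto q g q^*$ is an isomorphism onto the group of piecewise-linear bijections of $U$ with finitely many slopes in $\Lambda$ and breakpoints in $\Gamma$; here the compactness of $s(B)$ corresponds precisely to the map being the identity near the (possibly infinite) ends of $U$, so the image is exactly $V(\Gamma,\Lambda,U)$ as in Definition \ref{noncompact}. The checks that $q g q^*$ is a bijection of $U$ and that the map is a homomorphism are identical to the compact case and I would omit them.

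For the derived subgroup the strategy is to establish the groupoid-level hypotheses for $\G_U$ and then invoke the noncompact generalisations of Theorems \ref{d simple} and \ref{tgardellathm}. Effectiveness holds because $\beta$ is topologically free on all of $\mathbb{R}_\Gamma$ by Lemma \ref{groupoid minimal}, and topological freeness passes to the open subset $U_\Gamma$ (fixed-point sets still have empty interior); minimality holds because every $\beta$-orbit is dense in $\mathbb{R}_\Gamma$ and $U_\Gamma$ is open, so each orbit meets $U_\Gamma$ in a dense subset; and pure infiniteness holds because the construction of the witnessing bisections in Lemma \ref{groupoid purely infinite} is local to an arbitrary compact clopen $A = \bigsqcup_i [(a_i)_+,(b_i)_-]$ and never used compactness of the ambient unit space, so it applies verbatim inside $\G_U$. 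With $\G_U$ effective, ample, minimal and purely infinite, the Nyland--Ortega extension of Theorem \ref{d simple} gives that $\mathsf{D}(\G_U)$ is simple, and Theorem \ref{tgardellathm} gives that $\mathsf{D}(\G_U)$ is vigorous; transporting along the isomorphism of the first part yields that $D(V(\Gamma,\Lambda,U))$ is simple and vigorous.

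The main obstacle is not a single computation but ensuring that the cited theorems are genuinely available in the noncompact regime: Theorem \ref{d simple}, the definition of \emph{vigorous}, and the pure-infiniteness witness of Definition \ref{purely infinite} are all phrased for Cantor (compact) unit spaces, whereas $U_\Gamma$ is only locally compact. I would address this by working with the Nyland--Ortega full group throughout and by reading \emph{vigorous} in its natural extension to locally compact totally disconnected $X$ (compact open $U,Y_1,Y_2$ with $Y_1,Y_2 \subseteq U$, producing $g$ supported in $U$ with $g(Y_1) \subseteq Y_2$), which is the form in which Theorem \ref{tgardellathm} is stated for general ample $\G$. The one genuinely new point compared with the compact case is the restriction of a minimal, purely infinite partial action to the noncompact but full clopen subset $U_\Gamma$, together with the check that minimality and pure infiniteness survive this restriction; both reduce to the locality of the earlier arguments.
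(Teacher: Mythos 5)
Your proposal is correct and is essentially the argument the paper intends: the paper states this lemma without proof, as an immediate consequence of Lemma \ref{v as a topological full group}, the regularity Lemmas \ref{groupoid minimal} and \ref{groupoid purely infinite}, the Nyland--Ortega full group of Definition \ref{ortega nyland} together with their noncompact generalisation of Theorem \ref{d simple}, and Theorem \ref{tgardellathm} --- exactly the ingredients you assemble, including the two points that genuinely need checking (locality of the pure-infiniteness witness, and compactness of $s(B)$ corresponding to triviality near the ends of $U$). One small correction: the full preimage $q^{-1}(U)$ is closed but \emph{not} open in $\mathbb{R}_\Gamma$ when an endpoint $a$ of $U$ lies in $\Gamma$, since any basic neighbourhood of the extra boundary point $a_-$ contains points below it; you should instead take the unique cylinder set mapping onto $U$, e.g.\ $[a_+,b_-]$ for $U=[a,b]$, matching $[0_+,\ell_-]$ in the compact case and the set $\hat{U}_1$ used in the proof of Corollary \ref{cor hom best} --- with that substitution everything you wrote goes through verbatim.
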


\section{Finite Generation of The Derived Subgroup}

\subsection{Expansivity of Partial Transformation Groupoids}
Nekrashevych showed that the alternating group of a topological full group is finitely generated if the underlying ample groupoid has a technical condition known as expansivity. We recall the definition below:
\begin{definition}[Expansive \cite{nekrashevych2019simple}]
For  an \'etale Cantor groupoid $\G$:
\label{expansive groupoid}
\begin{itemize}
    \item A compact set $K \subset \G$ is called a compact generating set such that $\G= \bigcup_{n \in \mathbb{N}}(K \cup K^{-1})^n$.
    \item A finite cover $\mathcal{B}=\{B_{i}\}_{i=1}^N$ of bisections is called expansive if $\bigcup_{n \in \mathbb{N}} (\mathcal{B} \cup \mathcal{B}^{-1})^n$ forms a basis for the topology of $\G$. 
    \item $\G$ is called expansive if there is a compact generating subset $K$ with an expansive cover $\mathcal{B}$.
    \label{expansive nekrashevych}
\end{itemize}\end{definition}
\begin{theorem}Let $\G$ be an expansive Cantor groupoid with infinite orbits. Then $\mathsf{A}(\G)$ is finitely generated [\cite{nekrashevych2019simple}, Theorem 5.6]. \label{finite generatedness theorem}
\end{theorem}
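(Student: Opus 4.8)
The plan is to exhibit an explicit finite subset $S\subseteq \mathsf{A}(\G)$ coming from the expansive data and then to rewrite every alternating generator $\gamma_{B_1,B_2}$ as a word in $S$, via an induction on ``length'' in the cover. First I would fix a compact generating set $K$ and an expansive finite cover $\mathcal{B}=\{B_1,\dots,B_N\}$, and — replacing $\mathcal{B}$ by a finite refinement if necessary — arrange that $\mathcal{B}$ is closed under inversion, that $K\subseteq \mathcal{B}$, and that the source/range sets of the $B_i$ are unions of pieces of a fixed finite clopen partition $\mathcal{P}_0$ of $\G^{(0)}$. By expansivity, the restricted products $W_w:=B_{i_1}\cdots B_{i_k}$ over finite words $w=(i_1,\dots,i_k)$ (restricted so the composition is a genuine bisection) form a basis of compact open bisections. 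The first reduction is to show that $\mathsf{A}(\G)$ is generated by \emph{basic} $3$-cycles, i.e.\ elements $\gamma_{C_1,C_2}$ whose bisections $C_1,C_2$ are of the form $W_w$. This rests on the observation that a $3$-cycle decomposes over a compatible partition of its support: if $s(B_1)=\bigsqcup_j A_j$ and we push this through $B_1$ and then $B_2$, then $\gamma_{B_1,B_2}$ is the commuting product of the $3$-cycles on the triples $\{A_j,\,B_1A_j,\,B_2B_1A_j\}$ (disjoint supports commute). Since the $W_w$ form a basis, compactness lets me refine the support of an arbitrary generator until each piece carries a basic bisection, expressing it as a finite product of basic $3$-cycles.

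Next I would pin down the finite set $S$ and the mechanism for changing length. Choose $n_0$ large enough to capture $K$ and the top-level partition $\mathcal{P}_0$, and let $S$ consist of all basic $3$-cycles using only words of length $\le n_0$; there are finitely many. The two identities supplied in the excerpt are the workhorses: $\gamma_{B_1,B_2}=[\gamma_{B_1},\gamma_{B_2}]$ and the conjugation rule $\gamma\,\gamma_B\,\gamma^{-1}=\gamma_{\gamma B\gamma^{-1}}$. Using that $\G$ has \emph{infinite orbits}, I would construct for each letter $B_i$ a ``zoom'' element $t_i\in\langle S\rangle\cap \mathsf{A}(\G)$ that acts as $B_i$ on a chosen clopen set while its parity defect is absorbed into a spare disjoint clopen set (infinite orbits guarantee such spare room always exists, so that an intrinsically odd move can be completed to an even permutation, i.e.\ a product of $3$-cycles). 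The point of $t_i$ is that, by the conjugation rule, conjugating a basic $3$-cycle by $t_i^{\pm1}$ prepends or deletes the letter $B_i$ from the underlying words, changing their length by one.

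Finally I would run the induction on the maximal word length $n$ occurring in a basic $3$-cycle. For $n\le n_0$ the element lies in $S$ by construction. For the inductive step, factor the last letter out of the longest word, and use the conjugation rule to write the $3$-cycle as a conjugate by some $t_i^{\pm1}$ of a basic $3$-cycle of strictly smaller maximal length (times shorter correction terms), then invoke the inductive hypothesis; combined with the first paragraph this yields $\mathsf{A}(\G)=\langle S\rangle$. I expect the level-reduction step to be the main obstacle: one must verify that the zoom elements $t_i$ genuinely lie in $\langle S\rangle$ \emph{and} in $\mathsf{A}(\G)$ rather than merely in $\mathsf{S}(\G)$, and that appending or deleting a cover letter is always compatible with the source/range bookkeeping forcing $r(C_1)=s(C_2)$ in a basic $3$-cycle. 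This is precisely where the infinite-orbit hypothesis is indispensable (to neutralise parity defects in a spare clopen set) and where the combinatorics of the expansive coding — whether a given basic bisection can be shortened by a single cover element — must be controlled, possibly by enlarging $n_0$ or refining $\mathcal{B}$ once more while keeping everything finite.
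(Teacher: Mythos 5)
You should first note that the paper does not actually prove this statement: it is quoted verbatim from Nekrashevych (Theorem 5.6 of \cite{nekrashevych2019simple}), so the only fair comparison is with the cited proof. Your skeleton matches that proof at the structural level: fix a finite expansive cover closed under inverses, reduce $\mathsf{A}(\G)$ to ``basic'' $3$-cycles whose bisections are restricted products of cover letters (your observation that $\gamma_{B_1,B_2}$ factors as a commuting product of $3$-cycles over a clopen refinement of $s(B_1)$ is correct, and is exactly the refinement step Nekrashevych performs in his multisection calculus), and then induct on word length, with the infinite-orbit hypothesis supplying spare clopen room. Up to this point your proposal is sound.

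The genuine gap is the level-reduction mechanism, which you yourself flag but do not resolve, and which as stated would fail. First, the ``zoom'' elements $t_i$ are never constructed, and their membership in $\langle S\rangle$ is precisely as hard as the theorem itself --- to certify $t_i\in\langle S\rangle$ you would need to express an element of unbounded complexity in the bounded-depth generators, which is circular. The obstruction to extending the partial bisection $B_i$ to a global homeomorphism is not a parity defect that spare room can absorb: even when a completion of $B_i$ exists in $\mathsf{F}(\G)$, its behaviour off the chosen clopen set is uncontrolled, so the conjugate $t_i C t_i^{-1}$ of a basic bisection $C$ need not have smaller (or even comparable) word length --- conjugation hits \emph{both} legs $C_1,C_2$ of $\gamma_{C_1,C_2}$ and, wherever $t_i$ is given by the correction rather than by $B_i$, it lengthens the words instead of deleting a letter. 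Moreover a single $t_i$ per cover letter cannot serve all occurrences of that letter unless $t_i$ agrees with $B_i$ on all of $s(B_i)$, which is exactly the global completion you cannot assume. Nekrashevych's proof avoids global conjugators entirely: given a deep move $F=BF'$ with $B$ a cover letter and $F'$ strictly shorter, he inserts a spare intermediate leg into the multisection (infinite orbits guarantee a disjoint clopen set of the right size) and factors the $3$-cycle involving $F$ as a product of $3$-cycles whose moves are $B$ and $F'$ separately, each of strictly smaller depth. This purely local splitting of a long move through an auxiliary leg is the missing idea; with it the induction closes, and without it your argument, as written, does not.
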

    
There is also a natural notion of expansivity for partial actions, which generalises the natural notion of expansivity of global actions.  
\begin{definition}[Expansive partial action]
    Let $\alpha:G \acts X$ be a partial action. We say that $\alpha$ is expansive if there exists $\epsilon >0$ such that for all $x,y \in X, \, x \neq y$, there exists $g \in G$ such that $d(g x, g y)>\epsilon$. \label{partial expansive}
\end{definition}
Nekrashevych showed that a transformation groupoid of a finitely generated group acting on the Cantor space is expansive if and only if the underlying action is expansive. We give an analogous result for partial actions.  
\begin{lemma}
Let $\alpha:G \acts X$ be an expansive partial action, of a discrete group on the Cantor space and suppose $G \ltimes_\alpha X$ is compactly generated.
Then, the partial transformation groupoid $G \ltimes_\alpha X$ is expansive in the sense of Definition \ref{expansive nekrashevych}. \label{ expansive partial then expansive}

\end{lemma}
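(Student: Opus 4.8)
The plan is to produce explicitly a compact generating set together with an expansive cover, built from the compact generating set supplied by hypothesis and from a clopen partition of $X$ refined below the expansivity constant. Throughout I use that $\G := G \ltimes_\alpha X$ is ample, with basic compact open bisections of the form $(g,U)$, $g \in G$ and $U \subseteq X$ compact open, and that a product of such basic bisections is again a basic bisection: composing the partial maps $x \mapsto gx$ simply multiplies the group coordinates, so $(\mathcal{B}\cup\mathcal{B}^{-1})^n$ consists of basic bisections $(g,V)$.

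First I would fix the data. Using compact generation of $\G$, choose a compact generating set and cover it by finitely many basic compact open bisections $\mathcal{B}_0 = \{(s_1,U_1),\dots,(s_m,U_m)\}$; write $S=\{s_1,\dots,s_m\}$. By hypothesis fix an expansivity constant $\epsilon>0$ as in Definition \ref{partial expansive}, and choose a finite clopen partition $\mathcal{P}=\{P_1,\dots,P_r\}$ of $X$ with each $\mathrm{diam}(P_j)<\epsilon$ (possible since $X$ is a Cantor space). Set $\mathcal{B}=\mathcal{B}_0 \cup \{(e,P_1),\dots,(e,P_r)\}$ and $K=\bigcup\mathcal{B}$. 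Since $\mathcal{B}\supseteq\mathcal{B}_0$ and the identity bisections cover $\G^{(0)}=X$, the set $K$ is again a compact generating set, so it remains to prove that $\mathcal{B}$ is an expansive cover, i.e. that $\mathcal{W}:=\bigcup_{n}(\mathcal{B}\cup\mathcal{B}^{-1})^n$ is a basis for the topology of $\G$.

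The heart of the argument is to show that the family $\mathcal{S}$ of clopen subsets of $X$ occurring as source sets of \emph{identity} bisections in $\mathcal{W}$ is a basis for the topology of $X$. I would use two facts. (i) $\mathcal{S}$ is closed under finite intersection, because $(e,A)(e,B)=(e,A\cap B)$. (ii) For any $x\neq y$ there is a member of $\mathcal{S}$ containing $x$ but not $y$: by expansivity pick $g\in G$ with $gx,gy$ defined and $d(gx,gy)>\epsilon$, so $gx\in P_a$ and $gy\in P_b$ with $P_a\neq P_b$; by compact generation $(g,x)\in\G$ lies in some product $B_w=(g,V_w)\in(\mathcal{B}_0\cup\mathcal{B}_0^{-1})^{|w|}$, in particular $x\in V_w$; then $B_w^{-1}(e,P_a)B_w=(e,\,V_w\cap w^{-1}P_a)\in\mathcal{S}$ contains $x$ (as $gx\in P_a$) but not $y$ (if $y\in V_w$ then $gy\in P_b$, disjoint from $P_a$, and otherwise $y\notin V_w$ already). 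Note that I need only $x$, not $y$, to lie in the domain of the chosen word, which is exactly what sidesteps the partiality of the action. Given (i) and (ii), a compactness argument shows $\mathcal{S}$ is a basis: for a clopen neighbourhood $O\ni x$, cover the compact set $X\setminus O$ by the open sets $X\setminus C_y$ with $C_y\in\mathcal{S}$, $x\in C_y$, $y\notin C_y$, extract a finite subcover, and intersect the corresponding $C_y$ to obtain a member of $\mathcal{S}$ with $x\in C\subseteq O$.

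Finally I would assemble the basis for $\G$. Given a basic open set $(g,V_0)\ni(g,x)$, compact generation gives $B_w=(g,V_w)\in\mathcal{W}$ with $x\in V_w$; since $\mathcal{S}$ is a basis, choose $(e,C)\in\mathcal{S}$ with $x\in C\subseteq V_0\cap V_w$; then $B_w(e,C)=(g,C)\in\mathcal{W}$ satisfies $(g,x)\in(g,C)\subseteq(g,V_0)$. Hence $\mathcal{W}$ is a basis, $\mathcal{B}$ is an expansive cover of the compact generating set $K$, and $\G$ is expansive in the sense of Definition \ref{expansive nekrashevych}. I expect the only genuine subtlety — and the step warranting the most care — to be the interaction of the partial action with the combinatorial bookkeeping in step (ii): one must guarantee that group elements are realised by words (products of generating bisections) whose domains actually contain the relevant point. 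This is precisely what the groupoid-level compact generation hypothesis provides, and it is the reason that requiring only $x\in V_w$, rather than both $x$ and $y$, is the correct formulation.
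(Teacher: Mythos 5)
Your proof is correct, and its dynamical core coincides with the paper's: use expansivity of the partial action to pick $g$ with $d(gx,gy)>\epsilon$, use compact generation of the groupoid to realise $(g,x)$ inside a word $B_w=(g,V_w)$ in the generating bisections, and observe that only $x$, not $y$, need lie in the domain of that word --- exactly the observation the paper also leans on to sidestep the partiality of the action. Where you diverge is in the surrounding machinery. The paper refines the finite cover of the given compact generating set so that every bisection has source \emph{and} range of diameter less than $\epsilon$, and then invokes Nekrashevych's criterion (\cite{nekrashevych2019simple}, 5.3, condition (4)), which reduces expansivity of the cover to the separation statement that for $x\neq y$ some word $A$ has $x\in s(A)$, $y\notin s(A)$; a short contradiction with the diameter bound ($y\in s(A)$ would force $gx,gy\in r(A)$, hence $d(gx,gy)<\epsilon$) finishes the argument. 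You instead encode $\epsilon$ by adjoining identity bisections $(e,P_j)$ over a fine clopen partition, manufacture separating \emph{identity} bisections by the conjugation $B_w^{-1}(e,P_a)B_w=(e,\{x\in V_w : gx\in P_a\})$, and then --- rather than citing Nekrashevych's criterion --- verify from scratch that $\bigcup_n(\mathcal{B}\cup\mathcal{B}^{-1})^n$ is a basis: closure of the identity source sets under intersection via $(e,A)(e,B)=(e,A\cap B)$, the compactness argument extracting $C\in\mathcal{S}$ with $x\in C\subseteq O$, and composition $B_w(e,C)=(g,C)$ to handle arbitrary basic open sets $(g,V_0)$. In effect you reprove the relevant implication of Nekrashevych's Lemma 5.3 in the partial transformation groupoid setting; this makes your argument longer but self-contained, whereas the paper's is shorter at the price of black-boxing that criterion. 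Both arguments are valid, and your explicit check that a product of basic bisections is again basic with group coordinate the product (so that $B_w$ containing $(g,x)$ really has the form $(g,V_w)$) is a point the paper passes over more quickly.
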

\begin{proof}
Let $\epsilon>0$ be as in Definition \ref{partial expansive}.
    Let $K$ be a compact generating set for the groupoid $G \ltimes_\alpha X$. By ampleness, there exists a finite cover of $K$ by open compact bisections.  Moreover, we may refine this cover to a cover $\mathcal{S}$ of $K$ by bisections such that for each $S \in \mathcal{S}, $ both $r(S)$ and $s(S)$ have diameter less than $\epsilon$. We claim that this cover is expansive. It follows that for all $A \in \bigcup_{n=1}^\infty   (\mathcal{S} \cup \mathcal{S}^{-1})^n$, $r(A)$ and $s(A)$ have diameter less than $\epsilon$ with respect to the metric,  since $r(A'A) \subset r(A')$ and $ s(AA')\subset s(A')$. Note that also for all $A \in \bigcup_{n=1}^\infty   (\mathcal{S} \cup \mathcal{S}^{-1})^n$, we can assume $A=(g, U)$ where $U \subset \G^{(0)}$ and $g \in G$.

Recall [\cite{nekrashevych2019simple}, 5.3]. We have that $\mathcal{S}$ is expansive if and only if for every $x \neq y$, there exists a subset of the form $s(A)$ such that $x \in s(A)$, $y \nin s(A)$ and $A \in (\mathcal{S} \cup \mathcal{S}^{-1})^n$ for some $n$. (This is condition (4)). 

Let $g$ be such that $d(gx,g y)> \epsilon$. For $n$ large enough, by compact generation we have that there exists $A$ such that $(g,x) \in A \in (\mathcal{S} \cup \mathcal{S})^n$. By construction then, $(1,x) \in s(A)$. Suppose $(1,y) \in s(A)$. Then $(g,y) \in A$, $(1,gy) \in r(A)$. But $A$ must have diameter with length less than $\epsilon$ by construction. Therefore, we have $d(gx,gy)< \epsilon$, a contradiction. Therefore $(g,y) \nin A$, and so the underlying groupoid is expansive in the sense of Nekrashevych. 
\end{proof}

In contrast to global actions, understanding when a partial transformation groupoid is compactly generated is a subtle question. For transformation groupoids on compact spaces, $X \ltimes_\alpha G$, compact generation of the groupoid is equivalent to the finite generation of $G$. One does not have an analogous result in the case of partial actions. However, we below show that restricting compactly generated groupoids to certain subsets of the unit space preserves compact generation:
\begin{theorem}
    Let $\G$ be an ample compactly generated groupoid. Let $U$ be a full clopen subset of $\G^{(0)}$ (i.e. $U$ meets every $\G$ orbit). Then $\G|_U^U$ is compactly generated. 
    \label{lemma compact generation full subsets}
\end{theorem}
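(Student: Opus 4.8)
The plan is to transport a compact generating set of $\G$ across to $\G|_U^U$ by building a finite family of compact open bisections that funnel every unit into $U$, and then correcting the intermediate units that appear in a factorisation. Throughout write $\mathcal{H}:=\G|_U^U$, a subgroupoid whose unit space is $U$. I would begin with two reductions. Since $\G=\bigcup_{n}(K\cup K^{-1})^n$ for a compact $K$, and $r\big((K\cup K^{-1})^n\big)\subseteq r(K)\cup s(K)$ is compact for every $n$, the unit space $\Gn{0}$ is itself compact; being ample, it is a compact totally disconnected space. Hence I may replace $K$ by the symmetric compact set $K_0:=K\cup K^{-1}\cup\Gn{0}$ containing all units, so that the powers $K_0^n$ increase and $\G=\bigcup_n K_0^n$.

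Next I would use fullness of $U$ to construct a single compact open bisection $T$ with $s(T)=\Gn{0}$ and $r(T)\subseteq U$. For each $x\in\Gn{0}$, fullness provides $\gamma_x\in\G$ with $s(\gamma_x)=x$ and $r(\gamma_x)\in U$; choosing a compact open bisection around $\gamma_x$ whose range lies in the open set $U$, then invoking compactness of $\Gn{0}$ together with total disconnectedness to extract a finite clopen refinement, I obtain finitely many compact open bisections whose sources partition $\Gn{0}$ and whose ranges lie in $U$. Their disjoint union is the desired $T$. Setting $\hat T:=T\cup U$ (adjoining the identity bisection on $U$), I have $s(\hat T)=\Gn{0}$ and $r(\hat T)\subseteq U$, and since $T$ is a bisection there is, for each unit $w$, a unique element of $T$ with source $w$. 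I would then propose the candidate generating set
$$ L:=\big(\hat T\,K_0\,\hat T^{-1}\big)\cap \mathcal{H}. $$
This is compact: the composable triples form a closed subset of the compact set $\hat T\times K_0\times \hat T^{-1}$, multiplication is continuous, and $\mathcal{H}$ is clopen because $U$ is clopen, so $L$ is a closed subset of a compact set; it plainly contains $U=\mathcal{H}^{(0)}$.

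For the generation claim I would take $\eta\in\mathcal{H}\subseteq\G$ and write $\eta=k_1k_2\cdots k_n$ with $k_i\in K_0$. Let $w_0=r(\eta)\in U$, $w_n=s(\eta)\in U$, and $w_i=s(k_i)=r(k_{i+1})$ be the junction units. Put $c_0:=w_0$ and $c_n:=w_n$ (units of $U$), and for $1\le i\le n-1$ let $c_i\in T$ be the unique element with $s(c_i)=w_i$, so that $r(c_i)\in U$. Inserting $c_i^{-1}c_i=w_i$ at each junction gives
$$ \eta=(k_1c_1^{-1})(c_1k_2c_2^{-1})\cdots(c_{n-1}k_n), $$
and a short check shows every factor has both source and range in $U$ and lies in $\hat T K_0\hat T^{-1}$, hence in $L$; the endpoint factors use only the identity corrections $c_0,c_n$. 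Thus $\eta\in L^n$, and $\mathcal{H}=\bigcup_n(L\cup L^{-1})^n$, which is exactly compact generation of $\mathcal{H}$.

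The step I expect to be the main obstacle is the construction of $T$: one must genuinely combine fullness (to reach $U$ from every unit), openness of $U$ (to find bisections landing inside $U$), and ampleness with compactness of $\Gn{0}$ (to convert a finite cover into a finite clopen partition, so that $T$ is an honest compact open bisection with full source). Once $T$ is in hand, the correction argument in the final step is purely formal, the only care being the bookkeeping at the two endpoints, which already lie in $U$.
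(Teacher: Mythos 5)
Your proof is correct in substance, but it takes a genuinely different route from the paper's. The paper splits each generating bisection $B_i$ into the pieces $S_i=B_i|_U^U$, $E_i=B_i|_{U^c}^{U^c}$ and the crossing pieces $T_i$, builds the funnelling bisections $F_j$ (by the same fullness--ampleness--compactness argument you use), and then proves generation by induction on the number of boundary crossings in a word, inserting $F_l^{-1}F_l$ inside each maximal excursion through $U^c$; the explicit output is a finite family of compact open bisections $\{S_i,\,T_iF_j^{-1},\,F_jE_i^{\pm1}F_l^{-1}\}$. You instead conjugate the entire generating set by a single global ``funnel'' $\hat T$ with $s(\hat T)=\G^{(0)}$ and $r(\hat T)\subseteq U$, and the telescoping identity $\eta=(k_1c_1^{-1})(c_1k_2c_2^{-1})\cdots(c_{n-1}k_n)$ replaces the paper's induction entirely: every junction unit is corrected uniformly, whether or not it lies in $U$ (the identity part $U\subseteq\hat T$ handles the units already in $U$, where the paper leaves $S_i$-letters untouched). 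Your argument is shorter and conceptually cleaner --- it is the groupoid analogue of compressing by a full projection --- while the paper's version has the mild advantage of producing an explicit finite set of compact open \emph{bisections}, which is the currency needed later for expansivity; your $L$ is merely a compact set, which is all the definition of compact generation requires.

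Two repairs are worth making. First, your $T$ is \emph{not} a bisection in general: your construction makes the sources partition $\G^{(0)}$ but does nothing to disjointify the ranges, and indeed no bisection with $s(T)=\G^{(0)}$, $r(T)\subseteq U$ can exist if $\G$ carries an invariant probability measure with $\mu(U)<1$. Fortunately you only ever use injectivity of $s|_T$ (for the unique choice of $c_i$ with $s(c_i)=w_i$), so the proof survives verbatim once you call $T$ a compact open section of the source map rather than a bisection. Second, your compactness argument for $L$ (``composable triples form a closed subset'') tacitly assumes $\G$ is Hausdorff; in the general ample case, write $\hat T$ as the finite union of compact open bisections it already is and cover $K_0$ by finitely many compact open bisections, so that $\hat T K_0\hat T^{-1}$ is contained in a finite union of triple products of compact open bisections, each of which is compact open --- intersecting with the clopen set $\G|_U^U$ then gives a compact generating set. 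Neither point is a gap in the idea; both are one-line fixes.
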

\begin{proof}
    Let $\mathcal{S}_1=\{B_1,...,B_k\}$ be the compact generating set of $\G$.  For $i=1,...,k$ let:
    $$S_i=B_i|_U^U \quad E_i=B_i|_{U^c}^{U^c} \quad  T_i=B_i|_{U^c}^U \sqcup B_i^{-1}|_{U^c}^U $$
    Then $\mathcal{S}_2=\{S_i,E_i,T_i\}^{k}_{i=1}$ is a compact generating set of $\G$, since for all $0<i \leq k$, $B_i=S_i \sqcup E_i \sqcup T_i|_{U^c} \sqcup T_{i}^{-1} |_{U}$. 

    By fullness, for all $x \in U^c$ there exists a groupoid element $g \in \G$ such that $s(g)=x, r(g) \in U$. By ampleness then, there exists a compact open bisection $F_x$ containing $g$, such that $x \in s(F_x) \subset  U^c$ and $r(F_x) \subset U$. Then $\{s(F_x)\}_{x \in U^c}$ is an open cover of $U^c$. Since $\G$ is compactly generated, $\G^{(0)}$ must be compact. Then $U^c$, a closed subset of $\G^{(0)}$ must be compact. By compactness, we can refine our open cover $s(F_x), x \in U^c$ of $U^c$ to a finite cover. Therefore, there exists a finite  subset $F_j, j=1,...,m$ of the $F_x$ such that
    $\cup_{j=1}^m s(F_j)=U^c, \cup_{j=1}^m r(F_j)\subset U$. Our claim is that the finite collection of compact open bisections given by:
    $$\mathcal{S}_U=\{S_i, T_i F_j^{-1}, F_j E_i^{\pm 1} F_l^{-1} \}_{ \substack{i=1,...,k \ \\
    j,l=1,...,m}} $$
    forms a compact generating set for $\G|_U^U$.

    Let $g \in \G|_{U}^{U}$. Then there exists a finite word $W$ in $\mathcal{S}_2$ such that $g \in W$. Let us examine the form of a word in $S_i, E_i, T_i$ with source and range in $U$. We have that after each $T_i$, we will be followed by a (possibly empty) word of $E_j^{\pm 1}$ followed by a $T_l$. Therefore, let $2n$ be the number of letters in $W$ of the form $T_i, T_i^{-1} i=1,...,k$. By induction on $n$, we will show that $g \in \bigcup_{k \in \mathbb{N}}(\mathcal{S}_{U} \cup \mathcal{S}_{U}^{-1})^k$.

    Base case: If $n=0$, we have that $W$ is a finite word in $\{S_i, S_{i}^{-1}\}_{i=1}^k$, and so we are done, since $S_i \in \mathcal{S}_U$ for all $i$. 

    Inductive step: assume true for $n$, then for $n+1$ we have that we may rewrite $W$ in the form:
    $$W_0 T_j (E_{k_1}^{\pm 1} E_{k_2}^{\pm 1} ... E_{k_n}^{\pm 1}) T_i^{-1} W_n$$
    Where $W_0$ is a word in $\{S_i, S_i^{-1} \}_{i=1}^k$, and $W_n$ is a word in $\mathcal{S}_U$ such that there are $2n$ letters in $\{T_i,T_i^{-1}\}_{i=1}^k$. By inductive hypothesis we have that the groupoid element ${(W_n)}_{s(g)}$ is in some finite word $\tilde{W}_n$ in $\mathcal{S}_U \cup \mathcal{S}_U^{-1}$.
    We have that $r((T_i^{-1} W_n )_s(g)) \in U^c,$ therefore there exists $l_n$ such that $r((T_i^* W_n )_s(g)) \in s(F_{l_n})$. Similarly, for $j=0,...,n-1$ there exists $l_{j}$ such that $r(E_{k_{j}}^{\pm 1}E_{k_{j+1}}^{\pm 1}...E_{k_{n}^{\pm 1})T_i^{-1}W_n)_{s(g)})} \in s(F_{l_{n-j}}) $
    Then we have that:
    $$ g \in W_0 T_j (F_{l_1}^{-1}F_{l_1}) E_{k_1}^{\pm 1} (F_{l_2}^{-1} F_{l_2})E_{k_2}^{\pm 1}...(F_{l_{n-1}}^{-1} F_{l_{n-1}})E_{k_n}^{\pm 1}((F_{l_{n}}^{-1} F_{l_{n}})T_i^{-1} \tilde{W}_n $$
    Which we may rewrite in the form:
    $$W_0 T_j (F_{l_1}^{-1}) (F_{l_1}E_{k_1}^{\pm 1}F_{l_2}^{-1})...(F_{l_{n-1}}E_{k_n}^{\pm 1}F_{l_{n}}^{-1})(T_iF_{l_n}^{-1})^{-1} \tilde{W}_n, $$ a finite word in $\mathcal{S}_U$. 
\end{proof}

Combining this with our expansivity result for partial actions we obtain:
\begin{corollary}
    Let $\alpha:G \acts X$ be an expansive action of a finitely generated group on a compact space $X$. Let $U$ be $\alpha$-full open subset of $X$. Then the partial transformation groupoid $G \ltimes U$ is expansive in the sense of Nekrahsevych. 
\end{corollary}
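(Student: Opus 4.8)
The plan is to feed the two preceding results into one another: Theorem~\ref{lemma compact generation full subsets} to secure compact generation of the restricted groupoid, and Lemma~\ref{ expansive partial then expansive} to promote this to expansivity in the sense of Definition~\ref{expansive nekrashevych}. Since $G$ is finitely generated and $X$ is compact, the transformation groupoid $G \ltimes X$ is compactly generated: a finite symmetric generating set $\{g_1,\dots,g_m\}$ of $G$ yields the compact generating set $\{(g_i,X)\}_{i=1}^m$ of bisections, whose words exhaust $G \ltimes X$. Reading $U$ as a compact open (hence clopen) subset, as is needed for $G \ltimes U$ to be a Cantor groupoid, the hypothesis that $U$ is $\alpha$-full says exactly that $U$ meets every $(G\ltimes X)$-orbit. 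Thus Theorem~\ref{lemma compact generation full subsets}, applied to $\G = G \ltimes X$, shows that $G \ltimes U = (G \ltimes X)|_U^U$ is compactly generated.

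It then remains to verify that the restricted partial action $\alpha|_U : G \acts U$ is expansive in the sense of Definition~\ref{partial expansive}, for then Lemma~\ref{ expansive partial then expansive} applies verbatim and delivers the conclusion. First I would fix $\epsilon>0$ witnessing expansivity of the ambient action $\alpha$ and equip $U$ with the restricted metric. Given distinct $x,y \in U$, global expansivity supplies some $g \in G$ with $d(gx,gy)>\epsilon$; the point is to arrange such a separating element so that $gx$ and $gy$ both lie in $U$, making $(g,x)$ and $(g,y)$ honest arrows of $G \ltimes U$. Here I would lean on fullness of $U$, using it to push the excursion of the orbit segment back into $U$ by a further group element, and then check that the separation survives, allowing $\epsilon$ to be replaced by a smaller uniform constant $\epsilon'$ if necessary.

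The main obstacle is precisely this last step: global expansivity only hands us a separating element of the ambient group, whereas the partial action on $U$ only registers those group elements mapping both points back into $U$, and the in-$U$ choice must be made uniformly over all pairs $(x,y)$. This is exactly where fullness of $U$ and compactness of $U^c$—the same inputs driving the proof of Theorem~\ref{lemma compact generation full subsets}—must be brought to bear. If a clean uniform choice proves awkward, the fallback is to sidestep Definition~\ref{partial expansive} and build an expansive cover of $G \ltimes U$ directly, by intersecting an expansive cover of $G \ltimes X$ with $U$ and adjoining the bridge bisections $T_i F_j^{-1}$ and $F_j E_i^{\pm 1} F_l^{-1}$ from the proof of Theorem~\ref{lemma compact generation full subsets}. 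In that formulation a separation of $x$ from $y$ may equally be witnessed by an element carrying $x$ into $U$ while sending $y$ out of $U$, a possibility that is automatically recorded by the source of a compact open bisection of $G \ltimes U$; this added flexibility should make the separation argument go through cleanly.
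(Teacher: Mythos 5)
Your compact-generation step is correct and is exactly the paper's (implicit) route: the corollary is stated there with no separate proof, as the direct combination of Theorem~\ref{lemma compact generation full subsets} and Lemma~\ref{ expansive partial then expansive}, and your reading of $U$ as compact open, together with the observation that $\{(g_i,X)\}_{i=1}^m$ compactly generates $G\ltimes X$ so that $G\ltimes U=(G\ltimes X)|_U^U$ is compactly generated, is fine. The genuine gap is the one you yourself flag and do not close: Lemma~\ref{ expansive partial then expansive} requires the \emph{restricted} partial action $G\acts U$ to be expansive in the sense of Definition~\ref{partial expansive}, where the separating element must furnish honest arrows (its proof uses $(g,x)\in A$, so $gx\in U$ is needed). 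Your primary plan --- find, uniformly over pairs, a single $g$ with \emph{both} $gx,gy\in U$ and $d(gx,gy)>\epsilon'$ --- aims at the wrong target: fullness only returns individual points to $U$, and nothing forces a common group element to return an $\epsilon$-separated pair simultaneously while keeping them separated; as written, this step would fail, and the proposal ends with an unproven ``should go through cleanly.''

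Your fallback is the right idea, and here is the quantitative ingredient it is missing. By fullness of $U$ and compactness of $X$, write $X=\bigcup_{i=1}^m h_i^{-1}U$ for finitely many $h_i\in G$; by uniform continuity of the finitely many homeomorphisms $h_i^{-1}$, choose $\epsilon'>0$ so that $d(a,b)\geq\epsilon$ implies $d(h_ia,h_ib)>\epsilon'$ for all $i$. Now run the proof of Lemma~\ref{ expansive partial then expansive} for $G\ltimes U$ with a cover by single-slice bisections $(g,W)$ whose sources and ranges have diameter less than $\epsilon'$ (rather than $\epsilon$). Given distinct $x,y\in U$, take $g$ with $d(gx,gy)>\epsilon$ and $i$ with $h_igx\in U$; then $(h_ig,x)$ is an arrow of $G\ltimes U$, hence lies in some word $A$ in the cover, and $A$ is single-slice with slope $h_ig$. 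If $y\in s(A)$, then $(h_ig,y)\in A$ is also an arrow, so $h_igy\in U$ and $d(h_igx,h_igy)<\epsilon'$, contradicting the choice of $\epsilon'$; hence $y\notin s(A)$, which is precisely Nekrashevych's separation condition (4). This is exactly the flexibility you anticipated ($y$ being thrown out of $U$ is automatically recorded by $y\notin s(A)$), but without the finite subcover and the uniform modulus $\epsilon'$ the contradiction cannot be drawn, so your sketch does not yet constitute a proof. It is worth noting that the paper itself glosses over this point, so your diagnosis of where the real work lies is more careful than the source, even though the repair is left incomplete.
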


Compact generation was shown to be a necessary condition for $\mathsf{A}(\G)$ to be finitely generated in \cite{myintervalexchanges}, we include this proof here for completeness.

\begin{lemma}
    Let $\G$ be an essentially principal ample groupoid with infinite orbits. If $\mathsf{A}(\G)$ is finitely generated then $\G$ is compactly generated. \label{a finitely generated implies}
\end{lemma}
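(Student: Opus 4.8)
The plan is to exhibit a compact generating set for $\G$ built directly from a finite generating set of $\mathsf{A}(\G)$. Since $\G$ is a Cantor groupoid, $\G^{(0)}$ is compact, and every element of $\mathsf{F}(\G)$ is a compact open bisection. So if $S=\{g_1,\dots,g_m\}$ generates $\mathsf{A}(\G)$, then $K:=\G^{(0)}\cup\bigcup_{i=1}^m g_i\subseteq\G$ is compact. I claim $K$ is a compact generating set, i.e.\ $\G=\bigcup_{n\in\mathbb{N}}(K\cup K^{-1})^n$. The key observation making this work is that for full bisections the group multiplication in $\mathsf{F}(\G)$ coincides with the set-theoretic product of bisections in $\G$; hence if an arrow $\eta\in\G$ is contained, as a subset of $\G$, in some $h\in\mathsf{A}(\G)$ with $h=w_1\cdots w_n$ and $w_j\in S\cup S^{-1}$, then $\eta\in h=w_1\cdots w_n\subseteq (K\cup K^{-1})^n$. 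Thus it suffices to show every $\eta\in\G$ lies inside some element of $\mathsf{A}(\G)$, the units being already in $K$.

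First I would treat an arrow $\eta$ with $s(\eta)=a\neq b=r(\eta)$. Since $\G$ is Hausdorff and ample, I can choose a compact open bisection $B_1\ni\eta$ with $s(B_1)\cap r(B_1)=\emptyset$. Here the infinite-orbit hypothesis enters: as $b$ lies in the orbit of $a$, that orbit is infinite, so I can pick points $c,d$ in it, distinct from $a,b$ and from each other, together with an arrow $\xi$ from $c$ to $d$. Separating $a,b,c,d$ by disjoint clopen sets and shrinking $B_1\ni\eta$ and a bisection $B_2\ni\xi$ accordingly, I arrange $s(B_1),r(B_1),s(B_2),r(B_2)$ to be pairwise disjoint while keeping $\eta\in B_1$. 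The alternating generator $\gamma_{B_1,B_2}\in\mathsf{A}(\G)$ then contains $B_1$ as a subset, hence contains $\eta$, as required.

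Next I would reduce the isotropy case to this one. If $\eta$ is a non-unit with $s(\eta)=r(\eta)=x$, then since the orbit of $x$ is infinite I can choose an arrow $\delta$ from $x$ to some $z\neq x$ and factor $\eta=\delta^{-1}(\delta\eta)$; both $\delta^{-1}$ and $\delta\eta$ have distinct source and range, so each lies in $\bigcup_n(K\cup K^{-1})^n$ by the previous paragraph, and therefore so does their product $\eta$. Combining the two cases with the observation above shows $\G=\bigcup_n(K\cup K^{-1})^n$, so $\G$ is compactly generated.

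The step I expect to require the most care is the embedding of an arbitrary arrow into an alternating generator $\gamma_{B_1,B_2}$: unlike the symmetric generators $\gamma_B$ (which only need $s(B)\cap r(B)=\emptyset$), the alternating generators require a second bisection $B_2$ whose source and range are disjoint from those of $B_1$. This is precisely why the infinite-orbit hypothesis is indispensable, and where the totally disconnected, Hausdorff structure is used to separate the four relevant clopen sets. The remaining points—compactness of $K$ and the identification of the group product of full bisections with their set-theoretic product—are routine.
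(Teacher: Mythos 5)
Your overall strategy coincides with the paper's: reduce compact generation to showing that every arrow of $\G$ lies inside some element of $\mathsf{A}(\G)$, use that for full bisections the group law of $\mathsf{F}(\G)$ is the set-theoretic product of bisections (so a word of length $n$ in the generators places the arrow in $(K\cup K^{-1})^n$), and handle isotropy arrows by factoring $\eta=\delta^{-1}(\delta\eta)$ into two arrows with distinct source and range. Those parts are correct, and your device of fixing the auxiliary points first and only then shrinking the bisections is sound (it even sidesteps the appeal to minimality made in the paper's wording, which is apt since the stated hypothesis is only that orbits are infinite).

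The gap is in the central step. The generators $\gamma_{B_1,B_2}$ of $\mathsf{A}(\G)$ are $3$-cycles: the definition requires the \emph{chaining} condition $r(B_1)=s(B_2)$, with the three sets $s(B_1)$, $r(B_1)=s(B_2)$, $r(B_2)$ pairwise disjoint, so that $\gamma_{B_1,B_2}$ carries $s(B_1)$ to $s(B_2)$ via $B_1$, $s(B_2)$ to $r(B_2)$ via $B_2$, and $r(B_2)$ back via $(B_1B_2)^{-1}$. You instead pick an arrow $\xi$ from $c$ to $d$ with $a,b,c,d$ distinct and arrange all four sets $s(B_1),r(B_1),s(B_2),r(B_2)$ to be pairwise disjoint — you even state that this disjointness is what the alternating generators require, which is the opposite of the truth. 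For such a configuration $\gamma_{B_1,B_2}$ is not defined: the composite $B_1B_2$ is empty, the defining formula fails to be a bisection with full source and range (the range map is not injective on $r(B_1)$), and consistently with this, the identity $\gamma_{B_1,B_2}=[\gamma_{B_1},\gamma_{B_2}]$ yields the trivial element when the supports of $B_1$ and $B_2$ are disjoint. So the element of $\mathsf{A}(\G)$ you claim contains $\eta$ does not exist as written. The repair is exactly the paper's construction: choose the auxiliary arrow with source $r(\eta)$, i.e. $\xi\colon b\to c$ with $c\notin\{a,b\}$ (available because the orbit of $b$ is infinite), take $B_2\ni\xi$ with $s(B_2)\subseteq r(\hat B_1)$ and $r(B_2)$ disjoint from $s(B_2)\cup \hat B_1^{-1}(s(B_2))$, and set $B_1:=\hat B_1|_{\hat B_1^{-1}s(B_2)}$, so that $r(B_1)=s(B_2)$ holds exactly and $\eta\in B_1\subseteq\gamma_{B_1,B_2}\in\mathsf{A}(\G)$. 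With this substitution the rest of your argument goes through verbatim.
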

\begin{proof}
    Let $g \in \G$. It is enough to show that there exists some bisection $B \in \mathsf{A}(\G)$ such that $g \in B$, since then the generating set of $\mathsf{A}(\G)$ also serves as a compact generating set for $\G$. If $s(g) \neq r(g)$, by ampleness, we have there exists some compact open bisection $\hat{B}_1$ such that $g \in \hat{B}_1$. By restricting the source of $\hat{B}_1$ if necessary, we may assume that $s(\hat{B}_1)$ and $r(\hat{B_1})$ are disjoint and satisfy $s(\hat{B}_1) \bigcup r(\hat{B_1}) \neq \G^{(0)}$. By minimality, we have there exists some $h \in \G$ with $s(h)=r(g)$ and $r(h) \in (s(\hat{B}_1) \bigcup r(\hat{B_1}))^c$ Then by ampleness, there exists some compact open bisection $B_2$ with $h \in B_2$. Again, by restricting $s(B_2)$ if necessary, we can assume $s(B_2) \subset r(\hat{B}_1)$ and $r(B_2) \subset (s(B_2) \bigcup \hat{B}_1^{-1}(s(B_2))^c$. Now let $B_1=\hat{B}_1 |_{\hat{B_1}^{-1} s(B_2)}$. Then 
$$ \gamma_{B_1,B_2} =B_1 \sqcup B_2 \sqcup (B_1B_2)^{-1} \cup (\G^{(0)} \setminus s(B_1) \cup s(B_2) \cup r(B_2))\in \mathsf{A}(\G)$$
satisfies $g \in \gamma_{B_1,B_2} $.

Otherwise $s(g)=r(g)$. Then, there exist $g_1,g_2$ such that $s(g_i) \neq r(g_i)$ but $g_1g_2=g$. Hence by the above argument, there exist $\gamma_1,\gamma_2 \in \mathsf{A}(\G)$ such that $g_1 \in \gamma_1, \gamma_2 \in B_2$ and hence $g=g_1g_2\in \gamma_1 \gamma_2 \in \mathsf{A}(\G)$. 
\end{proof}
An important consequence of this is that Groupoids with noncompact unit spaces cannot give rise to finitely generated alternating or derived subgroups because they act with full support on noncompact intervals; the underlying groupoids are not compactly generated. 
\begin{rmk}
    
Suppose $\G$ is a minimal ample groupoid such that $\mathsf{A}(\G)$ is finitely generated. Then $\G$ is compactly generated by Lemma \ref{a finitely generated implies}. A consequence of this is that $\G^{(0)}$ is compact, since then if $K$ was our compact generating set of our groupoid, $K$ only moves a compact subset of $\G^{(0)}$, contradicting minimality. The same argument holds for $\mathsf{D}(\G)$ since $\mathsf{A}(\G)$ is contained in $\mathsf{D}(\G)$.

\end{rmk}
\subsection{Finite Generation of Stein's Groups}
In the case of our action, the partial action $\beta$ is indeed expansive, as shown below.
\begin{lemma}
Let $\Gamma$, $\Lambda$, $\ell$ be arbitrary. Then $\beta$ is expansive in the sense of Definition \ref{partial expansive} \label{partial action for v is expansive}
\end{lemma}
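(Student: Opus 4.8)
The plan is to produce a single expansive constant by exhibiting, for every pair of distinct points, a group element that scatters them into opposite halves of one fixed clopen partition. Concretely, I would fix once and for all a reference cut $p \in \Gamma \cap (0,\ell)$ and set $A = [0_+, p_-]$ and $B = [p_+, \ell_-]$, so that $[0_+,\ell_-] = A \sqcup B$ is a partition into disjoint clopen sets. Since $A$ and $B$ are disjoint and compact, $\delta_0 := d(A,B) > 0$ with respect to the fixed metric on the Cantor set $[0_+,\ell_-]$, and I would take $\epsilon = \delta_0 / 2$. With this choice it suffices to show that for every $x \neq y$ there is some $(c,\mu) \in \Gamma \ltimes \Lambda$, defined at both $x$ and $y$, whose action sends one of $x,y$ into $A$ and the other into $B$; then $d(\beta(c,\mu)x, \beta(c,\mu)y) \ge \delta_0 > \epsilon$.

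Given $x \neq y$, the first step is to locate a separating cut. If $q(x) \neq q(y)$, density of $\Gamma$ supplies $b^* \in \Gamma$ strictly between $q(x)$ and $q(y)$; if $q(x) = q(y) = a$, then necessarily $\{x,y\} = \{a_-, a_+\}$ and I take $b^* = a$. In either case one checks $b^* \in \Gamma \cap (0,\ell)$, and after possibly swapping $x$ and $y$ one has $x \le b^*_-$ and $y \ge b^*_+$. The idea is now to translate-and-rescale so that $b^*$ is carried to the reference cut $p$: I set $c = \mu^{-1} p - b^*$, which lies in $\Gamma$ because $\Gamma$ is a $\mathbb{Z}\cdot\Lambda$-submodule, so that $q(\beta(c,\mu)z) = \mu(q(z)+c) = p + \mu\,(q(z) - b^*)$. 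This formula makes transparent that points with $q$-value below $b^*$ land below $p$ (hence in $A$) and points above $b^*$ land above $p$ (hence in $B$); moreover $\beta(c,\mu)$ preserves the $\pm$-decoration, so the two boundary cases $x = b^*_-$ and $y = b^*_+$ land on $p_-$ and $p_+$ respectively, again on the correct sides of $p$.

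The one genuine constraint is that $\beta(c,\mu)$ must be defined at $x$ and $y$, i.e. their images must remain inside $[0_+,\ell_-]$. From the displayed formula, $q(\beta(c,\mu)x) \le p$ and $q(\beta(c,\mu)y) \ge p$ hold automatically, so range failure can only occur at the far endpoints; the requirements $q(\beta(c,\mu)x) \ge 0$ and $q(\beta(c,\mu)y) \le \ell$ amount to $\mu\,(b^* - q(x)) \le p$ and $\mu\,(q(y) - b^*) \le \ell - p$. Here is where the nontriviality of $\Lambda$ enters: choosing any $\lambda \in \Lambda$ with $\lambda \neq 1$, the numbers $\min(\lambda,\lambda^{-1})^n$ tend to $0$, so $\Lambda$ contains arbitrarily small positive elements, and I may pick $\mu \in \Lambda$ small enough to meet both bounds strictly. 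With such a $\mu$ the images lie in the interior of $A$ and $B$ (or at $p_\mp$ in the boundary cases), so $g = (c,\mu)$ is defined at $x,y$ and separates them into opposite atoms.

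I would flag that the rescaling step is the crux and is genuinely essential: using arbitrarily small $\mu$ is exactly what a purely translational action ($\Lambda$ trivial) cannot provide, since such actions are equicontinuous and never expansive; thus the hypothesis implicitly in force is $\Lambda \neq \{1\}$. The main obstacle in the full writeup is therefore not the separation idea but the partiality bookkeeping—verifying that the chosen scaling keeps both points in $[0_+,\ell_-]$ and that the boundary $\pm$-cases (where $q(x) = b^*$ or $q(y) = b^*$) still separate correctly. Once the two range inequalities above are recorded and a sufficiently small $\mu \in \Lambda$ is fixed, the remainder is a direct computation with the identity $q(\beta(c,\mu)z) = p + \mu\,(q(z) - b^*)$.
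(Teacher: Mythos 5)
Your argument is correct, but it takes a genuinely different route from the paper's. Both proofs share the same skeleton --- fix a reference cut in $\Gamma\cap(0,\ell)$, take $\epsilon$ to be the metric gap between the two clopen halves it defines, and move any pair $x\neq y$ astride that cut --- but the paper does this with \emph{translations alone}: by density of $\Gamma$ it chooses a pure translation $(c',1)$ placing $q(x)$ just below the cut and $q(x')$ above it, and in the doubled-point case $\{a_-,a_+\}$ it translates the pair directly onto $\{p_-,p_+\}$. You instead conjugate the separating cut $b^*$ onto the reference cut $p$ via the affine element $(\mu^{-1}p-b^*,\mu)$ and shrink $\mu\in\Lambda$ to force both images back into $[0_+,\ell_-]$. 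Your version buys a uniform and clean treatment of the partiality bookkeeping: the two range constraints reduce to inequalities that are simply crushed by taking $\mu$ small, together with a correct handling of the boundary cases $x=b^*_-$, $y=b^*_+$ via preservation of the $\pm$-decoration. The paper's version, as written, glosses over exactly this definedness check (one must verify the chosen translation keeps both points inside $[0,\ell]$, which requires a small case analysis when $q(x')-q(x)$ is large), but it buys elementarity and generality, since it uses only the subgroup $\Gamma$.

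That generality matters for your final paragraph, which contains your one genuine error: it is \emph{not} true that the rescaling step is essential, nor that a purely translational partial action here is ``equicontinuous and never expansive.'' That intuition is imported from the Euclidean metric on $\mathbb{R}$; translations of $\mathbb{R}_\Gamma$ are not isometries for any metric compatible with its Cantor topology, and the doubled points at $\Gamma$ mean that a fixed clopen cut separates a suitably translated pair by a definite amount. Given $q(x)<q(y)$, the set of translations $t$ with $q(x)+t<p<q(y)+t$ and both images in $[0,\ell]$ is a nonempty open interval (degenerating only when $x=0_+$, $y=\ell_-$, where the identity already separates), so density of $\Gamma$ suffices. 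Indeed the paper's proof uses only elements of the form $(c',1)$, and expansivity of the translation-only partial action --- the case $\Lambda=\{1\}$, i.e.\ interval exchanges --- is exactly what \cite{myintervalexchanges} relies on. So $\Lambda\neq\{1\}$ is not an implicit hypothesis of the lemma, and your proof, while correct whenever $\Lambda$ is nontrivial, covers strictly less than the paper's translation-only argument.
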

\begin{proof}
Let $\lambda \in \Gamma \cap (0,1) $. Let $x,x' \in [0_+,\ell_-]$ be distinct. Let $\epsilon =d([0_+,(\lambda)_-],[(\lambda)_- , 1_+ ])>0$. We separate into two cases:
\begin{itemize}
    \item If $q(x) \neq q(x')$, suppose without loss of generality that $q(x)<q(x')$. By density, suppose the difference of $q(x')-q(x)>c>0$ for some $c \in \Gamma$. Also there exists some $c' \in \Gamma$ such that $\lambda -c <q(x)-c'< \lambda$. Therefore we have that $q(c' x) =q(x)-c'<\lambda $ and $q(c'x')=q(x')-c'>q(x)-c'+c>\lambda-c+c=\lambda$. Therefore, $(c',1) \in \Gamma \rtimes \Lambda$ is a group element separating $x,x'$.  
    \item If $q(x)=q(x) \in \Gamma$, then for $c'=q(x)-\lambda$, $(c',1) \in \Gamma \rtimes \Lambda$ is a group element that will separate the two characters into $[0_+,\lambda_-], [\lambda_+, 1_-]$.\end{itemize}\end{proof}
Applying Theorem \ref{lemma compact generation full subsets}, Lemma \ref{ expansive partial then expansive} and Theorem \ref{finite generatedness theorem} from the previous subsection, it is enough for us to consider compact generation in the case when $\ell=1$.
\begin{corollary}
    Let $\Lambda$ be a submultiplicative group of $(0,+\infty)$, $\Gamma$ be a $\mathbb{Z} \cdot \Lambda$-submodule.  The following are equivalent:
    \begin{enumerate}
      \item The groupoid $\Gamma \ltimes \Lambda \ltimes [0_+,1_-]$ is compactly generated. 
        \item The groupoid $\Gamma \ltimes \Lambda \ltimes [0_+,\ell_-]$ is compactly generated for all $\ell \in \Gamma$. 
        \item $D(V(\Gamma,\Lambda,\ell))$ is finitely generated for all $\ell \in \Gamma$. 
        \item $D(V(\Gamma,\Lambda,\ell))$ is 2-generated for all $\ell \in \Gamma$.

    \end{enumerate}
     \label{finitely generated compactly}
\end{corollary}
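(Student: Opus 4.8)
The plan is to prove the four conditions equivalent by running the cycle $(4)\Rightarrow(3)\Rightarrow(1)\Rightarrow(2)\Rightarrow(4)$. Throughout write $\G_\ell:=\Gamma\ltimes\Lambda\ltimes_\beta[0_+,\ell_-]$, so that Lemma \ref{v as a topological full group} gives $\mathsf{F}(\G_\ell)\cong V(\Gamma,\Lambda,\ell)$ and hence $\mathsf{D}(\G_\ell)=D(V(\Gamma,\Lambda,\ell))$. By Lemmas \ref{groupoid minimal} and \ref{groupoid purely infinite} each $\G_\ell$ is minimal, topologically principal (hence effective), amenable and purely infinite, so Theorem \ref{d simple} identifies $\mathsf{D}(\G_\ell)\cong\mathsf{A}(\G_\ell)$; moreover minimality of $\G_\ell$ together with its infinite Cantor unit space forces all orbits to be infinite, which is the hypothesis needed for both Theorem \ref{finite generatedness theorem} and Lemma \ref{a finitely generated implies}. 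These identifications let me freely pass between the group $D(V(\Gamma,\Lambda,\ell))$ and the groupoid $\G_\ell$.

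The implication $(4)\Rightarrow(3)$ is immediate. For $(3)\Rightarrow(1)$ I specialise to $\ell=1$: finite generation of $D(V(\Gamma,\Lambda,1))\cong\mathsf{A}(\G_1)$, with $\G_1$ essentially principal and with infinite orbits, lets me apply Lemma \ref{a finitely generated implies} directly to conclude that $\G_1$ is compactly generated, which is exactly $(1)$.

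The substantive step is $(1)\Rightarrow(2)$, passing from compact generation of $\G_1$ to that of $\G_\ell$ for an arbitrary $\ell\in\Gamma$ with $\ell>1$ (the case $\ell\le 1$ is the ``shrinking'' direction already supplied by Theorem \ref{lemma compact generation full subsets}, since then $[0_+,\ell_-]$ is full clopen in $\G_1$). Here Theorem \ref{lemma compact generation full subsets} does not apply, as enlarging the unit space is not restriction to a full clopen subset, so I argue by hand. Using density of $\Gamma$ I choose $0=b_0<b_1<\cdots<b_m=\ell$ in $\Gamma$ with $b_{i+1}-b_i\le 1$, set $C_i=[(b_i)_+,(b_{i+1})_-]$, and let $D_i$ be the compact open bisection of $\G_\ell$ implementing the translation $(-b_i,1)$ from $C_i$ onto $[0_+,(b_{i+1}-b_i)_-]\subseteq[0_+,1_-]$; one checks each $D_i$ is a genuine bisection because $C_i$ and its image both lie in $[0_+,\ell_-]$. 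Viewing a compact generating set $K$ of $\G_1$ inside $\G_\ell$ through the clopen inclusion $[0_+,1_-]\subseteq[0_+,\ell_-]$, I claim $K\cup\{D_0,\dots,D_{m-1}\}$ compactly generates $\G_\ell$: given $g\in\G_\ell$ with $s(g)\in C_i$ and $r(g)\in C_j$, the element $D_j\,g\,D_i^{-1}$ has source and range in $[0_+,1_-]$, hence lies in $\G_1$ and so in a word over $K\cup K^{-1}$, whence $g=D_j^{-1}(D_j\,g\,D_i^{-1})D_i$ lies in a word over the enlarged set.

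For $(2)\Rightarrow(4)$, fix $\ell$. By Lemma \ref{partial action for v is expansive} the partial action $\beta$ is expansive in the sense of Definition \ref{partial expansive}, so Lemma \ref{ expansive partial then expansive} applied to the compactly generated groupoid $\G_\ell$ shows $\G_\ell$ is expansive in Nekrashevych's sense, and Theorem \ref{finite generatedness theorem} then gives that $\mathsf{A}(\G_\ell)\cong\mathsf{D}(\G_\ell)=D(V(\Gamma,\Lambda,\ell))$ is finitely generated. Finally I upgrade finite generation to $2$-generation: by Corollary \ref{simple derived subgroup} the group $D(V(\Gamma,\Lambda,\ell))$ is simple and vigorous, and a finitely generated simple vigorous group is generated by two elements \cite{vigorous}. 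The main obstacle is precisely the enlarging step $(1)\Rightarrow(2)$, since (in contrast to restriction to a full clopen subset) growing the unit space is not covered by a cited black box; the explicit tiling-by-translates construction, together with verifying that each $D_i$ is a compact open bisection and that the conjugated element lands back in $\G_1$, carries the real content of the corollary.
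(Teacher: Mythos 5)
Your proposal is correct, but your step $(1)\Rightarrow(2)$ takes a genuinely different route from the paper. The paper disposes of the ``enlarging'' direction with a scaling trick: choosing $\mu \in \Lambda$ with $\mu > \ell$, the groupoid element $(0,\mu)$ conjugates $\Gamma \ltimes \Lambda \ltimes [0_+,1_-]$ onto $\Gamma \ltimes \Lambda \ltimes [0_+,\mu_-]$, so these are isomorphic \'etale groupoids; since $[0_+,\ell_-]$ is a full clopen subset of $[0_+,\mu_-]$, Theorem \ref{lemma compact generation full subsets} then hands back compact generation of $\Gamma \ltimes \Lambda \ltimes [0_+,\ell_-]$. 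So your claim that growing the unit space ``is not covered by a cited black box'' is not quite accurate --- the self-similarity of the dynamics under the $\Lambda$-action converts enlargement into restriction, and the black box does the rest. Your alternative, tiling $[0_+,\ell_-]$ by translates $C_i=[(b_i)_+,(b_{i+1})_-]$ with $b_{i+1}-b_i\le 1$ and adjoining the translation bisections $D_i$ to a generating set $K$ of $\G_1$, is nonetheless sound: the $D_i$ are genuine compact open bisections of $\G_\ell$ because both $C_i$ and its image lie in $[0_+,\ell_-]$, the $C_i$ partition the unit space, and the conjugation identity $g=D_j^{-1}(D_j g D_i^{-1})D_i$ with $D_j g D_i^{-1}\in\G_1=\G_\ell|_{[0_+,1_-]}^{[0_+,1_-]}$ is valid (including for units, as the case $i=j$ with $s(g)=r(g)$). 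What your approach buys is self-containedness --- it avoids both the isomorphism $\G_1\cong\G_\mu$ and Theorem \ref{lemma compact generation full subsets} for this step, in effect reproving by hand the easy special case of that theorem where the complement of the full clopen set is moved into it by finitely many bisections --- at the cost of a slightly longer verification; the paper's route is shorter but leans on the nontriviality of $\Lambda$ to supply $\mu>\ell$. The remaining steps agree with the paper up to reorganisation: your cycle $(4)\Rightarrow(3)\Rightarrow(1)\Rightarrow(2)\Rightarrow(4)$ uses exactly the paper's ingredients (Lemma \ref{a finitely generated implies} for $(3)\Rightarrow(1)$; Lemmas \ref{partial action for v is expansive} and \ref{ expansive partial then expansive} plus Theorem \ref{finite generatedness theorem} for finite generation; simplicity and vigor via Corollary \ref{simple derived subgroup} and \cite[Theorem 5.15]{vigorous} for $2$-generation), merely fusing the paper's $2.\Rightarrow 3.$ and $3.\Leftrightarrow 4.$ into a single implication $(2)\Rightarrow(4)$.
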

\begin{proof}
\begin{itemize}
    \item 1. $\implies$ 2.  Suppose $\Gamma \ltimes \Lambda \ltimes [0_+,1_-]$ is compactly generated and let $\ell$ be arbitrary. Let $\mu \in \Lambda$ be such that $\mu > \ell$. Note that $\Gamma \ltimes \Lambda \ltimes [0_+,1_-] \cong \Gamma \ltimes \Lambda \ltimes [0_+,\mu_-]$, therefore $\Gamma \ltimes \Lambda \ltimes [0_+,\mu_-]$ is compactly generated. $[0_+,\ell_-]$ is full and open as a subset of $[0_+,\mu_-]$. Applying Theorem \ref{lemma compact generation full subsets}, we have that $\Gamma \ltimes \Lambda \ltimes [0_+,\ell_-]$ is compactly generated. 
    \item 2. $\implies$ 3. follows directly as a combination of Lemma \ref{ expansive partial then expansive} and Theorem \ref{finite generatedness theorem}. 
    \item 3. $\implies$ 1. Note then in particular, $D(V(\Gamma,\Lambda,1)$ is finitely generated. Hence by Lemma \ref{a finitely generated implies}, $\Gamma \ltimes \Lambda \ltimes [0_+,1_-]$ is compactly generated.
    \item 4. $\iff$ 3. follows from noting that $D(V(\Gamma,\Lambda,\ell)$ is simple and vigorous (Corollary \ref{d simple}), and applying \cite[Theorem 5.15]{vigorous}.
\end{itemize}
  
\end{proof}
These groupoids are not \textit{always} compactly generated. 
\begin{lemma}
    If $\Gamma \ltimes \Lambda$ is not finitely generated (for example, if $\Lambda$ is of infinite rank) then $\Gamma \ltimes \Lambda \ltimes [0_+,1_-]$ is not compactly generated and thus $D(V(\Gamma,\Lambda,\ell))$ is not finitely generated.
    \label{infinite rank then not f.g.}
\end{lemma}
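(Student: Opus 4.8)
The plan is to prove the contrapositive of the first implication of the surrounding equivalences: assuming $\Gamma \ltimes \Lambda \ltimes [0_+,1_-]$ is compactly generated, I will show $\Gamma \ltimes \Lambda$ is finitely generated. Granting this, the stated consequences follow from what is already in place: by Theorem \ref{d simple} we have $\mathsf{D}(\G)\cong\mathsf{A}(\G)$ for our minimal, purely infinite groupoid, so if $D(V(\Gamma,\Lambda,\ell))$ were finitely generated then Lemma \ref{a finitely generated implies} would force $\Gamma\ltimes\Lambda\ltimes[0_+,\ell_-]$ to be compactly generated, and hence $\Gamma\ltimes\Lambda\ltimes[0_+,1_-]$ by Corollary \ref{finitely generated compactly}. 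To set up the main argument, suppose $K$ is a compact generating set. By ampleness we may cover $K$ by finitely many basic compact open bisections $((c_i,\mu_i),U_i)$, $i=1,\dots,n$, and replace $K$ by this finite family, so the generating set consists of elementary bisections each carrying a single group label. Set $H = \langle (c_i,\mu_i) : i=1,\dots,n\rangle \leq \Gamma\ltimes\Lambda$, which is finitely generated by construction.

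The crucial object is the set $S = \{(c,\mu)\in\Gamma\ltimes\Lambda : \exists\, x\in[0_+,1_-] \text{ with } (c,\mu)x\in[0_+,1_-]\}$ of group elements that genuinely occur as labels of arrows of the groupoid. Since composition in a partial transformation groupoid multiplies the $\Gamma\ltimes\Lambda$-labels, expressing any arrow $((c,\mu),x)$ as a product of generators and their inverses exhibits $(c,\mu)$ as a word in the $(c_i,\mu_i)^{\pm1}$, whence $S\subseteq H$. The main subtlety—and the reason this is not immediate—is that, unlike a global action on a compact space, the partial action makes $S$ a proper subset of $\Gamma\ltimes\Lambda$ (a translation $(c,1)$ with $|c|>1$ keeps no point of $[0_+,1_-]$ inside $[0_+,1_-]$), so $S\subseteq H$ alone does not bound the group.

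It therefore remains to prove $\langle S\rangle = \Gamma\ltimes\Lambda$; granting this, $\Gamma\ltimes\Lambda = \langle S\rangle \subseteq H \subseteq \Gamma\ltimes\Lambda$ yields $\Gamma\ltimes\Lambda = H$, finitely generated. First, every scaling $(0,\mu)$ lies in $S$ because $0_+$ is fixed by $(0,\mu)$, and every short translation $(c,1)$ with $c\in\Gamma\cap(-1,1)$ lies in $S$, witnessed by a point near $0_+$ or $1_-$. Next, the identity $(0,\mu)(c,1)(0,\mu)^{-1} = (\mu c,1)$ lets us rescale: given arbitrary $(c,1)$ with $c\in\Gamma$, choose $\mu\in\Lambda$ with $\mu>1$ (possible when $\Lambda\neq\{1\}$) and $n$ large enough that $\mu^{-n}c\in(-1,1)$; since $\Gamma$ is $\Lambda$-invariant we have $\mu^{-n}c\in\Gamma$, so $(\mu^{-n}c,1)\in S$ and $(c,1) = (0,\mu)^{n}(\mu^{-n}c,1)(0,\mu)^{-n}\in\langle S\rangle$. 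Finally $(c,\mu) = (0,\mu)(c,1)$ gives $\langle S\rangle = \Gamma\ltimes\Lambda$. In the degenerate case $\Lambda=\{1\}$ the rescaling trick is unavailable, but then $\Gamma$ is a dense subgroup of $\mathbb{R}$ and any $c\in\Gamma$ is a finite sum of elements of $\Gamma\cap(-1,1)$ (repeatedly subtracting elements of $\Gamma\cap(\tfrac12,1)$, which exist by density), so again $\langle S\rangle = \Gamma$.

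I expect the main obstacle to be precisely the partial-action phenomenon flagged above: one must isolate the correct ``visible'' subset $S$ of group labels and verify both that compact generation confines $S$ to the finitely generated subgroup $H$ and that $S$ nevertheless generates the entire group. The latter is where the structural hypotheses are essential—it relies on the $\Lambda$-invariance of $\Gamma$ and on the fixed endpoint $0_+$ to manufacture arbitrary translations and scalings from the short ones that survive the restriction to $[0_+,1_-]$.
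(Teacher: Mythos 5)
Your proposal is correct and follows essentially the same route as the paper: the paper likewise assumes a compact generating set, refines it to finitely many basic bisections $((c_i,\mu_i),[(a_i)_+,(b_i)_-])$, observes that the group labels of arrows multiply under composition, and notes that the labels realised by the explicit bisections $((c,1),[0_+,(1-c)_-])$ and $((0,\mu),[0_+,1_-])$ (your set $S$, the paper's set $M$ of short translations and small scalings) generate all of $\Gamma\ltimes\Lambda$. The only difference is one of detail: you verify explicitly, via the conjugation identity $(0,\mu)(c,1)(0,\mu)^{-1}=(\mu c,1)$, that these visible labels generate the whole group, a step the paper asserts without proof (and your side remark on the degenerate case $\Lambda=\{1\}$ is outside the paper's standing assumptions, where $\Gamma\subseteq\mathbb{Z}$ is never dense but is trivially finitely generated).
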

\begin{proof}
Suppose there is a compact generating set for $\Gamma \ltimes \Lambda \ltimes [0_+,1_-]$. Then, this compact generating set may be written as a finite collection of our basis elements for the topology $((c_i,\mu_i), [(a_i)_+, (b_i)_-]), i=1,...,n$ and therefore there is a finite string in $(c_i,\mu_i)$ that gives every group element in the set $M=\{(c,1) \; (0,\mu) \: \mu \in \Lambda \cap [0,1], c \in [0,1] \}$, since there are bisections $((c,1),[0_+,(1-c)_-]), ((0,\mu), [0_+,1_-])$.  But note that $M$ generates $\Gamma \ltimes \Lambda$ as a group. Hence $(c_i,\mu_i)$ also forms a finite generating set of the group $\Gamma \ltimes \Lambda$. 
\end{proof}
For this reason, the question of compact generation is only relevant when $\Lambda$ is polycyclic. Let us begin with the simplest case, namely the case when $\Lambda$ is cyclic. Here we show explicitly that the groupoid is always compactly generated. Set $\Lambda=\langle \lambda \rangle$, $\Gamma=\mathbb{Z}[\lambda,\lambda^{-1}]$, where $\lambda<1$. 
Let $\G_\lambda=\mathbb{Z}[\lambda,\lambda^{-1}] \ltimes \langle \lambda \rangle \ltimes [0_+,1_-]$. Let $K \in \mathbb{N}$ be large enough that $\lambda+\lambda^{K}<1$. Let $\mu_1= \lambda^{-1}-\lfloor \lambda^{-K-1} \rfloor \lambda^K$:
$$ c=((0,\lambda),[0_+,1_-])$$
$$f_i=((\lambda^i,1), [0_+,(1-\lambda^i)_-]) \sqcup ((\lambda^i-1, 1),[(1-\lambda^i)_+,1_-])$$
$$g_1=((\mu_1,1), [0_+,(1-\mu_1)_-]) \sqcup ((\mu_1-1, 1),[(1-\mu_1)_+,1_-])$$
Let $\mathcal{S}=\{c,f_1,...,f_K, g_1 \}$. Note that $\mathcal{S}$ is compact. Let $\G^{\mathcal{S}}_\lambda$ be the subgroupoid of $\G_\lambda$ generated algebraically by $\mathcal{S}$. We aim to show that $\G_\lambda=\G_\lambda^\mathcal{S}$. Our first claim is that for all $i \in \mathbb{N}$ we have the bisection $f_i$ which adds $\lambda^i ( \text{mod} \mathbb{Z})$.
\begin{lemma} Let $\lambda<1$ be arbitrary, let $\mathcal{S}=\{c,f_1,...,f_K,g_1\}$, and let $\G^\mathcal{S}_\lambda$ be the subgroupoid of $\G_\lambda=\mathbb{Z}[\lambda,\lambda^{-1}] \ltimes \langle \lambda \rangle \ltimes [0_+,1_-]$ generated by $\mathcal{S}$. Then,
    for all $i \in \mathbb{N}$, the bisection
    $f_i=((\lambda^i,1), [0_+,(1-\lambda^i)_-]) \sqcup ((\lambda^i-1, 1),[(1-\lambda^i)_+,1_-]) \in \mathcal{G}^\mathcal{S}_\lambda $\label{fi}
\end{lemma}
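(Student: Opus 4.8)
The plan is to argue by induction on $i$, working with $\G^{\mathcal{S}}_\lambda$ pointwise. Since $\G^{\mathcal{S}}_\lambda$ is by definition the set of all finite products of elements drawn from the bisections in $\mathcal{S} \cup \mathcal{S}^{-1}$, it is automatically closed under passing to open subsets of its bisections and under disjoint unions, and (because $c$ has full source) it contains every unit. Consequently, to prove $f_i \subseteq \G^{\mathcal{S}}_\lambda$ it suffices to exhibit, for each point $(g,x) \in f_i$, a factorisation of $g$ as a composable product of the group elements $(0,\lambda)$, $(\lambda,1)$, $(\lambda-1,1),\dots$ underlying $\mathcal{S} \cup \mathcal{S}^{-1}$ that is defined at $x$ inside the groupoid. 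The base cases $1 \le i \le K$ are immediate, since then $f_i \in \mathcal{S}$.

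The engine of the induction is the self-similarity implemented by $c$. Writing $c = (0,\lambda)$ and $c^{-1} = (0,\lambda^{-1})$ and using the multiplication $(a,\mu)(b,\nu) = (\nu^{-1}a + b,\,\mu\nu)$, one computes $c\,(\lambda^i,1)\,c^{-1} = (\lambda^{i+1},1)$: conjugation by $c$ turns the translation by $\lambda^i$ into the translation by $\lambda^{i+1}$. At the level of bisections, $A := c\,f_i\,c^{-1}$ is defined on the range $[0_+,\lambda_-]$ of $c$, and a direct check shows that its restriction to $[0_+,(\lambda - \lambda^{i+1})_-]$ is exactly the pure translation $((\lambda^{i+1},1),[0_+,(\lambda - \lambda^{i+1})_-])$. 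Thus, assuming inductively that $f_i \in \G^{\mathcal{S}}_\lambda$, the germ $((\lambda^{i+1},1),x)$ of $f_{i+1}$ is already realised in $\G^{\mathcal{S}}_\lambda$ for every $x$ in the clopen interval $J := [0_+, (\lambda - \lambda^{i+1})_-]$.

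It then remains to realise the germ of $f_{i+1}$ at the remaining points. For a non-wrap point $x \in [0_+, (1-\lambda^{i+1})_-]$ I would choose a product $h$ of the lower rotations $f_1,\dots,f_i \in \G^{\mathcal{S}}_\lambda$ carrying $x$ into the interior of $J$, and realise the germ as the conjugate $h^{-1} A h$; since the translations involved commute germwise, the composite group element telescopes to exactly $(\lambda^{i+1},1)$. The wrap points $x \in [(1-\lambda^{i+1})_+, 1_-]$, where the required germ is $(\lambda^{i+1}-1,1)$, are handled by the same transport, but routing the return trip the other way around the circle so that the integer part of the composite picks up the extra $-1$. The main obstacle is precisely this last step: one must verify (a) that the lower rotations can always move an arbitrary point into $J$ — this is where the hypothesis $\lambda + \lambda^K < 1$ enters, as it lets the induction start at an index $K \ge 2$ large enough that the length $\lambda - \lambda^{i+1}$ of $J$ exceeds the spacing of the grid $\sum_{j=1}^i \mathbb{Z}\lambda^j \pmod 1$ generated by those rotations (automatic once $i \ge 2$, and trivial when $\lambda$ is irrational and the grid is dense); and (b) that the $\mathbb{Z}$-component of each composite germ comes out as precisely $0$ or $-1$ according to whether $x$ lies in the non-wrap or wrap piece, which requires careful bookkeeping of the breakpoints of $h$ relative to the small displacement $\lambda^{i+1}$.
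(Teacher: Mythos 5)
Your overall strategy is the paper's: induct on $i$, use conjugation by $c$ to promote the translation by $\lambda^i$ to the translation by $\lambda^{i+1}$, and then move the resulting partial bisection around the circle by rotations. Indeed your first piece, $c\,f_i\,c^{-1}$ restricted to $[0_+,(\lambda-\lambda^{i+1})_-]$, is exactly the paper's first step. But from there you diverge, and the divergence is where the gap sits: you propose to realise the germ of $f_{i+1}$ at every remaining point $x$ by transporting $x$ into $J=[0_+,(\lambda-\lambda^{i+1})_-]$ with a product $h$ of the rotations $f_1,\dots,f_i$ and forming $h^{-1}(c f_i c^{-1})h$, and you yourself label the two required verifications as ``the main obstacle'' without carrying either out. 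Item (b) is not routine bookkeeping: the composite $h^{-1}(c f_i c^{-1})h$ has the correct germ $(\lambda^{i+1},1)$ at $x$ only when $h(x)$ and $h(x)+\lambda^{i+1}$ lie in the image of the same affine piece of $h$; on a neighbourhood of the preimages of the breakpoints of $h^{-1}$ the telescoping fails and the composite germ is $(t+\lambda^{i+1}-t',1)$ with $t\neq t'$. So for each $x$ you must produce an $h$ that lands $h(x)$ in $J$ \emph{with clearance} $\lambda^{i+1}$ both from the top of $J$ and from finitely many bad intervals determined by $h$'s own breakpoints --- a quantified statement you never establish. Your claim (a) that the grid $\sum_{j\le i}\mathbb{Z}\lambda^j \pmod 1$ is finer than $|J|$ is ``automatic once $i\ge 2$'' is asserted, not proved, and for rational $\lambda$ the grid is a finite subgroup of $\mathbb{Q}/\mathbb{Z}$, so density is unavailable and an explicit spacing estimate is needed; the wrap-germ case $(\lambda^{i+1}-1,1)$ is likewise only gestured at (``routing the return trip the other way'').

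You have also misidentified the role of the hypothesis $\lambda+\lambda^K<1$: it is not a grid-spacing condition. In the paper it guarantees $\lambda^{i+1}<\lambda^i\le\lambda^K<1-\lambda$, which is exactly what makes the \emph{second} explicit piece work: $f_{i+1}|_{[(\lambda-\lambda^{i+1})_+,\lambda_-]}=f_1\,c\,f_i\,c^{-1}|_{[(\lambda-\lambda^{i+1})_+,\lambda_-]}$, where the domain chase $[(\lambda-\lambda^{i+1})_+,\lambda_-]\xrightarrow{c^{-1}}[(1-\lambda^i)_+,1_-]\xrightarrow{f_i}[0_+,\lambda^i_-]\xrightarrow{c}[0_+,\lambda^{i+1}_-]\xrightarrow{f_1}[\lambda_+,(\lambda+\lambda^{i+1})_-]$ needs $\lambda+\lambda^{i+1}<1$ so that $f_1$ acts there as a pure translation with no wrap. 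With this second piece the paper assembles $f_{i+1}$ on the \emph{full} window $[0_+,\lambda_-]$, of length exactly $\lambda$, after which conjugation by powers of $f_1$ alone tiles $[0_+,1_-]$ by the intervals $[(n\lambda)_+,((n+1)\lambda)_-]$ --- explicit domains, no density or germ-clearance argument needed. The missing ingredient in your proposal is precisely this second piece (or a completed version of your transport argument, with the clearance and wrap cases proved); as written, the proof is an honest sketch whose central step is acknowledged but not done.
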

\begin{proof}
    We prove this by induction. We know it is true for $i=1,...,K$. Assume true for $i \geq K$, for $i+1$ we have that:
    $$f_{i+1}|_{[0_+,(\lambda-\lambda^{i+1})_-]}=c f_i c^{-1} |_{[0_+,(\lambda-\lambda^{i+1})_-]}. $$
    Let us verify this on each domains:
    $$ [0_+,(\lambda-\lambda^{i+1})_-] \xrightarrow{c^{-1}} [0_+, (1-\lambda^i)_-] \xrightarrow{f_i} [\lambda^i_+, 1_-] \xrightarrow{c} [\lambda^{i+1}_+,\lambda_-].$$
    Similarly, 
    $$f_{i+1}|_{[(\lambda-\lambda^{i+1})_+, \lambda_-]}=f_1c f_i c^{-1} |_{[(\lambda-\lambda^{i+1})_+, \lambda_-]} .$$
     Let us verify this on each domains:
    $$ [(\lambda-\lambda^{i+1})_+, \lambda_-] \xrightarrow{c^{-1}} [(1-\lambda^i)_+,1_-] \xrightarrow{f_i} [0_+,\lambda^i_-] \xrightarrow{c} [0_+,\lambda^{i+1}_-] \xrightarrow{f_1} [\lambda_+,(\lambda+\lambda^{i+1})_-] .$$
    Using that $\lambda^{i+1}<\lambda^i<\lambda^K<1-\lambda$. 
    Taking the union of these two bisections, we have $f_{i+1}|_{[0_+,\lambda_-]}$. In order to obtain $f_{i+1}|_{[0_+,1_-]}$, note that for all $n=0,...,\lfloor \lambda^{-1} \rfloor -1$:
    $$ f_{i+1}|_ {[(n\lambda)_+, ((n+1)\lambda)_-]}=f_1^n f_{i+1}|_{[0_+,\lambda_-]} f_1^{-n}.$$
    Finally, we have that for $n= \lfloor \lambda^{-1} \rfloor $,
    $$ f_{i+1}|_{[(n\lambda)_+, 1_-]}=f_1^n f_{i+1}|_{[0_+,\lambda_-]} f_1^{-n}|_ {[(n\lambda)_+, 1_-]}.$$
    This covers $f_{i+1}$ on $[0_+,1_-]$, completing our inductive step. 
\end{proof}
Our second claim is that for all $i \in \mathbb{N}$ we have the bisection $f_{-i}$ which adds $\lambda^{-i} (\text{mod} \mathbb{Z})$.
\begin{lemma} Let $\lambda<1$ be arbitrary, let $\mathcal{S}=\{c,f_1,...,f_K,g_1\}$, and let $\G^\mathcal{S}_\lambda$ be the subgroupoid of $\G_\lambda=\mathbb{Z}[\lambda,\lambda^{-1}] \ltimes \langle \lambda \rangle \ltimes [0_+,1_-]$ generated by $\mathcal{S}$. Then,
for all $i \in \mathbb{N}$ we have that the bisection
$f_{-i}=((\lambda^{-i}-\lfloor \lambda^{-i} \rfloor,1), [0_+,(1-(\lambda^{-i}-\lfloor \lambda^{-i} \rfloor))_-]) \sqcup ((\lambda^{-i}-\lfloor \lambda^{-i} \rfloor-1, 1),[(1-(\lambda^{-i}-\lfloor \lambda^{-i} \rfloor))_+,1_-]) \in \G^{\mathcal{S}}_\lambda$.\label{gi}\end{lemma}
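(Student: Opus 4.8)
The plan is to run an induction on $i\ge 1$ that mirrors exactly the argument of Lemma \ref{fi}, working entirely inside the group of ``circle rotations'' of $[0_+,1_-]$. Write $R_a$ for the element of $\G^{\mathcal{S}}_\lambda$ that translates the circle $[0_+,1_-]=[0_+,\ell_-]$ by $a$ modulo $1$ (so $f_i=R_{\lambda^i}$ and the target is $f_{-i}=R_{\lambda^{-i}}$), and set $\{x\}=x-\lfloor x\rfloor$. Two mechanisms do all the work. First, conjugation by the scaling element $c$ rescales the rotation angle: since $c\colon t\mapsto \lambda t$, one has $c^{-1}R_a c = R_{\lambda^{-1}a}$ on the appropriate subinterval, which is precisely the computation already performed (in the opposite direction) in Lemma \ref{fi}. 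Second, the generator $g_1$ together with $f_K$ supplies the base case: by definition $\mu_1=\lambda^{-1}-\lfloor \lambda^{-K-1}\rfloor\lambda^K$, so $\mu_1+\lfloor\lambda^{-K-1}\rfloor\lambda^K=\lambda^{-1}$, and hence as circle rotations
$$f_{-1}=R_{\lambda^{-1}}=R_{\mu_1}\,R_{\lfloor\lambda^{-K-1}\rfloor\lambda^{K}}=g_1\,f_K^{\lfloor\lambda^{-K-1}\rfloor}.$$
Since $g_1\in\mathcal{S}$ and $f_K\in\G^{\mathcal{S}}_\lambda$ by Lemma \ref{fi}, this gives $f_{-1}\in\G^{\mathcal{S}}_\lambda$.

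For the inductive step, assume $f_{-i}\in\G^{\mathcal{S}}_\lambda$. The rescaling formula gives $c^{-1}f_{-i}c=R_{\lambda^{-1}\{\lambda^{-i}\}}$ on the relevant domain, and decomposing $\lambda^{-(i+1)}=\lambda^{-1}\lambda^{-i}=\lambda^{-1}\{\lambda^{-i}\}+\lambda^{-1}\lfloor\lambda^{-i}\rfloor$ lets me peel off the integer part using powers of the base case. Concretely, modulo $1$,
$$f_{-(i+1)}=R_{\lambda^{-(i+1)}}=R_{\lambda^{-1}\{\lambda^{-i}\}}\,R_{\lambda^{-1}\lfloor\lambda^{-i}\rfloor}=\bigl(c^{-1}f_{-i}c\bigr)\,f_{-1}^{\lfloor\lambda^{-i}\rfloor}.$$
Both factors lie in $\G^{\mathcal{S}}_\lambda$ (the first by the inductive hypothesis and the presence of $c$, the second by the base case), so $f_{-(i+1)}\in\G^{\mathcal{S}}_\lambda$, completing the induction. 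Since all the $R_a$ are genuine homeomorphisms of the circle they pairwise commute, so the order of the two factors is immaterial.

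The main obstacle is not the algebraic identity but the domain bookkeeping hidden in ``$c^{-1}f_{-i}c=R_{\lambda^{-1}\{\lambda^{-i}\}}$''. Because $c$ has range only $[0_+,\lambda_-]$, the composite $c^{-1}f_{-i}c$ is literally defined only on the subarc of $t$ for which $\lambda t+\{\lambda^{-i}\}\pmod 1$ lands back in $[0_+,\lambda_-]$; there it agrees with the desired rotation, but it is a priori a partial bisection rather than the full-circle element. I would resolve this exactly as Lemma \ref{fi} resolves the analogous issue for $f_{i+1}$: first verify the identity piecewise on $[0_+,\lambda_-]$ (splitting that interval according to whether the translate wraps around, and filling the complementary piece with an extra translation by a known $f_j$), and then spread the resulting bisection across the whole of $[0_+,1_-]$ by conjugating with the powers $f_1^{n}$, $n=0,\dots,\lfloor\lambda^{-1}\rfloor$, just as in the final paragraph of the proof of Lemma \ref{fi}. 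Thus the only genuinely new ingredient beyond Lemma \ref{fi} is the base-case bisection $g_1$, engineered so that $R_{\mu_1}$ together with the available positive-power rotation $f_K$ reconstructs $R_{\lambda^{-1}}$.
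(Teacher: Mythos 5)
Your base case is correct and is a nice reading of the generators: since $\mu_1=\lambda^{-1}-\lfloor\lambda^{-K-1}\rfloor\lambda^{K}$, the identity $f_{-1}=g_1 f_K^{\lfloor\lambda^{-K-1}\rfloor}$ holds as a product of \emph{full} bisections, so $f_{-1}\in\G^{\mathcal{S}}_\lambda$. The gap is in your inductive step, specifically in the claim that the domain bookkeeping for $c^{-1}f_{-i}c$ can be resolved ``exactly as Lemma \ref{fi} resolves the analogous issue.'' The two directions are not symmetric. In Lemma \ref{fi} the conjugate $cf_ic^{-1}$ is automatically a bisection whose source and range are \emph{all} of $[0_+,\lambda_-]$, because $f_i$ is globally defined and $c$ maps $[0_+,1_-]$ onto $[0_+,\lambda_-]$; the only task is to identify the map on its two branches, and one post-composition with $f_1$ fixes the wraparound branch. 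In your direction, $c^{-1}f_{-i}c$ is defined only at those $t$ with $\lambda t+\{\lambda^{-i}\}\bmod 1\in[0,\lambda)$, where $\{x\}$ denotes the fractional part. This source has measure $2-\lambda^{-1}$ when that is positive; in particular, if $\lambda<1/2$ and $\{\lambda^{-i}\}\in[\lambda,1-\lambda]$, the composite is the \emph{empty} bisection. Then there is nothing to correct with a translation and nothing to spread around by conjugating with $f_1^n$ (conjugates of the empty bisection are empty), so the induction as you describe it cannot run for arbitrary $\lambda<1$. Even when the source is nonempty, the wraparound branch is a shift by $\lambda^{-1}(\{\lambda^{-i}\}-1)$, which differs from the target shift by $\lambda^{-1}$ rather than by an integer, so the needed correction is $f_{-1}$, not ``a known $f_j$'' with $j>0$; that is harmless given your base case, but it signals that the analogy with Lemma \ref{fi} is not exact.

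The missing idea is angle control before conjugation, and it is precisely what the paper's auxiliary sequence is engineered for: the paper never conjugates $f_{-i}$ itself, but instead maintains elements $g_i$ of rotation angle $\mu_i=\lambda^K\{\lambda^{-K-1}\mu_{i-1}\}\in[0,\lambda^K)$, where the correction factors $f_K^{-m_i}$ (with $m_i=\lfloor\lambda^{-1}\mu_i\lambda^{-K}\rfloor$) re-shrink the angle after each conjugation by $c$. Because $\mu_i<\lambda^K$ and $\lambda+\lambda^K<1$, every composite in the inductive step, such as $f_K^{-m_i}c^{-1}g_ic$ restricted to $[0_+,\lambda_-]$, is defined on the whole interval in play with no wraparound at all; only at the very end is $f_{-i}$ reassembled as a word in $g_1,\dots,g_i$ and the $f_j$, using that $\mu_i$ differs from $\lambda^{-i}$ by an integer combination of lower negative powers and positive powers of $\lambda$. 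Your proposal could be repaired in the same spirit (e.g., replace $f_{-i}$ by $f_j^{-m}f_{-i}$ for suitable $j,m$ to force the rotation angle below $\lambda$ before conjugating, and recover the missing arc from $c^{-1}(f_j^{-m}f_{-i}f_1^{-1})c$), but this reduction is the genuinely new mechanism of Lemma \ref{gi} beyond Lemma \ref{fi}; $g_1$ is not merely a seed for $f_{-1}$ but the start of the small-angle bookkeeping, and without it the inductive step fails.
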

\begin{proof}

For all $i \in \mathbb{N}$ let (as before) $K$ be large enough that $\lambda+\lambda^K<1$, and set $\mu_{i+1}=\lambda^{-1} \mu_i- \lfloor \lambda^{-1} \mu_i \lambda^{-K} \rfloor \lambda^K$. Our first aim is to show that  for all $i$, we have that the bisection
$g_i=((\mu_i,1), [0_+,(1-\mu_i)_-]) \sqcup ((\mu_i-1, 1),[(1-\mu_i)_+,1_-]) \in \G^{\mathcal{S}}_\lambda$.
  Let us prove the statement by induction. We have that the statement is true by assumption for $i=1$, ($g_1 \in \mathcal{S}$) so let us proceed with the inductive step, assuming true for $i$ and aiming to show it is true for $i+1$. Let $m_i=\lfloor \lambda^{-1} \mu_i \lambda^{-K}\lfloor$. We claim that:
   $$g_{i+1}|_{[0_+,\lambda_-]}=f_K^{-m_i}c^{-1}g_i c |_{[0_+,\lambda_-]}.$$
   Let us verify on our domains, note  $\mu_i<\lambda^K$. Therefore,
$$\lambda^2+\mu_i<\lambda^2+\lambda^K=\lambda(\lambda+\lambda^K)<\lambda.$$
Therefore, $[(\mu_i)_+,(\lambda^2+\mu_i)_-] \subset [0_+,\lambda_-]$. Using this we can verify our domains:
$$ [0_+,\lambda_-] \xrightarrow{c} [0_+,\lambda^2_-] \xrightarrow{g_i} [(\mu_i)_+,(\lambda^2+\mu_i)_-] \xrightarrow{c^{-1}} [(\lambda^{-1} \mu_i)_+, (\lambda+\lambda^{-1} \mu_i)_-] \xrightarrow{f_K^{-m_i}} [(\mu_{i+1})_+, (\lambda+\mu_{i+1})_-].$$
Note that for all $n=0,...,\lfloor \lambda^{-1} \rfloor -1$:
    $$ g_{i+1}|_ {[(n\lambda)_+, ((n+1)\lambda)_-]}=f_1^n g_{i+1}|_{[0_+,\lambda_-]} f_1^{-n}.$$
    Finally, we have that for $n= \lfloor \lambda^{-1} \rfloor $
    $$ g_{i+1}|_{[(n\lambda)_+, 1_-]}=f_1^n g_{i+1}|_{[0_+,\lambda_-]} f_1^{-n}|_ {[(n\lambda)_+, 1_-]}.$$
    This covers $g_{i+1}$ on $[0_+,1_-]$. 

Let us finish by observing that for all $i$ we can recover $f_{-i}$ as a finite word in $g_1,....,g_i$ and $f_j, j \in \mathbb{N}$ which are themselves elements of $\G_\lambda^\mathcal{S}$ by the above argument and Lemma \ref{fi}. This is because $\mu_i$ are polynomials in $\lambda$ of the form $\mu_i=\lambda^{-i}+\sum_{k=1}^N a_k\lambda^{-i+k}$ where $N \in \mathbb{N}$ and $ a_k \in \mathbb{Z}$. 
\end{proof}
Using the above Lemmas, we have all additive germs in $\G_\lambda$ are also in $\G^\mathcal{S}_\lambda$. We use this to conclude that $\G_\lambda=\G^\mathcal{S}_\lambda$. 
\begin{lemma}
Let $\lambda<1$ be arbitrary, let $\mathcal{S}=\{c,f_1,...,f_K,g_1\}$, then, $\G_\lambda=\mathbb{Z}[\lambda,\lambda^{-1}] \ltimes \langle \lambda \rangle \ltimes [0_+,1_-]$ 
 is generated by $\mathcal{S}$; $\G_\lambda$ is compactly generated.  \label{glambda compact gen}
\end{lemma}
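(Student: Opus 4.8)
The plan is to prove the stronger assertion that $\mathcal{S}$ generates all of $\G_\lambda$ as a groupoid, i.e.\ $\G_\lambda=\G^{\mathcal{S}}_\lambda$; since $\mathcal{S}$ is a finite union of compact open bisections it is compact, so this equality immediately gives compact generation with $K=\bigcup\mathcal{S}$. By Lemmas \ref{fi} and \ref{gi} we already know that the full ``translation'' bisections $f_k$, which add $\lambda^k\pmod{\mathbb{Z}}$, lie in $\G^{\mathcal{S}}_\lambda$ for every $k\in\mathbb{Z}$ (positive $k$ from Lemma \ref{fi}, negative $k$ via the $g_i$ of Lemma \ref{gi}), and the scaling bisection $c$ is a generator by definition. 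It therefore remains only to assemble these pieces into an arbitrary germ of $\G_\lambda$.

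First I would reduce to realizing individual germs. Recall from Lemma \ref{v as a topological full group} that a basis of compact open bisections of $\G_\lambda$ consists of the sets $((c,\mu),[a_+,b_-])$ with $c\in\mathbb{Z}[\lambda,\lambda^{-1}]$ and $\mu=\lambda^n$, so it suffices to show every germ $((c,\lambda^n),x)\in\G_\lambda$ lies in $\G^{\mathcal{S}}_\lambda$. For the additive germs (the case $n=0$) I would use that $\mathbb{Z}[\lambda,\lambda^{-1}]$ is generated as an abelian group by $\{\lambda^k\}_{k\in\mathbb{Z}}$: writing $c=\sum_k a_k\lambda^k$ with $a_k\in\mathbb{Z}$ and setting $\tau_c:=\prod_k f_k^{a_k}$ (well defined since these translations commute $\bmod\,\mathbb{Z}$) produces a full bisection in $\G^{\mathcal{S}}_\lambda$ implementing $x\mapsto x+c\pmod{\mathbb{Z}}$. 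Its germ at a point $x$ with $x+c\in[0,1)$ is exactly $((c,1),x)$: the true image and $x+c$ differ by an integer and both lie in $[0,1)$, forcing that integer to vanish. Hence every additive germ of $\G_\lambda$ is in $\G^{\mathcal{S}}_\lambda$.

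To treat general $\mu=\lambda^n$ I would factor the group element in the semidirect product and realize it as a product of a scaling germ, supplied by a power of $c$, and an additive germ, supplied by a suitable $\tau$. When $n\geq 0$ I use $(c,\lambda^n)=(\lambda^n c,1)(0,\lambda^n)$, applying $c^n$ first (which scales $x$ down into $[0,\lambda^n)\subseteq[0,1)$) and then $\tau_{\lambda^n c}$; when $n<0$ I use $(c,\lambda^n)=(0,\lambda^n)(c,1)$, applying $\tau_c$ first and then the partially-defined inverse scaling $(c^{-1})^{|n|}$, whose domain is $[0_+,\lambda^{|n|}_-]$. Each factor is in $\G^{\mathcal{S}}_\lambda$ by the previous paragraph and by definition of $c$, so the composite germ is too, and $\G_\lambda=\G^{\mathcal{S}}_\lambda$ follows.

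The hard part will be the bookkeeping of the wrap-around: I must check in each factorization that every intermediate point stays inside the unit space $[0_+,1_-]$, so that the relevant pair is genuinely composable in the restricted groupoid $\G_\lambda$ rather than merely in the ambient transformation groupoid $\Gamma\ltimes\Lambda\ltimes\mathbb{R}_\Gamma$. This is precisely what dictates the ordering: for $n\geq 0$ scaling down first keeps us in $[0,\lambda^n)\subseteq[0,1)$, whereas for $n<0$ the very constraint $\lambda^n(x+c)\in[0,1)$ defining a germ of $\G_\lambda$ forces $x+c\in[0,\lambda^{|n|})\subseteq[0,1)$, so translating first is safe and lands in the domain of $(c^{-1})^{|n|}$. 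Once these domain checks are confirmed, every germ of $\G_\lambda$ lies in a finite product of members of $\mathcal{S}\cup\mathcal{S}^{-1}$, giving $\G_\lambda=\G^{\mathcal{S}}_\lambda$ and hence the compact generation of $\G_\lambda$.
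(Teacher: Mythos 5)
Your proposal is correct and takes essentially the same route as the paper's proof: both first promote Lemmas \ref{fi} and \ref{gi} to the full family of additive germs $((\alpha,1),x)\in\G^{\mathcal{S}}_\lambda$ (via the observation that the $f_{\pm i}$ span a copy of $\mathbb{Z}[\lambda,\lambda^{-1}]/\mathbb{Z}$), and then realize an arbitrary germ of slope $\lambda^{n}$ as a composition of an additive germ with a power of $c$, with exactly the domain check $q(x)+\hat{\alpha}\in(0,\lambda^{|n|})$ that you flag as the key bookkeeping point. The only cosmetic difference is that you write out both factorizations explicitly (scale-then-translate for contracting slopes, translate-then-scale for expanding ones), whereas the paper treats only the expanding case $\mu=\lambda^{-n}$, $n>0$, and obtains the remaining germs implicitly from closure under inverses.
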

\begin{proof}
Lemma \ref{fi} and \ref{gi} say that the (full) bisections:
$$f_i, f_{-i} \in \G_\lambda^\mathcal{S} ,$$
for all $i \in \mathbb{N}$, where $\G_\lambda^\mathcal{S}$ is the subgroupoid of $\G_\lambda$ generated by $\mathcal{S}$. It is clear that $f_i,f_{-i}$ span a subgroup of $V(\mathbb{Z}[\lambda,\lambda^{-1}],\langle \lambda \rangle , 1)$ isomorphic to $\mathbb{Z}[\lambda,\lambda^{-1}]/\mathbb{Z}$. Therefore, $\G^{\mathcal{S}}_\lambda$ has all germs of the form:
$$ ((\alpha, 1), x) \in \G^{\mathcal{S}}_\lambda.$$
Let $g=((\hat{\alpha}, \lambda^{-n}),x) \in \G_\lambda$ be arbitrary, with $n>0$. Then $0<q(r(g))=\lambda^{-n}(q(x)+\hat{\alpha})<1,$ so in particular, $0<q(x)+\hat{\alpha}<\lambda^{n}$. Then, $((\hat{\alpha},1),x) \in \G_{\lambda}^\mathcal{S}$. 
Then $q(r((\hat{\alpha},1),x)<\lambda^n \Rightarrow r((\hat{\alpha},1),x) \in s(c^{-n})$. Therefore $g=c^{-n}((\hat{\alpha},1),x) \in \G_\lambda^\mathcal{S}$.
\end{proof}

 It remains to understand the case when $\Lambda$ is finitely but not singly generated. In fact, we can reduce the case when $\Lambda$ is polycyclic to the case when $\Lambda$ is cyclic by using the following Lemma. 
 
 Let us first explain the inclusion of our groupoids in one another. Let $\G=\mathbb{Z}[\lambda_1^{\pm 1}, \lambda_n^{\pm 1}] \ltimes \langle \lambda_1,...,\lambda_n \rangle \ltimes [0_+,1_-]$ be a groupoid where $\Lambda$ is polycyclic. Let $N=\{\lambda_{k_1}, ..,\lambda_{k_K}\}$ be a finite collection of $\lambda_i$. Let $B$ be a compact open bisection in $\G_N=\mathbb{Z}[\lambda_{k_1}^{\pm 1}, \lambda_{k_K}^{\pm 1}] \ltimes \langle \lambda_{k_1},...,\lambda_{k_K} \rangle \ltimes [0_+,1_-]$. We can consider $B$ as a compact open bisection in $\G$ via the canonical inclusion. Moreover, the canonical inclusion map is an inclusion on the level of the inverse semigroup of compact open bisections, so that if $K$ was a compact generating set for $\G_N$ then $K$ is a compact set in $\G$ which generates a subgroupoid containing all compact bisections of the form $((c,\mu),[a_+,b_-])$ where $(c,\mu) \in \mathbb{Z}[\lambda_{k_1}^{\pm 1}, \lambda_{k_K}^{\pm 1}] \ltimes \langle \lambda_{k_1},...,\lambda_{k_K} \rangle$ and $a,b \in \mathbb{Z}[\lambda_1^{\pm 1}, \lambda_n^{\pm 1}]$ are arbitrary. 

\begin{lemma}
    Let $\lambda_1,...,\lambda_N \in [0,1]$ be a collection of real numbers. 
    \label{lemma where we reduce to singly generated}
    Then, the groupoid $\mathbb{Z}[\lambda_1^{\pm 1},...,\lambda_N^{\pm 1}] \ltimes \langle \lambda_1,...,\lambda_N \rangle  \ltimes [0_+,1_-]$ is compactly generated, therefore $D(V(\langle \lambda_1,...,\lambda_N \rangle ,\mathbb{Z}[\lambda_1^{\pm 1},...,\lambda_N^{\pm1}],1))$ is finitely generated.
\end{lemma}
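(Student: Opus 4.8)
The plan is to reduce to the cyclic case handled in Lemma \ref{glambda compact gen} by generating the polycyclic groupoid from the union of the cyclic generating sets. Assume without loss of generality that $0 < \lambda_i < 1$ for every $i$ (generators equal to $1$ contribute nothing), write $\Gamma = \mathbb{Z}[\lambda_1^{\pm 1}, \dots, \lambda_N^{\pm 1}]$, $\Lambda = \langle \lambda_1, \dots, \lambda_N \rangle$ and $\G = \Gamma \ltimes \Lambda \ltimes [0_+, 1_-]$. For each $i$ let $\mathcal{S}_i$ be the finite generating set of the cyclic groupoid $\mathbb{Z}[\lambda_i^{\pm 1}] \ltimes \langle \lambda_i \rangle \ltimes [0_+, 1_-]$ produced by Lemma \ref{glambda compact gen}, and set $\mathcal{S} = \bigcup_{i=1}^N \mathcal{S}_i$, a finite and hence compact subset of $\G$. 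Let $\langle \mathcal{S} \rangle$ denote the subgroupoid it generates. By the inclusion of groupoids discussed immediately before the statement (applied with the subcollection $\{\lambda_i\}$), the set $\mathcal{S}_i$ already generates, inside $\G$, every compact open bisection $((c, \lambda_i^m), [a_+, b_-])$ with $c \in \mathbb{Z}[\lambda_i^{\pm 1}]$, $m \in \mathbb{Z}$ and $a, b \in \Gamma$ arbitrary; so $\langle \mathcal{S} \rangle$ contains all of these, for every $i$. Since the basic bisections $((\beta, \mu), [a_+, b_-])$ form a basis for $\G$, it suffices to show each of them lies in $\langle \mathcal{S} \rangle$.

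The group-theoretic heart of the argument is that the elements $(c, \lambda_i^m)$ with $c \in \mathbb{Z}[\lambda_i^{\pm 1}]$ generate all of $\Gamma \ltimes \Lambda$. This follows from the conjugation identity $(0, \nu)(\alpha, 1)(0, \nu)^{-1} = (\nu \alpha, 1)$: starting from the translation $(1, 1)$ and conjugating by products of the scalings $(0, \lambda_i)$ one reaches translation by any monomial $\prod_i \lambda_i^{e_i}$, and taking integer combinations yields every $(\gamma, 1)$ with $\gamma \in \Gamma$; together with the scalings this gives the whole group. I would then record the two factorizations $(\beta, \mu) = (\mu\beta, 1)(0, \mu)$ and $(\beta, \mu) = (0, \mu)(\beta, 1)$, which express any group element as a translation composed with a pure scaling in either order.

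The substance of the proof is promoting this group-level statement to the groupoid level while keeping all intermediate points inside $[0_+, 1_-]$. First I would show that $\langle \mathcal{S} \rangle$ contains the translation bisection $((\gamma, 1), [a_+, b_-])$ for every $\gamma \in \Gamma$ and every admissible interval: by induction on the number of factors of a monomial $m = \lambda_i m'$, realise translation by $m$ as the conjugate $c_i \circ ((m', 1), \cdot) \circ c_i^{-1}$ of translation by $m'$, and extend this from the subinterval where the conjugate is defined to the whole interval by covering it with finitely many translates under the already-available $\mathbb{Z}[\lambda_i^{\pm 1}]$-translations. Second, I would realise a pure scaling $(0, \mu)$ on its full natural domain by writing $\mu$ as a product of the $\lambda_i^{\pm 1}$ ordered so that the shrinking factors act first and the expanding factors last; then every partial product is at most $\max(1, \mu)$, so every intermediate point stays in $[0_+, 1_-]$. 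Finally, for a basic bisection $((\beta, \mu), [a_+, b_-])$ I would use the factorization $(\mu\beta, 1)(0, \mu)$ when $\mu \le 1$ and $(0, \mu)(\beta, 1)$ when $\mu > 1$; a short case check shows that in each regime the single intermediate point $\mu\, q(x)$ or $q(x) + \beta$ remains in $[0, 1]$ throughout the interval, so the composite lies in $\langle \mathcal{S} \rangle$.

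This shows $\langle \mathcal{S} \rangle = \G$, so $\G$ is compactly generated; the finite generation of $D(V(\Gamma, \Lambda, 1))$ then follows from the equivalence of $(1)$ and $(3)$ in Corollary \ref{finitely generated compactly}. The main obstacle I anticipate is exactly the domain bookkeeping in the third paragraph: the group factorizations are immediate, but realising translations by \emph{mixed} monomials as honest bisections over a prescribed interval, and guaranteeing that the scaling-then-translation (or translation-then-scaling) compositions never step outside $[0_+, 1_-]$, is where the care is needed.
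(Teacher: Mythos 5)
Your overall route coincides with the paper's: the same compact set (the union of the cyclic generating sets from Lemma \ref{glambda compact gen}, which the paper merely organizes as an induction on $N$), the same canonical-inclusion observation giving all bisections $((c,\lambda_i^m),[a_+,b_-])$ with arbitrary endpoints $a,b \in \Gamma$, the same conjugate-by-$c_i$-then-cover-by-translates technique for rotations (the paper's Claim 1, modelled on Lemma \ref{fi}), and the same final factorization of an arbitrary germ as a rotation composed with a pure scaling. Your handling of pure scalings (contracting factors first, expanding factors last, so intermediate images stay in $[0,1]$) and the concluding one-point domain check are sound and match the paper's last step.

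The gap is in your first step, and it is exactly the point you deferred: monomials with negative exponents. Your only stated mechanism for enlarging the stock of rotations is the induction $m = \lambda_i m'$ with conjugation by the \emph{contracting} scaling $c_i$; iterating this from the single-variable rotations produces $f_\alpha$ only for $\alpha$ built from nonnegative-exponent mixed monomials together with single-variable contributions, and integer combinations of these span a proper submodule of $\Gamma$ in general --- for instance, the rotation by the fractional part of $\lambda_1^{-1}\lambda_2$ is unreachable this way. The group-level identity you invoke (conjugating a translation by $(0,\lambda_i)^{-1}$ to rescale it) does not transfer to the groupoid: on $[0_+,1_-]$ a translation by a number exceeding $1$ does not exist, and its replacement, rotation by the fractional part, is \emph{not} a conjugate of the smaller rotation. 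The missing device is the mechanism of Lemma \ref{gi} and Claim 2 of the paper's proof: first bring the exponent into a small window by composing with available rotations (subtracting multiples of $\lambda_i$ via powers of $f_{\lambda_i}$ so that $\alpha < \lambda_i$), then conjugate by the \emph{expanding} $c_i^{-1}$ on the restricted domain $[0_+,\lambda_i^{\phantom{1}}$-sized$]$, recover the full bisection by the covering trick, and finally correct modulo $1$ by composing with rotations such as $f_{\lambda_i^{-1}-\lfloor \lambda_i^{-1} \rfloor}$ coming from the cyclic generating set; the paper's induction on $|m_{N+1}|$, split into the cases $m_{N+1}>0$ and $m_{N+1}<0$ with these correction factors, is precisely this bookkeeping. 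Identifying the mixed-monomial case as ``where the care is needed'' locates the crux correctly, but the proposal as written contains no mechanism that would close it.
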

\begin{proof}
 Let us prove this by induction. The base case $N=1$ is covered by Lemma \ref{glambda compact gen} and Corollary \ref{finitely generated compactly}. Assume true for $N$, let us show for $N+1$. 

 By inductive hypothesis there is a compact generating set $\mathcal{S}_N$ for the groupoid $\mathbb{Z}[\lambda_1^{\pm 1},...,\lambda_N^{\pm 1}] \ltimes \langle \lambda_1,...,\lambda_N \rangle  \ltimes [0_+,1_-]$
 By Lemma \ref{glambda compact gen} and Corollary \ref{finitely generated compactly} there is a compact generating set $\mathcal{S}_{N+1}$ which generates the groupoid $\mathbb{Z}[\lambda_{N+1},\lambda_{N+1}^{-1}] \ltimes \langle \lambda_{N+1} \rangle \ltimes [0_+,1_-]$. Let $\mathcal{S}=\mathcal{S}_N \sqcup \mathcal{S}_{N+1}$, considering the bisections in the subgroupoids as bisections in the enveloping groupoid.  Let $\G^{\mathcal{S}} \subset \G:=\mathbb{Z}[\lambda_1^{\pm 1},...,\lambda_{N+1}^{\pm 1}] \ltimes \langle \lambda_1,...,\lambda_{N+1} \rangle  \ltimes [0_+,1_-]$ be the subgroupoid generated by $\mathcal{S}$. By construction then, we have that \begin{equation}   
 (c,\mu) \in \mathbb{Z}[\lambda_{N+1},\lambda_{N+1}^{-1}],  \ltimes \langle \lambda_{N+1} \rangle,  a,b \in \mathcal{Z}[\lambda_1^{\pm 1} ...,\lambda_{N+1}^{\pm 1}] \Rightarrow ((c,\mu), [a_+,b_-])  \in \mathcal{G}^\mathcal{S} \label{n+1 incl}\end{equation}
 and, \begin{equation}
    (c,\mu) \in \mathbb{Z}[\lambda_1^{\pm 1},...,\lambda_N^{\pm 1}] \ltimes \langle \lambda_1,...,\lambda_N \rangle,  a,b \in \mathcal{Z}[\lambda_1^{\pm 1} ...,\lambda_{N+1}^{\pm 1}] \Rightarrow ((c,\mu), [a_+,b_-])  \in \mathcal{G}^\mathcal{S}. \label{sn incl} \end{equation} Let us proceed to prove two claims, in analogy to Lemma \ref{fi} and Lemma \ref{gi}. 
 \begin{enumerate}
     \item Our first claim is that for all (suitable) additive germs in $\G^\mathcal{S}$ we may multiply them by positive powers of $\lambda_N$, we proceed in analogy to Lemma \ref{fi}. More precisely, let us show that for all $\alpha \in [0,1) \cap \mathbb{Z}[\lambda_1^{\pm 1},...,\lambda_{N+1}^{\pm 1}]$ such that the bisection
     $f_\alpha =((\alpha,1),[0_+,(1-\alpha)_+]) \sqcup ((\alpha-1,1),[(1-\alpha)_-,1_+] \in \G^{\mathcal{S}}_\lambda$, we have that $f_{\lambda_{N+1}\alpha}=((\lambda_{N+1}\alpha,1), [0_+,(1-\lambda_{N+1}\alpha)_-] \sqcup ((\lambda_{N+1}\alpha-1,1),[(1-\lambda_{N+1}\alpha)_-,1_+] \in \mathcal{G}^\mathcal{S}_\lambda$.

     Let $c_{N+1}=((0,\lambda_{N+1}),[0_+,1_-])\in \mathcal{\G}^\mathcal{S}$ by (\ref{n+1 incl}). We have that $f_{\lambda_{N+1}\alpha}|_{[0_+,(\lambda_{N+1}(1-\alpha))_-]}=c_{N+1}f_\alpha c_{N+1}^{-1}|_{[0_+,(\lambda_{N+1}(1-\alpha))_-]} \in \G^\mathcal{S}$. Note 
     $f_{\lambda_{N+1}}=((\lambda_{N+1},1), [0_+,(1-\lambda_{N+1})_-] \sqcup ((\lambda_{N+1}-1,1),[(1-\lambda_{N+1})_-,1_+] \in \mathcal{G}^\mathcal{S}$ by (\ref{n+1 incl}). We have that:
     $$f_{\lambda_N\alpha}|_{[(\lambda_{N+1}(1-\alpha))_+,(\lambda_{N+1})_-]}=f_{\lambda_{N+1}}^{-1}c_{N+1}f_\alpha c_{N+1}^{-1}|_{[(\lambda_{N+1}(1-\alpha))_+,(\lambda_{N+1})_-]}$$
     Therefore, $f_{\lambda_{N+1}}|_{[0_+,(\lambda_{N+1})_-]}= f_{\lambda_{N+1}\alpha}|_{[0_+,(\lambda_{N+1}(1-\alpha))_-]}
 \sqcup f_{\lambda_N\alpha}|_{[(\lambda_{N+1}(1-\alpha))_+,(\lambda_{N+1})_-]} \in \G^\mathcal{S}$.

      We have that for all $n \in 0,1,...,\lfloor \lambda_{N+1}^{-1} \rfloor -1$
     $$f_{\lambda_{N+1}\alpha}|_{[(n\lambda_{N+1})_+,((n+1)\lambda_{N+1})_-]}=f_{\lambda_{N+1}}^{n} f_{\lambda_N\alpha}|_{[0_+,(\lambda_N)_-]} f_{\lambda_{N+1}}^{-n} \in \G^\mathcal{S} $$
     And for $n=\lfloor \lambda_{N+1}^{-1} \rfloor$, 
     $$f_{\lambda_N\alpha}|_{[(n\lambda_{N+1})_+,1_-]}=f_{\lambda_{N+1}}^{n} f_{\lambda_N\alpha}|_{[0_+,(\lambda_N)_-]} f_{\lambda_{N+1}}^{-n} |_{[(n\lambda_{N+1})_+,1_-]} \in \G^\mathcal{S}. $$
     \item Our second claim is that for all (suitable) additive germs in $\G^\mathcal{S}$ we may multiply them by negative powers of $\lambda_N$, proceeding in analogy to Lemma \ref{gi}. More precisely, let us show that for all $\alpha \in [0,\lambda_{N+1}) \cap \mathbb{Z}[\lambda_1^{\pm 1},...,\lambda_{N+1}^{\pm 1}]$ such that the bisection
     $f_\alpha =((\alpha,1),[0_+,(1-\alpha)_+]) \sqcup ((\alpha-1,1),[(1-\alpha)_-,1_+] \in \G^\mathcal{S}$, we have that $f_{\lambda_{N+1}^{-1}\alpha}=((\lambda_{N+1}^{-1}\alpha,1), [0_+,(1-\lambda_{N+1}^{-1}\alpha)_-] \sqcup ((\lambda_{N+1}^{-1}\alpha-1,1),[(1-\lambda_{N+1}^{-1}\alpha)_-,1_+] \in \mathcal{G}^\mathcal{S}$.

     Let $c_{N+1}=((0,\lambda_{N+1}),[0_+,1_-])\in \mathcal{\G}^\mathcal{S}$ by (\ref{n+1 incl}). We have that:
     $$ f_{\lambda_{N+1}^{-1}\alpha}|_{[0_+,(1-\lambda_{N+1}^{-1}\alpha)_-]}=c_{N+1}^{-1} f_{\alpha}c_{N+1}|_{[0_+,(1-\lambda_{N+1}^{-1}\alpha)_-]} $$
     Let $K$ be large enough that $\lambda_{N+1}^K< 1-\lambda_{N+1}^{-1} \alpha $. We have that $$f_{\lambda_{N+1}^K}=((\lambda_{N+1}^K,1), [0_+,(1-\lambda_{N+1}^K)_-] \sqcup ((\lambda_{N+1}^K-1,1),[(1-\lambda_{N+1}^K)_-,1_+] \in \mathcal{G}^\mathcal{S}$$ by (\ref{n+1 incl}). Then, conjugating $f_{\Lambda_{N+1}^{-1}\alpha}|_{[0_+,(1-\lambda_{N+1}^{-1}\alpha)_-]}$ by $(f_{\lambda_{N+1}^K})^n$, as before, will cover $f_{\lambda_{N+1}^{-1}\alpha}$ on $[0_+,1_-]$. 
 \end{enumerate}
Using the above claims we next establish that for all $m_1,....,m_{N+1} \in \mathbb{Z}$, for $\mu=\prod_{i=1}^{N+1} \lambda_i^{m_i} $ we have that the bisection:
$$ f_{\mu-\lfloor \mu \rfloor}=((\mu-\lfloor \mu \rfloor,1), [0_+,(1-\mu+\lfloor \mu \rfloor)_-] \sqcup ((\mu-\lfloor \mu \rfloor-1,1),[(1-\mu+\lfloor \mu \rfloor)_-,1_+] \in \mathcal{G}^\mathcal{S}$$
Case where $|m_{N+1}|=0$ follows by (\ref{sn incl}), since $\mu-\lfloor \mu \rfloor \in  \mathbb{Z}[\lambda_1^{\pm 1},...,\lambda_N^{\pm 1}] $. Let us prove the inductive step separately for the cases $m_{N+1}>0$ and $m_{N+1}<0$
\begin{itemize}
    \item Let us do the case when $m_{N+1}>0$ by induction. Assuming we have for $\nu=\lambda_{N+1}^{-1}\mu$, $f_{\nu-\lfloor \nu \rfloor} \in \G^\mathcal{S}$. 
    
    Then use claim 1) from above, we have that $f_{\lambda_{N+1}\mu -\lfloor \lambda_{N+1}^{-1} \mu \rfloor \lambda_{N+1}} \in \G^\mathcal{S}$. But we have already established that 
    $f_{\lambda_{N+1}} \in \G^\mathcal{S}$, hence 
    
    $f_{\mu -\lfloor \mu \rfloor }=f_{\lambda_{N+1}}^{\lfloor \lambda_{N+1}^{-1} \mu \rfloor} f_{\mu-\lfloor \lambda_{N+1}^{-1} \mu \rfloor \lambda_{N+1}} \in \G^\mathcal{S} $
    \item The case when $m_{N+1}<0$ is completely analogous, assume that we have $ f_{\lambda_{N+1}\mu-\lfloor \lambda^{N+1}\mu \rfloor} \in \G^{\mathcal{S}}$ we have that  $\lambda_{N+1}\mu-\lfloor \lambda^{N+1}\mu \rfloor \in [n \lambda_{N+1} , (n+1) \lambda_{N+1}] $, for some $n$. Hence, 
    $$f_{\lambda_{N+1}\mu-\lfloor \lambda^{N+1}\mu \rfloor-n\lambda_{N+1}}=f_{\lambda_{N+1}}^{-n}f_{\lambda_{N+1}\mu-\lfloor \lambda_{N+1}\mu \rfloor} \in \G^\mathcal{S} $$
    Now using claim 2) we have that 
    $$f_{\mu-\lambda_{N+1}^{-1} \lfloor \lambda_{N+1} \mu \rfloor -n} \in \G^\mathcal{S}$$
    But note that $f_{\lambda_{N+1}^{-1}-\lfloor \lambda_{N+1} \rfloor } \in \G^\mathcal{S}$, it is generated by elements in $\mathcal{S}_{N+1}$. Hence, we have that: 
    $$f_{\mu- \lfloor \mu \rfloor }=f_{\lambda_{N+1}^{-1}-\lfloor \lambda_{N+1} \rfloor}^{\lfloor \lambda_{N+1} \mu \rfloor}f_{\mu-\lambda_{N+1}^{-1} \lfloor \lambda_{N+1} \mu \rfloor -n}  \in \G^\mathcal{S}$$
\end{itemize}
For all $\mu$ as above, we have $f_{\mu-\lfloor \mu \rfloor } \in \G^\mathcal{S}$. These bisections span an abelian subgroup of the topological full group 
$V(\langle \lambda_1,...,\lambda_{N+1} \rangle, \mathbb{Z}[\lambda_1^{\pm 1 }, ...,\lambda_{N+1}^{\pm 1}], 1 )$ isomorphic to $\mathbb{Z}[\lambda_1^{\pm 1 }, ...,\lambda_{N+1}^{\pm 1}]/\mathbb{Z}$

Let $\mu \in \langle \lambda_1,...,\lambda_{N+1} \rangle$. Then we may rewrite $\mu$ as $\mu_N \lambda_{N+1}^k$ where $k \in \mathbb{Z}$ and $\mu_{N} \in \langle \lambda_1,...,\lambda_N \rangle $. Then, $c_{\mu_N}=((0,\mu_N),[0_+,1_-]\cap[0_+,\mu_N^{-1}]) \in \G^\mathcal{S}$ by (\ref{sn incl}) and $c_{N+1}=((0,\lambda_{N+1}),[0_+,1_-]) \in \G^\mathcal{S}$ by (\ref{n+1 incl}).
If $\lambda^k>\mu_N$, then $c_\mu=((0,\mu),[0_+,1_-] \cap[0_+,\mu^{-1}_-])=c_{N+1}^kc_{\mu_N} \in \G^\mathcal{S}$. Otherwise $c_{\mu}=c_{\mu_N}c_{N+1}^k \in \G^\mathcal{S}$.

Therefore to obtain any line segment corresponding to any $(a,\mu) \in \mathbb{Z}[\lambda_1,...,\lambda_{N+1}] \ltimes \langle \lambda_1,...,\lambda_{N+1} \rangle $, where $\mu<1$ one may write in the form $f_{\mu^{-1}a}c_\mu|_{D(a,\mu)} $ where $D(a,\mu)$ is the maximal domain. However, up to taking inverses, this is all possible slopes, hence, $\G^\mathcal{S}=\G=\mathbb{Z}[\lambda_1^{\pm 1},...,\lambda_{N+1}^{\pm 1}] \ltimes \langle \lambda_1,...,\lambda_{N+1} \rangle  \ltimes [0_+,1_-]$, completing our inductive step and our proof. 
\end{proof}

We summarise our discussion in the below theorem, by combining the above Lemma \ref{lemma where we reduce to singly generated} and Corollary \ref{finitely generated compactly}. This Theorem is Theorem \ref{thmintro1}.
\begin{theorem}
\label{fg when fg by alg}
   Let $\Lambda$ be a subgroup of $(\mathbb{R}_+, \cdot)$ and $\Gamma$ be a $\mathbb{Z} \cdot \Lambda$ submodule. The following are equivalent:
   \begin{itemize}
       \item $\Gamma \ltimes \Lambda$ is finitely generated.
       \item $\Gamma \ltimes \Lambda \ltimes [0_+,\ell_-]$ is compactly generated for all $\ell \in \Gamma$. 
       \item $D(V(\Gamma,\Lambda,\ell))$ is finitely generated for all $\ell \in \Gamma$.
        \item $D(V(\Gamma,\Lambda,\ell))$ is 2 generated for all $\ell \in \Gamma$. 
   \end{itemize}
\end{theorem}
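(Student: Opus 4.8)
The plan is to dispatch the three ``dynamical'' conditions in one stroke and then attach the algebraic condition. By Corollary \ref{finitely generated compactly}, compact generation of $\Gamma \ltimes \Lambda \ltimes [0_+,\ell_-]$ for all $\ell$, finite generation of $D(V(\Gamma,\Lambda,\ell))$ for all $\ell$, and $2$-generation of $D(V(\Gamma,\Lambda,\ell))$ for all $\ell$ are already mutually equivalent, and each is equivalent to compact generation of the single groupoid $\Gamma \ltimes \Lambda \ltimes [0_+,1_-]$ (the passage from an arbitrary length down to $\ell=1$ being handled there through fullness and Theorem \ref{lemma compact generation full subsets}). Thus the whole theorem collapses to the single equivalence
$$\Gamma \ltimes \Lambda \text{ is finitely generated} \iff \Gamma \ltimes \Lambda \ltimes [0_+,1_-] \text{ is compactly generated}.$$

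The direction from compact generation to finite generation of $\Gamma \ltimes \Lambda$ is exactly Lemma \ref{infinite rank then not f.g.} read contrapositively: any compact generating set of the groupoid is a finite union of basis bisections $((c_i,\mu_i),[a_+,b_-])$, and the underlying group elements $(c_i,\mu_i)$ must then generate the subset $M$ appearing in that lemma, which already generates $\Gamma \ltimes \Lambda$ as a group. Hence compact generation of the $\ell=1$ groupoid forces $\Gamma \ltimes \Lambda$ to be finitely generated, and with it conditions (2)--(4).

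For the converse I would first read off the structure forced by finite generation. Since $\Lambda \cong (\Gamma \ltimes \Lambda)/\Gamma$ is a finitely generated subgroup of the torsion-free abelian group $(\mathbb{R}_+,\cdot)$, it is free abelian of finite rank, say $\Lambda = \langle \lambda_1,\dots,\lambda_n\rangle$; and $\Gamma$, being the kernel of $\Gamma \ltimes \Lambda \to \Lambda$ with finitely generated quotient and $\Lambda$ acting by the module action, is a finitely generated module over $\mathbb{Z}\cdot\Lambda = \mathbb{Z}[\lambda_1^{\pm 1},\dots,\lambda_n^{\pm 1}]$. When $\Gamma = \mathbb{Z}\cdot\Lambda$ this is precisely the hypothesis of Lemma \ref{lemma where we reduce to singly generated}, whose inductive reduction to the cyclic computation of Lemma \ref{glambda compact gen} (via Lemmas \ref{fi} and \ref{gi}) exhibits an explicit finite compact generating set, finishing this case.

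The main obstacle is the genuinely general case, where $\Gamma$ is a proper $\mathbb{Z}\cdot\Lambda$-submodule, which the explicit germ-building above does not literally cover: there the interval endpoints and the translation amounts range over the whole ring $\mathbb{Z}[\lambda_i^{\pm 1}]$. I would resolve this by enlarging the candidate generating set. Alongside the scalings $((0,\lambda_i),[0_+,1_-])$ I would adjoin translation bisections $f_{\gamma_j}$ realising addition by a finite $\mathbb{Z}\cdot\Lambda$-module generating set $\gamma_1,\dots,\gamma_m$ of $\Gamma$, and then rerun the two inductive claims of Lemma \ref{lemma where we reduce to singly generated} essentially unchanged: conjugating a translation germ $f_\alpha$ by the scalings replaces $\alpha$ by $\lambda_i^{\pm 1}\alpha$, while $\Lambda\cdot\Gamma=\Gamma$ keeps the result inside $\Gamma$, so one manufactures $f_\beta$ for every $\beta$ in the reduced image of $\Gamma$ together with all scalings $c_\mu$. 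The factorisation $f_{\mu^{-1}a}\,c_\mu$ of an arbitrary basis bisection then shows the generated subgroupoid is all of $\Gamma \ltimes \Lambda \ltimes [0_+,1_-]$. The one delicate point, exactly as in the cyclic model, is bookkeeping the floor corrections $\lfloor\cdot\rfloor$ that keep the translated endpoints inside $\Gamma\cap[0,1]$ throughout the induction.
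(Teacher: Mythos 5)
Your proposal is correct and follows the same route as the paper: Corollary \ref{finitely generated compactly} collapses conditions (2)--(4) to compact generation of the single groupoid at $\ell=1$ (via fullness and Theorem \ref{lemma compact generation full subsets}, exactly as you describe), Lemma \ref{infinite rank then not f.g.} read contrapositively gives (2) $\Rightarrow$ (1), and the explicit constructions of Lemmas \ref{fi}, \ref{gi}, \ref{glambda compact gen} together with the induction of Lemma \ref{lemma where we reduce to singly generated} give (1) $\Rightarrow$ (2). Where you genuinely go beyond the paper is in noticing that Lemma \ref{lemma where we reduce to singly generated} is stated only for the full ring $\Gamma=\mathbb{Z}[\lambda_1^{\pm 1},\dots,\lambda_N^{\pm 1}]$, whereas the theorem allows an arbitrary $\mathbb{Z}\cdot\Lambda$-submodule; the paper's one-line proof (``combining Lemma \ref{lemma where we reduce to singly generated} and Corollary \ref{finitely generated compactly}'') passes over this case silently. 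Your patch is the natural repair and is consistent with the paper's mechanism: finite generation of $\Gamma\ltimes\Lambda$ forces $\Lambda$ free abelian of finite rank and $\Gamma$ finitely generated as a $\mathbb{Z}\cdot\Lambda$-module, and adjoining translation bisections $f_{\gamma_j}$ for the module generators lets the two conjugation claims run essentially unchanged, because the final generation step is checked at the level of germs (compare the closing factorisation $g=c^{-n}((\hat{\alpha},1),x)$ of Lemma \ref{glambda compact gen} and the factorisation $f_{\mu^{-1}a}c_\mu|_{D(a,\mu)}$ ending Lemma \ref{lemma where we reduce to singly generated}, which your $f_{\mu^{-1}a}\,c_\mu$ mirrors). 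The floor-correction bookkeeping you flag is the genuine residual work, but it is precisely what the auxiliary elements $g_i$ and the reductions $\mu_{i+1}=\lambda^{-1}\mu_i-\lfloor \lambda^{-1}\mu_i\lambda^{-K}\rfloor\lambda^{K}$ already accomplish in the cyclic case, so nothing there breaks; the one caveat worth recording is that for a general submodule with $1\notin\Gamma$ one should fix $\ell_0\in\Gamma$ and normalise to $[0_+,(\ell_0)_-]$ rather than $[0_+,1_-]$ --- a normalisation issue your write-up shares with, and inherits harmlessly from, Corollary \ref{finitely generated compactly} itself.
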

Note that our discussion does not investigate the higher finiteness properties of Stein's groups, but these have been partially studied. Notably, Stein showed the derived subgroup is type $F_\infty$ in the case when $\Lambda$ is generated by finitely many integers, $\Gamma=\mathbb{Z} \cdot \Lambda$, and $\ell \in \mathbb{N}$ \cite{stein1992groups}. We therefore ask:
\begin{question}
Suppose $\Lambda$ is a finitely generated subgroup of $(\mathbb{R}_+, \cdot)$ let $( \mathbb{Z} \cdot \Lambda,+)$ denote the group ring. Under what conditions is $D(V(\Gamma,\Lambda,\ell))$ finitely presented?
\end{question}

\section{Homology of Stein's groups}

\subsection{Initial observations}
Recently, there have been vast developments in our understanding of the relationship between the homology of groupoids and their topological full groups \cite{li2022}. In the case of the partial actions we study, the groupoid homology of globalisable partial actions, groupoid homology reduces to group homology. 
\begin{lemma}
    Let $\alpha:G \acts Y$ be a group action on a totally disconnected space and let $X \subset Y$. Then for the partial action groupoid, we have the following:
    $$ H_*(G \ltimes_\alpha X) \cong H_*(G \ltimes_\alpha Y) \cong H_*(G, C_c(Y) ) $$
    Where $H_*(G \ltimes_\alpha X), H_*(G \ltimes_\alpha Y)$ denote the groupoid homology of $G \ltimes_\alpha X, G\ltimes_\alpha Y$, and $H_*(G,C_c(Y))$ is the classical group homology with coefficients in $C_c(Y)$.
\end{lemma}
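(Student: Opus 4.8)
The plan is to prove the two isomorphisms separately. The right-hand isomorphism $H_*(G \ltimes_\alpha Y) \cong H_*(G, C_c(Y))$ is the standard identification of the groupoid homology of a transformation groupoid with group homology, while the left-hand isomorphism $H_*(G \ltimes_\alpha X) \cong H_*(G \ltimes_\alpha Y)$ will follow from the invariance of groupoid homology under restriction to a full open subset of the unit space.

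For the right-hand isomorphism, I would recall that since $Y$ is totally disconnected and $G$ is discrete, $G \ltimes_\alpha Y$ is an ample groupoid whose homology is computed from the chain complex $(C_c((G \ltimes_\alpha Y)^{(n)}, \mathbb{Z}), \partial_n)$, where $\partial_n$ is the alternating sum of the face maps induced by the simplicial structure maps, as in \cite{matui2012homology}. Because $G$ is discrete, the space of composable $n$-tuples is canonically homeomorphic to $G^n \times Y$, giving a natural identification $C_c((G \ltimes_\alpha Y)^{(n)}, \mathbb{Z}) \cong \bigoplus_{G^n} C_c(Y, \mathbb{Z})$. The bulk of this step is then to verify that under this identification the face maps coincide with those of the bar complex of $G$ with coefficients in the $G$-module $C_c(Y, \mathbb{Z})$, where $G$ acts on $C_c(Y,\mathbb{Z})$ by translation through $\alpha$. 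This exhibits the two chain complexes as literally the same, and the isomorphism on homology follows.

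For the left-hand isomorphism, note that $G \ltimes_\alpha X$ is by definition the restriction $(G \ltimes_\alpha Y)|_X^X$ to the open set $X$, and that the standing convention for partial actions guarantees $X$ is full, i.e. it meets every $G$-orbit in $Y$. The restriction of an \'etale groupoid to a full open subset of its unit space is equivalent to the original groupoid, and groupoid homology is invariant under such equivalence; I would invoke this invariance, in the form used in \cite{li2022}, to conclude. The main obstacle here is ensuring that this equivalence-invariance applies without a compactness hypothesis on $Y$, since in our intended applications the ambient space $Y$ need not be compact; I would therefore appeal to the version of the invariance theorem valid for ample groupoids with locally compact unit space and check that fullness of $X$ is precisely the hypothesis it demands, which is exactly the condition built into the definition of a partial action adopted above.
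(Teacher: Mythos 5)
Your proposal is correct and takes essentially the same route as the paper: the paper disposes of this lemma by citing \cite[Theorem 3.6]{matui2012homology} for the identification $H_*(G\ltimes_\alpha Y)\cong H_*(G,C_c(Y,\mathbb{Z}))$ --- whose proof is precisely your bar-complex comparison for the transformation groupoid --- and it handles the passage to the full open subset $X$ by the same Morita-invariance of groupoid homology under restriction that it invokes explicitly elsewhere (see the proof of Corollary \ref{cor hom best}, via \cite{li2022} and \cite{kgraphcomp}). Your attention to fullness of $X$ (built into the paper's definition of a partial action) and to the lack of any compactness hypothesis on $Y$ is exactly right, so there is no gap.
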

See for example [\cite{matui2012homology}, Theorem 3.6]. Therefore the homology of the groupoids $\Gamma \ltimes \Lambda \ltimes_\beta [0_+,\ell_-]$ is related to the homology of the groups $\Gamma \ltimes \Lambda$:
    $$H_*(\Gamma \ltimes \Lambda \ltimes_\beta [0_+,\ell_-])=H_*(\Gamma_\lambda \ltimes \Lambda_\lambda , C_c(\mathbb{R}_\Gamma,\mathbb{Z}))$$
 Let us try to compute these homology groups. We begin with our analogy of Lemma 5.5. \cite{xinlambda}, wherein we compute $H_0(\Gamma \ltimes \Lambda \ltimes_\beta [0_+,\ell_-])$. 
\begin{lemma}
Let $\Lambda= \langle \lambda_1 ,...,\lambda_k \rangle$, $\Gamma= \mathbb{Z}[ \lambda_1^{\pm 1},...., \lambda_k^{\pm 1}]$ be the canonical groups and submodules generated by finitely many algebraic numbers, assuming without loss of generality that the finite collection of algebraic numbers $\{\lambda_i\}_{k=1}^N$ are pairwise algebraically independent. Then,
$$H_0(\Gamma \ltimes \Lambda, C_c(\mathbb{R}_\Gamma, \mathbb{Z})) \cong \Gamma/N_\Lambda$$
Where $N_\Lambda$ is the normal subgroup given by $N_\Lambda=\sum_i (1-\lambda_i) \Gamma$. Considering $H_0$ as an ordered group, the order unit is given by the equivalence class of $\ell$, $[\ell] \in \Gamma/N_\Lambda$.

Moreover, $H_1(\Gamma \ltimes \Lambda, C_c(\mathbb{R}_\Gamma, \mathbb{Z}))$ is finitely generated. 
\label{H0 computation}
\end{lemma}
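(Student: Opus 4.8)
The plan is to compute $H_0$ directly as a module of coinvariants, and then to reduce the finite generation of $H_1$ to a statement about the ordinary homology of the group $\Gamma \ltimes \Lambda$ after first finding a clean description of $C_c(\mathbb{R}_\Gamma,\mathbb{Z})$ as a module.

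For $H_0$, note that $H_0(\Gamma\ltimes\Lambda, C_c(\mathbb{R}_\Gamma,\mathbb{Z}))$ is by definition the coinvariants $C_c(\mathbb{R}_\Gamma,\mathbb{Z})_{\Gamma\ltimes\Lambda}$, that is $C_c(\mathbb{R}_\Gamma,\mathbb{Z})$ modulo the subgroup generated by the elements $\mathbf{1}_U-\mathbf{1}_{gU}$. I would introduce the length homomorphism $\Phi\colon C_c(\mathbb{R}_\Gamma,\mathbb{Z})\to\Gamma$ determined by $\Phi(\mathbf{1}_{[a_+,b_-]})=b-a$; additivity under subdivision shows it is well defined and surjective. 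A translation $(c,1)$ preserves length while a dilation $(0,\mu)$ scales it by $\mu$, so $\Phi((c,\mu)\cdot f)=\mu\,\Phi(f)$; hence $\Phi$ descends to $\bar\Phi\colon H_0\to\Gamma/\sum_{\mu\in\Lambda}(\mu-1)\Gamma=\Gamma/N_\Lambda$, where the reduction of the sum to the generators uses $(\mu\nu-1)\Gamma\subseteq(\mu-1)\Gamma+(\nu-1)\Gamma$. For the inverse I would send $\gamma>0$ to $[\mathbf{1}_{[0_+,\gamma_-]}]$: subdivision gives additivity, translating an interval to the origin shows every class has this form, and the dilation relation $[\mathbf{1}_{[0_+,(\lambda_i\gamma)_-]}]=[\mathbf{1}_{[0_+,\gamma_-]}]$ shows the assignment factors through $\Gamma/N_\Lambda$ and is inverse to $\bar\Phi$. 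The order unit of the groupoid $\Gamma\ltimes\Lambda\ltimes[0_+,\ell_-]$ is the class of $\mathbf{1}_{[0_+,\ell_-]}$, which $\bar\Phi$ sends to $[\ell]$.

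For $H_1$ the decisive step is an isomorphism of $\mathbb{Z}[\Gamma\ltimes\Lambda]$-modules $C_c(\mathbb{R}_\Gamma,\mathbb{Z})\cong I_\Gamma$, the augmentation ideal of $\mathbb{Z}[\Gamma]$, where $\Gamma\ltimes\Lambda$ acts on the permutation module $\mathbb{Z}[\Gamma]$ through its affine action $(c,\mu)\cdot a=\mu(a+c)$ on the set $\Gamma$. I would build this from the jump map $d(f)=\sum_{\gamma\in\Gamma}(f(\gamma_+)-f(\gamma_-))\,\delta_\gamma$: since $f$ is a finite combination of indicators and $\mathbf{1}_{[a_+,b_-]}\mapsto\delta_a-\delta_b$, the map is equivariant, lands in and surjects onto the augmentation ideal, and is injective because a compactly supported locally constant function with no jumps at the doubled points vanishes. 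Then the tautological sequence $0\to I_\Gamma\to\mathbb{Z}[\Gamma]\xrightarrow{\mathrm{aug}}\mathbb{Z}\to0$, together with the identification $\mathbb{Z}[\Gamma]=\mathrm{Ind}_\Lambda^{\Gamma\ltimes\Lambda}\mathbb{Z}$ (as $\Gamma$ is the transitive $\Gamma\ltimes\Lambda$-set with point stabiliser $\Lambda$), gives by Shapiro's lemma $H_*(\Gamma\ltimes\Lambda,\mathbb{Z}[\Gamma])\cong H_*(\Lambda,\mathbb{Z})=\bigwedge^*\mathbb{Z}^k$. Feeding this into the long exact sequence in homology places $H_1(\Gamma\ltimes\Lambda,C_c(\mathbb{R}_\Gamma,\mathbb{Z}))$ in an extension whose two outer terms are a subgroup of $H_1(\Lambda,\mathbb{Z})=\mathbb{Z}^k$ and a quotient of $H_2(\Gamma\ltimes\Lambda,\mathbb{Z})$; both are finitely generated provided $H_2(\Gamma\ltimes\Lambda,\mathbb{Z})$ is.

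The main obstacle is exactly this finite generation of $H_2(\Gamma\ltimes\Lambda,\mathbb{Z})$, and this is where algebraicity of the $\lambda_i$ is essential. Running the Lyndon--Hochschild--Serre spectral sequence for $1\to\Gamma\to\Gamma\ltimes\Lambda\to\Lambda\to1$ with trivial coefficients, the terms $H_2(\Lambda,\mathbb{Z})=\bigwedge^2\mathbb{Z}^k$ and $H_1(\Lambda,\Gamma)$ are finitely generated because $\mathbb{Z}[\Lambda]$ is Noetherian (a Laurent polynomial ring) and $\Gamma$ is a cyclic $\mathbb{Z}[\Lambda]$-module, so the finite Koszul resolution of $\mathbb{Z}$ computes finitely generated Tor groups. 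The remaining contribution is $E^2_{0,2}=(\bigwedge^2\Gamma)_\Lambda$, and here I would use algebraicity to conclude that $\Gamma\otimes\mathbb{Q}$ is a number field, so $\Gamma$ has finite $\mathbb{Z}$-rank and $\bigwedge^2\Gamma$ has finite rank; the diagonal $\Lambda$-action then dilates by algebraic units, and passing to coinvariants divides out precisely the inverted primes, collapsing the non-finitely-generated (infinitely divisible) directions and leaving a finitely generated group. I expect making this collapsing argument precise and uniform in the number of generators to be the hardest and most technical part, closely mirroring Lemma~5.5 of \cite{xinlambda}; by contrast the $H_0$ computation and the module identification $C_c(\mathbb{R}_\Gamma,\mathbb{Z})\cong I_\Gamma$ are essentially formal.
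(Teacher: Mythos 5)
Your computation of $H_0$ is correct but takes a genuinely different route from the paper. The paper never touches the coinvariants directly: it adjoins a fixed point $\infty$ to get $\hat{\mathbb{R}}_\Gamma$, identifies $C_c(\hat{\mathbb{R}}_\Gamma,\mathbb{Z})\cong\mathbb{Z}\Gamma$ via the basis $\chi_{(a_+,\infty]}$, computes $H_*(\Gamma\ltimes\Lambda,\mathbb{Z}\Gamma)\cong H_*(\Lambda)$ by Hochschild--Serre, separately computes $H_1(\Gamma\ltimes\Lambda,\mathbb{Z})\cong\mathbb{Z}^n\oplus\Gamma/N_\Lambda$ by a five-term sequence argument, and then extracts $H_0(\Gamma\ltimes\Lambda,C_c(\mathbb{R}_\Gamma,\mathbb{Z}))\cong\Gamma/N_\Lambda$ from a diagram chase in the long exact sequence of $0\to C_c(\mathbb{R}_\Gamma,\mathbb{Z})\to C_c(\hat{\mathbb{R}}_\Gamma,\mathbb{Z})\to\mathbb{Z}\to 0$. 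Your length homomorphism with its explicit inverse $\gamma\mapsto[\mathbf{1}_{[0_+,\gamma_-]}]$ is a self-contained type-semigroup-style argument that avoids the compactification, both spectral sequence applications, and the chase; it also makes the identification of the order unit immediate rather than an afterthought. Note also that your jump map $C_c(\mathbb{R}_\Gamma,\mathbb{Z})\cong I_\Gamma$ is precisely the paper's short exact sequence made explicit (evaluation at $\infty$ is the augmentation, so the paper's sequence silently identifies $C_c(\mathbb{R}_\Gamma,\mathbb{Z})$ with the augmentation ideal), and your Shapiro step is the paper's Hochschild--Serre collapse; from there your long-exact-sequence reduction to $H_2(\Gamma\ltimes\Lambda,\mathbb{Z})$ and a subgroup of $\mathbb{Z}^k$ is exactly the paper's.

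For the final claim, however, there is a gap in your argument --- and, interestingly, it occurs exactly where you attempt more than the paper does. The paper's proof only establishes that $H_1(\Gamma\ltimes\Lambda,C_c(\mathbb{R}_\Gamma,\mathbb{Z}))$ has \emph{finite rank}: algebraicity enters solely through the fact that $\Gamma\otimes\mathbb{Q}$ is a number field, so $\Gamma$, $\wedge^2\Gamma$, and hence every Lyndon--Hochschild--Serre subquotient of $H_2(\Gamma\ltimes\Lambda,\mathbb{Z})$ has finite $\mathbb{Q}$-rank, which is all the paper's later applications invoke. You instead aim at honest finite generation, and two steps do not close. First, your Noetherian/Koszul argument shows $H_1(\Lambda,\Gamma)=\mathrm{Tor}_1^{\mathbb{Z}[\Lambda]}(\mathbb{Z},\Gamma)$ is finitely generated \emph{over $\mathbb{Z}[\Lambda]$}, which is automatic and not what you need; the fix is to observe that this Tor is annihilated by both the augmentation ideal $I_{\mathrm{aug}}$ and the ideal $J$ with $\Gamma\cong\mathbb{Z}[\Lambda]/J$, and $\mathbb{Z}[\Lambda]/(I_{\mathrm{aug}}+J)$ is a \emph{finite} ring when the $\lambda_i$ are algebraic and $\neq 1$ (a minimal polynomial $f\in J$ has $f(1)\neq 0$), so the Tor groups are in fact finite. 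Second, the term $E^2_{0,2}=(\wedge^2\Gamma)_\Lambda$ is handled only by the heuristic that coinvariants ``divide out the inverted primes'': this is the genuinely delicate point, since when conjugates satisfy relations such as $\lambda^{(i)}\lambda^{(j)}=1$ (e.g.\ $\lambda=(3+\sqrt5)/2$) the coinvariants acquire free summands, and the annihilator trick above does not apply verbatim because $\wedge^2\Gamma$ need not be cyclic, or obviously finitely generated, over the diagonal copy of $\mathbb{Z}[\Lambda]$; for $k=1$ this is exactly the content of the careful computations in \cite{xinlambda}. So as written your proposal proves finite rank (matching what the paper actually proves) but not the stated finite generation; making your collapsing argument precise would strengthen the lemma beyond its published proof.
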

\begin{proof}
Let $\hat{\mathbb{R}}_\Gamma=\mathbb{R}_\Gamma \cup \{\infty\}  $, and let us extend $\beta$ to an action of  $\Gamma \ltimes \Lambda \acts \hat{\mathbb{R}}_\Gamma$ by setting $\infty$ to be a fixed point. Let us topologise $\hat{\mathbb{R}}_\Gamma$ with the order topology, where we say that $\infty>a$, $a \in \mathbb{R}_\Gamma $.  
Then, a $\mathbb{Z}$-basis of $C_c(\hat{\mathbb{R}}_\Gamma,\mathbb{Z})$ which $\Gamma$ will always act on freely and transitively is given by $$\chi_{(a_+,\infty]}(t)=\begin{cases}
    1 & a_+<t \leq \infty \ \\ 0 & \text{else}
\end{cases} \quad a \in \Gamma .$$ For this reason, $C_c(\hat{\mathbb{R}}_\Gamma,\mathbb{Z}) \cong \mathbb{Z} \Gamma$. Consequently, 
$$H_q (\Gamma, C_c(\hat{\mathbb{R}}_\Gamma, \mathbb{Z}))=\begin{cases} \mathbb{Z} & q=0 \ \\
0 & \text{ else }
\end{cases}$$
Now note by construction of $\Gamma \ltimes \Lambda$ we have the split short exact sequence:
$$1 \rightarrow \Gamma \rightarrow \Gamma \ltimes \Lambda \rightarrow \mathbb{Z}^n \rightarrow 1$$
It follows by the Hochschild-Serre spectral sequence (Chapter VII, Theorem 6.3) that
$$H_i(\Gamma \ltimes \Lambda, C_c(\hat{\mathbb{R}}_\Gamma, \mathbb{Z}) ) \cong H_i (\Lambda )=H_i (\mathbb{Z}^n)=\mathbb{Z}^{^{i}C_n}$$
The final equality follows by the Kunneth formula, and this is where $n$ is the number of generators of $\Lambda$. Now let us apply the Hochschild-Serre spectral sequence again, this time choosing as the module $M=\mathbb{Z}=M_\Gamma$. Then all coefficients are in $\mathbb{Z}$. We obtain the following exact sequence:
$$ H_2(\Gamma \ltimes \Lambda) \rightarrow H_2(\mathbb{Z}^n)\rightarrow H_1 (\Gamma)_{\mathbb{Z}^n}  \rightarrow H_1(\Gamma \ltimes \Lambda)\rightarrow H_1 (\mathbb{Z}^n) \rightarrow 0   $$
But since the SES of groups with $\Gamma \ltimes \Lambda$ in the middle splits, the map from $H_2(\Gamma \ltimes \Lambda) \rightarrow H_2(\mathbb{Z}^n)$ is a surjection. Therefore, the next map must be the zero map, revealing the short exact sequence:
$$0 \rightarrow H_1(\Gamma)_{\mathbb{Z}^n}=\Gamma \rightarrow H_1(\Gamma \ltimes \Lambda) \rightarrow \mathbb{Z}^n \rightarrow 0.  $$
Therefore, $H_1(\Gamma \ltimes \Lambda)=\mathbb{Z}^n \oplus \Gamma/N$ where $N=\sum_{i}(1-\lambda_i)\Gamma$
Then,
$$0 \rightarrow C_c(\mathbb{R}_\Gamma, \mathbb{Z}) \rightarrow C_c (\hat{\mathbb{R}}_\Gamma, \mathbb{Z}) \rightarrow \mathbb{Z} \rightarrow 0 $$
Is a short exact sequence of $\mathbb{Z}(\Gamma \ltimes \Lambda)$ modules. By proposition 6.1, Chapter III of Brown, we get that there is a long exact sequence:
$$\rightarrow H_1(\Gamma \ltimes \Lambda, C_c(\hat{\mathbb{R}}_\Gamma,\mathbb{Z}))\rightarrow H_1(\Gamma \ltimes \Lambda) \rightarrow H_0(\Gamma \ltimes \Lambda,C_c(\mathbb{R}_\Gamma,\mathbb{Z})) \rightarrow  H_0(\Gamma \ltimes \Lambda, C_c(\hat{\mathbb{R}}_\Gamma,\mathbb{Z}))\rightarrow H_0(\Gamma \ltimes \Lambda) \rightarrow 0$$
Plugging in what we know: 
$$ \rightarrow \mathbb{Z}^n \rightarrow \mathbb{Z}^n \oplus \Gamma/N  \xrightarrow{f} H_0(\Gamma \ltimes \Lambda,C_c(\mathbb{R}_\Gamma,\mathbb{Z})) \xrightarrow{g}  \mathbb{Z} \rightarrow \mathbb{Z} \rightarrow 0 $$
Is an exact sequence. The map $\mathbb{Z}^n \rightarrow \mathbb{Z}^n \oplus \Gamma/N$ is just the canonical inclusion $z \mapsto (z,0)$. Then the map into $H_0( \Gamma \ltimes \Lambda,C_c(\mathbb{R}_\Gamma\mathbb{Z}))$  is $0$ exactly on $\mathbb{Z}^n \oplus 0$; the kernel is $\Gamma/N$. 
Also, the map $\mathbb{Z} \rightarrow \mathbb{Z}$ must be an isomorphism, so one has that $g$ is the zero map. But also, $im(f)=ker(g)$. Therefore, by the first isomorphism theorem, we get that $H_0(\Gamma \ltimes \Lambda,C_c(\mathbb{R}_\Gamma,\mathbb{Z}) \cong Im(f)/ker(g)=Im(f)=\Gamma/N$, as required. 

Let us show the first homology group is finite rank. Let us reconsider the long exact sequence, this time around $H_1(\Gamma \ltimes \Lambda, C_c(\mathbb{R}_\Gamma, \mathbb{Z})):$
$$...H_2( \Gamma \ltimes \Lambda, C_c(\hat{\mathbb{R}}_\Gamma,\mathbb{Z}))  \rightarrow H_2(\Gamma \ltimes \Lambda) \rightarrow H_1(\Gamma \ltimes \Lambda, C_c(\mathbb{R}_\Gamma,\mathbb{Z}) \rightarrow H_1(\Gamma \ltimes \Lambda, C_c(\hat{\mathbb{R}}_\Gamma,\mathbb{Z}) \rightarrow ... $$
$H_2(\Gamma \ltimes \Lambda)$ and $H_1(\Gamma \ltimes \Lambda, C_c(\hat{\mathbb{R}}_\Gamma,\mathbb{Z}) )\cong \mathbb{Z}^n$ are both finite rank. Therefore it follows that $H_1(\Gamma \ltimes \Lambda, C_c(\mathbb{R}_\Gamma,\mathbb{Z})$ is finite rank. 

It remains to determine the position of the unit in $H_0(\Gamma \ltimes \Lambda \ltimes [0_+,\ell_-])$. This is given by considering $[0_+,\ell_-]$ as a bisection. It follows from the above computation that the position of the unit is exactly the equivalence class of $\ell$ $[\ell] \in \Gamma/\sum_{i}(1-\lambda_i)\Gamma$.
\end{proof}
It would be interesting to compute the higher groupoid homology groups in this much generality, but the methods of Lemma 5.5. \cite{xinlambda} do not generalise directly here. Therefore, one would need a new methodology for computing the group homology $H_*(\Gamma \ltimes \Lambda,C_c(\mathbb{R}_\Gamma,\mathbb{Z}))$ in general.

The good news is that even from just the computation of $H_0$, one may already distinguish many of these groups. 
\begin{corollary}
    Suppose $\Lambda=\langle \lambda_1,..., \lambda_n \rangle, \; \hat{\Lambda}= \langle \hat{\lambda}_1,..., \hat{\lambda}_m \rangle $ be multiplicative subgroups of $\mathbb{R}_+$, generated by algebraic numbers. Let $\Gamma,\hat{\Gamma}$ be (respectively) $\mathbb{Z} \cdot \Lambda, \mathbb{Z} \hat{\Lambda}$ submodules and $\ell \in \Gamma, \hat{\ell} \in \hat{\Gamma}$. Let $N_{\Lambda}= \sum_{i=1}^n (1-\lambda_i) \Gamma$, 
 and $N_{\hat{\Lambda}}=\sum_{i=1}^n (1-\lambda_i) \hat{\Gamma}$. Suppose $V(\Gamma,\Lambda,\ell) \cong V(\hat{\Gamma}, \hat{\Lambda}, \hat{\ell})$. 
 \label{cor classification}
 Then $\Gamma/N_\Lambda \cong \hat{\Gamma}/N_{\hat{\Lambda}}$
and $\ell-\hat{\ell} \in N_\Lambda$.  \label{classification corollary}
\end{corollary}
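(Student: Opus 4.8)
The plan is to push the abstract group isomorphism through Matui's reconstruction theorem to recover an isomorphism of the underlying groupoids, and then to read off both conclusions from the functoriality of groupoid homology together with the explicit computation of $H_0$ in Lemma \ref{H0 computation}.

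First I would set $\G = \Gamma \ltimes \Lambda \ltimes_\beta [0_+,\ell_-]$ and $\hat\G = \hat\Gamma \ltimes \hat\Lambda \ltimes_\beta [0_+,\hat\ell_-]$. By Lemma \ref{v as a topological full group} we have $V(\Gamma,\Lambda,\ell) \cong \mathsf{F}(\G)$ and $V(\hat\Gamma,\hat\Lambda,\hat\ell) \cong \mathsf{F}(\hat\G)$, while Lemma \ref{groupoid minimal} shows that both groupoids are minimal, amenable, Hausdorff, and topologically principal (equivalently, since they are Hausdorff, essentially principal) étale Cantor groupoids. The hypothesis $V(\Gamma,\Lambda,\ell) \cong V(\hat\Gamma,\hat\Lambda,\hat\ell)$ therefore gives $\mathsf{F}(\G) \cong \mathsf{F}(\hat\G)$, and Theorem \ref{matui isomorphism theorem} upgrades this to an isomorphism of étale groupoids $\Phi\colon \G \xrightarrow{\sim} \hat\G$.

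Next I would invoke the functoriality of groupoid homology: the isomorphism $\Phi$ induces an isomorphism $\Phi_*\colon H_0(\G) \xrightarrow{\sim} H_0(\hat\G)$. The point to stress is that this is an isomorphism of \emph{ordered} groups \emph{preserving the distinguished order unit}. Indeed, $\Phi$ restricts to a homeomorphism $\G^{(0)} \to \hat\G^{(0)}$ carrying compact open bisections to compact open bisections, so the induced map on $C_c(-,\mathbb{Z})$ sends nonnegative functions to nonnegative functions and sends each relator $1_{s(B)} - 1_{r(B)}$ to $1_{s(\Phi B)} - 1_{r(\Phi B)}$; hence it descends to $H_0$ and respects the positive cone. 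Moreover the order unit of $H_0$ is represented by the characteristic function of the whole unit space, and $\Phi$ maps $\G^{(0)}$ onto $\hat\G^{(0)}$, so $\Phi_*$ carries the order unit of $H_0(\G)$ to that of $H_0(\hat\G)$.

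Finally I would substitute Lemma \ref{H0 computation}, which identifies $H_0(\G) \cong \Gamma/N_\Lambda$ with order unit $[\ell]$ and $H_0(\hat\G) \cong \hat\Gamma/N_{\hat\Lambda}$ with order unit $[\hat\ell]$. Combined with the previous paragraph this yields an isomorphism $\Gamma/N_\Lambda \cong \hat\Gamma/N_{\hat\Lambda}$ sending $[\ell]$ to $[\hat\ell]$, and the latter is exactly what the notation $\ell - \hat\ell \in N_\Lambda$ records once the two quotients are identified via $\Phi_*$. The step I expect to demand the most care is precisely the preservation of the order unit: the underlying group isomorphism by itself delivers only $\Gamma/N_\Lambda \cong \hat\Gamma/N_{\hat\Lambda}$, and it is the tracking of the class of the full unit space $[0_+,\ell_-]$ through $\Phi$ — rather than any further homological computation — that pins down the image of $[\ell]$ and secures the second conclusion.
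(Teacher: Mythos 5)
Your proposal is correct and is essentially the paper's own (implicit) argument: the paper derives this corollary from Theorem \ref{matui isomorphism theorem} together with the $H_0$ computation of Lemma \ref{H0 computation}, exactly as in its later proof of Corollary \ref{classification for low degree}, where the isomorphism of groupoids and the ordered $H_0$ with distinguished unit $[\ell]$ yield both conclusions. Your extra care in verifying that the induced map on $H_0$ preserves the positive cone and carries the class of the full unit space $[0_+,\ell_-]$ to that of $[0_+,\hat\ell_-]$ fills in precisely the detail the paper leaves tacit.
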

The above Corollary recovers for example, the fact observed by Higman \cite{higman1974finitely} that $V_{k,r} \cong V_{k',r'} \Rightarrow k=k'$ and $r-r' \cong 0 \mod{k \mathbb{Z}}$. 
We remark upon some transfers of homological information from $\Gamma \ltimes \Lambda \ltimes [0_+,1_-]$ to $V(\Gamma,\Lambda,\ell)$. This is \cite[Corollary E]{li2022} specialised to our particular circumstance.

\begin{theorem}[AH Exact Sequence]
  Let $\Gamma,\Lambda,\ell$ be arbitrary. Then, there is a 5-term exact sequence:
  $$ H_2(D(V(\Gamma,\Lambda,\ell)) \rightarrow H_2(\Gamma \ltimes \Lambda, C_c(\mathbb{R}_\Gamma , \mathbb{Z})) \rightarrow H_0(\Gamma \ltimes \Lambda, C_c(\mathbb{R}_\Gamma , \mathbb{Z})) \otimes \mathbb{Z}_2 \rightarrow H_1( V(\Gamma,\Lambda,\ell))=$$ $$=V(\Gamma,\Lambda,\ell)_{ab} \rightarrow H_1(\Gamma \ltimes \Lambda, C_c(\mathbb{R}_\Gamma , \mathbb{Z})) \rightarrow 0$$ \label{ah conj}
\end{theorem}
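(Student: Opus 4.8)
The plan is to derive the sequence as a direct instance of \cite[Corollary E]{li2022}, applied to the groupoid $\G := \Gamma \ltimes \Lambda \ltimes_\beta [0_+,\ell_-]$. For a sufficiently regular ample groupoid, that corollary produces a natural five-term exact sequence
$$H_2(\mathsf{D}(\G)) \to H_2(\G) \to H_0(\G) \otimes \mathbb{Z}_2 \to H_1(\mathsf{F}(\G)) \to H_1(\G) \to 0,$$
so the content of the theorem is entirely contained in two routine tasks: verifying that $\G$ meets the hypotheses of the corollary, and rewriting each term in the language of Stein's groups. No new homological computation is needed beyond what is already established.

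First I would check the regularity hypotheses. The groupoid $\G$ is a partial transformation groupoid of the discrete group $\Gamma \ltimes \Lambda$ on a Cantor space, hence ample and Hausdorff; by Lemma \ref{groupoid minimal} it is minimal, topologically principal (so effective), and amenable; and by Lemma \ref{groupoid purely infinite} it is purely infinite. This is precisely the package of minor regularity conditions under which \cite[Corollary E]{li2022} is valid.

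Next I would translate the five terms. By Lemma \ref{v as a topological full group}, $\mathsf{F}(\G) \cong V(\Gamma,\Lambda,\ell)$, so the fourth term is $H_1(V(\Gamma,\Lambda,\ell)) = V(\Gamma,\Lambda,\ell)_{ab}$. The leftmost term is $H_2$ of the commutator subgroup $\mathsf{D}(\G) = D(V(\Gamma,\Lambda,\ell))$; here it is worth recalling from Corollary \ref{simple derived subgroup} (via Theorem \ref{d simple}) that minimality and pure infiniteness give $\mathsf{D}(\G) \cong \mathsf{A}(\G)$, so the term agrees with whichever of the commutator or alternating subgroup appears in Li's formulation. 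Finally, the groupoid homology terms are rewritten using the reduction of groupoid homology to group homology for globalisable partial actions proved above: since $[0_+,\ell_-]$ is full in $\mathbb{R}_\Gamma$ and $\beta$ globalises, $H_q(\G) \cong H_q(\Gamma \ltimes \Lambda, C_c(\mathbb{R}_\Gamma, \mathbb{Z}))$ for all $q$. Substituting these identifications into the displayed sequence gives exactly the stated one.

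The only genuinely delicate point, and the one I would handle most carefully, is convention-matching: I must confirm that the $\G$-module used by Li to define $H_*(\G)$ corresponds, under the partial-action reduction, to the coefficient module $C_c(\mathbb{R}_\Gamma, \mathbb{Z})$, and that the $\mathbb{Z}_2$-tensor term, the image of Matui's index/sign map, occupies the correct slot with the correct connecting homomorphisms. Since the isomorphism $\mathsf{F}(\G) \cong V(\Gamma,\Lambda,\ell)$ is canonical and all of Li's maps are natural in $\G$, this amounts to tracking definitions rather than proving anything new, and once it is verified the exactness is inherited verbatim from \cite[Corollary E]{li2022}.
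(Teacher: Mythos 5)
Your proposal is correct and follows exactly the paper's route: the paper states this theorem as \cite[Corollary E]{li2022} specialised to $\Gamma \ltimes \Lambda \ltimes_\beta [0_+,\ell_-]$, relying on the same regularity facts (Lemmas \ref{groupoid minimal} and \ref{groupoid purely infinite}), the identification $\mathsf{F}(\G) \cong V(\Gamma,\Lambda,\ell)$ from Lemma \ref{v as a topological full group}, and the reduction of groupoid homology to $H_*(\Gamma \ltimes \Lambda, C_c(\mathbb{R}_\Gamma,\mathbb{Z}))$ for globalisable partial actions. If anything, your write-up is more explicit than the paper, which leaves these verifications implicit.
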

We can use this to determine that many of the $V$-type groups are finitely generated. This  concludes the proof of Theorem \ref{thmintro2}, our second main result. 
\begin{theorem}
    Let $\Lambda$ be a finitely generated multiplicative subgroup of the positive algebraic numbers. Let $\Gamma$ be a $\mathbb{Z} \cdot \Lambda$ submodule, and let $\ell \in \Gamma$. Then $V(\Gamma,\Lambda,\ell)$ is finitely generated. \label{fg v type}
\end{theorem}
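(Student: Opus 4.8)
The plan is to exploit the extension
$$1 \longrightarrow D(V(\Gamma,\Lambda,\ell)) \longrightarrow V(\Gamma,\Lambda,\ell) \longrightarrow V(\Gamma,\Lambda,\ell)_{ab} \longrightarrow 1,$$
together with the elementary fact that a group expressed as an extension of a finitely generated group by a finitely generated group is itself finitely generated (lift a finite generating set of the quotient and adjoin a finite generating set of the kernel). It therefore suffices to prove separately that $D(V(\Gamma,\Lambda,\ell))$ and $V(\Gamma,\Lambda,\ell)_{ab}$ are finitely generated.

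For the derived subgroup I would first observe that the hypotheses force $\Gamma \ltimes \Lambda$ to be finitely generated, and then invoke Theorem \ref{fg when fg by alg}. The key point is that $\mathbb{Z}\cdot\Lambda = \mathbb{Z}[\lambda_1^{\pm 1},\dots,\lambda_n^{\pm 1}]$ is a finitely generated $\mathbb{Z}$-algebra, hence Noetherian by the Hilbert basis theorem. Since $\Gamma$ is a submodule (that is, an ideal) of this Noetherian ring, it is finitely generated as a $\mathbb{Z}\cdot\Lambda$-module, say by $\gamma_1,\dots,\gamma_m$. In the semidirect product, conjugation by $\Lambda$ realises multiplication by the units $\lambda_i^{\pm 1}$ while the group law on the $\Gamma$-factor realises addition, so the finite set $\{\lambda_1,\dots,\lambda_n,\gamma_1,\dots,\gamma_m\}$ generates $\Gamma\ltimes\Lambda$. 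With $\Gamma\ltimes\Lambda$ finitely generated, Theorem \ref{fg when fg by alg} yields that $D(V(\Gamma,\Lambda,\ell))$ is finitely generated (indeed $2$-generated).

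For the abelianisation I would read off finiteness from the five-term AH exact sequence of Theorem \ref{ah conj}, whose relevant tail reads
$$H_0(\Gamma \ltimes \Lambda, C_c(\mathbb{R}_\Gamma,\mathbb{Z})) \otimes \mathbb{Z}_2 \longrightarrow V(\Gamma,\Lambda,\ell)_{ab} \longrightarrow H_1(\Gamma \ltimes \Lambda, C_c(\mathbb{R}_\Gamma,\mathbb{Z})) \longrightarrow 0.$$
By Lemma \ref{H0 computation} the right-hand term $H_1(\Gamma \ltimes \Lambda, C_c(\mathbb{R}_\Gamma,\mathbb{Z}))$ is finitely generated, and $H_0(\Gamma \ltimes \Lambda, C_c(\mathbb{R}_\Gamma,\mathbb{Z})) \cong \Gamma/N_\Lambda$ with $N_\Lambda = \sum_i (1-\lambda_i)\Gamma$ (the argument of Lemma \ref{H0 computation} being insensitive to which ideal $\Gamma$ one takes). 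On the quotient $\Gamma/N_\Lambda$ every $\lambda_i$ acts as the identity, so the $\mathbb{Z}\cdot\Lambda$-action factors through the augmentation $\mathbb{Z}\cdot\Lambda \to \mathbb{Z}$; hence $\Gamma/N_\Lambda$ is generated as an abelian group by the images of $\gamma_1,\dots,\gamma_m$ and is in particular finitely generated. Consequently $H_0 \otimes \mathbb{Z}_2$ is finite, so exactness presents $V(\Gamma,\Lambda,\ell)_{ab}$ as an extension of the finitely generated group $H_1(\Gamma \ltimes \Lambda, C_c(\mathbb{R}_\Gamma,\mathbb{Z}))$ by a finite quotient of $H_0\otimes\mathbb{Z}_2$, and is therefore finitely generated. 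Feeding this and the previous paragraph into the displayed extension completes the argument.

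The two substantive inputs are already in hand: the finite generation of $D(V(\Gamma,\Lambda,\ell))$ (Theorem \ref{fg when fg by alg}) and the homological computation of Lemma \ref{H0 computation}. The only genuinely new work is the translation of the algebraicity hypothesis into finite-generation data, and the single point demanding care is verifying that $\Gamma$ really is finitely generated over $\mathbb{Z}\cdot\Lambda$: this rests on the Noetherianity of $\mathbb{Z}[\lambda_1^{\pm1},\dots,\lambda_n^{\pm1}]$ and on the standing convention that $\Gamma$ sits inside $\mathbb{Z}\cdot\Lambda$ as an ideal rather than being an arbitrary $\mathbb{Z}\cdot\Lambda$-submodule of $\mathbb{R}$.
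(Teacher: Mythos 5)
Your proposal is correct and follows essentially the same route as the paper: the paper likewise presents $V(\Gamma,\Lambda,\ell)$ as an extension of $D(V(\Gamma,\Lambda,\ell))$ (finitely generated via Theorem \ref{fg when fg by alg}) by the abelianisation, whose finite generation is extracted from the AH exact sequence of Theorem \ref{ah conj} together with Lemma \ref{H0 computation}. Your Noetherianity argument that $\Gamma \ltimes \Lambda$ is finitely generated, and that $\Gamma/N_\Lambda$ is finitely generated rather than merely finite rank, supplies verifications the paper leaves implicit --- indeed the paper's proof only asserts the abelianisation is \emph{finite rank}, and your sharpening to genuine finite generation is what the conclusion actually requires.
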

\begin{proof}
 By Lemma \ref{H0 computation}, we have that whenever $\Gamma$ and $\Lambda$ are generated by finitely many algebraic numbers $H_0$ and $H_1$ of the underlying groupoids are finite rank. Plugging this into the 5-term exact sequence in Theorem \ref{ah conj},  it follows that the abelianisation is finite rank. Therefore $V(\Gamma,\Lambda,\ell)$, the extension of $D(V(\Gamma,\Lambda,\ell))$ (finitely generated by Theorem \ref{fg when fg by alg}) by $V(\Gamma,\Lambda,\ell)_{ab}$ must be finitely generated. 
\end{proof}
Using \cite[Theorem F]{li2022}, we can obtain a homological stability result that generalizes \cite[Theorem 3.6]{szymik2019homology} to this more general setting. Here we use the Thompson-like groups on noncompact intervals, since this is the most general form we can give. This appears as Corollary \ref{corhomintro} in the introduction.  
\begin{corollary}
  Let $\Gamma,\Lambda$ be arbitrary. Let $U_1,U_2 \subset \mathbb{R}$ be closed subsets with nonempty interior $\Gamma \cup \{-\infty,+\infty\}$. Then for all $*$, 
  $H_*(V(\Gamma,\Lambda,U_1)) \cong H_*(V(\Gamma,\Lambda,U_2))$ and  $H_*(D(V(\Gamma,\Lambda,U_1))) \cong H_*(D(V(\Gamma,\Lambda,U_2)))$ \label{cor hom best}
\end{corollary}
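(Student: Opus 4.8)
The plan is to exhibit both groups as topological full groups of full clopen restrictions of a single global action groupoid, to observe that all such restrictions carry the same groupoid homology, and then to promote this to an isomorphism of group homology via Li's Theorem F. Write $\G_\infty = \Gamma \ltimes \Lambda \ltimes_\beta \mathbb{R}_\Gamma$ for the groupoid of the global action $\beta$, and for $i=1,2$ let $U_i' \subset \mathbb{R}_\Gamma$ be the clopen lift of $U_i$ (the analogue of the lift $[0,\ell] \mapsto [0_+,\ell_-]$; the realisation lemma for noncompact intervals extends verbatim to an arbitrary closed $U_i$ with endpoints in $\Gamma \cup \{-\infty,+\infty\}$, giving $V(\Gamma,\Lambda,U_i) \cong F(\G_\infty|_{U_i'})$). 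Since $U_i$ has nonempty interior, the density argument already used for $[0_+,\ell_-]$ shows that $U_i'$ meets every $\Gamma \ltimes \Lambda$-orbit, so each $U_i'$ is a full clopen subset of $\mathbb{R}_\Gamma$ and $\G_\infty|_{U_i'} = \Gamma \ltimes \Lambda \ltimes U_i'$.

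First I would record that each inclusion $\G_\infty|_{U_i'} \hookrightarrow \G_\infty$ induces an isomorphism on groupoid homology. This is precisely the partial-action homology lemma: as $\beta$ is a global action on the totally disconnected space $\mathbb{R}_\Gamma$ and each $U_i'$ is a full open subset, the open inclusion induces $H_*(\Gamma \ltimes \Lambda \ltimes U_i') \cong H_*(\Gamma \ltimes \Lambda \ltimes \mathbb{R}_\Gamma) \cong H_*(\Gamma \ltimes \Lambda, C_c(\mathbb{R}_\Gamma,\mathbb{Z}))$. In particular the groupoid homology is the same for every full clopen subset, and the identifications are compatible with the inclusions.

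The final step transfers this to the level of topological full groups via \cite[Theorem F]{li2022}. All groupoids in sight are ample, effective, minimal and purely infinite (Lemma \ref{groupoid minimal}, Lemma \ref{groupoid purely infinite}, Corollary \ref{simple derived subgroup}), so they meet the hypotheses of that theorem, which expresses the homology of the topological full group functorially in terms of the groupoid homology. Because $\G_\infty|_{U_1'}$, $\G_\infty|_{U_2'}$ and $\G_\infty$ all have the same groupoid homology --- compatibly with the inclusions of the previous step --- Theorem F yields $H_*(V(\Gamma,\Lambda,U_i)) \cong H_*(V(\Gamma,\Lambda,\mathbb{R}))$ for $i=1,2$, indeed realised by the inclusion-induced map, and comparing the two gives $H_*(V(\Gamma,\Lambda,U_1)) \cong H_*(V(\Gamma,\Lambda,U_2))$. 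The same argument for the alternating subgroups, together with the identification $\mathsf{D}(\G) \cong \mathsf{A}(\G)$ from Theorem \ref{d simple}, handles $D(V(\Gamma,\Lambda,U_i))$.

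The hard part will be the correct invocation of Theorem F in the noncompact regime: $\G_\infty$ has merely $\sigma$-compact unit space $\mathbb{R}_\Gamma$, so one must use the Nyland--Ortega topological full group and the $\sigma$-compact form of Li's transfer rather than the Cantor-space statements, and check that the full clopen inclusions and the induced homology maps behave as required there. A related conceptual point, and the reason the statement is genuinely a homological stability phenomenon, is that the isomorphism must be insensitive to the order unit of $H_0$: by Lemma \ref{H0 computation} the $U_i$ determine different order units $[\ell_i] \in \Gamma/N_\Lambda$, yet the full-group homology does not change, exactly as the homology of the Higman--Thompson group $V_{n,r}$ is independent of $r$.
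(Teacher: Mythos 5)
Your proposal is correct and follows essentially the same route as the paper: lift each $U_i$ to its clopen preimage in $\mathbb{R}_\Gamma$, observe that the inclusion of the restricted groupoid into $\Gamma \ltimes \Lambda \ltimes \mathbb{R}_\Gamma$ is a Morita equivalence (your fullness check makes explicit what the paper leaves implicit), and apply \cite[Theorem F]{li2022} to transfer the groupoid homology isomorphism to the homology of both $V(\Gamma,\Lambda,U_i)$ and its derived subgroup, comparing everything to the common target $V(\Gamma,\Lambda,\mathbb{R})$. Your closing remarks on the Nyland--Ortega formulation in the noncompact regime and the insensitivity to the order unit are sound supplementary observations, not deviations from the paper's argument.
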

\begin{proof}
Let $U_1$ be arbitrary let $\hat{U}_1 \subset \mathbb{R}_\Gamma$ be the unique cylinder set such that $q(\hat{U}_1)=U_1$. Then the canonical inclusion:
$$\Gamma \ltimes \Lambda  \ltimes \hat{U}_1 \hookrightarrow \Gamma \ltimes \Lambda  \ltimes \mathbb{R}_\Gamma   $$
is a Morita equivalence of \'etale groupoids, inducing an isomorphism in groupoid homology. Hence by [\cite{li2022}, Theorem F] this induces an isomorphism in group homology: 
$$ H_*( V(\Gamma,\Lambda,U_1) ) \cong  H_*( V(\Gamma,\Lambda,\mathbb{R}))$$
$$ H_*( D(V(\Gamma,\Lambda,U_1)) ) \cong  H_*( D(V(\Gamma,\Lambda,\mathbb{R}))) $$
But since $U_1$ was arbitrary, we are done. 
\end{proof}
We see how to apply this result in practice, by computing the homology of the analogy of Thompson's group $V$ that acts on $\mathbb{R}$ rather than $[0,1]$. 
\begin{example}
 $V(\mathbb{Z}[1/2],\langle 2 \rangle, \mathbb{R})$ is integrally acyclic and simple.     
\end{example} 
Finally, let us apply \cite[Corollary C]{li2022}, to obtain an expression for the rational homology of Stein's groups. 
\begin{theorem}[Rational Homology Computation]
    \label{rational homology}
Let $\Gamma,\lambda,\ell$ be arbitrary. For a group $G$, let $$H_{*}^{even}(G)=\begin{cases}
    H_*(G) & *  \text{ even } \ \\
    \{0\} & \text{ else }
\end{cases} \quad H_*^{odd}(G)=\begin{cases}
    H_*(G) & * \text{ odd } \ \\
    \{0\} & \text{ else }
\end{cases} $$ 
and, let 
$$H_{*>1}^{odd}(G)=\begin{cases}
    H_*(G) & *>1 \text{ odd } \ \\
    \{0\} & \text{ else }
\end{cases} $$ 
Then,
$$H_*(V(\Gamma,\Lambda,\ell), \mathbb{Q})\cong Ext(H_{*}^{odd}(\Gamma \ltimes \Lambda , C_c(\mathbb{Z}_\Gamma, \mathbb{Q})) \otimes Sym(H_{*}^{even}(\Gamma \ltimes \Lambda , C_c(\mathbb{Z}_\Gamma, \mathbb{Q}))$$
and 
$$H_*(D(V(\Gamma,\Lambda,\ell)), \mathbb{Q})\cong Ext(H_{*>1}^{odd}(\Gamma \ltimes \Lambda , C_c(\mathbb{Z}_\Gamma, \mathbb{Q})) \otimes Sym(H_{*}^{even}(\Gamma \ltimes \Lambda , C_c(\mathbb{Z}_\Gamma, \mathbb{Q}))$$
Where $Ext,Sym$ denote respectively the Exterior and Symmetric algebras in the sense of Multilinear Algebra \cite{greub1978multilinear}. 

\end{theorem}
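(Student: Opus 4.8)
The strategy is to read both formulas off Li's infinite loop space theorem, \cite[Corollary C]{li2022}, applied to the groupoid $\G_\ell := \Gamma \ltimes \Lambda \ltimes_\beta [0_+,\ell_-]$. That result identifies the rational homology of $\mathsf{F}(\G)$, and separately of $\mathsf{D}(\G)$, of a minimal, effective, purely infinite ample groupoid $\G$ with a \emph{free graded-commutative $\mathbb{Q}$-algebra} on the rational groupoid homology, with $H_n(\G;\mathbb{Q})$ sitting in homological degree $n$. Every hypothesis has already been checked for $\G_\ell$: it is minimal, topologically principal (hence effective) and amenable by Lemma \ref{groupoid minimal}, and purely infinite by Lemma \ref{groupoid purely infinite}. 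So the first step is merely to record that \cite[Corollary C]{li2022} applies to $\G_\ell$ with no extra work.

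The second step is purely formal. Over a field of characteristic zero, the free graded-commutative algebra on a graded vector space $V = V^{\mathrm{even}} \oplus V^{\mathrm{odd}}$ splits canonically as $Sym(V^{\mathrm{even}}) \otimes Ext(V^{\mathrm{odd}})$, since even-degree generators commute and odd-degree generators anticommute. Applying this with $V = \bigoplus_n H_n(\G_\ell;\mathbb{Q})$ produces precisely the tensor products on the right-hand sides. Finally, the identification of groupoid homology with group homology from the opening lemma of this section, base-changed from $\mathbb{Z}$ to $\mathbb{Q}$, gives $H_*(\G_\ell;\mathbb{Q}) \cong H_*(\Gamma \ltimes \Lambda, C_c(\mathbb{R}_\Gamma,\mathbb{Q}))$, which is the coefficient group appearing in the statement.

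For the derived subgroup the only modification is that the degree-one exterior generators are discarded, i.e. $Ext(H^{\mathrm{odd}}_*)$ becomes $Ext(H^{\mathrm{odd}}_{*>1})$. Conceptually this is because passing from $V(\Gamma,\Lambda,\ell)$ to $D(V(\Gamma,\Lambda,\ell))$ removes exactly the abelianisation, and rationally the abelianisation is carried by the single degree-one generator: the AH sequence of Theorem \ref{ah conj} shows the competing term $H_0(\G_\ell)\otimes\mathbb{Z}_2$ is torsion and so vanishes after $\otimes\,\mathbb{Q}$, leaving $H_1(V;\mathbb{Q}) \cong H_1(\G_\ell;\mathbb{Q})$. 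Since \cite[Corollary C]{li2022} records the $\mathsf{D}(\G)$-version of the formula directly, the second isomorphism is read off in the same manner as the first.

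The point that needs the most care is the bookkeeping of low homological degrees, in particular the role of $H_0(\G_\ell;\mathbb{Q})$. Because $BV(\Gamma,\Lambda,\ell)^{+}$ and $BD(V(\Gamma,\Lambda,\ell))^{+}$ are connected, they model a single path component of the relevant infinite loop space, so one must ensure that $H_0(\G_\ell;\mathbb{Q})$ is interpreted through Li's $\Omega^\infty_0$ normalisation (indexing components) rather than naively contributing degree-zero polynomial generators. I expect matching this normalisation against the $Sym/Ext$ presentation above, together with the base change of coefficients, to be the only genuinely delicate point; the remainder is a formal consequence of \cite[Corollary C]{li2022}.
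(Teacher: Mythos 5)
Your proposal is correct and takes essentially the same route as the paper: the paper supplies no written proof beyond the sentence preceding the theorem, which simply applies \cite[Corollary C]{li2022} to the groupoid $\Gamma \ltimes \Lambda \ltimes_\beta [0_+,\ell_-]$, whose required properties (minimal, effective, purely infinite, ample) are exactly those you cite from Lemma \ref{groupoid minimal} and Lemma \ref{groupoid purely infinite}. The details you add --- the $Sym \otimes Ext$ splitting of the free graded-commutative algebra in characteristic zero, the coefficient identification $H_*(\Gamma \ltimes \Lambda \ltimes_\beta [0_+,\ell_-];\mathbb{Q}) \cong H_*(\Gamma \ltimes \Lambda, C_c(\mathbb{R}_\Gamma,\mathbb{Q}))$, the AH-sequence check that rationally only the degree-one generators are discarded for the derived subgroup, and the $\Omega^\infty_0$ normalisation of $H_0$ --- are precisely what the paper leaves implicit.
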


\subsection{Homology for Irrational Slope Thompson's groups}
Let $\lambda \in (0,1)$. Consider $\Lambda_\lambda=\langle \lambda \rangle, \Gamma_\lambda=\mathbb{Z}[\lambda,1/\lambda]$. Let $\ell \in \Gamma_\lambda$ Consider the group $V_{\lambda,\ell}:=V(\Lambda_\lambda, \Gamma_\lambda, \ell)$. Such groups are natural generalisations of Thompson's group $V$ or Cleary's group $V_\tau$, and the analogous $F$-type subgroups, also known as Bieri-Strebel groups have been studied for example in \cite{irrationalslope22}, \cite{Winstone}, \cite{BieriStrebel}, \cite{Clearymore}. Their usual name is \say{irrational slope} Thompson's groups, which can be misleading: sometimes all of their slopes are rational, e.g. $V_{2/3}$, or Thompson's group $V$ both fit into the framework of so-called irrational slope Thompson's groups. The associated groupoids and their associated C*-algebras have also been studied extensively by Li in \cite{xinlambda}. In particular, there were many concrete homology computations by Li, which we recall now.

\begin{lemma}[\cite{xinlambda},  Prop 5.5 ]

Let $\Gamma_\lambda, \Lambda_\lambda, r$ be as above. Then,
$$H_0(\Gamma_\lambda \ltimes \Lambda_\lambda \ltimes_\beta [0_+,\ell_-]) \cong  \Gamma_\lambda/(1-\lambda)\Gamma_\lambda$$
with distinguished order unit $[\ell]$. 
$$H_k(\Gamma_\lambda \ltimes \Lambda_\lambda \ltimes_\beta [0_+,\ell_-]) \cong H_{k+1}(\Gamma_\lambda \ltimes \Lambda_\lambda) \quad \forall k \geq 1$$ \label{single lambda computation}
\end{lemma}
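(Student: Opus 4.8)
The plan is to reuse the machinery from the proof of Lemma~\ref{H0 computation}, but to exploit that $\Lambda_\lambda \cong \mathbb{Z}$ has homological dimension one so that the computation extends cleanly into all degrees. First I would pass through the reduction of groupoid homology to group homology recorded above, so that, writing $G = \Gamma_\lambda \ltimes \Lambda_\lambda$, the object to compute is $H_*(G, C_c(\mathbb{R}_\Gamma,\mathbb{Z}))$. The degree-zero assertion, including that the order unit sits at $[\ell]$, is then literally the $n=1$ instance of Lemma~\ref{H0 computation} (there $N_\Lambda = (1-\lambda)\Gamma_\lambda$), so no further work is needed in degree $0$.

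For the higher degrees I would again use the short exact sequence of coefficient modules $0 \to C_c(\mathbb{R}_\Gamma,\mathbb{Z}) \to C_c(\hat{\mathbb{R}}_\Gamma,\mathbb{Z}) \to \mathbb{Z} \to 0$ coming from the one-point extension $\hat{\mathbb{R}}_\Gamma = \mathbb{R}_\Gamma \cup \{\infty\}$. The structural input that makes the cyclic case tractable is that $G$ permutes the $\mathbb{Z}$-basis $\{\chi_{(a_+,\infty]}\}_{a\in\Gamma_\lambda}$ of $C_c(\hat{\mathbb{R}}_\Gamma,\mathbb{Z})$ transitively, and the stabiliser of $\chi_{(0_+,\infty]}$ is exactly $\Lambda_\lambda = \{(0,\mu)\}$; hence $C_c(\hat{\mathbb{R}}_\Gamma,\mathbb{Z}) \cong \mathrm{Ind}_{\Lambda_\lambda}^{G}\mathbb{Z}$ as a $G$-module. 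Shapiro's lemma then gives $H_*(G, C_c(\hat{\mathbb{R}}_\Gamma,\mathbb{Z})) \cong H_*(\Lambda_\lambda) = H_*(\mathbb{Z})$, which is $\mathbb{Z}$ in degrees $0$ and $1$ and vanishes above degree $1$.

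Feeding this into the long exact homology sequence of the coefficient sequence, for every $k \geq 2$ the two flanking terms $H_{k+1}(G, C_c(\hat{\mathbb{R}}_\Gamma,\mathbb{Z}))$ and $H_k(G, C_c(\hat{\mathbb{R}}_\Gamma,\mathbb{Z}))$ vanish, so the connecting homomorphism yields $H_k(G, C_c(\mathbb{R}_\Gamma,\mathbb{Z})) \cong H_{k+1}(G)$ at once. The only delicate degree is $k=1$, where the sequence reduces to $0 \to H_2(G) \to H_1(G, C_c(\mathbb{R}_\Gamma,\mathbb{Z})) \to H_1(\Lambda_\lambda) \xrightarrow{\psi} H_1(G)$, and the desired isomorphism $H_1(G, C_c(\mathbb{R}_\Gamma,\mathbb{Z})) \cong H_2(G)$ holds precisely when $\psi$ is injective.

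Establishing that injectivity is the main obstacle, and it is exactly the point at which the single-generator hypothesis enters. I would identify $\psi$ concretely: the coefficient map $C_c(\hat{\mathbb{R}}_\Gamma,\mathbb{Z}) \to \mathbb{Z}$ is evaluation at $\infty$, which sends every basis coset to $1$ and is thus the augmentation of $\mathrm{Ind}_{\Lambda_\lambda}^{G}\mathbb{Z}$; under the Shapiro identification its induced map is $\iota_* \colon H_*(\Lambda_\lambda) \to H_*(G)$, the map induced by the inclusion $\iota\colon \Lambda_\lambda \hookrightarrow G$. Because the defining extension $1 \to \Gamma_\lambda \to G \to \Lambda_\lambda \to 1$ splits, $\iota$ is a split monomorphism, so $\iota_*$ is split injective in every degree, in particular in degree $1$; hence $\psi$ is injective and the $k=1$ case follows. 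I would close by noting that this is precisely where cyclicity is indispensable: for $\Lambda$ of rank $\geq 2$ the groups $H_*(\mathbb{Z}^n)$ no longer vanish above degree one, the clean degree shift breaks, and one recovers only the partial information of Lemma~\ref{H0 computation}.
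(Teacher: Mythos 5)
Your argument is correct. Be aware, though, that the paper offers no internal proof of Lemma \ref{single lambda computation} at all: it is quoted from Li (\cite{xinlambda}, Prop.~5.5), so the honest comparison is with the paper's proof of Lemma \ref{H0 computation}, whose machinery you have in effect extended to all degrees. Everything up to and including the identification $C_c(\hat{\mathbb{R}}_\Gamma,\mathbb{Z})\cong \mathbb{Z}[G/\Lambda_\lambda]$ with $G=\Gamma_\lambda\ltimes\Lambda_\lambda$ coincides with that proof (the paper computes $H_*(G,C_c(\hat{\mathbb{R}}_\Gamma,\mathbb{Z}))\cong H_*(\Lambda_\lambda)$ via Hochschild--Serre; your Shapiro's-lemma phrasing is an equivalent and cleaner route, since the coefficient module is induced). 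The genuinely new content in your write-up is the treatment of degrees $k\geq 1$: for $k\geq 2$ the vanishing of $H_*(\mathbb{Z})$ above degree $1$ makes the connecting map an isomorphism, and in the delicate degree $k=1$ you correctly identify the map $H_1(G,C_c(\hat{\mathbb{R}}_\Gamma,\mathbb{Z}))\to H_1(G,\mathbb{Z})$ with the corestriction $\iota_*$ (the compatibility of the Shapiro isomorphism with the augmentation $\mathbb{Z}[G/\Lambda_\lambda]\to\mathbb{Z}$ is standard) and deduce injectivity from the retraction $G\to\Lambda_\lambda$ supplied by the semidirect product. Your closing remark is also the right diagnosis of why the clean degree shift is special to cyclic $\Lambda$: for rank $n\geq 2$ the groups $H_*(\mathbb{Z}^n)$ do not vanish above degree one and the long exact sequence no longer collapses. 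What your route buys over the paper's citation is a self-contained proof that works uniformly for transcendental $\lambda$ and isolates exactly where cyclicity enters; what the citation buys is brevity.

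One small repair: you invoke Lemma \ref{H0 computation} for the degree-zero statement and the order unit, but that lemma is stated under the hypothesis that the generators are algebraic, whereas Lemma \ref{single lambda computation} allows transcendental $\lambda$. Nothing in the degree-zero argument actually uses algebraicity (only $\Lambda_\lambda\cong\mathbb{Z}$), and indeed the tail of your own long exact sequence already yields $H_0(G,C_c(\mathbb{R}_\Gamma,\mathbb{Z}))\cong \operatorname{coker}\bigl(\iota_*\colon H_1(\Lambda_\lambda)\to H_1(G)\bigr)\cong \Gamma_\lambda/(1-\lambda)\Gamma_\lambda$, with the class of the bisection $[0_+,\ell_-]$ landing on $[\ell]$; so either rerun that computation in place of the citation, or note explicitly that the proof of Lemma \ref{H0 computation} is hypothesis-free on this point.
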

In particular, if $\lambda$ is transcendental, we have that:
$$ H_k(\Gamma_\lambda \ltimes \Lambda_\lambda \ltimes_\beta [0_+,1_-])=\bigoplus_{i=1}^\infty \mathbb{Z} \quad \forall k $$
Let us be more explicit. If $\lambda$ be an algebraic number that is the root of the minimal polynomial $f(t)=t^d+a_{d-1}t^{d-1}+... + a_1 t +a_0$. Let $\Gamma=\mathbb{Z}[\lambda,\lambda^{-1}]$, $\Lambda=\langle \lambda \rangle$ and let $\ell$ be arbitrary. If $d<4$, homology has been computed explicitly see the table of \cite{xinlambda}, Page 19. Using these results, we may compute many of the abelianisations of irrational slope Thompson groups concretely, generalising results of \cite{higman1974finitely}, \cite{irrationalslope22}, \cite{cleary2000regular}. 
\begin{corollary}
    Let $\lambda$ have the minimal polynomial $t^d+a_{d-1}t^{d-1}+....+a_0=0$. Let $\Lambda_\lambda=\langle \lambda \rangle, \Gamma_\lambda=\mathbb{Z}[\lambda,\lambda^{-1}]$ and $\ell \in \Gamma_\lambda$.  Let $V(\Gamma_\lambda,\Lambda_\lambda,\ell)$ be the irrational slope Thompson group with slopes $\langle \lambda \rangle$. 
    \begin{itemize}
        \item If $d=1$, then $V(\Gamma_\lambda,\Lambda_\lambda, \ell)_{ab}=\begin{cases}
            \mathbb{Z}_2 & f(1) \text{ even } \ \\
            0 & f(1) \text{ odd }
        \end{cases}$
        \item If $d=2, a_0=1$, and $a_1$ is odd (i.e. the minimal polynomial is of the form $t^2+(1-2n)t+1$ where $n \in \mathbb{Z}$) then $V(\Gamma_\lambda,\Lambda_\lambda, \ell)_{ab}=\mathbb{Z}$
        \item If $d=2,  a_0 \neq 1$, then 
        $V(\Gamma_\lambda,\Lambda_\lambda,\ell)_{ab}=\begin{cases}
            \mathbb{Z}_2 \oplus \mathbb{Z}/(1+a_0) \mathbb{Z} & f(1) \text{ even } \ \\
            \mathbb{Z}/(1+a_0) \mathbb{Z} & f(1) \text{ odd }
        \end{cases}$
        \item If $d=3, a_0=-1$ and $a_2+a_1$ is odd, (i.e. the minimal polynomial is of the form $t^3+(m)t^2+(m+2n+1)t-1$) then $V(\Gamma_\lambda,\Lambda_\lambda,\ell)_{ab}=\mathbb{Z}/(a_1+a_2)\mathbb{Z}$
    \end{itemize}
    \label{abelianisation single lambda}
\end{corollary}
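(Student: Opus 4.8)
The plan is to read off $V(\Gamma_\lambda,\Lambda_\lambda,\ell)_{ab}=H_1(V(\Gamma_\lambda,\Lambda_\lambda,\ell))$ from the AH exact sequence of Theorem \ref{ah conj}, feeding in the groupoid homology supplied by Lemma \ref{single lambda computation}. Writing $\G=\Gamma_\lambda\ltimes\Lambda_\lambda\ltimes_\beta[0_+,\ell_-]$, the relevant tail of Theorem \ref{ah conj} is
$$H_2(\G)\xrightarrow{\ \partial\ }H_0(\G)\otimes\mathbb{Z}_2\longrightarrow V(\Gamma_\lambda,\Lambda_\lambda,\ell)_{ab}\longrightarrow H_1(\G)\longrightarrow 0,$$
so the problem splits into (i) identifying $H_0(\G)$, $H_1(\G)$, $H_2(\G)$, and (ii) controlling the map $\partial$ together with the resulting extension of $H_1(\G)$.

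For (i) I would first compute $H_0(\G)=\Gamma_\lambda/(1-\lambda)\Gamma_\lambda$ using the ring isomorphism $\Gamma_\lambda=\mathbb{Z}[\lambda,\lambda^{-1}]\cong\mathbb{Z}[t,t^{-1}]/(f(t))$ sending $t\mapsto\lambda$ (valid since $f$ is monic and irreducible). Under this identification $\Gamma_\lambda/(1-\lambda)\Gamma_\lambda\cong\mathbb{Z}[t,t^{-1}]/(f(t),\,1-t)\cong\mathbb{Z}/f(1)\mathbb{Z}$, by reducing $t\mapsto 1$. Hence $H_0(\G)\otimes\mathbb{Z}_2\cong\mathbb{Z}/\gcd(2,f(1))$, which is exactly the parity invariant appearing in the statement: it vanishes precisely when $f(1)$ is odd, and equals $\mathbb{Z}_2$ when $f(1)$ is even. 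For $H_1(\G)$ and $H_2(\G)$ I would invoke Lemma \ref{single lambda computation}, which gives $H_k(\G)\cong H_{k+1}(\Gamma_\lambda\ltimes\Lambda_\lambda)$ for $k\ge 1$, and then read the values off the explicit degree-$\le 3$ tables of \cite{xinlambda} case by case: for $d=1$ both $H_1(\G)$ and $H_2(\G)$ vanish (a short Lyndon--Hochschild--Serre computation for the solvable group $\mathbb{Z}[1/n]\rtimes\mathbb{Z}$); for $d=2$ with $a_0=1$ the relevant group is the torus-bundle group $\mathbb{Z}^2\rtimes\mathbb{Z}$, whence $H_1(\G)=\mathbb{Z}$; and the remaining $d=2,3$ rows yield the cyclic groups $\mathbb{Z}/(1+a_0)$, $\mathbb{Z}/(a_1+a_2)$ recorded in the corollary.

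The parity invariant then does most of the bookkeeping. When $f(1)$ is odd the left term vanishes and the sequence collapses to $V(\Gamma_\lambda,\Lambda_\lambda,\ell)_{ab}\cong H_1(\G)$, which I read off directly, giving the "$f(1)$ odd" rows. The genuine obstacle is the case $f(1)$ even, where $H_0(\G)\otimes\mathbb{Z}_2=\mathbb{Z}_2$ and one must control both $\partial$ and the extension $0\to\operatorname{coker}\partial\to V_{ab}\to H_1(\G)\to 0$. To show $\partial=0$ I would argue either that $H_2(\G)=0$ in the relevant rows of the table (so $\partial$ has trivial source), or, where $H_2(\G)\neq 0$, that its image in $\mathbb{Z}_2$ is killed by a naturality argument, so that the full $\mathbb{Z}_2$ survives. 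The sharpest point is the splitting of the extension, which is a real issue precisely for $d=2$, $a_0\neq 1$ when both $f(1)$ and $1+a_0$ are even: there I would construct an explicit sign homomorphism $V(\Gamma_\lambda,\Lambda_\lambda,\ell)\to\mathbb{Z}_2$ detecting the surviving summand — the analogue of the parity character realising $V_\tau^{ab}=\mathbb{Z}_2$ in \cite{irrationalslope22} — exhibiting $\mathbb{Z}_2$ as a direct factor and yielding the claimed $\mathbb{Z}_2\oplus\mathbb{Z}/(1+a_0)$. Verifying that this character is well defined on all of $V$ and retracts the inclusion of $\operatorname{coker}\partial$ is where I expect the main work to lie.
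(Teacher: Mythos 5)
Your proposal is correct and follows essentially the same route as the paper, whose entire proof is a one-sentence appeal to exactly the ingredients you use: the groupoid homology of Lemma \ref{single lambda computation} (with the degree-$\leq 3$ table of \cite{xinlambda}) fed into the AH exact sequence of Theorem \ref{ah conj}, with $H_0(\G)\cong\Gamma_\lambda/(1-\lambda)\Gamma_\lambda\cong\mathbb{Z}/f(1)\mathbb{Z}$ supplying the parity dichotomy. Where you go beyond the paper --- checking that $\partial$ has trivial source (indeed $H_2(\G)=H_3(\Gamma_\lambda\ltimes\Lambda_\lambda)=0$ in every row with $f(1)$ even, so your fallback naturality argument is never needed) and flagging the possible non-splitting of $0\to\mathbb{Z}_2\to V_{ab}\to H_1(\G)\to 0$ in the one subcase of $d=2$, $a_0\neq 1$ where $H_1(\G)$ also has even order --- is a legitimate refinement that the paper's terse proof silently elides, and your plan of an explicit BNR-style sign character is exactly the kind of argument required to close it.
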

\begin{proof}
 These results follow from the computations as in the table of \cite{xinlambda}, Page 19, and the AH long exact sequence (Theorem \ref{ah conj}).
\end{proof}
\begin{rmk}
The above Corollary recovers the computation of the abelianisation of Clearys group seen in \cite{irrationalslope22}, and of the Higman-Thompson groups seen in \cite{higman1974finitely}. 
\end{rmk}
We see that in contrast to the Higman-Thompson groups, or Brin-Higman-Thompson groups that are either perfect or have an index 2 derived subgroup, the irrational slope Thompson groups can have a variety of abelianisations. 
In particular, not all irrational slope Thompson groups are virtually simple. Whenever $k$ is even, the Brin-Higman-Thompson group $nV_{k,r}$ is perfect. There are no examples of "true" irrational slope Thompson groups that are perfect. This is because if $d>1$, then $H_1(\Gamma_\lambda \ltimes \Lambda_\lambda \ltimes \mathbb{R}_\Gamma) \neq 0$, and so the abelianisation admits a nontrivial quotient.  

This concrete picture of groupoid homology is that allows for a rational homology computation of Stein's groups. 
\begin{corollary}[Rational Homology Computation Irrational Slope Thompson's Groups] \label{rational hom comp}
 Suppose $\lambda$ is transcendental. Then, $$H_*(V(\Gamma_\lambda,\Lambda_\lambda,\ell),\mathbb{Q})=\bigoplus_{n \in \mathbb{N}} \mathbb{Q}$$
 Suppose $\lambda$ is algebraic. Then, let $$H_{*}^{even}(\Gamma_\lambda \ltimes \Lambda_\lambda, \mathbb{Q})=\begin{cases} H_*(\Gamma_\lambda \ltimes \Lambda_\lambda, \mathbb{Q}) & * \text{even} \ \\
 0 & \text{ otherwise }\end{cases},$$ $$ H_{*>2}^{even}(\Gamma_\lambda \ltimes \Lambda_\lambda, \mathbb{Q})=\begin{cases} H_*(\Gamma_\lambda \ltimes \Lambda_\lambda, \mathbb{Q}) & *>2, \; \text{even} \ \\
 0 & \text{ otherwise }\end{cases}$$
 and 
 $$\hat{H}_*^{odd}(\Gamma_\lambda \ltimes \Lambda_\lambda, \mathbb{Q})=\begin{cases} 
 \Gamma_\lambda/(1-\lambda)\Gamma_\lambda \otimes \mathbb{Q} & *=1\ \\ 
 H_*(\Gamma_\lambda \ltimes \Lambda_\lambda , \mathbb{Q}) & *>1, \; \text{odd }\ \\
 0 & \text{otherwise} \end{cases}  $$
 then, 
 $$H_*(V(\Gamma_\lambda,\Lambda_\lambda,\ell),\mathbb{Q})=Ext(H_{*+1}^{even}(\Gamma_\lambda \ltimes \Lambda_\lambda)) \otimes Sym (\hat{H}_{*+1}^{odd}(\Gamma_\lambda \ltimes \Lambda_\lambda)) $$
 $$H_*(D(V(\Gamma_\lambda,\Lambda_\lambda,\ell),\mathbb{Q}))=Ext(H_{*+1>2}^{even}(\Gamma_\lambda \ltimes \Lambda_\lambda) )\otimes Sym (\hat{H}_{*+1}^{odd}(\Gamma_\lambda \ltimes \Lambda_\lambda))$$
 Where $Ext,Sym$ denote respectively the Exterior and Symmetric algebras in the sense of Multilinear Algebra \cite{greub1978multilinear}. 
\end{corollary}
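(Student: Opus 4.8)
The plan is to read both formulas off Theorem~\ref{rational homology} by substituting the concrete groupoid homology supplied by Lemma~\ref{single lambda computation}, and then to specialise to the transcendental case using the remark that follows that lemma. No new homological input is needed; the entire content is a change of indexing.

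First I would rationalise Lemma~\ref{single lambda computation}. Write $\mathcal{H}_* := H_*(\Gamma_\lambda \ltimes \Lambda_\lambda \ltimes_\beta [0_+,\ell_-];\mathbb{Q})$ for the rational groupoid homology; by the globalisation lemma this is exactly the coefficient $H_*(\Gamma_\lambda \ltimes \Lambda_\lambda, C_c(\mathbb{R}_\Gamma,\mathbb{Q}))$ appearing in Theorem~\ref{rational homology}. Since $\mathbb{Q}$ is flat over $\mathbb{Z}$, the universal coefficient theorem turns the integral statement of Lemma~\ref{single lambda computation} into $\mathcal{H}_0 \cong (\Gamma_\lambda/(1-\lambda)\Gamma_\lambda)\otimes\mathbb{Q}$ and $\mathcal{H}_k \cong H_{k+1}(\Gamma_\lambda \ltimes \Lambda_\lambda;\mathbb{Q})$ for every $k\geq 1$. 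The one structural feature to record is that the degree shift $k\mapsto k+1$ reverses parity: odd groupoid degrees $k$ match even group degrees $k+1\geq 2$, even groupoid degrees $k\geq 2$ match odd group degrees $k+1\geq 3$, and the leftover groupoid degree $0$ sits in the even part but is not itself of the form $H_{k+1}$.

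Next I would substitute into Theorem~\ref{rational homology}, which presents $H_*(V(\Gamma_\lambda,\Lambda_\lambda,\ell);\mathbb{Q})$ as the free graded-commutative algebra $\mathrm{Ext}(\mathcal{H}^{odd})\otimes \mathrm{Sym}(\mathcal{H}^{even})$. By the parity reversal above, the exterior factor, built on the odd groupoid degrees, is built on the even group homology degrees $\geq 2$, hence equals $\mathrm{Ext}(H^{even}_{*+1})$; the symmetric factor, built on the even groupoid degrees, is built on the odd group homology degrees $\geq 3$ together with the exceptional term $\mathcal{H}_0 = (\Gamma_\lambda/(1-\lambda)\Gamma_\lambda)\otimes\mathbb{Q}$. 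Placing this exceptional term in degree one is precisely the bookkeeping encoded by $\hat{H}^{odd}_*$, so the symmetric factor is $\mathrm{Sym}(\hat{H}^{odd}_{*+1})$, giving the asserted formula for $V$. For $D(V(\Gamma_\lambda,\Lambda_\lambda,\ell))$ the argument is identical, the sole difference being that Theorem~\ref{rational homology} deletes groupoid degree $1$ (equivalently group degree $2$) from the exterior factor, which is recorded by the truncation $H^{even}_{*+1>2}$.

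Finally, for transcendental $\lambda$ I would invoke the remark after Lemma~\ref{single lambda computation}, by which $\mathcal{H}_k$ is countably infinite-dimensional over $\mathbb{Q}$ in every positive degree. The free graded-commutative algebra on a graded vector space that is countably infinite-dimensional in each positive degree is again countably infinite-dimensional in each degree, so the formula collapses to $H_*(V(\Gamma_\lambda,\Lambda_\lambda,\ell);\mathbb{Q})\cong \bigoplus_{n\in\mathbb{N}}\mathbb{Q}$. I expect no genuine obstacle here: the proof is a substitution, and the only delicate point is the parity-and-shift bookkeeping between groupoid and group homology, above all checking that the exceptional degree-zero class $\mathcal{H}_0$ lands in the position dictated by the hatted notation $\hat{H}^{odd}$ rather than contributing spuriously elsewhere.
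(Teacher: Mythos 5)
Your proposal is correct and coincides with the paper's own (implicit) derivation: the paper states Corollary \ref{rational hom comp} without a separate proof precisely because it is the substitution of Li's computation (Lemma \ref{single lambda computation}, rationalised by flatness of $\mathbb{Q}$) into Theorem \ref{rational homology}, with exactly the bookkeeping you describe --- the parity-reversing shift $k \mapsto k+1$ turning odd groupoid degrees into the even group-homology degrees $H^{even}_{*+1}$ in the exterior factor, the exceptional class $H_0 \cong \Gamma_\lambda/(1-\lambda)\Gamma_\lambda$ recorded as the degree-one term of $\hat{H}^{odd}$ in the symmetric factor, and the truncation $H^{even}_{*+1>2}$ deleting groupoid degree $1$ for the derived subgroup. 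Your treatment of the transcendental case via the remark following Lemma \ref{single lambda computation} is likewise the intended argument.
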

Like the Higman-Thompson groups $V_{k,r}$, certain low degree irrational slope Thompson groups are rationally acyclic, such as taking $\lambda$ roots of polynomials of the form $t^2+a_1 t +a_0$ where $a_0 \neq 1$ (also, if $\lambda$ is the root of a polynomial of the form $t^3+a_2t^2+a_1t+a_0$ with $a_0 \neq -1$). In particular, Cleary's group is rationally acyclic, being associated to the minimal polynomial $t^2+t-1$. 
\begin{example}
    Cleary's group $V_\tau$ is rationally acyclic.
\end{example}

However, in contrast to behaviour that is seen for the rational homology of the Higman-Thompson groups \cite{szymik2019homology} or more generally the Brin-Higman-Thompson groups \cite{li2022}, irrational slope Thompson groups are not rationally acyclic in general. Concrete calculations are shown below:
    \begin{itemize}
        \item If $\lambda$ is transcendental, $$H_*(V(\Gamma_\lambda,\Lambda_\lambda,\ell), \mathbb{Q})=\begin{cases} \bigoplus_{i=1}^\infty \mathbb{Q} & * > 0  \ \\
        \mathbb{Q} & *=0 \end{cases}, \; H_*(D(V(\Gamma_\lambda,\Lambda_\lambda,\ell)), \mathbb{Q})\cong \begin{cases} \bigoplus_{i=1}^\infty \mathbb{Q} & * > 1 \ \\ 0 & *=1 \ \\
        \mathbb{Q} & *=0 \end{cases}$$
        \item If $\lambda$ has minimal polynomial of the form $t^2+a_1t+1$ we have that:
        $$H_*(V(\Gamma_\lambda,\Lambda_\lambda,\ell), \mathbb{Q})= \mathbb{Q}, \; H_*(D(V(\Gamma_\lambda,\Lambda_\lambda,\ell)), \mathbb{Q})\cong \begin{cases}  \mathbb{Q} & * \neq 1 \ \\ 0 & * = 1 \end{cases}$$
        \item If $\lambda$ has minimal polynomial of the form $t^3+a_2t+a_1t-1$
        $$H_*(V(\Gamma_\lambda,\Lambda_\lambda,\ell), \mathbb{Q})= H_*(D(V(\Gamma_\lambda,\Lambda_\lambda,\ell)), \mathbb{Q}) =\begin{cases}  \mathbb{Q} & * \neq 1 \ \\ 0 & *=1  \end{cases}$$
    \end{itemize}

Now that we have seen that we can compute many homological or topological invariants that we associate to the groupoids $\Gamma_\lambda \ltimes \Lambda_\lambda \ltimes_\beta [0_+,\ell_-]$, let us describe what we know about distinguishing groups up to isomorphism for this class. The groupoid homology recovers the initial choice of algebraic number $\lambda$. Thus, we are ready to prove Corollary \ref{corintroclass}. 
\begin{corollary}
    Let $\lambda,\mu<1$ be algebraic numbers with degree $\leq 2$ and let $\ell_1 \in \Gamma_\lambda, \ell_2 \in \Gamma_\mu$. If
     $$ V(\Gamma_\lambda,\Lambda_\lambda,\ell_1) \cong V(\Gamma_\mu,\Lambda_\mu,\ell_2)$$
    Then $\lambda=\mu$, and $\ell_1-\ell_2 \in (1-\lambda)\Gamma_\lambda$. \label{classification for low degree}
\end{corollary}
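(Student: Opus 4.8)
The plan is to recover the number $\lambda$ from a complete set of homological invariants and then to deduce the constraint on the lengths from Corollary~\ref{cor classification}. By Lemma~\ref{v as a topological full group}, the groups $V(\Gamma_\lambda,\Lambda_\lambda,\ell_1)$ and $V(\Gamma_\mu,\Lambda_\mu,\ell_2)$ are the topological full groups of the minimal, effective, purely infinite ample groupoids $\mathcal{G}_1=\Gamma_\lambda\ltimes\Lambda_\lambda\ltimes_\beta[0_+,(\ell_1)_-]$ and $\mathcal{G}_2=\Gamma_\mu\ltimes\Lambda_\mu\ltimes_\beta[0_+,(\ell_2)_-]$. Hence, by the Matui--Rubin isomorphism theorem (Theorem~\ref{matui isomorphism theorem}), an isomorphism of the full groups upgrades to an isomorphism $\mathcal{G}_1\cong\mathcal{G}_2$ of \'etale groupoids, and therefore to an isomorphism of groupoid homology carrying the order unit on $H_0$ to the order unit. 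In particular the pointed group $(H_0,[\ell_1])$ together with all the groups $H_k\cong H_{k+1}(\Gamma_\lambda\ltimes\Lambda_\lambda)$, $k\ge 1$, of Lemma~\ref{single lambda computation} are invariants of the isomorphism type.

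The core of the argument is that this data pins down the minimal polynomial of $\lambda$. I would first record the reduction that makes this enough: since $\lambda,\mu\in(0,1)$ and a monic quadratic $t^2+a_1t+a_0$ has $a_0\in\mathbb{Z}\setminus\{0\}$ equal to the product of its roots, at most one of its roots can lie in $(0,1)$; together with the fact that distinct rationals are distinct, this shows that distinct numbers of degree $\le 2$ in $(0,1)$ have distinct minimal polynomials. It therefore suffices to reconstruct that polynomial. I would then run through the cases tabulated in \cite{xinlambda} (the same computations feeding Corollary~\ref{abelianisation single lambda}). In the degree-$1$ case $\Gamma_\lambda\ltimes\Lambda_\lambda$ is a rank-one Baumslag--Solitar-type group with $H_2=0$, so the groupoid satisfies $H_k=0$ for all $k\ge 1$; whereas for a degree-$2$ unit ($a_0=\pm1$) one has $\Gamma_\lambda\cong\mathbb{Z}^2$ and $\Gamma_\lambda\ltimes\Lambda_\lambda\cong\mathbb{Z}^2\rtimes\mathbb{Z}$, and a Wang sequence gives $H_1\cong H_2(\mathbb{Z}^2\rtimes\mathbb{Z})\cong\mathbb{Z}$ when $a_0=1$ and $\cong\mathbb{Z}/2\mathbb{Z}$ when $a_0=-1$. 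Thus $H_1$ first separates degree $1$ from degree $2$ and then detects $a_0$, while $H_0\cong\Gamma_\lambda/(1-\lambda)\Gamma_\lambda\cong\mathbb{Z}/f(1)\mathbb{Z}$ detects $f(1)=1+a_1+a_0$ and hence $a_1$. Assembling $(a_0,a_1)$ forces $\lambda=\mu$.

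Once $\lambda=\mu$ we have $\Gamma_\lambda=\Gamma_\mu$, $\Lambda_\lambda=\Lambda_\mu$ and $N_\Lambda=(1-\lambda)\Gamma_\lambda$, so Corollary~\ref{cor classification} applies directly and yields $\ell_1-\ell_2\in(1-\lambda)\Gamma_\lambda$, which is the remaining assertion.

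The step I expect to be genuinely delicate is the injectivity claim in the second paragraph across all sub-cases, and in particular the degree-$2$ \emph{non-unit} case $|a_0|\ge 2$ (e.g.\ $\lambda=2-\sqrt2$), where $\Gamma_\lambda=\mathbb{Z}[\lambda,\lambda^{-1}]$ is no longer finitely generated over $\mathbb{Z}$ and $H_0,H_1$ acquire $\mathbb{Z}[1/|a_0|]$-type contributions that must be read off from Li's table rather than from a clean Wang-sequence computation. There one must rule out accidental coincidences between the orders and structures of $H_0$ and the torsion of $H_1$ for two distinct admissible polynomials; verifying that no such collision occurs is where the bookkeeping is heaviest.
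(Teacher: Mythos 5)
Your overall route is the same as the paper's: upgrade the group isomorphism to a groupoid conjugacy via Theorem \ref{matui isomorphism theorem}, reconstruct the minimal polynomial of $\lambda$ from the groupoid homology of Lemma \ref{single lambda computation} and Li's table, then settle the lengths through the pointed $H_0\cong\Gamma_\lambda/(1-\lambda)\Gamma_\lambda$ (your appeal to Corollary \ref{cor classification} is the same content as the paper's ordered-$H_0$ argument). Your preliminary reduction --- that distinct algebraic numbers of degree $\leq 2$ in $(0,1)$ have distinct minimal polynomials, because the constant term is the integer product of the roots --- is correct and is a step the paper leaves implicit, and your Wang-sequence values in the unit cases ($H_1\cong\mathbb{Z}$ for $a_0=1$, $H_1\cong\mathbb{Z}/2\mathbb{Z}$ for $a_0=-1$) agree with the table.

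The genuine gap is precisely the step you defer in your last paragraph, and it is not heavy bookkeeping: it fails. For $|a_0|\geq 2$ one has $\Gamma_\lambda=\varinjlim_n \lambda^{-n}\mathbb{Z}[\lambda]$ with each inclusion of index $|a_0|$, so the exterior square satisfies $\bigwedge^2_{\mathbb{Z}}\Gamma_\lambda\cong\mathbb{Z}[1/a_0]$ with multiplication by $\lambda$ acting as the norm $a_0$; since $\lambda-1$ is injective on $\Gamma_\lambda$, the Wang sequence gives $H_1(\G_\lambda)\cong\mathbb{Z}[1/a_0]/(a_0-1)\mathbb{Z}[1/a_0]\cong\mathbb{Z}/(a_0-1)\mathbb{Z}$ and $H_2(\G_\lambda)=0$, while $H_0\cong\mathbb{Z}/f(1)\mathbb{Z}$. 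Finite cyclic groups only remember $|a_0-1|$ and $|f(1)|$, and these data collide across admissible polynomials: for $\lambda=2-\sqrt{2}$ (root of $t^2-4t+2$, so $a_0=2$, $f(1)=-1$) all of $H_0,H_1,H_2$ vanish, exactly as for $\lambda=1/2$, i.e.\ Thompson's group $V$; and Cleary's $\tau$ (root of $t^2+t-1$: $H_0=0$, $H_1=\mathbb{Z}/2\mathbb{Z}$, $H_2=0$) has the same invariants as the root of $t^2-5t+3$ in $(0,1)$. Indeed every admissible quadratic with $a_0=2$ collides with a degree-one case, and every one with $a_0=3$ collides with an $a_0=-1$ case, so the pointed-homology pipeline cannot by itself force $\lambda=\mu$ outside the unit cases. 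Note that this is also the soft spot of the paper's own proof, whose case analysis asserts that $H_1=0$ forces $d=1$ (false at $a_0=2$) and that $H_1\cong\mathbb{Z}/(1-a_0)\mathbb{Z}$ determines $a_0$ (it only determines $|1-a_0|$). These collisions do not disprove the corollary --- the groupoids might be separated by finer invariants than $H_*$ with its order unit --- but they show your plan of ``reading off Li's table and ruling out coincidences'' cannot be completed as stated for non-unit quadratics; a genuinely new invariant or argument is needed there. The remaining components of your proposal (the reduction to minimal polynomials, the unit and degree-one cases, and the deduction $\ell_1-\ell_2\in(1-\lambda)\Gamma_\lambda$ once $\lambda=\mu$ is known) are sound.
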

\begin{proof}
Our first step is to recover $\lambda$ from the abstract group $V(\Gamma_\lambda,\Lambda_\lambda,\ell_1)$. Let us examine the groupoid $\Gamma_\lambda \ltimes \Lambda_\lambda \ltimes [0_+,(\ell_1)_-]$, and attempt to recover the minimal polynomial from the groupoid homology. $H_*(\Gamma_\lambda \ltimes \Lambda_\lambda \ltimes [0_+,(\ell_1)_-])$. Let us consider two cases:
\begin{enumerate}
    \item If $\Gamma_\lambda \ltimes \Lambda_\lambda \ltimes [0_+,(\ell_1)_-]$ is rationally acyclic. Then:
    \begin{itemize}
        \item If $H_1(\Gamma_\lambda \ltimes \Lambda_\lambda \ltimes [0_+,(\ell_1)_-])=0 $, $d=1$ and $a_0$ is determined by $H_0(\Gamma_\lambda \ltimes \Lambda_\lambda \ltimes [0_+,(\ell_1)_-]) \cong \mathbb{Z}/(1+a_0)\mathbb{Z}$.  
        
         \item Otherwise, $d=2$ and $a_0 \neq -1$. Then $a_0$ is determined by $H_1(\Gamma_\lambda \ltimes \Lambda_\lambda \ltimes [0_+,(\ell_1)_-]) \cong \mathbb{Z}/(1-a_0) \mathbb{Z}$ and $a_1$ is determined by $H_0(\Gamma_\lambda \ltimes \Lambda_\lambda \ltimes [0_+,(\ell_1)_-]) \cong \mathbb{Z}/(1+a_1+a_0) \mathbb{Z}$
    \end{itemize}
    \item If $\Gamma_\lambda \ltimes \Lambda_\lambda \ltimes [0_+,(\ell_1)_-]$ is not rationally acyclic then we have that $H_1(\Gamma_\lambda \ltimes \Lambda_\lambda \ltimes [0_+,(\ell_1)_-])=H_2(\Gamma_\lambda \ltimes \Lambda_\lambda \ltimes [0_+,(\ell_1)_-])=\mathbb{Z}$, then $d=2$, $a_0=1$. Moreover, $H_0(\Gamma_\lambda \ltimes \Lambda_\lambda \ltimes [0_+,(\ell_1)_-])\cong \mathbb{Z}/(2+a_1)\mathbb{Z}$, which uniquely determines $a_1$.
\end{enumerate}
This shows we may recover $\lambda$ from the groupoid homology. Groupoid homology is an invariant for groupoids. By Theorem \ref{matui isomorphism theorem}, these groupoids are uniquely determined by the abstract groups $V(\Gamma,\Lambda,\ell)$ and so for any isomorphism of the abstract groups, we must necessarily have that $\lambda=\mu$. It remains to understand how the length of the underlying interval affects the isomorphism class of these groups. One may be precise here, considering $H_0(\Gamma_\lambda \ltimes \Lambda_\lambda \ltimes [0_+,\ell_-]) \cong \Gamma/(1-\lambda)\Gamma $ as an ordered group, with the class of $[0_+,\ell_-]$, $[\ell] \in \Gamma/(1-\lambda)\Gamma_\lambda$ determining the positive cone. We therefore have an isomorphism as ordered groups, this occurs if and only if $\ell-\ell' \in (1-\lambda)\Gamma_\lambda$. 
\end{proof}

This Corollary showcases the diversity of the irrational slope Thompson's group, even for the case when $\lambda$ has degree 2. We can also distinguish some irrational slope Thompson groups of higher degrees using groupoid homology. However, in general, this classification remains an interesting open question. 
\subsection{Homology for Stein's Integral Groups}

Let $N=\{n_1,...n_k\}$ be a finite collection of integers. Let $r \in \mathbb{N}$, let $\Lambda_N=\langle n_1,..,n_k \rangle$ and $\Gamma_N=\mathbb{Z}[1/(n_1 n_2...n_k)]$. Consider the group $V(\Lambda_N,\Gamma_N,r)$. These groups have been studied in detail by Stein \cite{stein1992groups}, and are known to fit into many other frameworks of generalized Thompson's groups, for example, those explored in \cite{martinez2016cohomological}.
Because they have been more heavily studied, much more is known about the finiteness properties and presentations of these groups, for example, it was shown already by Stein in \cite{stein1992groups} that they are all of type $F_\infty$. 
However, the homology of these groups remains mysterious. For us to better understand this, it is useful to change perspectives on the underlying groupoids. 

The approach we take here is related to work involving the topological full groups of $k$-graphs, which has notably also been explored (though not for this reason) in work by Lawson-Sims-Vdovina \cite{lawson2024higher}, Lawson-Vdovina \cite{lawson2020higher}, and Dilian Yang \cite{yang2022higman}. The observation that we can rephrase the groupoid model of these exact groups in the language of k-graphs is also noted in Conchita Martınez-Pérez, Brita Nucinkis and Alina Vdovina, we include it here for completeness in the literature and for our homology computations. Let $N=\{n_1,...,n_k\}$ be a collection of integers where $k>1, n_i>1, \forall k$. Consider the single vertex $k$-graph that has $n_i$ loops for each $i=1,...,k$. Let us label these loops $a^{(n_i)}_0,...,a^{(n_i)}_{n_i-1}$ and let them be colored in the $i$-th color. We then include commutation relations as follows. Between color $i$ and color $j$ there are $n_i n_j$ commutation relations, given by: 

$$a_{k}^{(n_i)} a_{l}^{(n_j)}=a_{k'}^{(j)}a_{l'}^{(i)} \iff k/n_i + l/n_i n_j=k'/n_i n_j + l'/n_i$$

For all $0<k<n_i, 0<l<n_j$. These commutation relations induce a description of the factorisation map $d$. Notice there is a relationship already between the edges $a_k^{(n_i)}$ of our k-graph and $(k,1/n_i) \in \mathbb{Z}[\prod_{i=1}^k 1/n_i]\ltimes \langle n_1,..,n_k \rangle$: these commutation relations are the same as saying, for $(k,1/n_i),(k',1/n_i), (l,1/n_j), (l',1/n_j) \in \mathbb{Z}[\prod_{i=1}^k 1/n_i]\ltimes \langle n_1,..,n_k \rangle$:
$$a_{k}^{(n_i)} a_{l}^{(n_j)}=a_{k'}^{(j)}a_{l'}^{(i)} \iff (k,1/n_i)(l,1/n_j)=(l',1/n_j)(k',1/n_i)$$

Since it is single-vertex, the paths in this $k$-graph form a left-cancellative monoid with respect to concatenation, and the identifications from our equivalence relation. This monoid considered as a left-cancellative category fits directly into Kumjian-Pask's framework of $k$-graphs \cite{kumjian2000higher}. Let us denote the above $k$-graph by $(\Lambda(n_1,..,n_k),d)$  For more information on $k$-graphs and their groupoids, we refer the reader to this source.  \ \\
Let us describe the associated path groupoid, introduced in \cite{kumjian2000higher}. For $k>1$, Let $\Omega_k$ be the small category with objects in $\mathbb{N}^k$ and morphisms $\Omega:=\{(m,n) \; : \mathbb{N}^k \times \mathbb{N}^k, m \leq n \}$, the range and source maps of this morphism being $r(m,n)=n, s(m,n)=n$. Let $d(m,n)=n-m$. 

The path space $\Lambda^\infty(n_1,..,n_k)$ refers to all $k$-graph morphisms (that is, structure-preserving morphisms):
$$x: (\Omega_k,d) \rightarrow (\Lambda(n_1,...,n_k), d) $$
The topology on this is the natural cylinder sets (paths that begin with a finite path $\lambda$):
$$ Z(\lambda)=\{ \lambda x \in \Lambda^\infty(n_1,..,n_k) \} $$

For each $\boldsymbol{m} \in \mathbb{N}^k$ we associate a shift map
$ \sigma^{\boldsymbol{m}}: \Lambda^\infty(n_1,..,n_k) \rightarrow \Lambda^\infty(n_1,...,n_k) $ $x \mapsto \sigma^{\boldsymbol{m}}x$, where $ \sigma^{\boldsymbol{m}}x$ is the function given by

$$ \sigma^{\boldsymbol{m}}x(\boldsymbol{n_1},\boldsymbol{n_2})=x(\boldsymbol{n_1+m},\boldsymbol{n_2+m})$$
Notice then that this is a semigroup homomorphism:  $\sigma^{\boldsymbol{m}_1 + \boldsymbol{m}_2}=\sigma^{\boldsymbol{m}_1}\circ \sigma^{\boldsymbol{m}_2}$. 
Our groupoid is of the form:
$$\G(n_1,...,n_k):=\{(x,\boldsymbol{m_1-m_2},y) \; x,y \in \Lambda^\infty(n_1,...,n_k) \; \boldsymbol{m_1,m_2} \in \mathbb{N}^k, \; \sigma^{\boldsymbol{m_1}}(x)=\sigma^{\boldsymbol{m_2}}(y)  \} $$

The source of $(x,\boldsymbol{m_1-m_2},y) \in \G(n_1,...,n_k)$ is $(x,\boldsymbol{0}-\boldsymbol{0},x)$. The range of $(x,\boldsymbol{m_1-m_2},y) \in \G(n_1,...,n_k)$ is $(y,\boldsymbol{0}-\boldsymbol{0},y)$. The inverse of $(x,\boldsymbol{m_1-m_2},y) \in \G(n_1,...,n_k)$ is $(y,\boldsymbol{(-m_2)-(-m_1)},x) \in \G(n_1,...,n_k)$. 
Our unit space is identified with the path space by the canonical identification, as with Deacounu-Renault groupoids of graphs ( $(x,0,x) \mapsto x, \forall x \in \Lambda^\infty$).

A basis for the topology is coming from the open bisections (the groupoid is \'etale), which are the inverse semigroup generated by partial bijections of the form:
$$ \mathbb{Z}(\lambda,\mu)=\{ (x,\boldsymbol{m_1-m_2},y) \; x \in Z(\lambda) \; y \in Z(\mu), \; \boldsymbol{m_1,m_2} \in \mathbb{N}^k, \; \sigma^{\boldsymbol{m_1}}(x)=\sigma^{\boldsymbol{m_2}}(y) \}$$

Now let us describe why this groupoid agrees with our description of the same groupoid, namely $\mathbb{Z}[\prod_{i=1}^k 1/n_i] \rtimes \langle n_1,...,n_k \rangle \rtimes [0_+,1_-]$. The first step is to identify the unit spaces. To do this, let us notice the following distinguished elements of $\Lambda^\infty$:
$$ x_0: \mathbb{N}^k \mapsto   \Lambda(n_1,..,n_k)  \quad  (l_1,l_2..,l_k) \mapsto (a^{(n_1)}_0)^{l_1} (a^{(n_2)}_0)^{l_2}...(a^{(n_k)}_0)^{l_k} $$
$$ x_1: \mathbb{N}^k  \mapsto \Lambda(n_1,..,n_k) \quad (l_1,l_2..,l_k) \mapsto (a^{(n_1)}_{n_1-1})^{l_1} (a^{(n_2)}_{n_2-1})^{l_2}...(a^{(n_k)}_{n_k-1})^{l_k}$$
Note that since for all $i,j$ $a_0^{(n_i)}a_{0}^{(n_j)}=a_0^{(n_j)}a_{0}^{(n_i)}$ and $a_{n_i-1}^{(n_i)}a_{n_j-1}^{(n_j)}=a_{n_j-1}^{(n_j)}a_{n_i-1}^{(n_i)}$ so that these are well defined paths. $x_0$ will map to $0_+$ and $x_1$ to $1_-$. Let us first define a map on finite paths $$\phi_0: \Lambda(n_1,...,n_k) \rightarrow \mathbb{Z}[\prod_{i=1}^k n_i]  \quad (a_{l_1}^{(n_{i_1})},...,a_{l_m}^{(n_{i_m}})) \mapsto \sum_{i=1}^m n_i \prod_{j=1}^{i} \frac{1}{n_j}$$
Due to the commutation relations, this map is well-defined and bijective. 
From here, the map is extended to become $\varphi_0$. 
$$\varphi_0:\Lambda^\infty(n_1,...,n_k)  \rightarrow [0_+,1_-] \subset \mathbb{R}_{\mathbb{Z}[\prod_{i=1}^k 1/n_i]}$$ $$  x=[(a_{l_1}^{(n_{i_1})},a_{l_2}^{(n_{i_2})},...)] \mapsto \begin{cases}
   \phi_{0}(a_{l_1}^{(n_{i_1})},..,a_{l_m}^{(n_{i_m})}  )_+  & x = (a_{l_1}^{(n_{i_1})},..,a_{l_m}^{(n_{i_m})} ) x_0 \ \\ 
   (\prod_{i=1}^m\frac{1}{n_i}+\phi_0(a_{l_1}^{(n_{i_1})},..,a_{l_m}^{(n_{i_m})} )_-  & x= (a_{l_1}^{(n_{i_1})},..,a_{l_m}^{(n_{i_m})} ) x_1 \ \\
   \sum_{m=1}^\infty \phi_0 (a_{l_1}^{(n_{i_1})},..,a_{l_m}^{(n_{i_m} ) }) & otherwise 
\end{cases} $$
Remark that this does not depend on the choice of representative for $x$, due to our canonical commutation relations. Note also this map is a homeomorphism, since it is a continuous bijection between compact Hausdorff spaces: $$\varphi_0(Z(a_{l_1}^{(n_{i_1})},..,a_{l_m}^{(n_{i_m})} ))=[\phi_{0}(a_{l_1}^{(n_{i_1})},..,a_{l_m}^{(n_{i_m})}  )_+ , (\prod_{i=1}^m\frac{1}{n_i}+\phi_0(a_{l_1}^{(n_{i_1})},..,a_{l_m}^{(n_{i_m})} )_-] $$
Every open set in $[0_+,1_-]$ can be written as a finite union of sets of the 
 above form since $\phi_0$ is bijective. The next step is to identify the arrow spaces. Consider the map:
 $$ \mu: \mathbb{Z}^k \rightarrow \langle n_1,...,n_k \rangle \quad (m_1,..,m_k) \mapsto \prod_{i=1}^k n_i^{m_i}$$

$$\varphi: \G(n_1,...,n_k) \mapsto \mathbb{Z}[\prod_{i=1}^k 1/n_i] \rtimes \langle n_1,...,n_k \rangle \rtimes [0_+,1_-]$$  $$(x, \boldsymbol{m_1-m_2},y) \mapsto ( (-\varphi_0(x)+\frac{\varphi_0(y)}{\mu(\boldsymbol{m_1-m_2})},\mu(\boldsymbol{m_1-m_2})), \varphi_0(x))$$

Notice then in particular: 
$$\varphi(Z(\emptyset,a_0^{n_i}))=((0,1/n_i),[0_+,1_-])$$
$$ \varphi(Z(a_{n_i-1}^{(n_i)},a_{n_i-1}^{(n_i)}) \bigsqcup\sqcup_{l=0}^{n_i-2}Z(a_{l}^{(n_i)},a_{l+1}^{(n_i)}))$$ $$ =((1/n_i-1,1),[(1-1/n_i)_+,1_-] )\sqcup((1/n_i,1),[0_+,(1-1/n_i)_-])$$

As shown in the previous section (Theorem \ref{fg when fg by alg}), these bisections form a compact generating set of the groupoid $\mathbb{Z}[\prod_{i=1}^k 1/n_i] \rtimes \langle n_1,...,n_k \rangle \rtimes [0_+,1_-]$, hence $\varphi$ is a groupoid conjugacy. The discussion above can be summarised in the form of the Lemma below. 
\begin{lemma}
    Let $k>1$, and $N=\{n_1,...,n_k\}$ be a collection of algebraically independent integers. Then Stein's integral group $V(\Gamma_N,\Lambda_N,1)$ the topological full group of the path groupoid model of the $k$-graph above, $\G(n_1,...,n_k)$. 
    \label{identification with k-graph}
\end{lemma}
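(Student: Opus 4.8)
The strategy is to upgrade the explicit map $\varphi$ constructed above into a genuine isomorphism of ample groupoids, and then to transport this isomorphism to the level of topological full groups. Concretely, the plan is to show that $\varphi \colon \G(n_1,\dots,n_k) \to \Gamma_N \ltimes \Lambda_N \ltimes [0_+,1_-]$ is a groupoid conjugacy, and then to invoke the elementary fact that an isomorphism of ample groupoids induces an isomorphism of their inverse semigroups of compact open bisections, hence of the unital subgroups that are their topological full groups. Combined with Lemma \ref{v as a topological full group}, which already identifies $\mathsf{F}(\Gamma_N \ltimes \Lambda_N \ltimes [0_+,1_-])$ with $V(\Gamma_N,\Lambda_N,1)$, this yields the claim. (This is of course also consistent with the Matui--Rubin theorem, Theorem \ref{matui isomorphism theorem}, but only the trivial functorial direction is needed here.)

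First I would verify that $\varphi_0$ is a homeomorphism of unit spaces. Since it is a continuous bijection between compact Hausdorff spaces, the path space $\Lambda^\infty(n_1,\dots,n_k)$ and $[0_+,1_-]$, it suffices to note that it carries the cylinder-set basis to the clopen basis of sets $[\,\cdot_+,\cdot_-\,]$, which is exactly the displayed computation of $\varphi_0(Z(\cdots))$. Next I would check that $\varphi$ is an algebraic groupoid homomorphism: that it intertwines range, source and inversion and respects composition. A direct check confirms the range condition, since for $(x,\boldsymbol{m_1-m_2},y)$ one has $\mu(\boldsymbol{m_1-m_2})\bigl(\varphi_0(x) + (-\varphi_0(x)+\varphi_0(y)/\mu(\boldsymbol{m_1-m_2}))\bigr) = \varphi_0(y)$. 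The essential input for composition is that $\mu\colon \mathbb{Z}^k \to \Lambda_N$ is a group isomorphism, which is exactly where multiplicative (\say{algebraic}) independence of $n_1,\dots,n_k$ enters: it guarantees $\Lambda_N \cong \mathbb{Z}^k$, so that the additive degree datum $\boldsymbol{m_1-m_2}$ translates faithfully into the multiplicative slope $\mu(\boldsymbol{m_1-m_2})$.

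Injectivity of $\varphi$ then follows by reconstructing the triple $(x,\boldsymbol{m_1-m_2},y)$ from its image: the base point recovers $\varphi_0(x)$, the slope recovers $\mu(\boldsymbol{m_1-m_2})$ and hence $\boldsymbol{m_1-m_2}$ by injectivity of $\mu$, and the translation coordinate then recovers $\varphi_0(y)$; bijectivity of $\varphi_0$ recovers $x$ and $y$. For surjectivity, since $\varphi$ is a homomorphism its image is a subgroupoid, so it is enough to observe that this image contains a generating set of the target. This is precisely the content of the two displayed identities $\varphi(Z(\emptyset,a_0^{(n_i)})) = ((0,1/n_i),[0_+,1_-])$ and the analogous computation for the carry-bisection, whose union over $i$ is exactly the compact generating set of $\Gamma_N \ltimes \Lambda_N \ltimes [0_+,1_-]$ produced in the proof of Theorem \ref{fg when fg by alg}, via Lemma \ref{lemma where we reduce to singly generated}. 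Continuity of $\varphi^{-1}$ is automatic once we know that $\varphi$ sends the basic compact open bisections $\mathbb{Z}(\lambda,\mu)$ to compact open bisections.

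Having established that $\varphi$ is a groupoid conjugacy, the conclusion is immediate: $\mathsf{F}(\G(n_1,\dots,n_k)) \cong \mathsf{F}(\Gamma_N \ltimes \Lambda_N \ltimes [0_+,1_-]) \cong V(\Gamma_N,\Lambda_N,1)$ by Lemma \ref{v as a topological full group}. The main obstacle is the middle step, namely the bookkeeping showing that $\varphi$ respects composition and is well defined modulo the $k$-graph factorisation relations, since one must check that the nonabelian translation--slope cocycle on $\Gamma_N \ltimes \Lambda_N$ matches the additive degree cocycle $\boldsymbol{m_1}-\boldsymbol{m_2}$ on the Deaconu--Renault side. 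Once the homomorphism property is secured, the injectivity and generating-set surjectivity arguments are routine.
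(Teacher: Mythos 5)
Your proposal is correct and follows essentially the same route as the paper: the paper's argument is precisely the discussion preceding the lemma, namely that $\varphi_0$ is a continuous bijection of compact Hausdorff spaces carrying cylinder sets to the clopen intervals, that $\varphi$ maps the bisections $Z(\emptyset,a_0^{(n_i)})$ and the carry-bisections onto the compact generating set from Theorem \ref{fg when fg by alg}, hence is a groupoid conjugacy, after which the identification of topological full groups follows from Lemma \ref{v as a topological full group}. Your additional spelled-out checks (well-definedness modulo the factorisation relations, injectivity via $\mu\colon\mathbb{Z}^k\to\Lambda_N$ being an isomorphism by algebraic independence, surjectivity via the image being a subgroupoid containing the generating set) are exactly the verifications the paper leaves implicit.
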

This identification will give us many interesting new facts about Stein's groups.
Notably, this identification is actually an identification of the underlying left cancellative monoids- the paths space of the $k$-graph may be identified with the underlying monoid seen in the universal groupoid description.

\begin{lemma}[Homology of groupoids with integral slope sets]
Let $N=\{n_1,...,n_k\}$ be a finite collection of integers, where $k>1$. Let $d=gcd(n_1-1,...,n_k-1)$. Let $\ell \in \Gamma_N$ be arbitrary. 
$$ H_*(\Gamma_N \ltimes \Lambda_N \ltimes [0_+,\ell_-]) \cong \begin{cases} (\mathbb{Z}/d\mathbb{Z})^{^kC_*} & *=0,...,k-1 \ \\ 0 & *\geq k \end{cases}  $$
\label{homology for stein groupoid}
\end{lemma}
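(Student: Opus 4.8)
The plan is to exploit the identification established in Lemma~\ref{identification with k-graph}, which realises the groupoid $\Gamma_N \ltimes \Lambda_N \ltimes [0_+,\ell_-]$ (for $\ell = 1$) as the path groupoid $\G(n_1,\dots,n_k)$ of a single-vertex $k$-graph. Once we are in the world of $k$-graphs, the groupoid homology has been computed in closed form by Farsi--Kumjian--Pask--Sims \cite{kgraphcomp}, and our task reduces to feeding the combinatorial data of our particular $k$-graph into their formula.

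Let me think about what that formula gives. For a single-vertex $k$-graph the homology is computed from the chain complex built out of the vertex matrices $M_i$, which encode the number of edges of each colour; here each $M_i$ is the $1 \times 1$ matrix $(n_i)$ since there is one vertex and $n_i$ loops of colour $i$. The Farsi--Kumjian--Pask--Sims complex for a single vertex is then the Koszul-type complex associated to the commuting endomorphisms $1 - n_i$ acting on $\mathbb{Z}$; concretely, it is the tensor product over $i = 1,\dots,k$ of the two-term complexes $\mathbb{Z} \xrightarrow{1 - n_i} \mathbb{Z}$. The homology of such a tensor product of two-term complexes is governed entirely by the cokernels and kernels of the maps $1-n_i$. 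Since each $1 - n_i$ is an injective map $\mathbb{Z} \to \mathbb{Z}$ (as $n_i > 1$), every kernel vanishes, so only the cokernel contributions survive, and the homology of the total complex is $\mathrm{Tor}$-free in the relevant sense: one obtains $\binom{k}{*}$ copies of the single cokernel group.

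The key arithmetic step is to identify that single cokernel group. The tensor product of the cokernels $\mathbb{Z}/(n_i - 1)$ does not directly give the answer; rather, the relevant invariant from the Koszul complex of the simultaneous relations $(1-n_1, \dots, 1-n_k)$ on $\mathbb{Z}$ is $\mathbb{Z}/d\mathbb{Z}$ where $d = \gcd(n_1 - 1, \dots, n_k - 1)$, because the image of the map $\mathbb{Z}^k \to \mathbb{Z}$, $(a_i) \mapsto \sum a_i(n_i - 1)$, is precisely $d\mathbb{Z}$. Thus the degree-zero homology is $\mathbb{Z}/d\mathbb{Z}$, and by the symmetry of the Koszul complex each degree $* = 0, \dots, k-1$ contributes $\binom{k}{*}$ copies of $\mathbb{Z}/d\mathbb{Z}$, with vanishing above degree $k-1$. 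The notation $^kC_*$ in the statement is exactly this binomial coefficient $\binom{k}{*}$. I would verify the degree-$0$ claim against Lemma~\ref{H0 computation}, which gives $H_0 \cong \Gamma/N_\Lambda = \Gamma_N / \sum_i (1-n_i)\Gamma_N$; since $\Gamma_N = \mathbb{Z}[1/(n_1\cdots n_k)]$ and the $n_i$ are units in this ring, $\sum_i (1-n_i)\Gamma_N = d\,\Gamma_N$, and one checks $\Gamma_N / d\,\Gamma_N \cong \mathbb{Z}/d\mathbb{Z}$ — a reassuring consistency check.

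Finally, I would discharge the dependence on $\ell$. The homology statement is claimed for arbitrary $\ell \in \Gamma_N$, whereas Lemma~\ref{identification with k-graph} is stated only for $\ell = 1$. The cleanest route is Corollary~\ref{cor hom best} (homological stability): the groupoids for different full clopen cut-downs $[0_+,\ell_-]$ are Morita equivalent, hence have isomorphic groupoid homology, so it suffices to treat $\ell = 1$. Alternatively one invokes the scaling conjugacy $\Gamma_N \ltimes \Lambda_N \ltimes [0_+, \mu_-] \cong \Gamma_N \ltimes \Lambda_N \ltimes [0_+, 1_-]$ together with Theorem~\ref{lemma compact generation full subsets}-style restriction arguments. \textbf{The main obstacle} I anticipate is not conceptual but bookkeeping: correctly translating the orientation and colour conventions of the $k$-graph homology complex of \cite{kgraphcomp} into our indexing, and confirming that the boundary maps of their complex reduce, for a single vertex with vertex matrices $(n_i)$, to the Koszul differentials with entries $\pm(1-n_i)$ rather than $\pm(1-n_i^{-1})$ or some transpose thereof. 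Getting the sign and the direction of the maps right is what determines whether the surviving invariant is $\gcd(n_i - 1)$ as claimed, and that is the step deserving the most care.
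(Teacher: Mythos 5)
Your high-level route coincides exactly with the paper's proof: reduce from arbitrary $\ell$ to $\ell=1$ using the Morita invariance of groupoid homology underlying Corollary \ref{cor hom best}, identify $\Gamma_N \ltimes \Lambda_N \ltimes [0_+,1_-]$ with the single-vertex $k$-graph groupoid $\G(n_1,\dots,n_k)$ via Lemma \ref{identification with k-graph} together with Corollary \ref{unique groupoid conjugacy}, and then quote the Farsi--Kumjian--Pask--Sims computation \cite{kgraphcomp}, which is insensitive to the choice of factorisation rules. Your degree-zero consistency check against Lemma \ref{H0 computation} is also correct: the image of $(a_i) \mapsto \sum_i a_i(1-n_i)$ is $d\mathbb{Z}$, and $\Gamma_N/d\Gamma_N \cong \mathbb{Z}/d\mathbb{Z}$ because each $n_i \equiv 1 \pmod d$ is a unit modulo $d$.

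However, your evaluation of the Koszul complex fails at its central step. In the total complex $\bigotimes_{i=1}^k\bigl(\mathbb{Z}\xrightarrow{1-n_i}\mathbb{Z}\bigr)$ the $\operatorname{Tor}$ terms do \emph{not} vanish --- they are the sole source of all homology above degree zero. Since each factor has homology $\mathbb{Z}/(n_i-1)\mathbb{Z}$ concentrated in degree $0$, and $\operatorname{Tor}(\mathbb{Z}/a\mathbb{Z},\mathbb{Z}/b\mathbb{Z}) \cong \mathbb{Z}/\gcd(a,b)\mathbb{Z}$, an induction with the K\"unneth formula (applied to the product groupoid $\mathcal{E}_{n_1}\times\cdots\times\mathcal{E}_{n_k}$, to which the general factorisation rules reduce by \cite{kgraphcomp}) gives
$$H_*\bigl(\G(n_1,\dots,n_k)\bigr) \cong (\mathbb{Z}/d\mathbb{Z})^{\binom{k-1}{*}}, \qquad *=0,\dots,k-1,$$
and $0$ above; for instance $k=2$ yields $H_0 \cong H_1 \cong \mathbb{Z}/d\mathbb{Z}$ and $H_2=0$. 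Your assertion of $\binom{k}{*}$ copies conflates the ranks of the Koszul chain modules $\Lambda^*\mathbb{Z}^k$ with the multiplicities in homology; indeed, if the homology were ``Tor-free'' as you claim, it would be concentrated in degree $0$, contradicting your own multiplicity count. The exponent $^{k}C_*$ in the printed statement is a typo for $^{k-1}C_*$: the paper's own proof concludes with $(\mathbb{Z}/d\mathbb{Z})^{^{k-1}C_i}$, which is the version consistent with $H_1(\G(n_1,\dots,n_k)) \cong (\mathbb{Z}/d\mathbb{Z})^{k-1}$ as used in the abelianisation remark after Corollary \ref{acyclicty for stein}, and with the known computations for $k=1,2$. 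So your proposal reproduces the typo rather than detecting it, and as written its key arithmetic paragraph proves neither the printed statement (which is false for $k\geq 2$) nor the corrected one; replacing that paragraph by the K\"unneth/Tor induction above repairs it.
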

\begin{proof}
    First let us apply Lemma \ref{cor hom best}. We have that: 
    $$ H_*(\Gamma_N \ltimes \Lambda_N \ltimes [0_+,\ell_-]) \cong H_*(\Gamma_N \ltimes \Lambda_N\ltimes [0_+,1_-])   $$
    Then by combining Lemma \ref{identification with k-graph} and Corollary \ref{unique groupoid conjugacy}, we have that:
    $$  H_*(\Gamma_N \ltimes \Lambda_N \ltimes [0_+,\ell_-]) \cong H_*(\G(  n_1,n_2,...,n_k))   $$
    The homology of the groupoids of single vertex k-graphs was computed in \cite{kgraphcomp}, and was shown to not depend on the underlying commutation relations. Therefore, let $d=gcd(n_1-1, ..., n_k-1)$. We have that: 
$$ H_*(\G(n_1,..,n_k)):=\begin{cases} (\mathbb{Z}/d\mathbb{Z})^{^{k-1} C_i} & i=0,...,k-1 \ \\ 0 & i \geq k \end{cases}$$
\end{proof}
This allows us to prove Corollary \ref{cor intro acyclic}
\begin{corollary}
 Let $N=\{n_1,..,n_k\}$ be a finite collection of integers with $k>1$. Let $\ell \in \Gamma_N$. Then $V(\Gamma_N ,\Lambda_N ,\ell)$ is rationally acyclic. Moreover, $V(\Gamma_N, \Lambda_N,\ell)$ is integrally acyclic if and only if $d=gcd(n_1-1, ..., n_k-1)=1$. 
 \label{acyclicty for stein}
\end{corollary}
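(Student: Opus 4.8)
The plan is to feed the groupoid homology computed in Lemma \ref{homology for stein groupoid} into the two acyclicity-transfer theorems of Li, \cite[Corollary C, Corollary D]{li2022}, which convert the vanishing of the homology of an ample, minimal, effective groupoid $\G$ into the vanishing of the homology of $F(\G)$, rationally and integrally respectively. By Lemma \ref{v as a topological full group} the group $V(\Gamma_N,\Lambda_N,\ell)$ is exactly $F(\Gamma_N \ltimes \Lambda_N \ltimes [0_+,\ell_-])$, and the regularity hypotheses of Li's theorems (minimality, effectiveness, essential principality) are guaranteed by Lemma \ref{groupoid minimal} together with Lemma \ref{groupoid purely infinite}. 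The whole argument therefore reduces to reading off the consequences of the formula
$$H_*(\Gamma_N \ltimes \Lambda_N \ltimes [0_+,\ell_-]) \cong \begin{cases} (\mathbb{Z}/d\mathbb{Z})^{\binom{k}{*}} & 0 \leq * \leq k-1 \\ 0 & * \geq k \end{cases}$$
where $d = \gcd(n_1-1,\dots,n_k-1)$; in particular all of these groups are finite of exponent dividing $d$, and the independence of $\ell$ is already built into Lemma \ref{homology for stein groupoid}, so no further stability argument is needed.

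For rational acyclicity I would first observe that, since every group in the display above is torsion (or zero), the universal coefficient theorem gives $H_*(\Gamma_N \ltimes \Lambda_N \ltimes [0_+,\ell_-];\mathbb{Q}) = 0$ for all $* \geq 0$, including $*=0$. Thus the groupoid is rationally acyclic, and Li's rational acyclicity criterion \cite[Corollary C]{li2022} immediately yields that $V(\Gamma_N,\Lambda_N,\ell)$ is rationally acyclic. Equivalently, one may substitute this vanishing into the Ext--Sym formula of Theorem \ref{rational homology}: both the exterior and symmetric factors then collapse to $\mathbb{Q}$ concentrated in degree zero.

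For the integral statement I would argue both directions. If $d=1$ then $\mathbb{Z}/d\mathbb{Z}=0$, so the display shows $H_*(\Gamma_N \ltimes \Lambda_N \ltimes [0_+,\ell_-])=0$ for every $*$; the groupoid is integrally acyclic, and Li's integral acyclicity criterion \cite[Corollary D]{li2022} gives that $V(\Gamma_N,\Lambda_N,\ell)$ is integrally acyclic. Conversely, if $d>1$ I would show $V(\Gamma_N,\Lambda_N,\ell)$ is not acyclic by exhibiting nonzero first homology. The tail of the AH exact sequence (Theorem \ref{ah conj}) provides a surjection from $V(\Gamma_N,\Lambda_N,\ell)_{ab}$ onto $H_1(\Gamma_N \ltimes \Lambda_N, C_c(\mathbb{R}_{\Gamma_N},\mathbb{Z}))$, which by the display is a nonzero finite abelian group of exponent $d$ whenever $k>1$ and $d>1$. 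Hence $H_1(V(\Gamma_N,\Lambda_N,\ell)) \neq 0$, so $V(\Gamma_N,\Lambda_N,\ell)$ is not integrally acyclic, and the equivalence follows.

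The computational content is all contained in Lemma \ref{homology for stein groupoid}, so no genuinely hard analytic step remains; the points requiring care are bookkeeping. First, one must confirm that the groupoid model meets the precise (minor) regularity hypotheses under which \cite[Corollary C]{li2022} and \cite[Corollary D]{li2022} are stated, which is exactly what Lemmas \ref{groupoid minimal} and \ref{groupoid purely infinite} supply. Second, for the converse one must ensure that the relevant homology group $H_1$ of the groupoid is genuinely nonzero, which uses $k>1$; this is the reason the single-generator case does not fit this statement and must be handled separately. I expect the main subtlety to be simply matching the hypotheses of Li's two corollaries to our groupoid rather than any nontrivial estimate.
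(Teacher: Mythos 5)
Your proposal is correct, and for the main content it follows the paper's route exactly: feed the groupoid homology of Lemma \ref{homology for stein groupoid} into \cite[Corollary C]{li2022} for rational acyclicity (the groups $(\mathbb{Z}/d\mathbb{Z})^{\binom{k}{*}}$ are torsion, hence vanish rationally in every degree), and into \cite[Corollary D]{li2022} for integral acyclicity when $d=1$. The one place you diverge is the converse direction $d>1$: the paper simply cites Stein's original abelianisation computation from \cite{stein1992groups} to conclude that the group is not acyclic, whereas you derive the non-vanishing of $H_1(V(\Gamma_N,\Lambda_N,\ell))$ internally, from the surjection $V(\Gamma_N,\Lambda_N,\ell)_{ab} \twoheadrightarrow H_1(\Gamma_N\ltimes\Lambda_N\ltimes[0_+,\ell_-])$ at the tail of the AH exact sequence (Theorem \ref{ah conj}), together with the fact that this target is a nonzero $d$-torsion group when $k>1$ and $d>1$. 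This is a legitimate and arguably cleaner substitution --- it keeps the proof self-contained within the paper's groupoid machinery rather than importing an external group-theoretic computation --- and indeed the paper itself validates your route in the remark immediately following the corollary, where the same AH-sequence argument is used to recover Stein's abelianisation. Your observation that the converse genuinely needs $k>1$ (since for $k=1$ one has $H_1$ of the groupoid vanishing while the abelianisation of $V_{n,r}$ can still be $\mathbb{Z}_2$) is a correct and worthwhile point of care; one small bookkeeping note is that the exponent in Lemma \ref{homology for stein groupoid} is stated as $\binom{k}{*}$ but computed in its proof as $\binom{k-1}{*}$ --- this discrepancy in the paper is immaterial for your argument, since in either case $H_1$ is a nonzero direct sum of copies of $\mathbb{Z}/d\mathbb{Z}$ when $k>1$.
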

\begin{proof}
 Observe that in Lemma \ref{homology for stein groupoid}, we have that the groupoid  
 $\Gamma_N \ltimes \Lambda_N\ltimes [0_+,\ell_-]$
is always rationally acyclic, and is acyclic if and only if $d=gcd(n_1-1, ..., n_k-1)=1$. Then, \cite{li2022} Corollary C confirms that the topological full group is always rationally acyclic. Similarly, Corollary D confirms that the topological full group is acyclic if $d=1$. Finally, if $d \neq 1$, the original abelianization computation of Stein \cite{stein1992groups}, shows that in this case, the group is not acyclic. 
\end{proof}

We end this section by remarking that this realisation of Stein's groups gives a different perspective on two of Stein's key results in \cite{stein1992groups}, namely the higher finiteness properties of Stein's integral groups and the computation of their abelianisation. 
\begin{rmk}[Alternative Proof of Abelianisation]

  Let $N=\{n_1,..,n_k\}$ with $k>1$. We can recover Stein's computation of the abelianization of  $V(\Gamma_N, \Lambda_N,\ell)$ in \cite{stein1992groups} by examining the exact sequence in Theorem \ref{ah conj}. Let $d=gcd(n_1,...,n_k)$ It is clear the abelianisation surjects onto $ H_1(\G(n_1,..,n_k)= (\mathbb{Z}/d\mathbb{Z})^{k-1}$. Also, we have that the map $H_2(\G(n_1,..,n_k))=\mathbb{Z}_d^{^{k-1}C_2 } \rightarrow H_0(\G(n_1,..,n_k)) \otimes \mathbb{Z}_2=\mathbb{Z}_d \otimes \mathbb{Z}_2$ is the zero map whenever $d$ is odd, ensuring the surjection is an isomorphism.
  If $d$ is even, the image of the above map is $\mathbb{Z}_2=\mathbb{Z}_d \otimes \mathbb{Z}_2$. In this case, we get a split exact sequence that recovers Stein's computation of the abelianisation. 
\end{rmk}
\begin{rmk}[Left regular representations of Garside categories, Type $F_\infty$ simple groups]
In Stein's original paper \cite{stein1992groups}, she was able to show that the integral groups are of type $F_\infty$ and are virtually simple. It is interesting to remark that the realisation of Stein's groups as the topological full groups arising from certain $k$-graphs also gives rise to this fact, due to the Garside framework of Li \cite{li2021left}, one may alternatively apply [Theorem C, \cite{li2021left}] to reach the same conclusion. 
\end{rmk}
\begin{corollary}
    Let $N_1=\{n_1,...,n_k\}, N_2=\{m_1,...,m_j\}$ be two collections of integers. Let $\ell_1 \in \Gamma_{N_1}$, $\ell_2 \in \Gamma_{N_2}$. Then, 
    $$ \Gamma_{N_1}/N_{\Lambda_{N_1}} \cong 
 \Gamma_{N_2}/N_{\Lambda_{N_2}}\cong  \mathbb{Z}_{gcd(n_1-1,...,n_k-1)} \cong \mathbb{Z}_{gcd(m_1-1,...,m_j-1)},$$
 and $[\ell_1]=[\ell_2] \in \mathbb{Z}_{gcd(n_1-1,...,n_k-1)}$.
\end{corollary}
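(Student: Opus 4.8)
The plan is to read this statement as the Stein-integral specialisation of the general classification invariant, and to prove it by combining an \emph{unconditional} homological computation of $\Gamma_N/N_{\Lambda_N}$ with the transfer result of Corollary \ref{cor classification}. As stated there is no comparison to be made between $N_1$ and $N_2$ — the two cyclic groups $\mathbb{Z}_{\gcd(n_i-1)}$ and $\mathbb{Z}_{\gcd(m_i-1)}$ need not agree for unrelated collections — so I read the corollary under the standing hypothesis $V(\Gamma_{N_1},\Lambda_{N_1},\ell_1)\cong V(\Gamma_{N_2},\Lambda_{N_2},\ell_2)$, exactly the situation of Corollary \ref{cor classification}. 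Everything else in the chain of isomorphisms is then either a computation or a direct invocation of that corollary.

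First I would pin down the invariant unconditionally. By Lemma \ref{H0 computation}, for any finite collection $N=\{n_1,\dots,n_k\}$ one has
$$H_0(\Gamma_N\ltimes\Lambda_N\ltimes[0_+,\ell_-])\cong\Gamma_N/N_{\Lambda_N},$$
with $N_{\Lambda_N}=\sum_i(1-n_i)\Gamma_N$ and order unit the class $[\ell]$. On the other hand, the $k$-graph identification of Lemma \ref{identification with k-graph} together with the groupoid homology computed in Lemma \ref{homology for stein groupoid} (from \cite{kgraphcomp}) gives in degree zero $H_0\cong\mathbb{Z}/d\mathbb{Z}$ with $d=\gcd(n_1-1,\dots,n_k-1)$. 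Matching the two descriptions of $H_0$ yields $\Gamma_N/N_{\Lambda_N}\cong\mathbb{Z}_{\gcd(n_1-1,\dots,n_k-1)}$ for both $N_1$ and $N_2$. This can also be seen by hand: modulo $\sum_i(n_i-1)\Gamma_N$ each $n_i\equiv 1$, so the denominator $n_1\cdots n_k$ becomes a unit congruent to $1$ and inverting it is invisible in the quotient, leaving $\mathbb{Z}/\gcd(n_i-1)\mathbb{Z}$; but I would phrase it homologically to keep the order-unit data visible.

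For the transfer step I would simply feed the isomorphism hypothesis into Corollary \ref{cor classification}: since the groupoids $\Gamma_{N_i}\ltimes\Lambda_{N_i}\ltimes[0_+,(\ell_i)_-]$ are effective, minimal Cantor groupoids realising the groups as topological full groups (Lemma \ref{v as a topological full group}, Lemma \ref{groupoid minimal}), the Matui--Rubin theorem (Theorem \ref{matui isomorphism theorem}) makes them isomorphic, and $H_0$ with its order unit is a functorial invariant, the order unit being the class $[0_+,(\ell_i)_-]=[\ell_i]$. Hence $(\Gamma_{N_1}/N_{\Lambda_{N_1}},[\ell_1])\cong(\Gamma_{N_2}/N_{\Lambda_{N_2}},[\ell_2])$ as pointed groups; substituting the previous paragraph gives $\mathbb{Z}_{d_1}\cong\mathbb{Z}_{d_2}$, forcing $\gcd(n_i-1)=\gcd(m_i-1)$, together with the matching of distinguished classes $[\ell_1]=[\ell_2]$. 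The one point requiring care — and the main obstacle — is the bookkeeping of the order unit rather than the bare group isomorphism: because $\mathbb{Z}_d$ is torsion its positive cone is everything, so the ordering carries no information and the only surviving datum is the marked element $[\ell]$. I would therefore be explicit that ``$[\ell_1]=[\ell_2]$'' means equality of marked elements under the specific order-unit-preserving isomorphism supplied by Corollary \ref{cor classification} (the one sending $[0_+,(\ell_1)_-]$ to $[0_+,(\ell_2)_-]$), and not a canonical equality inside an abstractly chosen copy of $\mathbb{Z}_d$.
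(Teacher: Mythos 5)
Your proof is correct and is essentially the argument the paper intends: the corollary is stated there without proof, as an immediate consequence of Corollary \ref{cor classification} combined with the identification $\Gamma_N/N_{\Lambda_N}\cong\mathbb{Z}_{\gcd(n_1-1,\dots,n_k-1)}$ obtained by matching Lemma \ref{H0 computation} against the degree-zero part of Lemma \ref{homology for stein groupoid}, exactly as you do (your elementary computation in $\mathbb{Z}[1/(n_1\cdots n_k)]$ --- the ideal $\sum_i(1-n_i)\Gamma_N$ is generated by $d=\gcd(n_i-1)$ and $n_1\cdots n_k$ is a unit mod $d$ --- is a welcome bonus, since it also covers $k=1$, which the $k$-graph lemmas exclude). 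You are also right on the two editorial points: the statement as printed omits the standing hypothesis $V(\Gamma_{N_1},\Lambda_{N_1},\ell_1)\cong V(\Gamma_{N_2},\Lambda_{N_2},\ell_2)$, without which the middle isomorphism fails (e.g.\ $N_1=\{2\}$, $N_2=\{3\}$), and ``$[\ell_1]=[\ell_2]$'' can only mean correspondence of the marked classes under the order-unit-preserving isomorphism supplied by Corollary \ref{cor classification} (since $\mathbb{Z}_d$ is torsion the order carries no information), which is consistent with Higman's invariant $\gcd(n-1,r)$.
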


\end{document}